\newtheorem{lemma}{Lemma}[section]
\newtheorem{corollary}{Corollary}[section]
\newtheorem{theorem}{Theorem}
\newtheorem{proposition}{Proposition}[section]
\newtheorem{definition}{Definition}[section]
\def\tr{\mbox{Tr}\,}
\def\limo{\lim_{\omega}}
\def\dim{{\rm dim}}
\def\<{\langle}
\def\>{\rangle}
\def\bX{{\bf X}}
\def\bY{{\bf Y}}
\def\bA{{\bf A}}
\def\bB{{\bf B}}
\def\bh{{\bf h}}
\def\bm{{\bf \mu}}
\begin{document}

\title{Higher order fourier analysis as an algebraic theory II.}
\author{{\sc Bal\'azs Szegedy}}

\maketitle

\abstract{Our approach to higher order Fourier analysis is to study the ultra product of finite (or compact) Abelian groups on which a new algebraic theory appears. This theory has consequences on finite (or compact) groups usually in the form of approximative statements.
The present paper is a continuation of \cite{Sz1} in which higher order characters and decompositions were introduced. We generalize the concept of the Pontrjagin dual group and introduce higher order versions of it. We study the algebraic structure of the higher order dual groups. We prove a simple formula for the Gowers uniformity norms in terms of higher order decompositions.
We present a simple spectral algorithm to produce higher order decompositions. We briefly study a multi linear version of Fourier analysis. Along these lines we obtain new inverse theorems for Gowers's norms.
}

\tableofcontents

\section{Introduction}

Higher order Fourier analysis was started by Gowers \cite{Gow},\cite{Gow2} generalizing Roth's Fourier analytic proof for Roth's theorem.
In this framework Gowers introduced a sequence of norms $\|f\|_{U_k}$ for functions on finite (or compact) Abelian groups. Let $A$ be a compact Abelian group with the normalized Haar measure.
For an $L_\infty$ measurable function $f:A\rightarrow\mathbb{C}$ and $t\in A$ we introduce the function $\Delta_t f$ whose value at $x$ is $f(x+t)\overline{f(x)}$. By iterating this we also introduce $\Delta_{t_1,t_2,\dots,t_k}f$. The norm $\|f\|_{U_k}$ is the $2^k$-th root of the expected value of $\Delta_{t_1,t_2,\dots,t_k}f(x)$ when $t_1,t_2,\dots,t_k$ and $x$ are chosen uniformly at random.

Quite interestingly, it turns out that if $f$ is bounded then the norm $\|f\|_{U_2}$ decides if $f$ is correlated with a harmonic function or $f$ is just "random noise" (from the Fourier theoretic point of view).
To be more precise, it is not hard to see that
\begin{equation}\label{gow2}
\|f\|_{U_2}=\Bigl(\sum |\lambda_i|^4\Bigr)^{1/4}
\end{equation}
where the numbers $\lambda_i$ are the Fourier coefficients of $f$.
It follows that if $\|f\|_2\leq 1$ then $$|\lambda_{\max}|\leq\|f\|_{U_2}\leq|\lambda_{\max}|^{1/2}$$ for the maximal Fourier coefficient $\lambda_{\max}$.

\smallskip

It is possible that $\|f\|_{U_2}$ is arbitrarily small but $\|f\|_{U_3}$ is a fixed positive constant whereas for the ``completely random noise'' all the uniformity norms are very small. This suggest that functions that are noise from the Fourier theoretic point of view can have structure measured by the higher uniformity norms.
The natural question arises:

\smallskip

{\center{\bf What do the norms $U_k$ measure in general?}}

\smallskip

One can reasonably expect that either $U_k$ measures something in Fourier analysis which is more complicated than the dominant coefficient or there is a generalization of Fourier analysis (a higher order version) that is relevant to the meaning of $U_k$.
Even though the possibility of a pure Fourier theoretic approach can't be excluded, quite amazingly, it turns out that reality favors the existence of a higher order version of Fourier analysis.
A work by Green and Tao \cite{GrTao} shows for example that if $\|f\|_{U_3}$ is separated from $0$ then $f$ is correlated with a ``quadratic structure''.

Another interesting fact is that the Higher order version of Fourier analysis, in a strict algebraic form, seems to appear only in the limit.
We have to mention that a lot of work has been done in the finite case by Gowers, Green, Tao, and many others. Also limiting theories were studied by Tao, Ziegler, Host, Kra and others.

In this sequence of papers we follow a different approach than previous papers. We build up a theory that is strongly related to hypergraph regularization. For this reason we use methods from the paper \cite{ESz} in which the limits of hypergraph sequences are studied.
Higher order Fourier analysis from our point of view is a theory that, in a precise algebraic form, appears on the ultra product of compact Abelian groups. Even though such groups are not compact (they satisfy a weakening of compactness), results in higher order Fourier analysis can be used to prove statements about measurable functions on compact Abelian groups.

To keep things simple, most of this paper deals with the case when the groups (of which we take the ultra product) are finite, however the technique that we use is not limited to this case. As we will point out, most results can be extended to compact, (infinite) discrete, and some results to locally compact groups.

An important feature of our theory is that while on a compact abelian group $A$ linear characters span the whole Hilbert space $L_2(A)$ it is no longer true on an ultra product group $\bA$. This creates space for an extended theory.
Linear characters of $\bA$ span a subspace in $L_2(\bA)$ which is the $L_2$ of an $\bA$-invariant $\sigma$-algebra. We denote this $\sigma$-algebra by $\mathcal{F}_1$ and we call it the first Fourier $\sigma$-algebra. Note that a finite analogy of an $\mathcal{F}_1$ measurable set is a set that is composed of a bounded number of Bohr neighborhoods. It turns out \cite{Sz1} that on $\bA$ an interesting hierarchy of $\sigma$-algebras $\mathcal{F}_0\subset\mathcal{F}_1\subset\mathcal{F}_2\dots$ appears (where $\mathcal{F}_0$ is the trivial $\sigma$-algebra). In general $L_2(\mathcal{F}_k)$ is the orthogonal space of the space of functions with $\|f\|_{U_{k+1}}=0$. Note that $U_k$ for the group $\bA$ is a norm only on $L_\infty(\mathcal{F}_{k-1})$ and is a semi-norm in general.

The fundamental theorem in our theory (proved in \cite{Sz1}) says that $L_2(\mathcal{F}_k)$ can be uniquely decomposed into the orthogonal sum of the ($L_2$-closures of) rank one modules over the algebra $L_\infty(\mathcal{F}_{k-1})$. In particular $L_2(\mathcal{F}_1)$ is decomposed into rank one modules over $L_\infty(\mathcal{F}_0)=\mathbb{C}$ which is ordinary Fourier theory. Our fundamental theorem immediately gives rise to a so-called $k$-th order Fourier decomposition. For every fixed $k$ each bounded measurable function $f$ on $\bA$ can be uniquely decomposed as $$f=g+f_1+f_2+f_3+\dots$$ converging in $L_2$ such that $\|g\|_{U_{k+1}}=0$ and $f_i$ are chosen from distinct rank one modules over $L_\infty(\mathcal{F}_{k-1})$. The components $f_i$ are the projections of $f$ to the different rank one modules.

Now we list the topics investigated in this paper:

\bigskip

\noindent{\bf Additivity result for the Gowers norms:}~We show that the $2^{k+1}$-th power of the norm $U_{k+1}$ is additive on the terms of the $k$-th order decomposition of $f$. This generalizes the equation (\ref{gow2}).

\bigskip

\noindent{\bf Local behavior of higher order characters:}~We show that the $k$-th order components $f_i$ have a ``locally $k$-th degree'' structure relative to a separable $\sigma$-algebra contained in $\mathcal{F}_{k-1}$ (see lemma \ref{charsep} and corollary \ref{secder}). Intuitively it means that the $k$-th order components $f_i$ have a $k$-th degree behavior on neighborhoods that have a degree $k-1$ structure. Furthermore the whole group $\bA$ is covered by these neighborhoods corresponding to $f_i$. It is important to mention that here by a degree $k$ function we mean a function whose $k$-th difference function is constant.

\bigskip

\noindent{\bf Spectral approach:}~For every $k$ and $f$ we construct an operator in a simple way whose eigenvectors are the $k$-th order components $f_1,f_2,\dots$ of $f$. We call this fact the ``spectral form of the fundamental theorem''. (An interesting feature is that a $k$-th order generalization of convolution seems to appear here.)
This part of the paper gives some hope that a finite version of $k$-th order decompositions could be efficiently computed. At the end of the paper we sketch a finite translation of this topic but details will be worked out in a separate paper.

\bigskip

\noindent{\bf Limits of self adjoint operators:}~The spectral approach can be best translated to finite results if a limit theory for operators is available. We prove a theorem in this direction which has consequences of independent interest. For example it implies a spectral version of the strong regularity lemma. It also implies that for a convergent graph sequence the joint distributions of the (entries of the) eigenvectors corresponding to the largest eigenvalues converge. (Note that it is easy to see that the eigenvalues converge) Further details on this will be available in a separate paper \cite{Sz2}.

\bigskip

\noindent{\bf Higher order dual groups:}~Rank one modules over $L_\infty(\mathcal{F}_{k-1})$ are forming an Abelian group that we call the $k$-th order dual group and denote by $\hat{\bA}_k$. We study the algebraic structure of $\hat{\bA}_k$ which turns out to be closely related to the dual of the $k$-th tensor power of $\bA$. This also justifies the name ``Higher order Fourier Analysis''. To be more precise we show that every element in $\hat{\bA}_k$ can be characterized by an element of
$\hom(\bA,\hat{\bA}_{k-1}/T)$
where $T$ is a countable subgroup. Such countable subgroups will be called ``types'' in general. Note that in some sense $T$ encodes the dual concept of a $k-1$ degree ``neighborhood''. In particular, if $\phi$ is a ``pure'' $k$-th degree character, i.e. $\Delta_{t_1,t_2,\dots,t_{k+1}}\phi(x)$ is the constant $1$ function, then the element of $\hat{\bA}_k$ representing $\phi$ has the trivial group as $T$. This corresponds to the fact that in this case $\phi$ is nice on the whole group. Countable subgroups $T$ of the first dual group $\hat{\bA}_1$ define Bohr neighborhoods in our language. The intersection $K$ of the kernels of the characters in $T$ is a subgroup of $\bA$ which is the ultra product version of a Bohr neighborhood. The factor group $\bA/T$ has a compact topological group structure.

The homomorphism sets $\hom(\bA,\hat{\bA}_{k-1}/T)$ as $T$ runs trough all countable subgroups are forming a direct system. We denote the direct limit by $\hom^*(\bA,\hat{\bA}_{k-1})$. We prove that $\hat{\bA}_k$ is injectively embedded into $\hom^*(\bA,\hat{\bA}_{k-1})$. This has numerous consequences on the structures of the dual groups. For example it shows that $\hat{\bA}_k$ is embedded into
$$\hom^*(\bA,\hom^*(\bA,\dots,\hom^*(\bA,\hom(\bA,\mathbb{C})))\dots )$$
where the number of $\hom^*$'s is $k-1$. This shows the connection to the $k$-th tensor power of $\bA$.

\bigskip

\noindent{\bf Multi linear Fourier analysis:}~In a short chapter, out of aesthetical reasons, we put down the foundations of ``multi linear Fourier analysis'' which seems to simplify Higher order Fourier analysis. The main point is that degree $k$ functions (functions measurable in $\mathcal{F}_k$) can be represented (without loosing too much information) in a $\sigma$-algebra whose elements are in some sense $k$-linear. The representation is given by the simple formula
$$\tilde{V}_k(f)(t_1,t_2,\dots,t_k)=\int_x\Delta_{t_1,t_2,\dots,t_k}f(x).$$
This brings Higher order Fourier analysis one step closer to usual (linear) Fourier analysis. A good example is that if $\phi$ is a pure $k$-th order character then $\Delta_{t_1,t_2,\dots,t_k}f(x)$ doesn't depend on $x$ and is $k$-linear in $t_1,t_2,\dots,t_k$. Degree $k$-pure characters in characteristic $p$ can look difficult but the representing multi linear form takes only $p$-th root of unities.

\bigskip

\noindent{\bf Finite translations and the inverse theorem:}~Last but not least we translate some of our results into finite statements to demonstrate how it works.
In particular we get an inverse theorem for the uniformity norms (see Theorem \ref{furreg}). Roughly speaking it says that if $\|f\|_{U_k}$ is separated from $0$ then $f$ is correlated with a function $f'$ that has a degree $k-1$ structure structure. This means that the abelian group can be decomposed into a bounded number of sets that are level sets of functions with a degree $k-2$ structure such that the value of $\Delta_{t_1,t_2,\dots,t_k}f'(x)$ depends only (with a high probability and with a small error) on the partition sets containing the partial sums $x+\sum_{i\in S} t_i$. Notice that the notion of a ``function with a degree $k$ structure'' is recursively defined. As a starting step, functions with a zero degree structure are only the constant functions. This means that functions with a first degree structure are close to ordinary characters. It is important to note that the theorem is close to the so called strong regularity lemma in spirit. For example the error in the definition of a degree $k$ structure can be made arbitrary small in terms of the number of partition sets.

\bigskip

\noindent{\bf Remarks:}~Some important topics are not discussed in this paper including connections to nilpotent groups, and the non-commutative theory. They will be discussed in a subsequent paper.
A third part of this sequence is in preparation.

\subsection{Overview of the first paper}

We denote by $\bA$ the ultra product of the sequence $\{A_i\}_{i=1}^\infty$ of growing finite Abelian groups.
The set $\bA$ is a probability space with the group invariant $\sigma$-algebra $\mathcal{A}$ and measure $\bm$ introduced in \cite{Sz1}.

Let $\mathcal{F}_0$ denote the trivial $\sigma$-algebra on $\bA$. A measurable set $S\in\mathcal{A}$ is called separable if the $\sigma$-algebra generated by all translates $S+x$ is separable i.e. it can be generated by countable many sets.
If $\mathcal{B}$ is a shift invariant $\sigma$-algebra in $\mathcal{A}$ and $S\in\mathcal{A}$ then we say that $S$ is relative separable over $\mathcal{B}$ if the sigma algebra generated by $\mathcal{B}$ and all translates $S+x$ can be generated by $\mathcal{B}$ and countably many extra sets. Separable elements in $\mathcal{A}$ are forming a $\sigma$-algebra that we denote by$\mathcal{F}_1$. We construct a growing sequence of $\sigma$-algebras $\mathcal{F}_i$ recursively such that $\mathcal{F}_i$ is formed by all measurable sets which are relative separable over $\mathcal{F}_{i-1}$.

Let $\mathcal{C}$ denote the set $\{x|x\in\mathbb{C},|x|=1\}$.
For a function $f:\bA\rightarrow\mathbb{C}$ and $t\in\bA$ we introduce the notation $$\Delta_t f(x)=f(x+t)\overline{f(x)}.$$ Similarly we define $\Delta_{t_1,t_2,\dots,t_k}f(x)$ by iterating the previous operation. The $2^k$-th root of the expected value of $\Delta_{t_1,t_2,\dots,t_k}f(x)$ where $x,t_1,t_2,\dots,t_k$ are chosen uniformly at random is called the $k$-th Gowers (semi)-norm of $f$.

It was proved in \cite{Sz1} that

\begin{lemma}[Norm characterization]\label{normchar} For a measurable function $f\in L_\infty(\mathcal{A},\bm)$ we have that
$$\|f\|_{U_k}=\|\mathbb{E}(f|\mathcal{F}_{k-1})\|_{U_k}.$$
Furthermore the $k$-th Gowers (semi)-norm of $f$ is $0$ if and only if $f$ is orthogonal to $L_2(\mathcal{F}_{k-1},\bm)$. In particular $U_k$ is a norm on $L_\infty(\mathcal{F}_{k-1})$.
\end{lemma}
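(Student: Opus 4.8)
\smallskip
\noindent\textit{Proof strategy.} The plan is to reduce both assertions to a single dichotomy: $\|f\|_{U_k}=0$ \emph{if and only if} $f\perp L_2(\mathcal{F}_{k-1},\bm)$. Granting this, I would set $g:=f-\mathbb{E}(f|\mathcal{F}_{k-1})$, observe $g\perp L_2(\mathcal{F}_{k-1})$ hence $\|g\|_{U_k}=0$, and obtain $\|f\|_{U_k}=\|\mathbb{E}(f|\mathcal{F}_{k-1})\|_{U_k}$ from the principle that adding a $U_k$-null function to a bounded function does not change its $U_k$ semi-norm. The ``in particular'' clause then follows because a nonzero $h\in L_\infty(\mathcal{F}_{k-1})$ satisfies $\langle h,h\rangle>0$, so $h\not\perp L_2(\mathcal{F}_{k-1})$ and hence $\|h\|_{U_k}>0$. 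So the real work is the dichotomy and the negligibility principle; the latter, and the triangle inequality needed to call $U_k$ a semi-norm at all, are soft consequences of the Gowers--Cauchy--Schwarz inequality, whereas the dichotomy rests on two structural facts from \cite{Sz1}.

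First I would record the Gowers--Cauchy--Schwarz inequality. For a family $(f_\omega)_{\omega\in\{0,1\}^k}$ in $L_\infty(\bA)$, consider the Gowers inner product
$$\langle(f_\omega)_\omega\rangle_{U_k}\;=\;\mathbb{E}_{x,t_1,\dots,t_k}\ \prod_{\omega\in\{0,1\}^k}\mathcal{C}^{|\omega|}f_\omega\Bigl(x+\textstyle\sum_{i\,:\,\omega_i=1}t_i\Bigr),$$
where $\mathcal{C}^{j}$ denotes taking the complex conjugate $j$ times (the identity for even $j$), and the averages are integrals for the ultra product measure, whose Fubini theorem is available from \cite{Sz1}. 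Applying ordinary Cauchy--Schwarz once in each direction $t_1,\dots,t_k$ --- at step $i$ grouping the factors by the value of $\omega_i$ and duplicating the missing coordinate --- yields $|\langle(f_\omega)_\omega\rangle_{U_k}|\le\prod_\omega\|f_\omega\|_{U_k}$, and in particular $\langle(f,\dots,f)\rangle_{U_k}=\|f\|_{U_k}^{2^k}\ge 0$. Since the Gowers inner product is multilinear, expanding $\|h+g\|_{U_k}^{2^k}$ over the $2^{2^k}$ sign patterns bounds every term in which $g$ sits at a vertex by a power of $\|g\|_{U_k}$ times $L_\infty$-bounds; hence $\|g\|_{U_k}=0$ gives $\|h+g\|_{U_k}=\|h\|_{U_k}$, which is the negligibility principle, and the triangle inequality follows similarly.

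Next, for the dichotomy I would use the $k$-th order dual function
$$\mathcal{D}_k f(x)\;=\;\mathbb{E}_{t_1,\dots,t_k}\ \prod_{\mathbf{0}\ne\omega\in\{0,1\}^k}\mathcal{C}^{|\omega|+1}f\Bigl(x+\textstyle\sum_{i\,:\,\omega_i=1}t_i\Bigr),$$
so that pulling out the $x$-average gives $\langle f,\mathcal{D}_k f\rangle=\|f\|_{U_k}^{2^k}$ and the Gowers--Cauchy--Schwarz bound (with $f$ at the vertex $\mathbf 0$ and $g$ elsewhere) gives $|\langle f,\mathcal{D}_k g\rangle|\le\|f\|_{U_k}\|g\|_{U_k}^{2^k-1}$ for every bounded $g$. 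I would then quote two facts from \cite{Sz1}: (i) $\mathcal{D}_k f\in L_\infty(\mathcal{F}_{k-1})$ for every bounded $f$; and (ii) the dual functions $\mathcal{D}_k g$ span an $L_2$-dense subspace of $L_2(\mathcal{F}_{k-1},\bm)$. By (i), $f\perp L_2(\mathcal{F}_{k-1})$ forces $\|f\|_{U_k}^{2^k}=\langle f,\mathcal{D}_k f\rangle=0$; by the displayed bound together with (ii), $\|f\|_{U_k}=0$ forces $\langle f,\mathcal{D}_k g\rangle=0$ for all $g$, hence $f\perp L_2(\mathcal{F}_{k-1})$. That completes the dichotomy, hence the lemma.

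The main obstacle is plainly (i) and (ii): they are precisely the point at which the ``relatively separable'' definition of $\mathcal{F}_{k-1}$ must be matched up with the behaviour of the Gowers norm, and their proof is the substance of \cite{Sz1}. For (i) I would expect an induction on $k$: write $\mathcal{D}_k f$ as an average over $t_k$ of conjugates of $(k-1)$-st order dual functions of shifted products $x\mapsto f(x+t_k)\overline{f(x)}$, and check that integrating an $\mathcal{F}_{k-2}$-relatively-separable family over a parameter lands in $\mathcal{F}_{k-1}$ --- the step that genuinely uses the separability in the definition together with a compactness/Fubini argument on the ultra product. The density statement (ii) is the harder half and relies on the decomposition theory of \cite{Sz1}. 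In a self-contained account I would simply import (i) and (ii) and present in detail only the Gowers--Cauchy--Schwarz steps above.
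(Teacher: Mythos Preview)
The paper does not prove this lemma at all: it is stated in the ``Overview of the first paper'' section with the preamble ``It was proved in \cite{Sz1} that'', so there is no in-paper argument to compare against. Your reduction to the dichotomy via Gowers--Cauchy--Schwarz plus two imported structural facts is the standard route and is correct in outline; the paper's own later use of the lemma (e.g.\ Corollary~\ref{zeroint}) runs through exactly the same machinery, namely the ``cubic'' identity Lemma~\ref{cubint} from \cite{Sz1} together with perpendicularity of the coset $\sigma$-algebra $\psi_\emptyset^{-1}(\mathcal{A})$.

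One refinement: your fact (ii) as stated --- density of \emph{dual functions} $\mathcal{D}_k g$ in $L_2(\mathcal{F}_{k-1})$ --- is stronger than you need and is not the form in which \cite{Sz1} packages things. Since you already have the full multilinear Gowers--Cauchy--Schwarz inequality $|\langle (f_\omega)\rangle_{U_k}|\le\prod_\omega\|f_\omega\|_{U_k}$, the hypothesis $\|f\|_{U_k}=0$ kills every inner product of $f\circ\psi_\emptyset$ against an arbitrary product $\prod_{S\ne\emptyset}g_S\circ\psi_S$, with the $g_S$ allowed to be \emph{different}. Such products generate $L_2\bigl(\bigvee_{S\ne\emptyset}\mathcal{B}_S\bigr)$, and then Lemma~\ref{cubint} plus perpendicularity gives $\mathbb{E}(f|\mathcal{F}_{k-1})=0$ directly. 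This avoids having to argue that the diagonal family $\{\mathcal{D}_k g\}$ alone is dense, which is true but is itself a nontrivial consequence of the decomposition theory rather than an input to it. With that tweak your sketch is a faithful expansion of what \cite{Sz1} supplies.
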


We denote the $k$-th Gowers norm either by $U_k(f)$ or by $\|f\|_{U_k}$.

The fundamental theorem proved in \cite{Sz1} says that

\begin{theorem}[Fundamental theorem]\label{fundth} $L_2(\mathcal{F}_k,\bm)$ is equal to the orthogonal sum of all the ($L_2$-closures of) shift invariant rank one modules over the algebra $L_\infty(\mathcal{F}_{k-1},\bm)$. Every (shift invariant) rank one module over $L_\infty(\mathcal{F}_{k-1},\bm)$ has a generating element $\phi:\bA\rightarrow\mathcal{C}$ such that the functions
$\Delta_t\phi$
are measurable in $\mathcal{F}_{k-1}$ for every $t\in\bA$.
Such functions are called $k$-th order characters.
\end{theorem}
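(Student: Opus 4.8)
\medskip
\noindent\textbf{Proof plan.} The plan is to induct on $k$. When $k=1$ the statement is ordinary Fourier analysis: $L_2(\mathcal{F}_1)$ is, by its construction, the closed span of the linear characters of $\bA$, each such character is a unit generator of a shift invariant line (a rank one module over $L_\infty(\mathcal{F}_0)=\mathbb{C}$), distinct characters span orthogonal lines, and $\Delta_t\phi$ is the constant $\phi(t)$, hence $\mathcal{F}_0$-measurable. So assume the theorem for $\mathcal{F}_{k-1}$ over $L_\infty(\mathcal{F}_{k-2})$. Let $\bA$ act on $L_2(\mathcal{F}_k,\bm)$ by its unitary translation action; a \emph{module} will mean a closed subspace invariant under this action and under multiplication by $L_\infty(\mathcal{F}_{k-1},\bm)$.

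The structural input that drives the argument is that $\mathcal{F}_k$ is a \emph{compact} (relatively almost periodic) extension of $\mathcal{F}_{k-1}$ for the $\bA$-action. This is exactly where the defining property of $\mathcal{F}_k$ enters: each $S\in\mathcal{F}_k$ is relatively separable over $\mathcal{F}_{k-1}$, so there is a \emph{separable} $\mathcal{B}\subseteq\mathcal{F}_{k-1}$ over which the conditional $L_2$-orbit of the indicator of $S$ is precompact in the Hilbert-module sense. Unwound, this says: for $f\in L_2(\mathcal{F}_k)$ and $\e>0$ there are finitely many $g_1,\dots,g_n$ so that every translate of $f$ lies within $\e$, in the conditional norm over $\mathcal{F}_{k-1}$, of the $L_\infty(\mathcal{F}_{k-1})$-module they generate. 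Setting this up carefully on the non-compact, non-separable space $\bA$ is the first real obstacle; passing to the separable base $\mathcal{B}$ is what makes a conditional Furstenberg--Zimmer type argument go through, and it is what the later parts of the paper have in mind when they speak of the ``local'' degree $k$ behaviour of the components relative to a separable sub-$\sigma$-algebra of $\mathcal{F}_{k-1}$.

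Granting compactness of the extension, the next step is a conditional spectral / Peter--Weyl argument. For a fixed $f$ one forms the averaging operators on $L_2(\mathcal{F}_k)$ obtained by integrating the translation action against $\mathcal{F}_{k-1}$-measurable kernels; compactness of the extension makes them compact relative to $L_\infty(\mathcal{F}_{k-1})$, so they have discrete spectrum and their eigenmodules are finitely generated ($=$ finite rank) $\bA$-invariant $L_\infty(\mathcal{F}_{k-1})$-submodules, and these span $L_2(\mathcal{F}_k)$. A maximal orthogonal family of irreducible finite rank modules then exhausts the space by Zorn's lemma (otherwise the complement is a nonzero module, which by the above contains a further finite rank submodule). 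That distinct irreducible rank one modules are automatically orthogonal will follow from ergodicity of the translation action: if the unit generators $\phi_1,\phi_2$ of two such modules had $\mathbb{E}(\phi_1\overline{\phi_2}|\mathcal{F}_{k-1})\neq 0$, then this conditional expectation transforms under translation by the same phase that $\phi_1\overline{\phi_2}$ does, its modulus is translation invariant hence a.e.\ constant, and dividing it out exhibits $\phi_1\overline{\phi_2}$ as an $\mathcal{F}_{k-1}$-measurable unit, forcing the two modules to coincide.

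The last and genuinely crucial step is to refine ``finite rank'' to ``rank one'' and to produce the unit generator. On an irreducible finite rank module the translation action is implemented by a measurable field of unitary cocycles $t\mapsto\rho_t$ with $\rho_t(x)\in U(d)$ over $\mathcal{F}_{k-1}$; since $\bA$ is abelian this is fibrewise a commuting family, and the task is to diagonalise it simultaneously with $\mathcal{F}_{k-1}$-measurable eigenprojections, after which the module splits into rank one pieces, each carrying a modulus-one generator $\phi$ on which translation by $t$ acts as multiplication by a scalar $\lambda_t=\Delta_t\phi$. The diagonalisation is assembled from the $\mathcal{F}_{k-1}$-measurable cocycle $\rho$ together with the inductive description of $L_\infty(\mathcal{F}_{k-1})$ via $(k-1)$-st order characters, so every $\lambda_t$ is $\mathcal{F}_{k-1}$-measurable, i.e.\ $\phi$ is a $k$-th order character. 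The hard points here are making the simultaneous diagonalisation measurable (restrict the uncountably indexed field $\{\rho_t\}$ to a countably generated subfield using separability, diagonalise, and propagate via the cocycle identity) and checking that no non-abelian residue survives — this is precisely the place where commutativity of the groups $A_i$ is used, and it is the incarnation in the present language of the classical fact that the structure groups appearing in the Fourier tower are abelian rather than arbitrary compact groups. Combining the exhaustion of the third step with this refinement yields the orthogonal decomposition of $L_2(\mathcal{F}_k)$ into rank one modules together with the asserted description of their generators.
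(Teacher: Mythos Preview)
This theorem is not proved in the present paper at all: it is quoted from the predecessor paper \cite{Sz1} (see the sentence ``The fundamental theorem proved in \cite{Sz1} says that\dots'' immediately preceding the statement, and the heading of Subsection~1.1, ``Overview of the first paper''). There is therefore no proof here against which to compare your proposal.

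That said, a brief remark on the approach. Your sketch is a Furstenberg--Zimmer/Host--Kra style argument: compactness of the extension $\mathcal{F}_{k-1}\subset\mathcal{F}_k$, a conditional Peter--Weyl decomposition into finite rank modules, and then a measurable simultaneous diagonalisation of the abelian cocycle to split down to rank one. Judging from the tools this paper imports from \cite{Sz1} --- the cubic $\sigma$-algebras $\mathcal{B}_S=\psi_S^{-1}(\mathcal{A})$ and the key identity of Lemma~\ref{cubint}, together with the hypergraph-regularity machinery of \cite{ESz} --- the proof in \cite{Sz1} is organised rather differently: it identifies $\mathcal{F}_{k-1}$ via the intersection formula $\mathcal{B}_H\wedge\bigl(\bigvee_{S\neq H}\mathcal{B}_S\bigr)=\psi_H^{-1}(\mathcal{F}_{k-1})$ on the cube $\bA^{k+1}$ and extracts the module decomposition from that, rather than from an abstract compact-extension argument. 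Your outline is a reasonable heuristic, but the step you flag as ``the first real obstacle'' --- establishing relative compactness of $\mathcal{F}_k$ over $\mathcal{F}_{k-1}$ on the non-separable, non-standard space $\bA$ directly from the relative-separability definition --- is not something this paper supplies, and you would need to consult \cite{Sz1} to see whether (and how) it is actually carried out there or whether the argument bypasses it via the cubic structure.
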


It is clear that $k$-th order characters are unique up to multiplication with elements from $L_\infty(\mathcal{F}_{k-1},\bm)$ of absolute value $1$.

Every function $f\in L_2(\mathcal{A},\bm)$ has a unique (possibly infinite) decomposition
$$f=f_0+f_1+f_2+\dots$$
where $f_0$ is orthogonal to $\mathcal{F}_k$ and for $i>0$ the functions $f_i$ are contained in distinct rank $1$ modules over $L_\infty(\mathcal{F}_{k-1},\bm)$.

\subsection{Compact groups, discrete groups and other generalizations}

In our paper usually $\bA$ is the ultra product of a sequence $\{A_i\}_{i=1}^\infty$ of finite abelian groups. However finiteness is not significant here. One can replace $A_i$ by compact groups with the normalized Haar measure on them. In this case the $\sigma$-algebra on the ultra product is generated by ultra products of Borel sets and the measure is the ultra limit of the Haar measures.

More interestingly one can extend the theory to the case where $A_i$ are discrete infinite Abelian groups with finitely additive measures $\tau_i$ with total measure one. Since Abelian groups are amenable, such measures always exist. In this case, quite interestingly, the ultra product becomes an ordinary measure space. Again the $\sigma$-algebra is generated by ultra products of subsets of $A_i$.

Some of our theory can be generalized to locally compact groups, but one has to be much more careful in that case. We will discuss that in a separate paper.

Finally, an interesting direction is the case when the groups $A_i$ are not commutative.
In this situation a ``higher order representation theory can be studied''
This is also a topic of a forthcoming paper.

\section{Basics}

\subsection{Shift invariant $\sigma$-algebras and $\mathcal{B}$-orthogonality}

\begin{lemma} Let $\mathcal{B}$ be a shift invariant $\sigma$-algebra in $\mathcal{A}$. If $\phi:\bA\rightarrow\mathcal{C}$ satisfy
that $\Delta_t\phi$ is in $L_\infty(\mathcal{B},\bm)$ for almost every $t$ then it is true for every $t$.
\end{lemma}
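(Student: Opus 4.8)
The plan is to exploit the cocycle identity satisfied by the difference operator together with translation invariance of the measure. First I would record the pointwise identity
$$\Delta_{s+r}\phi(x) = \bigl(\Delta_r\phi\bigr)(x+s)\cdot\bigl(\Delta_s\phi\bigr)(x),$$
valid for all $x,s,r\in\bA$; it follows at once from $\phi(x+s+r)\overline{\phi(x)} = \bigl(\phi(x+s+r)\overline{\phi(x+s)}\bigr)\bigl(\phi(x+s)\overline{\phi(x)}\bigr)$ using $|\phi(x+s)|=1$. The point of this identity is that the set of ``good'' $t$, i.e. those for which $\Delta_t\phi\in L_\infty(\mathcal{B},\bm)$, is closed under addition.

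Next, fix an arbitrary $t\in\bA$ and let $N\sq\bA$ be a null set outside of which $\Delta_s\phi\in L_\infty(\mathcal{B},\bm)$ (this is exactly what ``for almost every $t$'' provides). Since $\bm$ is shift invariant, $t-N$ is again null, so $\bA\setminus\bigl(N\cup(t-N)\bigr)$ has measure one and in particular is nonempty. Pick $s$ in this set and put $r=t-s$; then $s\notin N$ and $r=t-s\notin N$, so each of $\Delta_s\phi$ and $\Delta_r\phi$ agrees $\bm$-almost everywhere with a $\mathcal{B}$-measurable function.

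Finally, since $\mathcal{B}$ is shift invariant and $x\mapsto x+s$ is measure preserving, the function $x\mapsto(\Delta_r\phi)(x+s)$ also agrees almost everywhere with a $\mathcal{B}$-measurable function. Multiplying by $\Delta_s\phi$ and invoking the cocycle identity shows that $\Delta_t\phi$ agrees almost everywhere with a $\mathcal{B}$-measurable function, that is, $\Delta_t\phi\in L_\infty(\mathcal{B},\bm)$. As $t$ was arbitrary, the lemma follows.

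I do not anticipate a genuine obstacle here; the only care needed is routine measure-theoretic bookkeeping — keeping the pointwise cocycle identity distinct from the almost-everywhere equalities coming from the hypothesis, noting that the exceptional set may be taken to be an honest null set, and using that both $\bm$ and the class of $\mathcal{B}$-measurable functions are preserved under translation. The substantive content is merely that a full-measure subgroup-like set (closed under $(s,r)\mapsto s+r$) must be all of $\bA$.
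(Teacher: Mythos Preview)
Your proposal is correct and takes essentially the same approach as the paper. The paper's proof picks a random $t_2$ and multiplies $\phi(x+t)\overline{\phi(x+t_2)}$ by $\phi(x+t_2)\overline{\phi(x)}$, which is exactly your cocycle decomposition with $s=t_2$ and $r=t-t_2$; your write-up is simply more explicit about the measure-theoretic bookkeeping.
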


\begin{proof}Let $t$ be arbitrary and $t_2$ be chosen randomly. Then with probability one both
$\phi(x+t)\overline{\phi(x+t_2)}$ and $\phi(x+t_2)\overline{\phi(x)}$ are measurable in $\mathcal{B}$ with probability one. Taking such a $t_2$ and the product of the two expressions the proof is complete.
\end{proof}

\begin{lemma}\label{ort1} Let $\mathcal{B}$ be a shift invariant $\sigma$-algebra. Let $\phi:\bA\rightarrow\mathcal{C}$ be a function such that $\Delta_t\phi$ is measurable in $\mathcal{B}$ for every $t\in\bA$. Then either $\mathbb{E}(\phi|\mathcal{B})$ is constant $0$ or $\phi\in\mathcal{B}$.
\end{lemma}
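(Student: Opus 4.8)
The key observation is that $\Delta_t\phi$ being $\mathcal{B}$-measurable for every $t$ means the "module structure" of $\phi$ over $L_\infty(\mathcal{B},\bm)$ is well-behaved, and I want to exploit orthogonality relations coming from integration in $x$. Write $\psi = \mathbb{E}(\phi|\mathcal{B})$. The plan is to compute $\mathbb{E}(\Delta_t\phi \mid \mathcal{B})$ in two ways. On one hand, since $\Delta_t\phi(x) = \phi(x+t)\overline{\phi(x)}$ is already $\mathcal{B}$-measurable, taking conditional expectation does nothing and we must relate it to $\psi$. On the other hand, I would like to use the shift-invariance of $\mathcal{B}$: the map $x \mapsto x+t$ preserves $\mathcal{B}$ and the measure, so $\mathbb{E}(\phi(\cdot + t)\mid\mathcal{B})(x) = \psi(x+t)$. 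Here is the crucial point: $\phi(x+t)\overline{\phi(x)}$ is $\mathcal{B}$-measurable, so as a function of $x$ it equals its own conditional expectation; but I can also try to "pull out" the factor $\overline{\phi(x)}$ — except $\phi(x)$ itself need not be $\mathcal{B}$-measurable, so this is the subtle step.

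To get around that, the cleaner route is to integrate over $t$. For fixed $x$, consider $\int_t \phi(x+t)\overline{\phi(x)}\,dt = \overline{\phi(x)}\int_t\phi(x+t)\,dt = \overline{\phi(x)}\cdot c$ where $c = \int_x \phi(x)\,d\bm = \mathbb{E}(\phi)$ by translation invariance of the measure. Now the left-hand side is $\int_t \Delta_t\phi(x)\,dt$, and since each $\Delta_t\phi$ is $\mathcal{B}$-measurable, this average (a limit of $\mathcal{B}$-measurable functions) is $\mathcal{B}$-measurable in $x$. Hence $\overline{\phi(x)}\cdot c$ is $\mathcal{B}$-measurable. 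If $c = \mathbb{E}(\phi) \neq 0$, this forces $\overline{\phi}$, and therefore $\phi$, to be $\mathcal{B}$-measurable, i.e. $\phi\in\mathcal{B}$.

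It remains to upgrade "$\mathbb{E}(\phi)\neq 0$" to "$\mathbb{E}(\phi|\mathcal{B})$ not identically $0$". So suppose $\psi = \mathbb{E}(\phi|\mathcal{B})$ is not the zero function; I want to produce a situation where an honest (scalar) expectation of a suitable twist of $\phi$ is nonzero. Replace $\phi$ by $\phi_g(x) = \phi(x)\overline{g(x)}$ for a well-chosen $g\in L_\infty(\mathcal{B},\bm)$ with $|g| \le 1$ (or of modulus $1$); note $\Delta_t\phi_g(x) = \Delta_t\phi(x)\cdot \overline{\Delta_t g(x)}$ is still $\mathcal{B}$-measurable since $\mathcal{B}$ is a shift-invariant algebra and $\Delta_t g \in L_\infty(\mathcal{B})$. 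Taking $g = \psi/|\psi|$ on $\{\psi\neq 0\}$ (and $g$ of modulus $1$ arbitrary elsewhere), we get $\mathbb{E}(\phi_g) = \int_x \mathbb{E}(\phi\overline g \mid \mathcal{B}) = \int_x \overline{g}\,\psi = \int_x |\psi| > 0$. Applying the previous paragraph to $\phi_g$ shows $\phi_g \in \mathcal{B}$; but $\phi = \phi_g\cdot g$ with $g\in\mathcal{B}$ (it can be chosen $\mathcal{B}$-measurable since $\psi$ is), so $\phi\in\mathcal{B}$, completing the proof.

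The main obstacle I anticipate is the measurability of the $t$-average $\int_t \Delta_t\phi(x)\,dt$: one must check that on the ultra product $\bA$ this integral makes sense as a genuine $L_\infty$ function and is measurable in $\mathcal{B}$ (it is a Bochner-type integral / weak limit of $\mathcal{B}$-measurable functions, which is fine since $L_\infty(\mathcal{B})$ is weakly closed, but it deserves a careful sentence). A secondary point is ensuring $g$ can be taken of modulus exactly $1$ and $\mathcal{B}$-measurable so that the $\Delta_t$-condition transfers cleanly; this is routine once $\psi$ is shown to be $\mathcal{B}$-measurable, which it is by definition of conditional expectation.
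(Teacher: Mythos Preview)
Your argument is correct, but it takes a different route from the paper. The paper's proof is a single direct computation: writing $\phi^*=\mathbb{E}(\phi\mid\mathcal{B})$, shift invariance gives $\phi^*(x+t)=\mathbb{E}(\phi(x+t)\mid\mathcal{B})$, and since $\phi(x+t)=\phi(x)\Delta_t\phi(x)$ with $\Delta_t\phi\in\mathcal{B}$, one can pull out $\Delta_t\phi$ to obtain the pointwise identity
\[
\phi^*(x+t)=\phi^*(x)\,\Delta_t\phi(x)\quad\text{for a.e.\ }x.
\]
Fixing any $x_0$ with $\phi^*(x_0)\neq 0$ (and for which the identity holds for a.e.\ $t$) immediately expresses $\phi(x_0+t)$ as a $\mathcal{B}$-measurable function of $t$, and shift invariance finishes. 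By contrast, you first integrate over $t$ to reduce to the scalar case $\mathbb{E}(\phi)\neq 0$ and then twist by a modulus-one $\mathcal{B}$-measurable $g$ to force that scalar expectation to be nonzero. Both are valid; the paper's approach is shorter and sidesteps the measurability-of-the-$t$-integral issue you flagged (which, as you note, is handled by weak closedness of $L_\infty(\mathcal{B})$ or by commuting $\mathbb{E}(\cdot\mid\mathcal{B})$ with the $t$-integral via Fubini), while your approach has the mild advantage of isolating a reusable sub-lemma (the scalar-expectation case).
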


\begin{proof} Let $\phi^*=\mathbb{E}(\phi|\mathcal{B})$. Then for every $t$ we have that

$$\phi^*(x+t)=\mathbb{E}(\phi(x+t)|\mathcal{B})=\mathbb{E}(\phi(x)(\phi(x+t)\overline{\phi(x)})|\mathcal{B})=\phi^*(x)\phi(x+t)\overline{\phi(x)}$$
for almost all $x$.
Assume that $\phi^*$ is not $0$ on a positive measure set. We obtain that there is a fixed $x$ with $\phi^*(x)\neq 0$ such that
$$\phi^*(x+t)=\phi^*(x)\phi(x+t)\overline{\phi(x)}$$
for almost all $t$.
Since $|\phi(x)|=1$, this implies that $\phi(x+t)$ is measurable in $\mathcal{B}$ and its shifted version $\phi(t)$ is also measurable in $\mathcal{B}$.
\end{proof}

\begin{lemma}\label{sepex} Let $\mathcal{B}\subseteq\mathcal{A}$ be a shift invariant $\sigma$-algebra and $\phi:\bA\rightarrow\mathcal{C}$ be a function such that $\Delta_t\phi\in\mathcal{B}$ for every $t\in\bA$. Then $\phi$ is contained in a shift invariant separable extension of $\mathcal{B}$.
\end{lemma}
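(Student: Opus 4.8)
The plan is to take the most naive candidate and check that it works: let $\mathcal{B}'$ be the $\sigma$-algebra generated by $\mathcal{B}$ together with the single function $\phi$. We must verify three things: that $\phi$ is $\mathcal{B}'$-measurable, that $\mathcal{B}'$ is a separable extension of $\mathcal{B}$, and that $\mathcal{B}'$ is shift invariant. The first two come for free once we observe that the Borel $\sigma$-algebra of $\mathcal{C}$ is countably generated, say by the arcs $\{U_n\}$ with rational endpoints: then $\mathcal{B}'$ is generated by $\mathcal{B}$ and the countably many sets $\phi^{-1}(U_n)$, hence is a separable extension of $\mathcal{B}$ (and it sits inside $\mathcal{A}$ since $\phi$ is measurable), and $\phi$ is $\mathcal{B}'$-measurable by construction. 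So the whole content of the lemma is shift invariance.

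The engine for shift invariance is the identity
$$\phi(x+t)=\phi(x)\,\Delta_t\phi(x),$$
valid for every $t$ because $|\phi(x)|=1$ forces $\overline{\phi(x)}\phi(x)=1$. It says that the translate of $\phi$ by any $t$ is the pointwise product of $\phi$ with $\Delta_t\phi$, and the latter lies in $\mathcal{B}\subseteq\mathcal{B}'$ by hypothesis; consequently $x\mapsto\phi(x+t)$ is $\mathcal{B}'$-measurable for every $t$.

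To turn this into shift invariance of $\mathcal{B}'$, fix $t$ and consider $\mathcal{S}_t=\{S\in\mathcal{A}:S+t\in\mathcal{B}'\}$. Because translation by $t$ maps $\mathcal{A}$ onto itself and commutes with complementation and countable union, $\mathcal{S}_t$ is a $\sigma$-algebra. It contains $\mathcal{B}$ because $\mathcal{B}$ is shift invariant, and it contains each $\phi^{-1}(U_n)$ because the translate $\phi^{-1}(U_n)+t=\{x:\phi(x-t)\in U_n\}$ is the preimage of $U_n$ under $x\mapsto\phi(x)\,\Delta_{-t}\phi(x)$, which we have just seen to be $\mathcal{B}'$-measurable. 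Hence $\mathcal{S}_t\supseteq\mathcal{B}'$, i.e.\ $S+t\in\mathcal{B}'$ for all $S\in\mathcal{B}'$; since $t$ was arbitrary, $\mathcal{B}'$ is shift invariant.

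There is no serious obstacle here; the only points that need a little care are (a) reducing the shift-invariance check to the generators of $\mathcal{B}'$ rather than trying to describe general $\mathcal{B}'$-measurable functions, and (b) noting that $\phi$ is itself $\mathcal{A}$-measurable (part of the standing conventions, as $\phi\in L_\infty$), so that $\mathcal{B}'$ is legitimately a sub-$\sigma$-algebra of $\mathcal{A}$.
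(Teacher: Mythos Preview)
Your proof is correct and follows essentially the same route as the paper: define $\mathcal{B}'$ as the $\sigma$-algebra generated by $\mathcal{B}$ and $\phi$, observe that this is a separable extension since $\mathcal{C}$ has countably generated Borel $\sigma$-algebra, and establish shift invariance via the identity $\phi(x+t)=\phi(x)\,\Delta_t\phi(x)$. The paper's proof is terser---it simply asserts that shifts of $\mathcal{B}'$ land in $\mathcal{B}'$ and invokes invertibility of shifts---whereas you spell out the generator argument via the auxiliary $\sigma$-algebra $\mathcal{S}_t$, but the substance is identical.
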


\begin{proof} Since the level sets of $\phi$ can be generated countable many sets it is enough to prove that the $\sigma$-algebra $\mathcal{B}'$ generated by $\phi$ and $\mathcal{B}$ is again shift invariant.
The formula $\phi(x+t)=\phi(x)/\Delta_t\phi$ implies that for every fixed $t$ the function $x\mapsto\phi(x+t)$ is measurable in $\mathcal{B}'$. It follows that any shift of $\mathcal{B}'$ is contained in $\mathcal{B}'$. Since shifts have inverses the proof is complete.
\end{proof}

\begin{definition} Let $\mathcal{B}\subseteq\mathcal{A}$ be a $\sigma$-algebra. Two measurable functions $f,g$ on $\bA$ are called $\mathcal{B}$-orthogonal if $\mathbb{E}(f\overline{g}|\mathcal{B})$ is the constant $0$ function.
\end{definition}

\begin{lemma}\label{ort2} Let $f,g$ be elements from two distinct rank one module over $L_\infty(\mathcal{F}_{k-1})$. Then $f$ and $g$ are $\mathcal{F}_{k-1}$ orthogonal.
\end{lemma}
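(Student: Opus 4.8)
The plan is to reduce to the generating $k$-th order characters and then apply Lemma \ref{ort1}. By Theorem \ref{fundth} the two modules are generated by $k$-th order characters $\phi,\psi\colon\bA\to\mathcal{C}$, i.e. with $\Delta_t\phi,\Delta_t\psi$ measurable in $\mathcal{F}_{k-1}$ for every $t$. Since multiplication by a function of modulus $1$ is an $L_2$-isometry and $L_\infty(\mathcal{F}_{k-1})$ is dense in $L_2(\mathcal{F}_{k-1})$, the $L_2$-closure of the module of $\phi$ consists precisely of the functions $a\phi$ with $a\in L_2(\mathcal{F}_{k-1})$, and likewise for $\psi$. Hence I may write $f=a\phi$, $g=b\psi$ with $a,b\in L_2(\mathcal{F}_{k-1})$, so that $f\overline{g}=a\overline{b}\,\phi\overline{\psi}$ with $a\overline{b}\in L_1(\mathcal{F}_{k-1})$. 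Because $\phi\overline{\psi}$ is bounded and $a\overline{b}$ is $\mathcal{F}_{k-1}$-measurable, the pull-out property of conditional expectation gives
$$\mathbb{E}(f\overline{g}\mid\mathcal{F}_{k-1})=a\overline{b}\;\mathbb{E}(\phi\overline{\psi}\mid\mathcal{F}_{k-1}),$$
so it suffices to prove $\mathbb{E}(\phi\overline{\psi}\mid\mathcal{F}_{k-1})=0$.

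Next I would verify that $\chi:=\phi\overline{\psi}$ meets the hypotheses of Lemma \ref{ort1} with $\mathcal{B}=\mathcal{F}_{k-1}$. Indeed $\chi$ maps $\bA$ into $\mathcal{C}$, and a direct computation yields $\Delta_t\chi=\Delta_t\phi\cdot\overline{\Delta_t\psi}$ for every $t\in\bA$, which is a product of two $\mathcal{F}_{k-1}$-measurable functions and therefore $\mathcal{F}_{k-1}$-measurable. Lemma \ref{ort1} then forces exactly one of two possibilities: either $\mathbb{E}(\chi\mid\mathcal{F}_{k-1})=0$, which is what we want, or $\chi\in\mathcal{F}_{k-1}$. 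In the latter case, since $|\psi|=1$ we have $\phi=\chi\psi$ with $\chi\in L_\infty(\mathcal{F}_{k-1})$ of modulus $1$; but then $\phi$ and $\psi$ generate the same rank one module over $L_\infty(\mathcal{F}_{k-1})$, contradicting the hypothesis that the two modules are distinct. Hence the first alternative holds, and the lemma follows.

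The only genuinely delicate point is the passage to the $L_2$-closures of the modules: representing an arbitrary element of the closure as $a\phi$ with merely $a\in L_2(\mathcal{F}_{k-1})$, and justifying the pull-out of the $L_1$ factor $a\overline{b}$ from the conditional expectation. Both are standard once one uses that $|\phi|=|\psi|=1$ keeps everything in sight finite. If one prefers to avoid this entirely, one can instead approximate $f,g$ in $L_2$ by $f_n=a_n\phi$, $g_n=b_n\psi$ with $a_n,b_n\in L_\infty(\mathcal{F}_{k-1})$; then $\mathbb{E}(f_n\overline{g_n}\mid\mathcal{F}_{k-1})=a_n\overline{b_n}\,\mathbb{E}(\phi\overline{\psi}\mid\mathcal{F}_{k-1})=0$ by the bounded case together with the argument above, while $f_n\overline{g_n}\to f\overline{g}$ in $L_1$ by Cauchy--Schwarz and conditional expectation is continuous on $L_1$, so $\mathbb{E}(f\overline{g}\mid\mathcal{F}_{k-1})=0$.
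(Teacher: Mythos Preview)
Your proof is correct and follows essentially the same route as the paper: reduce to the generating $k$-th order characters, form their quotient $\chi=\phi\overline{\psi}$, and apply Lemma~\ref{ort1} to conclude that $\mathbb{E}(\chi\mid\mathcal{F}_{k-1})=0$ since $\chi\in\mathcal{F}_{k-1}$ would force the two modules to coincide. The paper compresses the reduction step into the single sentence ``We can assume that $f$ and $g$ are $k$-th order characters''; your version spells out this reduction (via the pull-out property, or alternatively by $L_\infty$-approximation and $L_1$-continuity of conditional expectation) with more care than the paper does.
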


\begin{proof} We can assume that $f$ and $g$ are $k$-th order characters. It is clear that $h=fg$ is a $k$-th order character and that it is not measurable in $\mathcal{F}_{k-1}$. Now lemma \ref{ort1} completes the proof.
\end{proof}

The next lemma is an immediate consequence of lemma \ref{normchar}

\begin{lemma}\label{gort} Two functions $f,g$ are $\mathcal{F}_{k-1}$ orthogonal if and only if $\|f\overline{g}\|_{U_k}=0$.
\end{lemma}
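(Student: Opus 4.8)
The plan is to deduce the statement directly from the norm characterization lemma (Lemma~\ref{normchar}). Write $h=f\overline{g}$. By definition, $f$ and $g$ are $\mathcal{F}_{k-1}$-orthogonal precisely when $\mathbb{E}(h\mid\mathcal{F}_{k-1})$ is the constant $0$ function, so the task reduces to showing that this holds if and only if $\|h\|_{U_k}=0$.

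First I would record the elementary fact that a function $h\in L_2(\mathcal{A},\bm)$ is orthogonal to the closed subspace $L_2(\mathcal{F}_{k-1},\bm)$ if and only if $\mathbb{E}(h\mid\mathcal{F}_{k-1})=0$. Indeed, for every $\psi\in L_2(\mathcal{F}_{k-1},\bm)$ one has $\langle h,\psi\rangle=\langle\mathbb{E}(h\mid\mathcal{F}_{k-1}),\psi\rangle$ since $\psi$ is $\mathcal{F}_{k-1}$-measurable; choosing $\psi=\mathbb{E}(h\mid\mathcal{F}_{k-1})$, which itself lies in $L_2(\mathcal{F}_{k-1},\bm)$, shows that $h\perp L_2(\mathcal{F}_{k-1},\bm)$ forces $\mathbb{E}(h\mid\mathcal{F}_{k-1})=0$, and the reverse implication is immediate.

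Now I would combine this with Lemma~\ref{normchar}, whose second assertion states exactly that $\|h\|_{U_k}=0$ if and only if $h$ is orthogonal to $L_2(\mathcal{F}_{k-1},\bm)$. Chaining the two equivalences yields $\|f\overline{g}\|_{U_k}=0$ iff $\mathbb{E}(f\overline{g}\mid\mathcal{F}_{k-1})=0$ iff $f$ and $g$ are $\mathcal{F}_{k-1}$-orthogonal, which is the claim.

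There is essentially no obstacle here; the lemma really is an immediate consequence of Lemma~\ref{normchar}. The only points requiring the slightest care are the passage between ``orthogonality to the subspace $L_2(\mathcal{F}_{k-1})$'' and ``vanishing conditional expectation'' described above, and the mild integrability bookkeeping needed so that $f\overline{g}\in L_2(\mathcal{A},\bm)$ and Lemma~\ref{normchar} applies --- this is automatic when $f,g$ are bounded, in particular in the situation of Lemma~\ref{ort2} where they are taken to be $k$-th order characters.
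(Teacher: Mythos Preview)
Your proof is correct and matches the paper's approach exactly: the paper simply states that this lemma ``is an immediate consequence of lemma~\ref{normchar}'' without giving any further argument, and your write-up fills in precisely the routine details behind that remark.
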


\subsection{The fundamental theorem}

In this section we elaborate on the fundamental theorem. We restate it in a more detailed form.

\begin{theorem}[Variant of the Fundamental theorem]\label{dec1} Fix a natural number $k$.
\begin{enumerate}
\item Every function $f\in L_\infty(\bA,\mathcal{A})$ can be uniquely decomposed into two parts $f=f'+g$ where $\|f'\|_{U_{k+1}}=\|f\|_{U_{k+1}}$ and $\|g\|_{U_{k+1}}=0$ such that $f'$ is orthogonal to any function $h$ with $\|h\|_{U_{k+1}}=0$ (or equivalently $f'$ is measurable in $\mathcal{F}_k$). The function $f'$ is equal to the projection $\mathbb{E}(f|\mathcal{F}_k)$.

\item The function $f'$ can be uniquely decomposed into the $L_2$ sum of functions $\{f_i\}_{i=1}^\infty$ with $\|f'\|_p\geq\|f_i\|_p$ for all $i\in\mathbb{N},~1\leq p\leq \infty$ such that $\|f_i\overline{f_j}\|_{U_k}=0$ for $i\neq j$ and $\Delta_t f_i$ is orthogonal to any function $h$ with $\|h\|_{U_k}=0$ (or equivalently $\Delta_t f_i\in\mathcal{F}_{k-1}$).

\item For every $f_i$ there is a $k$-th order character $\phi_i:\bA\rightarrow\mathcal{C}$ and function $g_i$ measurable in $\mathcal{F}_{k-1}$ such that $f_i=\phi_ig_i$. In particular $|f_i|=g_i\in\mathcal{F}_{k-1}$.
\end{enumerate}
\end{theorem}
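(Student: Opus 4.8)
The plan is as follows. Part (1) I would derive directly from the norm characterization Lemma \ref{normchar}. Put $f' := \mathbb{E}(f\mid\mathcal{F}_k)$ and $g := f-f'$. Since $\mathbb{E}(g\mid\mathcal{F}_k)=0$, the function $g$ is orthogonal to $L_2(\mathcal{F}_k,\bm)$, so $\|g\|_{U_{k+1}}=0$ by Lemma \ref{normchar} applied with $k+1$ in place of $k$; the same lemma gives $\|f\|_{U_{k+1}}=\|\mathbb{E}(f\mid\mathcal{F}_k)\|_{U_{k+1}}=\|f'\|_{U_{k+1}}$. A function $h$ satisfies $\|h\|_{U_{k+1}}=0$ exactly when $h\perp L_2(\mathcal{F}_k)$, so ``$f'$ is orthogonal to all such $h$'' is literally ``$f'\in L_2(\mathcal{F}_k)$'', which holds by construction, and $\|f'\|_\infty\le\|f\|_\infty<\infty$ by contractivity of conditional expectation. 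For uniqueness, if $f=f_1'+g_1=f_2'+g_2$ are two such decompositions, then $f_1'-f_2'=g_2-g_1$ lies in $L_2(\mathcal{F}_k)$, while the triangle inequality for the semi-norm $U_{k+1}$ gives $\|g_2-g_1\|_{U_{k+1}}=0$, i.e. $g_2-g_1\perp L_2(\mathcal{F}_k)$; hence $f_1'-f_2'$ is orthogonal to itself and vanishes.

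For (2) and (3) I would apply the Fundamental theorem (Theorem \ref{fundth}) to $f'\in L_2(\mathcal{F}_k)$: write $f'=\sum_i f_i$ as an $L_2$-convergent sum, where $f_i$ is the orthogonal projection of $f'$ onto the $L_2$-closure of the $i$-th shift invariant rank one module over $L_\infty(\mathcal{F}_{k-1})$, taken to be generated by a $k$-th order character $\phi_i:\bA\to\mathcal{C}$. Because multiplication by $\overline{\phi_i}$ is an $L_2$-isometry and $L_\infty(\mathcal{F}_{k-1})$ is $L_2$-dense in $L_2(\mathcal{F}_{k-1})$, this $L_2$-closure equals $\{\phi_i h : h\in L_2(\mathcal{F}_{k-1})\}$, and the best approximation of $f'$ inside it minimizes $\|\overline{\phi_i}f'-h\|_2$ over $h\in L_2(\mathcal{F}_{k-1})$; thus $f_i=\phi_i h_i$ with $h_i=\mathbb{E}(\overline{\phi_i}f'\mid\mathcal{F}_{k-1})$. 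In particular $|f_i|=|h_i|$ is $\mathcal{F}_{k-1}$-measurable, and after multiplying $\phi_i$ by the $\mathcal{F}_{k-1}$-measurable unit-modulus factor $h_i/|h_i|$ (which again gives a $k$-th order character, these being unique up to such factors) we obtain $f_i=\phi_i g_i$ with $g_i:=|f_i|\ge 0$ measurable in $\mathcal{F}_{k-1}$ — this is (3).

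The remaining claims of (2) then follow. Conditional Jensen gives $g_i=|h_i|\le\mathbb{E}(|\overline{\phi_i}f'|\mid\mathcal{F}_{k-1})=\mathbb{E}(|f'|\mid\mathcal{F}_{k-1})$ pointwise a.e., and since conditional expectation is an $L_p$-contraction for every $1\le p\le\infty$ we get $\|f_i\|_p=\|g_i\|_p\le\|\,\mathbb{E}(|f'|\mid\mathcal{F}_{k-1})\,\|_p\le\|f'\|_p$. For $i\ne j$, $f_i$ and $f_j$ lie in distinct rank one modules, so they are $\mathcal{F}_{k-1}$-orthogonal by Lemma \ref{ort2}, which by Lemma \ref{gort} is precisely $\|f_i\overline{f_j}\|_{U_k}=0$. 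Finally $\Delta_t f_i(x)=(\Delta_t\phi_i)(x)\,g_i(x+t)\overline{g_i(x)}$; here $\Delta_t\phi_i\in\mathcal{F}_{k-1}$ by definition of a $k$-th order character, and $g_i$ together with its shift $g_i(\cdot+t)$ are $\mathcal{F}_{k-1}$-measurable because $\mathcal{F}_{k-1}$ is shift invariant, so $\Delta_t f_i\in\mathcal{F}_{k-1}$ (equivalently, by Lemma \ref{normchar}, orthogonal to every $h$ with $\|h\|_{U_k}=0$). Uniqueness of the $\{f_i\}$ is the uniqueness of orthogonal projections onto the summands in the orthogonal decomposition of Theorem \ref{fundth}.

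The only genuine work is the identification of the projection onto a rank one module as $f'\mapsto\phi_i\,\mathbb{E}(\overline{\phi_i}f'\mid\mathcal{F}_{k-1})$, together with the care needed to describe the $L_2$-closure of the module and to absorb a phase into $\phi_i$; once this formula is available the $L_p$ bounds of (2) drop out of Jensen and contractivity, and the rest is bookkeeping with the lemmas already proved. I expect the pointwise domination $|f_i|\le\mathbb{E}(|f'|\mid\mathcal{F}_{k-1})$ — the step that upgrades the obvious $p=2$ estimate to all $p$, including $p=\infty$ — to be the point most worth spelling out.
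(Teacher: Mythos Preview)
Your argument is correct and follows essentially the same route as the paper: the paper also observes that most parts are direct reformulations of Theorem \ref{fundth} and Lemma \ref{normchar}, and singles out the projection formula $f_i=\phi_i\,\mathbb{E}(f'\overline{\phi_i}\mid\mathcal{F}_{k-1})$ (justified there via Lemma \ref{ort2}) as the key to the $L_p$ bound. The only minor difference is that for $\|f_i\|_p\le\|f'\|_p$ the paper applies $L_p$-contractivity of conditional expectation directly to $f'\overline{\phi_i}$, i.e.\ $\|f_i\|_p=\|\mathbb{E}(f'\overline{\phi_i}\mid\mathcal{F}_{k-1})\|_p\le\|f'\overline{\phi_i}\|_p=\|f'\|_p$, which is one step shorter than your route through the pointwise Jensen inequality $|f_i|\le\mathbb{E}(|f'|\mid\mathcal{F}_{k-1})$; your version is not wrong, just slightly less direct, and has the small advantage of yielding that pointwise domination as a byproduct.
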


\begin{proof} Most of the parts are trivially equivalent with the original form.
We prove the inequality $\|f'\|_p\geq\|f_i\|_p$ for $p\geq 1$. Let $\phi$ be a character representing the module of $f_i$. Then by lemma \ref{ort2} we have that $f_i=\mathbb{E}(f'\overline{\phi}|\mathcal{F}_{k-1})\phi$. It follows that $$\|f_i\|_p=\|\mathbb{E}(f'\overline{\phi}|\mathcal{F}_{k-1})\|_p\leq\|f'\overline{\phi}\|_p=\|f'\|_p.$$

The equality $\|f_i\overline{f_j}\|_{U_k}=0$ in part two also follows from lemma \ref{ort2}.
\end{proof}

\subsection{Higher order decomposition as spectral decomposition}

Let $D_k$ denote the subspace of all functions $f$ in $L_2(\bA\times\bA,\mathcal{F}_k\times\mathcal{F}_k)$ with the property that the functions $x\mapsto f(x,x+t)$ is in $\mathcal{F}_{k-1}$ for (almost) all $t\in\bA$.

\begin{definition} Let $k$ be a fixed natural number. We define the operator $$\mathcal{K}_k:L_\infty(\bA,\mathcal{A})\rightarrow L_\infty(\bA\times\bA,\mathcal{F}_k\times\mathcal{F}_k)$$ such that $\mathcal{K}_k(f)$ is the orthogonal projection of $h(x,y):=f(x)\overline{f(y)}$ to $D_k$
\end{definition}

From this definition it is not even clear that $\mathcal{K}_k(f)$ is in $L_\infty$.
This follows from the next lemma.

\begin{lemma} Let $k$ be a natural number and $f\in L_\infty(\bA,\mathcal{A})$. Let
$$g(x,t):=\mathbb{E}_x((\Delta_t f)(x)|\mathcal{F}_{k-1})$$
then
$$\mathcal{K}_k(f)(x,y)=g(y,x-y).$$
\end{lemma}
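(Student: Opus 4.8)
The plan is to unwind the definitions and verify the claimed formula by checking two things: first, that the function $(x,y)\mapsto g(y,x-y)$ actually lies in the subspace $D_k$; and second, that the difference $h(x,y)-g(y,x-y)$ — where $h(x,y)=f(x)\overline{f(y)}$ — is orthogonal to every element of $D_k$. Together these identify $g(y,x-y)$ as the orthogonal projection $\mathcal{K}_k(f)$. A side benefit of this route is that it immediately shows $\mathcal{K}_k(f)\in L_\infty$, since $g(y,x-y)$ is a conditional expectation of a bounded function and hence bounded by $\|f\|_\infty^2$.

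First I would check membership in $D_k$. Writing $F(x,y):=g(y,x-y)=\mathbb{E}_z\bigl((\Delta_{x-y}f)(z)\,\big|\,\mathcal{F}_{k-1}\bigr)\big|_{z=y}$, I need that $F\in L_2(\mathcal{F}_k\times\mathcal{F}_k)$ and that $x\mapsto F(x,x+t)$ is in $\mathcal{F}_{k-1}$ for almost all $t$. The second condition is the transparent one: $F(x,x+t)=g(x+t,-t)=\mathbb{E}_z((\Delta_{-t}f)(z)|\mathcal{F}_{k-1})|_{z=x+t}$, which for fixed $t$ is a fixed $\mathcal{F}_{k-1}$-measurable function evaluated at the shifted point $x+t$, hence $\mathcal{F}_{k-1}$-measurable in $x$ by shift-invariance of $\mathcal{F}_{k-1}$. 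For the first condition I would argue that $g(x,t)$, as a function of $(x,t)$, is measurable in $\mathcal{F}_k\times(\text{something})$: the key point is that $\Delta_t f(x)=f(x+t)\overline{f(x)}$ and that projecting to $\mathcal{F}_{k-1}$ in the $x$-variable leaves a function measurable in $\mathcal{F}_{k-1}$ in $x$ and hence (after the change of variables $(x,y)=(y+t,y)$, which is a measure-preserving automorphism of $\bA\times\bA$) measurable in $\mathcal{F}_k\times\mathcal{F}_k$; here one uses that $\mathcal{F}_{k-1}\subseteq\mathcal{F}_k$ and that $\mathcal{F}_k$ is shift-invariant, so the diagonal change of variables does not escape $\mathcal{F}_k\times\mathcal{F}_k$. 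I expect this measurability bookkeeping to be the main obstacle — it is routine in spirit but requires being careful about which $\sigma$-algebra controls which variable after the linear change of coordinates.

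Next I would verify orthogonality of the remainder. Fix an arbitrary $\psi\in D_k$ and compute $\langle h-F,\psi\rangle$ in $L_2(\bA\times\bA)$. Using the substitution $y=x-t$ (again measure-preserving), this becomes $\int_t\int_x\bigl(f(x)\overline{f(x-t)}-g(x-t,t)\bigr)\overline{\psi(x,x-t)}\,dx\,dt$. For fixed $t$, $x\mapsto\psi(x,x-t)$ is $\mathcal{F}_{k-1}$-measurable by the defining property of $D_k$ (applied with shift $-t$), and $f(x)\overline{f(x-t)}$ is a shift of $\Delta_t f$; I would rewrite things so that the inner integral is $\mathbb{E}_x\bigl((\text{a shift of }\Delta_t f - \text{its }\mathcal{F}_{k-1}\text{-projection})\cdot\overline{\psi(x,x-t)}\bigr)$. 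Since $\psi(\cdot,\cdot-t)$ is $\mathcal{F}_{k-1}$-measurable and the bracketed term has zero $\mathbb{E}(\cdot|\mathcal{F}_{k-1})$ after accounting for the shift, the inner integral vanishes for almost every $t$ by the defining property of conditional expectation; integrating over $t$ gives $\langle h-F,\psi\rangle=0$. The one point needing care is matching the shift: $g(x-t,t)$ is $\mathbb{E}_z((\Delta_t f)(z)|\mathcal{F}_{k-1})$ evaluated at $z=x-t$, whereas the unprojected term evaluated in the same coordinate is $(\Delta_t f)(x-t)=f(x)\overline{f(x-t)}$, so the subtraction is exactly "$u-\mathbb{E}(u|\mathcal{F}_{k-1})$" with $u$ the function $z\mapsto(\Delta_t f)(z)$ shifted to the point $x-t$ — and since shifting by a fixed amount commutes with $\mathbb{E}(\cdot|\mathcal{F}_{k-1})$ (shift-invariance of $\mathcal{F}_{k-1}$), pairing against the $\mathcal{F}_{k-1}$-measurable $\psi(\cdot,\cdot-t)$ kills it. Combining the two verifications, $F=\mathcal{K}_k(f)$, which is the assertion; and the formula makes the $L_\infty$ bound manifest.
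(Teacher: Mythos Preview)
Your overall strategy---verify that $F(x,y):=g(y,x-y)$ lies in $D_k$ and that $h-F$ is orthogonal to $D_k$---is the natural one, and is more explicit than the paper, which simply states that the lemma ``follows from the results in \cite{Sz1}'' and gives no argument. Your orthogonality computation (Step~2) is correct: after the shear $(x,y)\mapsto(x,x-t)$, the difference $h-F$ becomes, for each fixed $t$, a function with zero $\mathcal{F}_{k-1}$-projection paired against an $\mathcal{F}_{k-1}$-measurable function, and shift-invariance of $\mathcal{F}_{k-1}$ handles the translation as you say.

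The gap is in the first step, and it is not merely bookkeeping. You assert that $g(x,t)$ is ``measurable in $\mathcal{F}_k\times(\text{something})$'' and that the shear $(y,t)\mapsto(y+t,y)$ then lands you in $\mathcal{F}_k\times\mathcal{F}_k$. But knowing only that $g(\cdot,t)\in\mathcal{F}_{k-1}$ for every $t$ places $g$ in $\mathcal{F}_{k-1}\star\mathcal{A}$ (strong product), and after the shear a generic element of that $\sigma$-algebra need not be $\mathcal{F}_k\times\mathcal{F}_k$-measurable: take any $b\notin L_\infty(\mathcal{F}_k)$ and consider $(x,y)\mapsto b(x-y)$, which satisfies the diagonal condition trivially yet is not $\mathcal{F}_k\times\mathcal{F}_k$-measurable. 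So shift-invariance of $\mathcal{F}_k$ alone does not control the second variable after shearing; one needs genuine structural input relating the fibrewise $\mathcal{F}_{k-1}$-projection to the cubic $\sigma$-algebras $\psi_S^{-1}(\mathcal{A})$, of the kind encapsulated in Lemma~\ref{cubint} and its companions from \cite{Sz1}. That is exactly the content the paper is importing when it cites \cite{Sz1} here, and it is where the real work lies---your proposal correctly locates the difficulty but underestimates it.
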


The above lemma follows from the results in \cite{Sz1}.
Let us assume that $f=f'+g$ where $f'=\mathbb{E}(f|\mathcal{F}_k)$.
Then we have the following.

\begin{lemma} For almost every $t$ we have that
$$\mathbb{E}_x((\Delta_t f)(x)|\mathcal{F}_{k-1})=\mathbb{E}_x((\Delta_t f')(x)|\mathcal{F}_{k-1}).$$
\end{lemma}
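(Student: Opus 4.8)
\smallskip
\noindent{\it Proof proposal.} Write $f=f'+g$ with $f'=\mathbb{E}(f|\mathcal{F}_k)\in L_\infty(\mathcal{F}_k)$ and $g=f-f'\in L_\infty(\bA,\mathcal{A})$; note that $\|g\|_{U_{k+1}}=0$ and hence, by Lemma~\ref{normchar}, $g$ is orthogonal to $L_2(\mathcal{F}_k,\bm)$. Expanding,
$$\Delta_t f(x)=f(x+t)\overline{f(x)}=\Delta_t f'(x)+f'(x+t)\overline{g(x)}+g(x+t)\overline{f'(x)}+\Delta_t g(x),$$
so it suffices to show that the conditional expectation $\mathbb{E}_x(\,\cdot\,|\mathcal{F}_{k-1})$ of each of the last three terms vanishes for (almost) every $t$. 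The plan is to treat the two ``cross'' terms for \emph{every} $t$ by a duality/shift-invariance argument, and the purely $g$-term for almost every $t$ via the Gowers norm.

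For the cross terms, fix $t$ and test $C_1:=\mathbb{E}_x(f'(x+t)\overline{g(x)}|\mathcal{F}_{k-1})$ against an arbitrary $\psi\in L_\infty(\mathcal{F}_{k-1})$: since $C_1$ is $\mathcal{F}_{k-1}$-measurable, $\int C_1\overline{\psi}=\langle f'(\cdot+t),\,g\psi\rangle$. Now $g\psi$ is still orthogonal to $L_2(\mathcal{F}_k)$ (for $\chi\in L_\infty(\mathcal{F}_k)$ one has $\langle g\psi,\chi\rangle=\langle g,\chi\overline{\psi}\rangle=0$ because $\chi\overline{\psi}\in L_2(\mathcal{F}_k)$ and $g\perp L_2(\mathcal{F}_k)$, then extend by density), while $f'(\cdot+t)\in L_2(\mathcal{F}_k)$ because $\mathcal{F}_k$ is shift invariant; hence $\langle f'(\cdot+t),g\psi\rangle=0$. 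As $L_\infty(\mathcal{F}_{k-1})$ is dense in $L_2(\mathcal{F}_{k-1})$, $C_1=0$. Symmetrically, $C_2:=\mathbb{E}_x(g(x+t)\overline{f'(x)}|\mathcal{F}_{k-1})$ tested against $\psi$ gives $\langle g(\cdot+t),f'\psi\rangle=\langle g,(f'\psi)(\cdot-t)\rangle$, and $(f'\psi)(\cdot-t)\in L_2(\mathcal{F}_k)$ by shift invariance while $g\perp L_2(\mathcal{F}_k)$, so $C_2=0$.

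For the remaining term, use the standard identity $\|g\|_{U_{k+1}}^{2^{k+1}}=\mathbb{E}_t\,\|\Delta_t g\|_{U_k}^{2^k}$ (iterating the definition of the Gowers norm, $\Delta_{t_1,\dots,t_{k+1}}g=\Delta_{t_2,\dots,t_{k+1}}(\Delta_{t_1}g)$). Since $\|\cdot\|_{U_k}^{2^k}\ge 0$ and the average is $0$, we get $\|\Delta_t g\|_{U_k}=0$ for almost every $t$, and then Lemma~\ref{normchar} gives $\mathbb{E}_x(\Delta_t g(x)|\mathcal{F}_{k-1})=0$ for almost every $t$. Combining the three vanishing statements yields $\mathbb{E}_x(\Delta_t f(x)|\mathcal{F}_{k-1})=\mathbb{E}_x(\Delta_t f'(x)|\mathcal{F}_{k-1})$ for almost every $t$. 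The only point where one cannot get ``every $t$'' is precisely this last term, since $g(\cdot+t)$ is itself orthogonal to (not contained in) $L_2(\mathcal{F}_k)$, so the duality argument used for the cross terms does not apply; I expect this to be the one genuine subtlety, everything else being routine manipulation of conditional expectations and shift invariance of the $\sigma$-algebras $\mathcal{F}_i$.
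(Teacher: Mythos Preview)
Your proof is correct and follows essentially the same route as the paper: expand $\Delta_t f$ into four terms, kill the two cross terms for every $t$ by using that the orthogonal complement $W_k$ of $L_2(\mathcal{F}_k)$ is a shift-invariant $L_\infty(\mathcal{F}_k)$-module (which is exactly what your duality argument with $\psi$ establishes by hand), and handle $\Delta_t g$ for almost every $t$ via $\int_t\|\Delta_t g\|_{U_k}^{2^k}=\|g\|_{U_{k+1}}^{2^{k+1}}=0$. The paper's only cosmetic difference is that it names $W_k$ and invokes the module property in one line rather than testing against $\psi$.
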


\begin{proof} Let $W_k$ denote the orthogonal space of $L_2(\mathcal{F}_k)$. It is clear that $W_k$ is a shift invariant $L_\infty(\mathcal{F}_k)$ module. We have that $$(\Delta_t f)(x)=(\Delta_t f')(x)+(\Delta_t g)(x)+f'(x+t)\overline{g(x)}+g(x+t)\overline{f'(x)}$$
where the last two terms are in $W_k$. This implies that
\begin{equation}\label{speq1}
\mathbb{E}_x((\Delta_t f)(x)|\mathcal{F}_{k-1})=\mathbb{E}_x((\Delta_t f')(x)|\mathcal{F}_{k-1})+\mathbb{E}((\Delta_t g)(x)|\mathcal{F}_{k-1}).
\end{equation}
It remains to prove that the second term in (\ref{speq1}) is $0$ for almost every $t$.
In other words we have to prove that
$\|\Delta_t g\|_{U_k}=0$ hold almost always.
This follows from the next equation.
$$\int\|\Delta_t g\|^{2^k}_{U_k}~dt=\|g\|_{U_{k+1}}^{2^{k+1}}=0.$$
\end{proof}

An important corollary is the following.

\begin{corollary}\label{specor} For an arbitrary $f\in L_\infty(\bA,\mathcal{A})$ the equation $\mathcal{K}_k(f)=\mathcal{K}_k(\mathbb{E}(f|\mathcal{F}_k))$ holds.
\end{corollary}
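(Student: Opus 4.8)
The plan is to deduce Corollary \ref{specor} directly from the two lemmas that immediately precede it, rather than re-running any computation from \cite{Sz1}. Write $f = f' + g$ with $f' = \mathbb{E}(f|\mathcal{F}_k)$ and $\|g\|_{U_{k+1}} = 0$, as in part (1) of Theorem \ref{dec1}. The penultimate lemma gives, for almost every $t$, the pointwise identity $\mathbb{E}_x((\Delta_t f)(x)|\mathcal{F}_{k-1}) = \mathbb{E}_x((\Delta_t f')(x)|\mathcal{F}_{k-1})$; in the notation of the lemma before it, this says precisely that the functions $g(x,t)$ attached to $f$ and to $f'$ agree for almost every $t$, hence agree as elements of $L_2(\bA\times\bA)$.

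Next I would feed this into the formula $\mathcal{K}_k(f)(x,y) = g(y, x-y)$. Since the map $(x,y)\mapsto(y,x-y)$ is a measure-preserving bijection of $\bA\times\bA$ (it is affine with the obvious inverse $(u,v)\mapsto(u+v,u)$), the almost-everywhere equality of the two functions $g(\cdot,\cdot)$ transports to an almost-everywhere equality $\mathcal{K}_k(f)(x,y) = \mathcal{K}_k(f')(x,y)$. A small point to check is the quantifier order: the penultimate lemma asserts equality for a.e. $t$ of two functions of $x$, which by Fubini is the same as a.e. equality on $\bA\times\bA$ in the $(x,t)$ variables, and this is what the change of variables needs.

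Finally I would simply record that $f' = \mathbb{E}(f|\mathcal{F}_k)$ by construction, so the displayed equality reads $\mathcal{K}_k(f) = \mathcal{K}_k(\mathbb{E}(f|\mathcal{F}_k))$, which is the claim. No genuine obstacle arises here; the one thing worth a sentence of care is that $\mathcal{K}_k$ has been \emph{defined} as an orthogonal projection onto $D_k$ and only \emph{identified} with the explicit formula $g(y,x-y)$ via the preceding lemma (whose proof is cited to \cite{Sz1}), so the argument must go through that formula rather than through the projection definition — trying to compare the projections of $f(x)\overline{f(y)}$ and $f'(x)\overline{f'(y)}$ directly would require knowing that $f(x)\overline{f(y)} - f'(x)\overline{f'(y)}$ is orthogonal to $D_k$, which is essentially the content one is trying to avoid re-deriving. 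Thus the route through the two lemmas is the intended and cleanest one.
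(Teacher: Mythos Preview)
Your proposal is correct and is exactly the route the paper intends: the corollary is stated immediately after the two lemmas with no further proof, so deducing it by combining the formula $\mathcal{K}_k(f)(x,y)=g(y,x-y)$ with the almost-everywhere equality $\mathbb{E}_x((\Delta_t f)(x)|\mathcal{F}_{k-1})=\mathbb{E}_x((\Delta_t f')(x)|\mathcal{F}_{k-1})$ (transported through the measure-preserving change of variables $(x,y)\mapsto(y,x-y)$) is precisely what is meant. Your remark that one should go through the explicit formula rather than the projection definition is also apt.
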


Two variable measurable functions in general can be interpreted as integral kernel operators.
It will be fruitful to see $\mathcal{K}_k$ as an operator valued operator. It is obvious from the definition that $\mathcal{K}_k(f)$ is a self adjoint operator.
Due to the fact that $\mathcal{K}_k(f)$ is measurable in $\mathcal{F}_k\times\mathcal{F}_k$, for every $f$ there is a separable sub $\sigma$-algebra $\mathcal{B}\subset\mathcal{F}_k$ such that $\mathcal{K}_k(f)$ is measurable in $\mathcal{B}\times\mathcal{B}$. This means that we can apply the theory of integral kernel operators on standard Lebesgue spaces.
We say that $\mathcal{K}_k(f)$ is {\bf simple} if all its non zero eigenvalues have multiplicity one.
For the sake of simplicity we introduce the convention that elements in $L_2(\bA)$ can be looked at ``column vectors''. With this convention, if $f,g\in L_2(\bA)$ then $fg^*$ denotes a two variable function defined by $fg^*(x,y)=f(x)\overline{g(x)}$.

The next crucial theorem shows that the $k$-th order Fourier decomposition of $f$ is ``basically'' the same as the spectral decomposition of $\mathcal{K}_k(f)$

\begin{theorem}[Spectral form of the fundamental theorem]\label{spec} The terms $\{f_i\}_{i=1}^\infty$ in the $k$-th order Fourier decomposition of $f$ satisfy
$$\mathcal{K}_k(f)=\sum_{i=1}^\infty f_if_i^*$$
converging in $L_2$.
\end{theorem}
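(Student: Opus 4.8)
\medskip\noindent\emph{Proof proposal.} The plan is to reduce the statement to a finite computation about the pieces $f_i$, carry out that computation, and then pass to the limit using continuity of the projection. First, by Corollary \ref{specor} one has $\mathcal{K}_k(f)=\mathcal{K}_k(f')$ with $f'=\mathbb{E}(f|\mathcal{F}_k)$, and the terms $f_i$ ($i\ge 1$) of the $k$-th order decomposition of $f$ are exactly the terms of the decomposition of $f'$, so it suffices to work with $f'=\sum_{i=1}^\infty f_i$ (convergence in $L_2(\bA)$). Write $P$ for the orthogonal projection onto $D_k$, so that $\mathcal{K}_k(f')=P(f'(f')^*)$. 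I would then prove two claims: (a) $f_if_i^*\in D_k$ for every $i$; and (b) $f_if_j^*\perp D_k$ whenever $i\ne j$. Granting these, applying $P$ to the partial sums of $f'(f')^*$ finishes the argument, as explained in the last paragraph.

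Claim (a) is immediate. By Theorem \ref{dec1} each $f_i$ lies in $L_\infty(\mathcal{F}_k)$, so $f_if_i^*\in L_2(\mathcal{F}_k\times\mathcal{F}_k)$; and by part (2) of Theorem \ref{dec1}, $\Delta_t f_i\in\mathcal{F}_{k-1}$ for every $t$. Since $f_if_i^*(x,x+t)=f_i(x)\overline{f_i(x+t)}=\overline{\Delta_t f_i(x)}$, the slice $x\mapsto f_if_i^*(x,x+t)$ is $\mathcal{F}_{k-1}$-measurable for every $t$, which is precisely the defining property of $D_k$.

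Claim (b) is the heart of the argument. Using Theorem \ref{dec1}(3) write $f_i=\phi_ig_i$ and $f_j=\phi_jg_j$ with $\phi_i,\phi_j$ $k$-th order characters generating distinct rank one modules and $g_i,g_j\in L_\infty(\mathcal{F}_{k-1})$. Fix $F\in D_k$ and put $F_t(x):=F(x,x+t)$, which is in $L_2(\mathcal{F}_{k-1})$ for almost every $t$. Since $|\phi_j|=1$ we have $\phi_j(x+t)=\phi_j(x)\,\Delta_t\phi_j(x)$, hence
$$ f_i(x)\,\overline{f_j(x+t)}=\phi_i(x)\overline{\phi_j(x)}\cdot\bigl(g_i(x)\,\overline{\Delta_t\phi_j(x)}\,\overline{g_j(x+t)}\bigr). $$
For each fixed $t$ the bracketed factor is bounded and $\mathcal{F}_{k-1}$-measurable ($\Delta_t\phi_j\in\mathcal{F}_{k-1}$ by definition of a $k$-th order character, and $x\mapsto g_j(x+t)\in\mathcal{F}_{k-1}$ by shift invariance of $\mathcal{F}_{k-1}$); multiplying by $\overline{F_t}$ gives an $\mathcal{F}_{k-1}$-measurable function $w_t$. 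Then
$$ \int_x f_i(x)\overline{f_j(x+t)}\,\overline{F_t(x)}\,dx=\int_x(\phi_i\overline{\phi_j})(x)\,w_t(x)\,dx=\int_x \mathbb{E}(\phi_i\overline{\phi_j}|\mathcal{F}_{k-1})(x)\,w_t(x)\,dx=0, $$
because $\phi_i$ and $\phi_j$ are $\mathcal{F}_{k-1}$-orthogonal by Lemma \ref{ort2}. Integrating over $t$ (the integrand is dominated since $f_i,f_j\in L_\infty$ and $F\in L_2$, so Fubini applies) yields $\langle f_if_j^*,F\rangle=0$, which is (b).

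Finally, for convergence: by part (2) of Theorem \ref{dec1} the partial sums $S_N=\sum_{i=1}^N f_i$ satisfy $S_N\to f'$ in $L_2(\bA)$ with $\|S_N\|_2\le\|f'\|_2$, so from $S_N(x)\overline{S_N(y)}-f'(x)\overline{f'(y)}=(S_N-f')(x)\overline{S_N(y)}+f'(x)\overline{(S_N-f')(y)}$ one gets $S_NS_N^*\to f'(f')^*$ in $L_2(\bA\times\bA)$; continuity of $P$ then gives $P(S_NS_N^*)\to\mathcal{K}_k(f)$. On the other hand $S_NS_N^*=\sum_{i,j\le N}f_if_j^*$ is a finite sum, so by (a) and (b), $P(S_NS_N^*)=\sum_{i=1}^N f_if_i^*$. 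Hence $\sum_{i=1}^N f_if_i^*\to\mathcal{K}_k(f)$ in $L_2$, as claimed. (One may further note that the $f_if_i^*$ are pairwise orthogonal in $L_2(\bA\times\bA)$, since $\langle f_if_i^*,f_jf_j^*\rangle=|\langle f_i,f_j\rangle|^2=0$ for $i\ne j$ by $\mathcal{F}_{k-1}$-orthogonality, so the series converges unconditionally and $\|\mathcal{K}_k(f)\|_2^2=\sum_i\|f_i\|_2^4$.) The step I expect to need the most care is (b): one must organize the product $f_i(x)\overline{f_j(x+t)}$ so that the purely ``spectral'' factor $\phi_i\overline{\phi_j}$ is cleanly isolated from an $\mathcal{F}_{k-1}$-measurable remainder, in order to invoke the $\mathcal{F}_{k-1}$-orthogonality of distinct characters; the limiting argument afterward is routine.
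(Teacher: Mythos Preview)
Your proof is correct and follows essentially the same approach as the paper: reduce to $f'=\mathbb{E}(f|\mathcal{F}_k)$, show the diagonal terms $f_if_i^*$ lie in $D_k$, show the off-diagonal terms $f_if_j^*$ are orthogonal to $D_k$ via Lemma~\ref{ort2}, and pass to the limit. The paper is slightly terser in claim~(b) (it applies Lemma~\ref{ort2} directly to $f_i$ and the shift of $f_j$ rather than factoring through the characters $\phi_i,\phi_j$) and handles convergence by noting that \emph{all} the $f_if_j^*$ are pairwise orthogonal rather than by continuity of the projection, but these are cosmetic differences.
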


\medskip

\begin{corollary}\label{specor2}The terms $\{f_i\}_{i=1}^\infty$ in the $k$-th order Fourier decomposition of $f$ are all eigenvectors of the operator $\mathcal{K}_k(f)$.
The eigenvalue corresponding to $f_i$ is $\|f_i\|_2^2$.
Furthermore the system $\{f_i\}_{i=1}^\infty$ $L_2$-spans the image space of $\mathcal{K}_k(f)$.
In particular if $\mathcal{K}_k(f)$ is simple then its eigenvectors are exactly the scalar multiples of the functions $f_i$.
\end{corollary}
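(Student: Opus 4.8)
The plan is to deduce Corollary \ref{specor2} directly from Theorem \ref{spec} together with the orthogonality and module structure already established in Theorem \ref{dec1} and Lemma \ref{ort2}. Write $\mathcal{K}_k(f)=\sum_{i=1}^\infty f_if_i^*$ as in Theorem \ref{spec}. First I would record that the terms $f_i$ come from distinct rank one modules over $L_\infty(\mathcal{F}_{k-1})$, so by Lemma \ref{ort2} they are $\mathcal{F}_{k-1}$-orthogonal; in particular $\mathbb{E}(f_i\overline{f_j}\mid\mathcal{F}_{k-1})=0$ for $i\ne j$, which gives $\langle f_i,f_j\rangle = \mathbb{E}(\mathbb{E}(f_i\overline{f_j}\mid\mathcal{F}_{k-1}))=0$, so the $f_i$ are pairwise orthogonal in $L_2(\bA)$.

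Next I would apply the operator $\mathcal{K}_k(f)$, viewed as an integral kernel operator $g\mapsto \int \mathcal{K}_k(f)(\cdot,y)g(y)\,d\bm(y)$, to each $f_j$. Using $\mathcal{K}_k(f)(x,y)=\sum_i f_i(x)\overline{f_i(y)}$ and the orthogonality just established,
$$\mathcal{K}_k(f)f_j = \sum_{i=1}^\infty f_i \langle f_j, f_i\rangle = \|f_j\|_2^2\, f_j,$$
so $f_j$ is an eigenvector with eigenvalue $\|f_j\|_2^2$. Since $\|f_j\|_p\le\|f'\|_p<\infty$ for all $p$ (Theorem \ref{dec1}(2)) and $f_j\ne 0$, this eigenvalue is strictly positive, and interchanging sum and integral is justified by $L_2$-convergence of $\sum f_if_i^*$ as a kernel. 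The image space of $\mathcal{K}_k(f)$ is then spanned (in $L_2$) by the $f_i$: any vector in the range is a limit of finite linear combinations $\int \mathcal{K}_k(f)(\cdot,y)g(y)\,d\bm(y) = \sum_i f_i\langle g,f_i\rangle$, and conversely each $f_i$ lies in the range by the eigenvector equation.

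Finally, for the ``simple'' case: suppose all nonzero eigenvalues of $\mathcal{K}_k(f)$ have multiplicity one. We already know each $f_i$ is an eigenvector with nonzero eigenvalue $\|f_i\|_2^2$; since the system $\{f_i\}$ spans the range, the nonzero part of the spectral decomposition of $\mathcal{K}_k(f)$ is realized on $\overline{\mathrm{span}}\{f_i\}$, and simplicity forces each eigenspace to be one-dimensional, hence spanned by exactly one $f_i$ (after grouping any $f_i$ sharing an eigenvalue — but multiplicity one rules this out, so the $\|f_i\|_2^2$ are distinct). Therefore every eigenvector with nonzero eigenvalue is a scalar multiple of some $f_i$. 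The main obstacle I anticipate is the bookkeeping around convergence: making sure that applying the kernel $\sum f_i f_i^*$ term-by-term to a fixed $g\in L_2$ is legitimate (i.e. that the $L_2(\bA\times\bA)$ convergence of the kernel series passes to strong convergence of the associated operators, which it does since $\|f_if_i^*\|_{HS}=\|f_i\|_2^2$ and these are summable), and ruling out that two distinct $f_i,f_j$ secretly have the same eigenvalue $\|f_i\|_2^2=\|f_j\|_2^2$ in the simple case — which is exactly what the simplicity hypothesis excludes, since $f_i+f_j$ would then be an eigenvector spanning a two-dimensional eigenspace.
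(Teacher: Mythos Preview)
Your proposal is correct and is essentially the same argument the paper has in mind: the paper states Corollary \ref{specor2} immediately after Theorem \ref{spec} without a separate proof, treating it as a direct consequence of the kernel identity $\mathcal{K}_k(f)=\sum_i f_if_i^*$ together with the orthogonality of the $f_i$. Your explicit verification---applying the kernel to $f_j$ using $\langle f_i,f_j\rangle=0$ for $i\ne j$, checking the spanning statement, and handling the simple case via the multiplicity-one hypothesis---is exactly the routine unpacking the paper leaves implicit.
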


\begin{proof} Thanks to corollary \ref{specor} we can assume that $f\in\mathcal{F}_k$ and $f=\sum_i f_i$ is the $k$-th order Fourier decomposition.
First of all notice that the two variable functions $f_if_j^*$ are pairwise orthogonal in $L_2(\bA\times\bA)$.
It is also clear that $\|f_if_j^*\|_2=\|f_i\|_2\|f_j\|_2$.
Now we have that
$$\sum_{i,j}\|f_if_j^*\|_2^2=\bigl(\sum_i\|f_i\|_2^2\bigr)^2=\|f\|_2^4<\infty$$
which implies (together with orthogonality) that $\sum_{i,j}f_if^*_j$ converges in $L_2$ to $ff^*$.
Let us write $ff^*=q+h$ where $q=\sum_i f_if_i^*$ and $h=\sum_{i,j,i\neq j}f_if_j^*$.
The function $q$ is an element of the space $D_k$ since the functions $f_i$ satisfy $\Delta_t f_i\in\mathcal{F}_{k-1}$ for every $t$.
We claim that $h$ is orthogonal to $D_k$. It is enough to prove that for every pair $i\neq j$ of natural numbers $f_if_j^*$ is orthogonal to $D_k$.
Let $g$ be an arbitrary function in $D_k$.
Then
$$(f_if_j^*,g)=\int_{x,y}f_i(x)\overline{f_j(y)}\overline{g(x,y)}=\int_{x,t}f_i(x)\overline{f_j(x+t)}\overline{g(x,x+t)} .$$
Since $f_i(x)\overline{f_j(x+t)}$ is orthogonal to $L_2(\mathcal{F}_{k-1})$ for every fixed $t$ (see lemma \ref{ort2}) the proof is complete.
\end{proof}

\medskip

The natural question arises how to identify the $k$-th order decomposition of $f$ if $\mathcal{K}_k(f)$ is not simple. The following observation helps.

\begin{lemma}\label{findimpart} Assume that $F=\{f_1,f_2,\dots\}$ is the set of $k$-th degree Fourier components of a function $f$. Let $\lambda\neq 0$ be an eigenvalue of $\mathcal{K}_k(f)$, $V_\lambda$ be the corresponding eigen-subspace and let $F_\lambda=\{h|h\in F,\|h\|_2^2=\lambda\}$. Then $F_\lambda$ is an orthogonal basis of $V_\lambda$. Furthermore if $v=\sum_{h\in F_\lambda}\alpha_hh\in V_\lambda$
and $w=z+\sum_{h\in F_\lambda}\beta_hh$ where $(z,V_\lambda)=0$ then  $$\mathcal{K}_k(v)w=\sum_{h\in F_\lambda}|\alpha_h|^2\lambda\beta_h h.$$
In particular the (non zero) eigenvalues of $\mathcal{K}_k(v)$ are $\{|\alpha_h|^2\lambda~|h\in F_\lambda\}$ and the set $A_\lambda=\{\mathcal{K}_k(v)|v\in V_\lambda\}$ is a commutative algebra of dimension $d=\dim(V_\lambda)$ over $\mathbb{R}$.
\end{lemma}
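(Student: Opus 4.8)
The plan is to compute $\mathcal{K}_k(v)$ directly from Theorem~\ref{spec} applied to $v$, using the fact that $v=\sum_{h\in F_\lambda}\alpha_h h$ is already given as an orthogonal sum of functions that are $\mathcal{F}_{k-1}$-orthogonal to each other and satisfy $\Delta_t h\in\mathcal{F}_{k-1}$. First I would establish that $F_\lambda$ is an orthogonal basis of $V_\lambda$: by Corollary~\ref{specor2} the $f_i$ are eigenvectors of $\mathcal{K}_k(f)$ with eigenvalue $\|f_i\|_2^2$ and they $L_2$-span the image of $\mathcal{K}_k(f)$; hence the eigenvectors with eigenvalue $\lambda\neq 0$ are spanned by exactly those $f_i$ with $\|f_i\|_2^2=\lambda$, which is $F_\lambda$, and distinct $f_i$ are orthogonal (indeed $\mathcal{F}_{k-1}$-orthogonal, by Lemma~\ref{ort2}), so $F_\lambda$ is an orthogonal basis of $V_\lambda$. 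This also shows $V_\lambda$ is finite-dimensional (only finitely many $f_i$ can have a given nonzero $L_2$-norm-squared, since $\sum_i\|f_i\|_2^2=\|f\|_2^2<\infty$), so $d=\dim V_\lambda=|F_\lambda|<\infty$.

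Next I would identify the $k$-th order Fourier decomposition of $v$ itself. Each $h\in F_\lambda$ is a $k$-th order character times an $\mathcal{F}_{k-1}$-measurable function (Theorem~\ref{dec1}(3)), so $\alpha_h h$ lies in the same rank one module as $h$, and for $h\neq h'$ in $F_\lambda$ the functions $\alpha_h h$ and $\alpha_{h'} h'$ lie in distinct rank one modules; moreover $\Delta_t(\alpha_h h)=|\alpha_h|^2\Delta_t h\in\mathcal{F}_{k-1}$. Hence $v=\sum_{h\in F_\lambda}\alpha_h h$ is precisely the $k$-th order Fourier decomposition of $v$ (with the $\|v\|_{U_{k+1}}=0$ part vanishing). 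By Theorem~\ref{spec},
$$\mathcal{K}_k(v)=\sum_{h\in F_\lambda}(\alpha_h h)(\alpha_h h)^*=\sum_{h\in F_\lambda}|\alpha_h|^2\, h h^*.$$
Now I would apply this operator to $w=z+\sum_{h\in F_\lambda}\beta_h h$. Since $hh^*$ as an integral kernel operator sends $u\mapsto \langle u,h\rangle h$, and $z\perp V_\lambda$ means $\langle z,h\rangle=0$ for all $h\in F_\lambda$ while $\langle h',h\rangle=\|h\|_2^2\,\delta_{h,h'}=\lambda\,\delta_{h,h'}$ by orthogonality and the eigenvalue identity, we get
$$\mathcal{K}_k(v)w=\sum_{h\in F_\lambda}|\alpha_h|^2\langle w,h\rangle h=\sum_{h\in F_\lambda}|\alpha_h|^2\lambda\beta_h h,$$
which is the claimed formula. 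Reading off the diagonal action in the basis $F_\lambda$, the nonzero eigenvalues of $\mathcal{K}_k(v)$ are exactly $\{|\alpha_h|^2\lambda \mid h\in F_\lambda\}$.

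Finally, for the statement about $A_\lambda=\{\mathcal{K}_k(v)\mid v\in V_\lambda\}$, the displayed formula shows that every such operator acts on $V_\lambda$ (it kills $V_\lambda^\perp$ and preserves $V_\lambda$) diagonally in the fixed basis $F_\lambda$, with the eigenvalue on $h$ being $|\alpha_h|^2\lambda$ where $v=\sum\alpha_h h$. Thus $A_\lambda$ is contained in the algebra of operators diagonal in this basis, hence commutative; and the map $v=\sum\alpha_h h\mapsto (|\alpha_h|^2\lambda)_{h\in F_\lambda}$ from $V_\lambda\cong\mathbb{C}^d$ onto the diagonal entries is surjective onto $\mathbb{R}^d$ (any prescribed nonnegative diagonal is realized, and real linear combinations of such operators give all of $\mathbb{R}^d$), so $A_\lambda$ is a commutative algebra of dimension $d=\dim V_\lambda$ over $\mathbb{R}$. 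The main technical obstacle is justifying the two uses of Theorem~\ref{spec}: that $v$ (not just $f$) has the asserted $k$-th order Fourier decomposition, and that one may legitimately treat $hh^*$ as the rank one kernel operator $u\mapsto\langle u,h\rangle h$ and compute convergent sums of these on the finite-dimensional piece $V_\lambda$ — but these are exactly the facts supplied by Theorems~\ref{dec1} and~\ref{spec} together with the finiteness of $F_\lambda$. \qed
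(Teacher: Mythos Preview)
Your proposal is correct and follows essentially the same approach as the paper: the paper's proof is a one-line appeal to Theorem~\ref{spec}, observing that the $k$-th order Fourier components of $v$ are exactly $\{\alpha_h h \mid h\in F_\lambda\}$ and hence $\mathcal{K}_k(v)=\sum_{h\in F_\lambda}|\alpha_h|^2\,hh^*$. You have simply spelled out the details the paper leaves implicit (why $F_\lambda$ is an orthogonal basis of $V_\lambda$, why the displayed expression is the $k$-th order decomposition of $v$, and how the operator acts on $w$).
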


\begin{proof} This lemma follows from theorem \ref{spec} by using the fact that the $k$-th order components of $v$ are exactly $\{\alpha_h h|h\in F_\lambda\}$ and so $\mathcal{K}_k(v)=\sum_{h\in F_\lambda}|\alpha_h|^2hh^*$.
\end{proof}

\begin{corollary} If $v\in V_\lambda$ is such that the eigenvalues of $\mathcal{K}_k(v)$ are all distinct then the one dimensional eigen-subspaces are generated by the elements from $F_\lambda$.
\end{corollary}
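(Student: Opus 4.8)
The plan is to read off the conclusion directly from Lemma~\ref{findimpart}. By that lemma, $F_\lambda = \{h \in F : \|h\|_2^2 = \lambda\}$ is an orthogonal basis of $V_\lambda$, so any $v \in V_\lambda$ can be written as $v = \sum_{h \in F_\lambda} \alpha_h h$, and the nonzero eigenvalues of $\mathcal{K}_k(v)$ are exactly the numbers $|\alpha_h|^2 \lambda$ for $h \in F_\lambda$ (with $h$ the eigenvector attached to $|\alpha_h|^2\lambda$, via the formula $\mathcal{K}_k(v)w = \sum_{h\in F_\lambda} |\alpha_h|^2 \lambda \beta_h h$). So the first step is simply to invoke this.

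Next I would argue as follows. Suppose the eigenvalues of $\mathcal{K}_k(v)$ are pairwise distinct. Since the multiset of nonzero eigenvalues is $\{|\alpha_h|^2\lambda : h \in F_\lambda\}$, distinctness forces all the scalars $|\alpha_h|^2$ to be distinct; in particular $|\alpha_h|^2 \neq 0$, i.e. $\alpha_h \neq 0$, for every $h \in F_\lambda$ (an eigenvalue equal to $0$ would either not be ``nonzero'' or, if some $\alpha_h$ vanished while others did not, would still be forced to appear, contradicting distinctness unless $|F_\lambda| = 1$; in the latter trivial case the statement is immediate). Hence each $\mu_h := |\alpha_h|^2 \lambda$ is a distinct nonzero eigenvalue of the self-adjoint operator $\mathcal{K}_k(v)$, so its eigenspace is one-dimensional, and from the formula $\mathcal{K}_k(v)h = \mu_h h$ we see that this one-dimensional eigenspace is spanned by $h$. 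Running over all $h \in F_\lambda$ gives exactly the claim: the one-dimensional eigen-subspaces of $\mathcal{K}_k(v)$ (those among the $V_{\mu_h}$) are generated by the elements of $F_\lambda$.

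I do not expect a genuine obstacle here, since Lemma~\ref{findimpart} already does the structural work; the only care needed is the bookkeeping around whether any $\alpha_h$ can vanish and whether the operator $\mathcal{K}_k(v)$ might have relevant kernel behaviour on $V_\lambda$. The cleanest way to handle this is to note that $\mathcal{K}_k(v)$ restricted to $V_\lambda$ is the diagonal operator with entries $|\alpha_h|^2\lambda$ in the orthogonal basis $F_\lambda$, and that $\mathcal{K}_k(v)$ annihilates the orthogonal complement of $V_\lambda$ up to the part $z$ in the decomposition of Lemma~\ref{findimpart} — more precisely, the formula there shows $\mathcal{K}_k(v)$ sends $z + \sum \beta_h h$ to $\sum |\alpha_h|^2\lambda \beta_h h$, so its image lies in $V_\lambda$ and the eigenvalue analysis is exhausted by what happens on $V_\lambda$. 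Thus ``eigenvalues all distinct'' is really a statement about the diagonal entries $|\alpha_h|^2\lambda$, and the corollary drops out.
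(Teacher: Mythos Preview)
Your proposal is correct and is exactly what the paper intends: the corollary is stated without proof immediately after Lemma~\ref{findimpart}, and your argument---write $v=\sum_{h\in F_\lambda}\alpha_h h$, use that $\mathcal{K}_k(v)=\sum_{h\in F_\lambda}|\alpha_h|^2\lambda\, hh^*$ is diagonal in the basis $F_\lambda$, and read off the one-dimensional eigenspaces when the diagonal entries are distinct---is precisely the intended one-line deduction. Your extended discussion of the $\alpha_h=0$ case is more careful than necessary (in the paper's usage the hypothesis is really that $\mathcal{K}_k(v)$ has $\dim V_\lambda$ distinct nonzero eigenvalues, which already forces all $\alpha_h\neq 0$), but it does no harm.
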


Now assume that we are given the subspace $V_\lambda$ and we want to determine the set $F_\lambda$.
First of all note that if $h'$ is a scalar multiple of an element $h\in F_\lambda$ then $h=h'(h',f)/\|h'\|_2^2$.
The previous corollary shows that, to find the elements of $F_\lambda$ up to scalar multiples, it is enough to find a vector $v\in V_\lambda$ with distinct eigenvalues. We also know by lemma \ref{findimpart} that almost all vectors in $V_\lambda$ have this property.
It will be useful to give a ``finite'' algorithm to find such a vector $v$.
The next lemma says that there is finite set of vectors that can be used for this purpose.

\begin{lemma}\label{sepset} For every $n$ there a finite subset $T_n$ of the $n$-dimensional euclidean space $E_n$ over $\mathbb{C}$ such that for every orthonormal basis $b_1,b_2,\dots,b_n$ of $E_n$ there is a vector $v\in T_n$ with $||(v,b_i)|-|(v,b_j)||>1$ whenever $i\neq j$.
\end{lemma}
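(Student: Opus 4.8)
The plan is a compactness argument over the space of all orthonormal bases. First I would identify an orthonormal basis with the $n$-tuple $(b_1,\dots,b_n)$ of its members, so that the set $\mathcal{O}_n$ of all orthonormal bases of $E_n$ becomes a subset of $E_n^{\,n}$; since it is cut out by the equations $(b_i,b_j)=\delta_{ij}$ and all $b_i$ have norm $1$, it is closed and bounded, hence compact. For each pair $i\neq j$ the function
\[
\Phi_{ij}\bigl((b_1,\dots,b_n),v\bigr)=\bigl|\,|(v,b_i)|-|(v,b_j)|\,\bigr|
\]
is continuous on $\mathcal{O}_n\times E_n$ (each $(v,b_i)$ is a polynomial in the real and imaginary parts of the coordinates), so the set $G=\{(\beta,v): \Phi_{ij}(\beta,v)>1\text{ for all }i\neq j\}$ is open.

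Next I would show that $G$ meets every vertical slice $\{\beta\}\times E_n$: for $\beta=(b_1,\dots,b_n)$ put $v_\beta=\sum_{i=1}^n 3^i b_i$, so that $(v_\beta,b_i)=3^i$ and hence $\Phi_{ij}(\beta,v_\beta)=|3^i-3^j|\ge 6>1$ for $i\neq j$; thus $(\beta,v_\beta)\in G$. Because $G$ is open, there is an open neighbourhood $W_\beta$ of $\beta$ in $\mathcal{O}_n$ with $W_\beta\times\{v_\beta\}\subseteq G$ — that is, the single vector $v_\beta$ works simultaneously for every orthonormal basis lying in $W_\beta$.

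Finally I would invoke compactness: the cover $\{W_\beta\}_{\beta\in\mathcal{O}_n}$ of $\mathcal{O}_n$ admits a finite subcover $W_{\beta_1},\dots,W_{\beta_m}$, and $T_n:=\{v_{\beta_1},\dots,v_{\beta_m}\}$ is the desired finite set, since any orthonormal basis lies in some $W_{\beta_\ell}$ and then $v=v_{\beta_\ell}\in T_n$ satisfies $||(v,b_i)|-|(v,b_j)||>1$ for all $i\neq j$.

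There is no serious obstacle here; the only points needing (routine) verification are the joint continuity of $\Phi_{ij}$ and the compactness of $\mathcal{O}_n$. One could alternatively phrase the whole argument in terms of the unitary group $U(n)$ acting on the standard basis, but working directly with $\mathcal{O}_n$ avoids introducing change-of-basis matrices. The base $3$ is inessential: any base $\ge 3$ makes each consecutive gap exceed $1$, and since $3^1<3^2<\dots<3^n$ the minimal pairwise gap is attained by a consecutive pair.
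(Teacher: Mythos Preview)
Your proof is correct and, like the paper's, is a compactness argument, but you apply compactness to a different space. The paper observes that for any orthonormal basis the vector $v=\sum_{i=1}^n 2i\,b_i$ works and has $\|v\|_2^2\le 4n^3$, so the compact ball $T'_n=\{v:\|v\|_2^2\le 4n^3\}$ contains a good vector for every basis; it then takes a sufficiently fine finite $\epsilon$-net $S_\epsilon\subset T'_n$ and notes that the separating condition is stable under small perturbations of $v$. You instead compactify the space $\mathcal{O}_n$ of orthonormal bases, produce for each basis a specific good vector $v_\beta$, and use openness of the good set to pass to a finite subcover. Both are short and clean; the paper's version is marginally more quantitative (one can in principle bound $|T_n|$ via the covering number of the ball), while yours avoids having to check the perturbation stability explicitly. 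Your choice of coefficients $3^i$ versus the paper's $2i$ is immaterial.
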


\begin{proof} If we don't require the finiteness of $T_n$ then the compact set $$T'_n=\{v|\|v\|^2_2\leq 4n^3\}$$ is clearly good since $v=\sum_{i=1}^n 2ib_i$ is a good choice. The compactness of $T'_n$ implies that for every $\epsilon>0$ there is a finite subset $S_\epsilon$ in $T'_n$ such that every vector in $T'_n$ is of distance at most $\epsilon$ form this set. It is clear that if $\epsilon$ is small enough then $S_\epsilon$ is a good choice for $T_n$.
\end{proof}

\medskip

Now we summarize our knowledge:

\medskip

\noindent{\bf An algorithm to compute the $k$-th order Fourier decomposition:}

\medskip

Let $\lambda_1>\lambda_2>\lambda_3\dots$ be the list of distinct eigenvalues of $\mathcal{K}_k(f)$. Let $V_1,V_2,V_3,\dots$ be the list of corresponding eigenspaces with $\dim(V_i)=d_i$. For each space $V_i$ we choose a separating set $T_{d_i}$ guaranteed by lemma \ref{sepset}. Then we pick an element $v_i\in T_{d_i}$ such that $\mathcal{K}_k(v_i)$ has $d_i$ distinct eigenvalues. We denote the set of eigenspaces of $\mathcal{K}_k(v_i)$ by $Q_i$.
We have that $Q=\cup_i Q_i$ is the collection of one dimensional subspaces generated by the $k$-th order Fourier components of $f$. Let $\{w_q\}_{q\in Q}$ be a system of functions with $w_q\in q~,~\|w_q\|_2=1$. Then the set $\{f_q=w_q(w_q,f)|q\in Q\}$ is the set of $k$-th order Fourier components of $f$.

\subsection{Limits of self adjoint operators}\label{limop}

Let $M_i:X_i\times X_i\rightarrow \mathbb{C}$ be a sequence of self adjoint matrices with $\|M_i\|_\infty\leq 1$ and $|X_i|=m_i$. As always in this paper, we think of $X_i$ as a finite probability space with the uniform distribution and $L_2(X_i)$ as $L_2$ of this probability space. This defines the proper normalization of scalar products and action on $L_2(X_i)$. In particular we think of $M_i$ as an integral kernel operator on $L_2(X_i)$. This means that if $f:X_i\rightarrow\mathbb{C}$ is a function then then the product $M_if$ is defined by.
$$M_if(x)=m_i^{-1}\sum_{y\in X_i}M_i(x,y)f(y).$$ Traces and products of matrices are normalized accordingly.

Assume that the eigenvalues (with multiplicities) of $M_i$ are $\lambda_{i,1},\lambda_{i,2},\dots$ ordered in a way that $|\lambda_{i,1}|\geq|\lambda_{i,2}|\geq\dots$. Let $f_{i,j}$ be an eigenvector corresponding to $\lambda_{i,j}$ such that for every fixed $i$ the system $\{f_{i,j}\}_{j=1}^{m_i}$ is an orthonormal basis. We have that
$$M_i=\sum_{i=1}^{m_i}f_{i,j}f^*_{i,j}\lambda_{i,j}.$$

\begin{theorem}\label{limop} Let $M$ denote the ultra limit of $\{M_i\}_{i=1}^\infty$, $f_j$ denote the ultra limit of $\{f_{i,j}\}_{i=1}^\infty$ and $\lambda_j$ denote the ultra limit of $\{\lambda_{i,j}\}_{j=1}^\infty$. Then
\begin{equation}\label{oplimeq}
\mathbb{E}(M|\sigma_{1}\times\sigma_{2})=\sum_{\{j|\lambda_j>0\}}^\infty f_jf_j^*\lambda_j
\end{equation}
converging in $L_2$, where $\sigma_{1}$ and $\sigma_{2}$ are the two cylindric $\sigma$-algebras on $\bX\times\bX$ depending on the first and second coordinates.
\end{theorem}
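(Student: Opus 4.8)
The plan is to use that $\mathbb{E}(\,\cdot\mid\sigma_{1}\times\sigma_{2})$ is the orthogonal projection of $L_2(\bX\times\bX)$ onto the closed subspace $L_2(\sigma_{1}\times\sigma_{2})$, so it suffices to produce the right hand side as an element $P$ of that subspace and to verify that $M-P$ is orthogonal to it. First I would record the normalizing estimate: $\|M_i\|_\infty\le 1$ forces $\|M_i\|_2^2=\sum_j\lambda_{i,j}^2\le 1$ for every $i$ (the $f_{i,j}f_{i,j}^*$ form an orthonormal system in the normalized $L_2$), hence $\sum_j\lambda_j^2\le 1$ and in particular $\lambda_j\to 0$ as $j\to\infty$. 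Since each $f_{i,j}$ is an $L_2$ unit vector, the ultralimit $f_j\in L_2(\bX)$ is well defined and $\langle f_{i,j},f_{i,k}\rangle=\delta_{jk}$ passes to the limit, so $\{f_jf_j^*\}_j$ is an orthonormal system in $L_2(\bX\times\bX)$; together with $\sum_j\lambda_j^2<\infty$ this gives that $P:=\sum_j f_jf_j^*\lambda_j$ converges in $L_2$, and since each $f_jf_j^*(x,y)=f_j(x)\overline{f_j(y)}$ is $\sigma_{1}\times\sigma_{2}$ measurable, $P\in L_2(\sigma_{1}\times\sigma_{2})$. (The terms with $\lambda_j=0$ drop out, and for positive semidefinite $M_i$ --- the case of the operators $\mathcal{K}_k(f)$ to which the theorem is applied --- the sum is exactly the one displayed.)

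For the orthogonality I would test against the functions $(a\otimes b)(x,y):=a(x)b(y)$ with $a,b\in L_\infty(\bX)$, whose linear span is dense in $L_2(\sigma_{1}\times\sigma_{2})$ because the ultraproduct measure restricted to the cylinder $\sigma$-algebra is the product measure. Lifting $a,b$ to internal $a_i,b_i$ bounded by $C$, one has $\langle M,a\otimes b\rangle=\lim_\omega\langle M_i,a_i\otimes b_i\rangle$, and expanding $M_i$ in its eigenbasis, $\langle M_i,a_i\otimes b_i\rangle=\sum_j\lambda_{i,j}\overline{\alpha_{i,j}}\,\beta_{i,j}$ with $\alpha_{i,j}=\langle a_i,f_{i,j}\rangle$, $\beta_{i,j}=\langle\overline{b_i},f_{i,j}\rangle$ and $\sum_j|\alpha_{i,j}|^2,\sum_j|\beta_{i,j}|^2\le C^2$. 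The crucial point is that the ordering of the $\lambda_{i,j}$ by decreasing modulus yields, via Cauchy--Schwarz, the tail bound $\bigl|\sum_{j>N}\lambda_{i,j}\overline{\alpha_{i,j}}\beta_{i,j}\bigr|\le|\lambda_{i,N+1}|\,C^2$, uniformly in $i$. Hence for each fixed $N$, $\langle M_i,a_i\otimes b_i\rangle=\sum_{j\le N}\lambda_{i,j}\overline{\alpha_{i,j}}\beta_{i,j}+E_{i,N}$ with $|E_{i,N}|\le|\lambda_{i,N+1}|C^2$; taking $\lim_\omega$ (the finite sum goes to $\sum_{j\le N}\lambda_j\overline{\alpha_j}\beta_j$ with $\alpha_j=\langle a,f_j\rangle$, $\beta_j=\langle\overline{b},f_j\rangle$) and then letting $N\to\infty$ gives $\langle M,a\otimes b\rangle=\sum_j\lambda_j\overline{\alpha_j}\beta_j$. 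The same computation applied to the convergent series $P$ gives $\langle P,a\otimes b\rangle=\sum_j\lambda_j\overline{\alpha_j}\beta_j$, so $M-P\perp L_2(\sigma_{1}\times\sigma_{2})$ and $\mathbb{E}(M\mid\sigma_{1}\times\sigma_{2})=P$, as claimed.

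The algebra of the spectral expansions and the Cauchy--Schwarz estimates is routine; the genuine content, and the step I expect to be most delicate to write carefully, is the legitimacy of the limit interchanges: that ultralimits of $L_2$-bounded sequences of internal functions define honest elements of $L_2$ of the ultraproduct, that scalar products pass to the ultralimit, and that bounded measurable functions on the ultraproduct lift to internal functions. These are standard features of the Loeb/ultraproduct construction from \cite{Sz1} and \cite{ESz} and should be quoted rather than reproved. The uniform tail estimate is precisely what converts ``$\lim_\omega$ of a finite truncation of $M_i$'' into ``$\lim_\omega$ of the full $M_i$'', so the absolute-value ordering of the eigenvalues enters in an essential way --- without it the discarded tail $\sum_{j>N}\lambda_{i,j}^2$ need not be small for any individual $i$, and the truncation argument would break down.
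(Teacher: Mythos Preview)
Your overall strategy---define $P$, show it lies in $L_2(\sigma_1\times\sigma_2)$, and verify $\langle M-P,\,a\otimes b\rangle=0$ via a uniform tail bound on the spectral truncation---is exactly the paper's approach; the paper tests against $a=b=1_S$ for measurable $S$, which is the same argument up to linearity.

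There is, however, a genuine gap in your handling of the ultralimits $f_j$. You write that ``since each $f_{i,j}$ is an $L_2$ unit vector, the ultralimit $f_j\in L_2(\bX)$ is well defined'' and later list ``ultralimits of $L_2$-bounded sequences of internal functions define honest elements of $L_2$ of the ultraproduct'' among the standard facts to be quoted. This is not true in general: an $L_2$-bounded sequence can concentrate on sets of vanishing measure (take $g_i=\sqrt{|X_i|}\,1_{\{x_0\}}$, so $\|g_i\|_2=1$ but the pointwise ultralimit is $0$ almost everywhere), and then neither the $L_2$ norm nor scalar products pass to the limit. What rescues the argument---and what the paper records as its very first observation---is the pointwise bound $\|f_{i,j}\|_\infty\le 1/|\lambda_{i,j}|$, obtained from $f_{i,j}=\lambda_{i,j}^{-1}M_if_{i,j}$ together with $\|M_i\|_\infty\le 1$. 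For each fixed $j$ with $\lambda_j\neq 0$ this gives a uniform $L_\infty$ bound on a set in the ultrafilter, so $f_j$ exists in $L_\infty(\bX)$ and the scalar products you need ($\alpha_{i,j}\to\alpha_j$, $\beta_{i,j}\to\beta_j$, and $\langle f_{i,j},f_{i,k}\rangle\to\delta_{jk}$) genuinely pass to the limit. Once you insert this bound your proof is complete and coincides with the paper's.
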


\begin{proof} We start with a few observations. First of all note that $\|f_{i,j}\|_\infty\leq 1/\lambda_{i,j}$. This implies that if $\lambda_j>0$ then the ultra limit of $f_j$ exists and it is an $L_\infty$ measurable function with $\|f_j\|_\infty\leq 1/\lambda_j$.

Second observation is that $\sum_{j=1}^\infty |\lambda_j|^2\leq 1$. This follows from the fact that for every $i$ we have $\sum_{j=1}^{m_i}|\lambda_j|^2=\tr(M_iM_i^*)\leq 1$.
In particular we get the $L_2$ convergence of the sum in the theorem.

Third observation is that for every $0-1$ valued function $h$ we have that $$|hM_ih^*-\sum_{j=1}^t h^*f_{i,j}f^*_{i,j}h\lambda_{i,j}|\leq \sqrt{1/t}.$$ To see this we rewrite the left hand side as
$$|\sum_{j=t+1}^{m_i} h^*f_{i,j}f^*_{i,j}h\lambda_{i,j}|.$$ The term $h^*f_{i,j}f^*_{i,j}h$ is equal to $|(h,f_{i,j})|^2$. Since $\{f_{i,j}|t<j\leq m_i\}$ is an orthonormal system we have
$$\sum_{j=t+1}^{m_i} |(h,f_{i,j})|^2\leq \|h\|_2^2\leq 1.$$  The second observation implies that $|\lambda_{i,j}|\leq\sqrt{1/t}$ for all $j>t$. This completes the proof of the third observation.

Now we are ready to finish the proof. Let $M'$ be the left hand side of (\ref{oplimeq}). The third observation implies that if $h$ is an ultra limit of $0-1$ valued functions than $h^*Mh=h^*M'h$. It is easy to see that this implies that for two different measurable $0-1$ valued functions $h_1$ and $h_2$ we have $h_1^*Mh_2=h_1^*M'h_2$. In other words the integral of $M$ on product sets is the same as the integral of $M'$. Since every measurable set in $\sigma_1\times\sigma_2$ can be approximated by disjoint unions of product sets and $M'$ is obviously measurable in $\sigma_1\times\sigma_2$ the proof is complete.
\end{proof}

The above theorem has interesting consequences.

\begin{theorem}[Spectral form of the regularity lemma] For every function $F:\mathbb{N}\rightarrow\mathbb{R}^+$ and every $\epsilon>0$ there is a constant $n$ such that for every self adjoint matrix $M:V\times V\rightarrow\mathbb{C}$ with $\|M\|_\infty\leq 1$ and $|V|>n$ there is a number $n_0<n$ such that the spectral approximation
$\sum_{i=1}^{n_0}f_if_i^*\lambda_i$ of $M$ is at most $\epsilon$ close in $L_2$ to a matrix $M'$ which is $F(n_0)$ close in the normalized cut norm to $M$.
\end{theorem}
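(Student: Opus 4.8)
The plan is to deduce this from Theorem \ref{limop} by the standard ultraproduct/compactness argument. Suppose the statement fails. Then there is a function $F:\bN\to\bR^+$ and an $\epsilon>0$ such that for every $n$ there is a self adjoint matrix $M_n:V_n\times V_n\to\bC$ with $\|M_n\|_\infty\le 1$ and $|V_n|>n$, but for which no $n_0<n$ works: i.e. for every $n_0<n$, the spectral approximation $\sum_{i=1}^{n_0}f_{n,i}f_{n,i}^*\lambda_{n,i}$ of $M_n$ fails to be $\epsilon$-close in $L_2$ to any matrix that is $F(n_0)$-close to $M_n$ in the normalized cut norm. Since $|V_n|>n\to\infty$, I may form the ultraproduct $M$ of the sequence $\{M_n\}$ on $\bX\times\bX$, together with the ultralimits $f_i$ of the eigenvectors and $\lambda_i$ of the eigenvalues, exactly as in the setup preceding Theorem \ref{limop}.

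First I would invoke Theorem \ref{limop}: the conditional expectation $\bE(M|\sigma_1\times\sigma_2)$ equals $\sum_{\{i|\lambda_i>0\}}f_if_i^*\lambda_i$, converging in $L_2$. Call this limit operator $M'_\infty$. Since the series converges in $L_2$, I can pick a finite cutoff $n_0$ (the same $n_0$ for all large $n$, chosen below) so that the tail $\sum_{i>n_0,\lambda_i>0}f_if_i^*\lambda_i$ has $L_2$-norm less than $\epsilon/2$; equivalently the partial sum $S_{n_0}:=\sum_{i\le n_0}f_if_i^*\lambda_i$ is within $\epsilon/2$ of $M'_\infty=\bE(M|\sigma_1\times\sigma_2)$ in $L_2$. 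Now $M-M'_\infty=M-\bE(M|\sigma_1\times\sigma_2)$ is orthogonal to every product set, hence has zero integral on every measurable rectangle, so $\|M-M'_\infty\|_{\square}=0$ in the normalized cut norm on $\bX\times\bX$. Thus $M'_\infty$ is $0$-close to $M$ in cut norm and $\epsilon/2$-close to the finite spectral sum $S_{n_0}$ in $L_2$. The point is that $n_0$ depends only on the limit data, and in particular once $n_0$ is fixed we have the genuine gap $F(n_0)>0$ available.

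The remaining step, which I expect to be the main obstacle, is the transfer back to the finite matrices $M_n$ for $\o$-almost all $n$. The $L_2$-closeness $\|S_{n_0}-M'_\infty\|_2<\epsilon/2$ is an ultralimit of the finite quantities $\|\sum_{i\le n_0}f_{n,i}f_{n,i}^*\lambda_{n,i}-\bE(M_n|\text{blocks})\|_2$, so for $\o$-almost all $n$ the finite spectral approximation $\sum_{i\le n_0}f_{n,i}f_{n,i}^*\lambda_{n,i}$ is within $\epsilon$ in $L_2$ of some matrix $M'_n$ on $V_n$ (a suitable finite conditional expectation / block approximation of $M_n$). It remains to check that $M'_n$ is within $F(n_0)$ of $M_n$ in the normalized cut norm. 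Here the delicate part is that the cut norm is not literally an ultralimit of the finite cut norms, but a convergence-in-cut-norm statement does pass to the ultraproduct via approximation by finite partitions: because $\|M-M'_\infty\|_\square=0$ and the $\sigma$-algebra $\sigma_1\times\sigma_2$ is approximated by finite product partitions, the finite cut-norm distances $\|M_n-M'_n\|_\square$ tend to $0$ along $\o$, hence are eventually below $F(n_0)$. Taking such an $n$ with $n>n_0$ produces a matrix $M_n$ for which $n_0<n$ does work, contradicting the choice of $M_n$. This completes the proof; the only real care needed is in the last paragraph, in setting up $M'_n$ as the right finite block-average and in justifying that cut-norm-smallness (rather than an exact limit identity) survives the ultraproduct passage.
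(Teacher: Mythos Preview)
The paper does not actually prove this theorem here; it is stated as a consequence of Theorem~\ref{limop} and the proof is explicitly deferred to \cite{Sz2}. Your contradiction--via--ultraproduct argument, deducing the finite statement from Theorem~\ref{limop}, is exactly the expected route and is essentially correct.

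Two small points about the last paragraph. First, your worry that ``the cut norm is not literally an ultralimit of the finite cut norms'' is misplaced: for uniformly bounded functions it is. If $g_n$ are uniformly bounded with ultralimit $g$, then taking for each $n$ sets $S_n,T_n$ realizing $\|g_n\|_\square$ and passing to their ultralimits gives $\lim_\omega\|g_n\|_\square\le\|g\|_\square$, while the reverse inequality follows by approximating measurable sets by ultraproduct sets. So once you arrange $\lim_\omega(M_n-M'_n)=M-M'_\infty$ in $L_2$, the finite cut norms automatically tend to $\|M-M'_\infty\|_\square=0$ and drop below $F(n_0)$ along $\omega$.

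Second, the phrase ``a suitable finite conditional expectation / block approximation of $M_n$'' is too vague, and taken literally could be wrong: averaging $M_n$ over an arbitrary product partition need not be cut-close to $M_n$. The clean construction is to first approximate $M'_\infty=\mathbb{E}(M\mid\sigma_1\times\sigma_2)$ in $L_2$ by a bounded step function $M''=\sum c_{ab}\,1_{A_a\times B_b}$ with error less than $\min(\epsilon/4,F(n_0)/2)$, where the $A_a,B_b$ are ultraproduct sets, and then take $M'_n$ to be the obvious finite lift $M''_n$. Since $\|M-M'_\infty\|_\square=0$ and $\|M'_\infty-M''\|_\square\le\|M'_\infty-M''\|_2$, one gets $\|M-M''\|_\square<F(n_0)/2$, and both the $L_2$ estimate $\|S_{n_0}^{(n)}-M''_n\|_2<\epsilon$ and the cut estimate $\|M_n-M''_n\|_\square<F(n_0)$ then transfer to $\omega$-almost all $n$ as above. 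With these two clarifications your argument is complete.
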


Another consequence (roughly speaking) says that if $G_i$ is a convergent graph sequence with limit $W$ then for every fixed $k$ the joint distribution of the values in the first $k$ eigenvectors (corresponding to the $k$ biggest eigenvalues) converges in distribution to the joint distribution of the first $k$ eigenvectors of $W$ (as an integral kernel operator).
Note that there is a small problem with multiple eigenvalues which can be handled by careful formulation.
These results and many others are proved in the paper \cite{Sz2}.

\subsection{Measurable Homomorphisms}

Let $h_i:A_i\rightarrow B_i$ be a sequence of surjective homomorphisms between the finite Abelian groups $A_i$ and $B_i$. Let $\bB$ denote the ultra product of the sequence $\{B_i\}_{i=1}^\infty$ and $\mathcal{B}$ the ultra product $\sigma$-algebra on it. We can take the ultra product $\bh$ of the functions $h_i$ which creates a surjective measure preserving homomorphism from $\bA$ to $\bB$. We will need a few lemmas. The next one is obvious.

\begin{lemma}\label{normhom} If $f$ is a function in $L_\infty(\mathcal{B})$ and $k$ is a natural number then
$$U_k(f)=U_k(\bh\circ f)$$
where the first Gowers norm is computed on $\bB$ and the second is computed on $\bA$.
\end{lemma}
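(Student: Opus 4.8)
The plan is to reduce the identity to the finite level, where it is immediate, and then take the ultra limit. Write $g\colon\bA\to\mathbb{C}$ for the pullback of $f$ along $\bh$, that is $g(x)=f(\bh(x))$, so that $U_k(g)$ is the Gowers norm computed on $\bA$. Since $\bh$ is a group homomorphism it intertwines the difference operators: for all $x,t_1,\dots,t_k\in\bA$ one has
$$\Delta_{t_1,\dots,t_k}g(x)=\bigl(\Delta_{\bh(t_1),\dots,\bh(t_k)}f\bigr)(\bh(x)),$$
which follows by induction on $k$ from $g(x+t)=f(\bh(x)+\bh(t))$ together with the definition $\Delta_t g(x)=g(x+t)\overline{g(x)}$.

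The second step is to observe that the $(k+1)$-fold product map $\bh^{(k+1)}\colon\bA^{k+1}\to\bB^{k+1}$, sending $(x,t_1,\dots,t_k)$ to $(\bh(x),\bh(t_1),\dots,\bh(t_k))$, is surjective and measure preserving. This is inherited from the finite groups: each $h_i\colon A_i\to B_i$ is a surjective homomorphism, so the product homomorphism $h_i^{(k+1)}\colon A_i^{k+1}\to B_i^{k+1}$ is surjective and all of its fibres have the same size $|\ker h_i|^{k+1}$; hence $h_i^{(k+1)}$ pushes the uniform probability measure on $A_i^{k+1}$ forward to the uniform probability measure on $B_i^{k+1}$. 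Passing to the ultra product, $\bA^{k+1}$ (with its product $\sigma$-algebra and measure) is the ultra product of the spaces $A_i^{k+1}$, and $\bh^{(k+1)}$ is the ultra product of the maps $h_i^{(k+1)}$, hence a surjective measure preserving map onto $\bB^{k+1}$.

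Combining the two steps and using the change of variables formula along the measure preserving map $\bh^{(k+1)}$,
$$U_k(g)^{2^k}=\mathbb{E}_{x,t_1,\dots,t_k\in\bA}\,\bigl(\Delta_{\bh(t_1),\dots,\bh(t_k)}f\bigr)(\bh(x))=\mathbb{E}_{y,s_1,\dots,s_k\in\bB}\,\Delta_{s_1,\dots,s_k}f(y)=U_k(f)^{2^k},$$
and taking $2^k$-th roots gives $U_k(g)=U_k(f)$. (Boundedness of $f$, hence of $g$, is only needed to guarantee that the integrals above are finite and that the Gowers seminorm is defined.) The only point that is not pure bookkeeping — and the reason the lemma deserves the word ``obvious'' rather than ``trivial'' — is the equidistribution claim in the second step: surjectivity of the $h_i$ forces every fibre of $h_i^{(k+1)}$ to have the same cardinality, which is what makes the push-forward of the uniform measure uniform; this is elementary for finite abelian groups and survives the ultra limit by the construction of the ultra product measure.
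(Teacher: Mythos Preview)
Your proof is correct. The paper does not actually give a proof of this lemma; it simply declares it ``obvious'' and moves on. Your argument --- pulling back the iterated difference via the homomorphism property of $\bh$ and then using that the product map $\bh^{(k+1)}$ pushes the uniform measure on $\bA^{k+1}$ forward to the uniform measure on $\bB^{k+1}$ --- is exactly the natural way to make ``obvious'' precise, so there is nothing to compare.
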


\begin{lemma}\label{projnorm} If $f\in L_\infty(\mathcal{A})$ satisfies $U_k(f)=0$ then $U_k(\mathbb{E}(f|\bh^{-1}(\mathcal{B})))=0$.
\end{lemma}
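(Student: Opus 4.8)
The plan is to exploit the fact that $\bh^{-1}(\mathcal{B})$ is a shift-invariant $\sigma$-algebra and to relate the $U_k$ semi-norm of a conditional expectation onto it to a genuine $U_k$ semi-norm computed on $\bB$. First I would set $g = \mathbb{E}(f\mid \bh^{-1}(\mathcal{B}))$. Since $\bh:\bA\to\bB$ is a measure-preserving surjective homomorphism, functions measurable in $\bh^{-1}(\mathcal{B})$ are exactly the functions of the form $\bh\circ f_0$ with $f_0\in L_\infty(\mathcal{B})$; in particular there is $f_0\in L_\infty(\mathcal{B})$ with $g = \bh\circ f_0$ (this is the usual identification of $L_\infty(\bh^{-1}(\mathcal{B}))$ with $L_\infty(\mathcal{B})$ under a measure-preserving map). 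By Lemma \ref{normhom} we then have $U_k(g) = U_k(\bh\circ f_0) = U_k(f_0)$, so it suffices to show $U_k(f_0)=0$, equivalently $U_k(g)=0$.

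The second and main step is to show that passing to the conditional expectation onto $\bh^{-1}(\mathcal{B})$ does not increase the $U_k$ semi-norm, i.e. $U_k(g)\le U_k(f)$; combined with $U_k(f)=0$ this finishes the proof. There are two natural ways to get this. One is to use the Gowers inner-product / Cauchy--Schwarz--Gowers machinery: $\|g\|_{U_k}^{2^k}$ is the average of $\prod_{S\subseteq[k]} \mathcal{C}^{|S|} g(x+\sum_{i\in S}t_i)$, and since $g$ is the orthogonal projection of $f$ onto a shift-invariant $\sigma$-algebra, one can insert $g = \mathbb{E}(f\mid \bh^{-1}(\mathcal{B}))$ and use shift-invariance together with the tower property to bound the resulting multilinear average by $\|f\|_{U_k}^{2^k}$ — this is the standard fact that conditional expectation onto a shift-invariant factor is a contraction in each Gowers norm. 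The cleaner alternative, which I would prefer given what is already available in the excerpt, is to invoke Lemma \ref{normchar}: $U_k(g)=0$ iff $g$ is orthogonal to $L_2(\mathcal{F}_{k-1})$, and $U_k(f)=0$ means $f$ is orthogonal to $L_2(\mathcal{F}_{k-1})$. So I would argue that $\mathbb{E}(f\mid\bh^{-1}(\mathcal{B}))$ stays orthogonal to $L_2(\mathcal{F}_{k-1})$. For this, take any $h\in L_\infty(\mathcal{F}_{k-1})$; then
$$\langle g, h\rangle = \langle \mathbb{E}(f\mid\bh^{-1}(\mathcal{B})), h\rangle = \langle f, \mathbb{E}(h\mid\bh^{-1}(\mathcal{B}))\rangle,$$
so it suffices to check that $\mathbb{E}(h\mid\bh^{-1}(\mathcal{B}))$ is still measurable in $\mathcal{F}_{k-1}$ (then the right-hand side vanishes because $f\perp L_2(\mathcal{F}_{k-1})$). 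This last point is the crux: I would argue it from the recursive definition of the $\mathcal{F}_j$, showing that $\bh^{-1}(\mathcal{B})\cap \mathcal{F}_{k-1}$ behaves well, or more directly that $\mathbb{E}(\,\cdot\mid\bh^{-1}(\mathcal{B}))$ maps $L_2(\mathcal{F}_{k-1})$ into itself because $\bh^{-1}(\mathcal{B})$ is a shift-invariant $\sigma$-algebra and $\mathcal{F}_{k-1}$ is generated by (relatively) separable shift-invariant data that is respected by pulling back along $\bh$.

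I expect the main obstacle to be precisely that last claim — that conditional expectation onto $\bh^{-1}(\mathcal{B})$ preserves $\mathcal{F}_{k-1}$-measurability (equivalently preserves the orthogonal complement $W_{k-1}$). The subtlety is that $\mathcal{F}_{k-1}$ is defined by a relative-separability condition and it is not a priori obvious that intersecting with, or conditioning on, the pulled-back $\sigma$-algebra $\bh^{-1}(\mathcal{B})$ interacts correctly with that condition. I would handle it either by an induction on $k$ (the base case $\mathcal{F}_0=$ trivial being immediate, and the inductive step using that a $k$-th order character composed with $\bh$, or pulled back appropriately, is again of the right type, as in Lemmas \ref{ort1}--\ref{sepex}), or, if the Gowers-inner-product route is taken, by a direct Cauchy--Schwarz--Gowers estimate showing $\|\mathbb{E}(f\mid\bh^{-1}(\mathcal{B}))\|_{U_k}\le\|f\|_{U_k}$, which sidesteps the $\sigma$-algebra bookkeeping entirely. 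Given the tools the paper has set up, the inner-product contraction argument is the most robust, so that is the route I would write out.
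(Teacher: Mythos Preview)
Your route (a) is correct: since $\bh^{-1}(\mathcal{B})$ is shift-invariant, the dual function at one vertex of the Gowers cube (built from the remaining $2^k-1$ shifted copies of $g=\mathbb{E}(f\mid\bh^{-1}(\mathcal{B}))$) is $\bh^{-1}(\mathcal{B})$-measurable, so one copy of $g$ may be replaced by $f$, and then Gowers--Cauchy--Schwarz gives $\|g\|_{U_k}^{2^k}\le\|f\|_{U_k}\,\|g\|_{U_k}^{2^k-1}$, hence $\|g\|_{U_k}\le\|f\|_{U_k}=0$. The paper, however, takes a different and shorter path. It observes that $\bh^{-1}(\mathcal{B})$ is a \emph{coset} $\sigma$-algebra (its atoms are the cosets of $\ker\bh$), and coset $\sigma$-algebras are perpendicular to every shift-invariant $\sigma$-algebra --- a structural fact imported from \cite{Sz1}. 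Perpendicularity means precisely that the conditional-expectation operators onto $\bh^{-1}(\mathcal{B})$ and onto $\mathcal{F}_{k-1}$ commute, so from $\mathbb{E}(f\mid\mathcal{F}_{k-1})=0$ one gets $\mathbb{E}\bigl(\mathbb{E}(f\mid\bh^{-1}(\mathcal{B}))\mid\mathcal{F}_{k-1}\bigr)=0$ in one line. This is exactly the ``crux'' you isolated in route (b), but resolved by the coset/perpendicularity observation rather than by induction on $k$ or by unpacking the recursive definition of the $\mathcal{F}_j$. Your contraction argument has the virtue of applying to any shift-invariant factor, not only coset ones; the paper's argument has the virtue of staying entirely inside the $\sigma$-algebra formalism already set up and avoiding the Gowers--Cauchy--Schwarz machinery altogether.
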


\begin{proof} First of all note that $\bh^{-1}(\mathcal{B})$ is a coset $\sigma$-algebra on $\bA$ so it is perpendicular to every shift invariant $\sigma$-algebra. Now $U_k(f)=0$ implies that $\mathbb{E}(f|\mathcal{F}_k)$ is the $0$ function. This implies that $$\mathbb{E}(\mathbb{E}(f|\mathcal{F}_k)|\bh^{-1}(\mathcal{B}))=0.$$
Using perpendicularity we can interchange the projections and obtain
$$\mathbb{E}(\mathbb{E}(f|\bh^{-1}(\mathcal{B}))|\mathcal{F}_k)=0.$$
This completes the proof.
\end{proof}

As a consequence we get the following.

\begin{lemma} $\bh^{-1}(\mathcal{F}_k(\bB))=\mathcal{F}_k(\bA)\cap\bh^{-1}(\mathcal{B})$.
\end{lemma}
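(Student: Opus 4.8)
The plan is to reduce the asserted equality to two inclusions and to check each by testing indicator functions against functions of vanishing Gowers norm, feeding everything through the norm characterization (Lemma \ref{normchar}), the pullback-invariance of the Gowers norms (Lemma \ref{normhom}), and the projection statement (Lemma \ref{projnorm}). Throughout I would use that $\bh$ is measure preserving, so that $u\mapsto\bh\circ u$ is an isometric isomorphism of $L_2(\mathcal{B},\bB)$ onto $L_2(\bh^{-1}(\mathcal{B}),\bA)$; for an $\bh^{-1}(\mathcal{B})$-measurable function on $\bA$ I call the corresponding function on $\bB$ its pushdown. Recall also that by Lemma \ref{normchar} a set $T$ lies in $\mathcal{F}_k(\bA)$ precisely when $\mathbf 1_T$ is orthogonal to every $h$ with $\|h\|_{U_{k+1}}=0$, and similarly on $\bB$.

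First I would establish $\bh^{-1}(\mathcal{F}_k(\bB))\subseteq\mathcal{F}_k(\bA)\cap\bh^{-1}(\mathcal{B})$. The inclusion into $\bh^{-1}(\mathcal{B})$ is immediate since $\mathcal{F}_k(\bB)\subseteq\mathcal{B}$, so the point is $\bh^{-1}(\mathcal{F}_k(\bB))\subseteq\mathcal{F}_k(\bA)$. Fixing $S'\in\mathcal{F}_k(\bB)$ and $h\in L_\infty(\mathcal{A})$ with $\|h\|_{U_{k+1}}=0$, I would compute $(\mathbf 1_{\bh^{-1}(S')},h)=(\mathbf 1_{\bh^{-1}(S')},\mathbb{E}(h|\bh^{-1}(\mathcal{B})))=(\mathbf 1_{S'},g)$, where $g$ on $\bB$ is the pushdown of $\mathbb{E}(h|\bh^{-1}(\mathcal{B}))$: the first equality holds because $\mathbf 1_{\bh^{-1}(S')}=\bh\circ\mathbf 1_{S'}$ is $\bh^{-1}(\mathcal{B})$-measurable, the second because pullback is an isometry. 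By Lemma \ref{projnorm}, applied with $k+1$ in place of $k$, we get $\|\mathbb{E}(h|\bh^{-1}(\mathcal{B}))\|_{U_{k+1}}=0$, hence $\|g\|_{U_{k+1}}=0$ by Lemma \ref{normhom}; since $S'\in\mathcal{F}_k(\bB)$ this forces $(\mathbf 1_{S'},g)=0$. As $h$ is arbitrary, $\bh^{-1}(S')\in\mathcal{F}_k(\bA)$. I expect this to be the one place where Lemma \ref{projnorm} is genuinely needed — pulling the projection onto the coset $\sigma$-algebra $\bh^{-1}(\mathcal{B})$ back to $\bA$ is exactly the non-formal point, and it is why no separate induction on $k$ is required.

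Then I would prove the reverse inclusion $\mathcal{F}_k(\bA)\cap\bh^{-1}(\mathcal{B})\subseteq\bh^{-1}(\mathcal{F}_k(\bB))$. Given $S$ in the left-hand side, write $S=\bh^{-1}(S')$ with $S'\in\mathcal{B}$; to conclude $S'\in\mathcal{F}_k(\bB)$ it suffices to check that $\mathbf 1_{S'}$ is orthogonal to every $g$ on $\bB$ with $\|g\|_{U_{k+1}}=0$. For such $g$, Lemma \ref{normhom} gives $\|\bh\circ g\|_{U_{k+1}}=0$, so $\bh\circ g\perp L_2(\mathcal{F}_k(\bA))$ by Lemma \ref{normchar}; since $\mathbf 1_S\in L_2(\mathcal{F}_k(\bA))$ by hypothesis and pullback is an isometry, $(\mathbf 1_{S'},g)=(\bh\circ\mathbf 1_{S'},\bh\circ g)=(\mathbf 1_S,\bh\circ g)=0$. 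Hence $S'\in\mathcal{F}_k(\bB)$ and $S=\bh^{-1}(S')\in\bh^{-1}(\mathcal{F}_k(\bB))$. This direction uses only Lemmas \ref{normhom} and \ref{normchar}.

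Putting the two inclusions together finishes the proof. The only routine care needed is the harmless $L_\infty$ versus $L_2$ bookkeeping — orthogonality need only be tested against indicators, which are bounded — and keeping the pushdown/pullback identifications straight; I do not anticipate any genuine obstacle beyond correctly invoking Lemma \ref{projnorm} in the first inclusion.
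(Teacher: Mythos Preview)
Your proof is correct and follows essentially the same route as the paper: both arguments reduce the equality to the orthogonality characterization of $\mathcal{F}_k$ (Lemma \ref{normchar}), use Lemma \ref{normhom} to identify zero-$U_{k+1}$ functions on $\bB$ with their pullbacks, and invoke Lemma \ref{projnorm} to handle the projection to $\bh^{-1}(\mathcal{B})$. The only cosmetic difference is that the paper packages both directions into a single ``if and only if'' chain starting from an arbitrary $f\in L_\infty(\bh^{-1}(\mathcal{B}))$, whereas you split the two inclusions explicitly.
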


\begin{proof} Let $f\in L_\infty(\bh^{-1}(\mathcal{B}))$. We know that $f\in\mathcal{F}_k$ if and only if $f$ is orthogonal to any function $g$ with $U_k(g)=0$. However the scalar product $(f,g)$ is equal to $(f,\mathbb{E}(g|\bh^{-1}(\mathcal{B})))$. By Lemma \ref{projnorm} this implies that $f\in\mathcal{F}_k$ if and only if $f$ is orthogonal to every $g$ with $U_k(g)=0$ such that $g$ is measurable in $\bh^{-1}(\mathcal{B})$. This is equivalent with saying that $f$ has the form $\bh\circ f'$ where $f'$ is measurable in $\mathcal{F}_k(\bB)$.
\end{proof}

\subsection{Lemmas on $\sigma$-algebars}

In this section $(X,\mathcal{A},\mu)$ will be a probability space with $\sigma$-algebra $\mathcal{A}$ and measure $\mu$.
All the $\sigma$-algebras occurring in this chapter will be assumed to be subalgebras of $\mathcal{A}$.

For two $\sigma$-algebras $\mathcal{B}$ and $\mathcal{C}$ in $\mathcal{A}$ we denote by $\mathcal{B}\vee\mathcal{C}$ the smallest $\sigma$-algebra containing them and by $\mathcal{B}\wedge\mathcal{C}$ the intersection of them.

\begin{definition} We say that $\mathcal{B}$ and $\mathcal{C}$ are {\bf perpendicular} if for any $\mathcal{B}$ measurable function $f$ the function $\mathbb{E}(f|\mathcal{C})$ is again $\mathcal{B}$ measurable.
\end{definition}

We will use the following lemma.

\begin{lemma}\label{modul} Let $\mathcal{B}$ and $\mathcal{C}$ be two perpendicular $\sigma$-algebras and let $\mathcal{B}_1$ be a sub $\sigma$-algebra of $\mathcal{B}$.
Then $(\mathcal{C}\vee\mathcal{B}_1)\wedge\mathcal{B}=(\mathcal{C}\wedge\mathcal{B})\vee\mathcal{B}_1$.
\end{lemma}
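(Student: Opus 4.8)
The strategy is to prove the two inclusions of the claimed identity separately. Write $\mathcal{D}=\mathcal{B}\wedge\mathcal{C}$. The inclusion $(\mathcal{C}\wedge\mathcal{B})\vee\mathcal{B}_1\subseteq(\mathcal{C}\vee\mathcal{B}_1)\wedge\mathcal{B}$ is immediate from monotonicity of $\vee$ and $\wedge$: both $\mathcal{D}$ and $\mathcal{B}_1$ are contained in $\mathcal{B}$ (using $\mathcal{B}_1\subseteq\mathcal{B}$), and both are contained in $\mathcal{C}\vee\mathcal{B}_1$, so their join lies in the meet $(\mathcal{C}\vee\mathcal{B}_1)\wedge\mathcal{B}$. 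All the work is in the reverse inclusion, which I will prove in the equivalent $L_2$ form: every function in $L_2\big((\mathcal{C}\vee\mathcal{B}_1)\wedge\mathcal{B}\big)$ lies in $L_2(\mathcal{D}\vee\mathcal{B}_1)$.

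\textbf{Step 1: extract the symmetric form of perpendicularity.} First I would record that perpendicularity forces $\mathbb{E}(g\mid\mathcal{B})=\mathbb{E}(g\mid\mathcal{D})$ for every $\mathcal{C}$-measurable bounded $g$ (and symmetrically with $\mathcal{B},\mathcal{C}$ interchanged). To see this, note that for $\mathcal{B}$-measurable bounded $f$ one has $\mathbb{E}(f\mid\mathcal{C})$ measurable in $\mathcal{B}$ (definition of perpendicular) and in $\mathcal{C}$, hence in $\mathcal{D}$; testing against $\mathcal{D}$-measurable functions shows $\mathbb{E}(f\mid\mathcal{C})=\mathbb{E}(f\mid\mathcal{D})$. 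Then for $\mathcal{B}$-measurable $f$ and $\mathcal{C}$-measurable $g$,
$$\mathbb{E}(fg)=\mathbb{E}\big(g\,\mathbb{E}(f\mid\mathcal{C})\big)=\mathbb{E}\big(g\,\mathbb{E}(f\mid\mathcal{D})\big)=\mathbb{E}\big(\mathbb{E}(g\mid\mathcal{D})\,\mathbb{E}(f\mid\mathcal{D})\big)=\mathbb{E}\big(f\,\mathbb{E}(g\mid\mathcal{D})\big),$$
and since this holds for all $\mathcal{B}$-measurable $f$ while $\mathbb{E}(g\mid\mathcal{D})$ is itself $\mathcal{B}$-measurable, it equals $\mathbb{E}(g\mid\mathcal{B})$.

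\textbf{Step 2: approximate and project.} Let $f\in L_2\big((\mathcal{C}\vee\mathcal{B}_1)\wedge\mathcal{B}\big)$; in particular $f$ is $\mathcal{B}$-measurable, so $\mathbb{E}(f\mid\mathcal{B})=f$. Since $f$ is $(\mathcal{C}\vee\mathcal{B}_1)$-measurable and indicators $1_{C\cap B}=1_C1_B$ with $C\in\mathcal{C}$, $B\in\mathcal{B}_1$ generate $\mathcal{C}\vee\mathcal{B}_1$, finite sums $\sum_i g_ih_i$ with $g_i$ bounded $\mathcal{C}$-measurable and $h_i$ bounded $\mathcal{B}_1$-measurable are dense in $L_2(\mathcal{C}\vee\mathcal{B}_1)$; pick such $f_n\to f$ in $L_2$. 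Applying $\mathbb{E}(\cdot\mid\mathcal{B})$ and using that each $h_i$ is $\mathcal{B}_1\subseteq\mathcal{B}$-measurable together with Step 1,
$$\mathbb{E}(f_n\mid\mathcal{B})=\sum_i h_i\,\mathbb{E}(g_i\mid\mathcal{B})=\sum_i h_i\,\mathbb{E}(g_i\mid\mathcal{D}),$$
which is measurable in $\mathcal{D}\vee\mathcal{B}_1$. Since conditional expectation is an $L_2$-contraction, $\mathbb{E}(f_n\mid\mathcal{B})\to\mathbb{E}(f\mid\mathcal{B})=f$ in $L_2$, and $L_2(\mathcal{D}\vee\mathcal{B}_1)$ is closed, so $f\in L_2(\mathcal{D}\vee\mathcal{B}_1)$. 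This gives $(\mathcal{C}\vee\mathcal{B}_1)\wedge\mathcal{B}\subseteq(\mathcal{C}\wedge\mathcal{B})\vee\mathcal{B}_1$ and completes the proof.

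\textbf{Main obstacle.} The only genuinely delicate point is Step 1 — converting the one-sided definition of perpendicularity into the symmetric ``conditional independence over $\mathcal{D}$'' statement $\mathbb{E}(g\mid\mathcal{B})=\mathbb{E}(g\mid\mathcal{D})$; once that is available, Step 2 is a routine density-plus-closedness argument, with the mild care that one should reduce the general measurable case to $L_2$ (equivalently, to bounded functions via truncation) before approximating.
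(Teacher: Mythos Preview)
Your proof is correct and follows essentially the same approach as the paper's: approximate an indicator (or $L_2$ function) in $(\mathcal{C}\vee\mathcal{B}_1)\wedge\mathcal{B}$ by finite sums $\sum_i 1_{C_i}1_{B_i}$, apply $\mathbb{E}(\cdot\mid\mathcal{B})$, pull the $\mathcal{B}_1$-measurable factors out, and observe that each $\mathbb{E}(1_{C_i}\mid\mathcal{B})$ lands in $\mathcal{B}\wedge\mathcal{C}$. Your Step~1 is in fact more careful than the paper, which simply asserts ``using perpendicularity we have that $\mathbb{E}(1_{C_i}\mid\mathcal{B})$ is measurable in $\mathcal{B}\wedge\mathcal{C}$'' without deriving the symmetric form from the one-sided definition; your argument fills that gap.
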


\begin{proof}
Both terms of the right hand side are contained in both terms of the left hand side so we have that $(\mathcal{C}\vee\mathcal{B}_1)\wedge\mathcal{B}\supseteq(\mathcal{C}\wedge\mathcal{B})\vee\mathcal{B}_1$.
To see the other containment let $H$ be a set in $(\mathcal{C}\vee\mathcal{B}_1)\wedge\mathcal{B}$.
Using that $H\in\mathcal{C}\vee\mathcal{B}_1$ we have that for an arbitrary small $\epsilon>0$ there is an approximation of the characteristic function $1_H$ in $L_2$ of the form $$f=\sum_{i=1}^n 1_{C_i}1_{B_i}$$
where $C_i\in\mathcal{C}$ and $B_i\in\mathcal{B}_1$. Since $\|1_H-f\|_2\leq\epsilon$ and $H\in\mathcal{B}$ we have that $$\epsilon\geq\|\mathbb{E}(1_H|\mathcal{B})-\mathbb{E}(f|\mathcal{B})\|_2=\|1_H-\sum_{i=1}^n \mathbb{E}(1_{C_i}|\mathcal{B})1_{B_i}\|_2.$$
Using perpendicularity we have that $E(1_{C_i}|\mathcal{B})$ is measurable in $\mathcal{B}\wedge\mathcal{C}$ for every $i$ and the whole sum is measurable in $(\mathcal{C}\wedge\mathcal{B})\vee\mathcal{B}_1$.
Using it for every $\epsilon$ the proof is complete.
\end{proof}

\subsection{Cubic structure}

Let $d$ be a fixed natural number. For every subset $S\subseteq\{1,2,\dots,d\}$ we introduce a homomorphism $\psi_S:\bA^{d+1}\rightarrow\bA$ defined by
$$\psi_S(x,t_1,t_2,\dots,t_d)=x+\sum_{i\in S}t_i.$$
These functions are measurable in the ultra product measure.
Let us denote by $\mathcal{B}_S$ the $\sigma$-algebra $\psi_S^{-1}(\mathcal{A})$.
The next crucial fact was proved in \cite{Sz1}.

\begin{lemma}\label{cubint} $$\mathcal{B}_H\bigwedge\Bigl(\bigvee_{S\neq H\subseteq [d]}\mathcal{B}_S\Bigr)=\psi_H^{-1}(\mathcal{F}_{d-1})$$
when $H$ is any fixed subset of $[d]$.
\end{lemma}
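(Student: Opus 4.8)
Lemma \ref{cubint} asserts that for a fixed subset $H\subseteq[d]$,
$$\mathcal{B}_H\wedge\Bigl(\bigvee_{S\neq H}\mathcal{B}_S\Bigr)=\psi_H^{-1}(\mathcal{F}_{d-1}).$$
The plan is to reduce to the case $H=\emptyset$ by a change of variables, then prove the two inclusions separately, using the characterization of $\mathcal{F}_{d-1}$ via vanishing of the Gowers norm $U_d$ (Lemma \ref{normchar}) for the hard inclusion, and using Lemma \ref{modul} (the modularity identity for perpendicular $\sigma$-algebras) to handle the inductive bookkeeping.

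\medskip

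\noindent\emph{Step 1: Reduction to $H=\emptyset$.} The affine automorphism of $\bA^{d+1}$ sending $(x,t_1,\dots,t_d)$ to $(x+\sum_{i\in H}t_i, \pm t_1,\dots,\pm t_d)$ (with the sign on $t_i$ flipped exactly when $i\in H$) is measure preserving and carries $\psi_H$ to $\psi_\emptyset$ and, more importantly, permutes the family $\{\mathcal{B}_S\}_{S\subseteq[d]}$ among itself: it sends $\mathcal{B}_S$ to $\mathcal{B}_{S\triangle H}$. Hence it suffices to prove $\mathcal{B}_\emptyset\wedge\bigl(\bigvee_{S\neq\emptyset}\mathcal{B}_S\bigr)=\psi_\emptyset^{-1}(\mathcal{F}_{d-1})$, where $\psi_\emptyset$ is simply the projection $(x,t_1,\dots,t_d)\mapsto x$. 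So I must show that the part of the $\sigma$-algebra generated by the ``shifted coordinates'' that lives on the $x$-coordinate alone is exactly the pullback of $\mathcal{F}_{d-1}$.

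\medskip

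\noindent\emph{Step 2: The easy inclusion $\supseteq$.} Here I want $\psi_\emptyset^{-1}(\mathcal{F}_{d-1})\subseteq\bigvee_{S\neq\emptyset}\mathcal{B}_S$ (the containment in $\mathcal{B}_\emptyset$ is trivial). Since $\mathcal{F}_{d-1}$ is generated by $(d-1)$-st order characters and their level sets, and by Theorem \ref{fundth} a function $g$ on $\bA$ measurable in $\mathcal{F}_{d-1}$ can be built from functions $\phi$ with $\Delta_t\phi\in\mathcal{F}_{d-2}$, one unwinds the telescoping identity expressing $g(x)$ through the $2^d$ values $g\bigl(x+\sum_{i\in S}t_i\bigr)$; more directly, $\mathcal{F}_{d-1}$-measurable functions are precisely those whose $U_d$ semi-norm controls them, and the standard Gowers–Cauchy–Schwarz manipulation writes the ``$d$-th derivative'' as a product over $S\subseteq[d]$ of (conjugates of) $g$ evaluated at $x+\sum_{i\in S}t_i$; iterating once more and using that $g\in\mathcal{F}_{d-1}$ means the top derivative is $\mathcal{F}_{d-1}$-measurable, one sees $g\circ\psi_\emptyset$ is a limit of sums of products of functions each measurable in some $\mathcal{B}_S$ with $S\neq\emptyset$. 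This is the routine direction and I would keep it brief, citing \cite{Sz1}.

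\medskip

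\noindent\emph{Step 3: The hard inclusion $\subseteq$, by induction on $d$.} This is the main obstacle. Let $f$ be measurable in $\mathcal{B}_\emptyset\wedge\bigl(\bigvee_{S\neq\emptyset}\mathcal{B}_S\bigr)$; so $f=f(x)$ depends only on $x$, yet is measurable in $\bigvee_{S\neq\emptyset}\mathcal{B}_S$. I split the index set according to whether $S$ contains $d$: $\bigvee_{S\neq\emptyset}\mathcal{B}_S=\bigl(\bigvee_{\emptyset\neq S\subseteq[d-1]}\mathcal{B}_S\bigr)\vee\bigl(\bigvee_{d\in S}\mathcal{B}_S\bigr)$. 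The $\sigma$-algebra $\mathcal{C}:=\bigvee_{d\in S}\mathcal{B}_S$ only ``sees'' the variable $x+t_d$ and the $t_i$ ($i<d$), while $\mathcal{B}_\emptyset$ only sees $x$; these are perpendicular because $t_d$ is an independent coordinate. Applying Lemma \ref{modul} with $\mathcal{B}=\mathcal{B}_\emptyset\vee(\text{the }[d-1]\text{-coordinates})$ lets me intersect through and reduce the condition on $f$ to a condition purely in the $(x,t_1,\dots,t_{d-1})$ coordinates: integrating out $t_d$ (equivalently, projecting onto the $\sigma$-algebra not involving $t_d$) turns membership of $f$ in $\mathcal{C}$ into the statement that $\mathbb{E}_{t_d}$ of a product of shifts of $f$ has small $U_{d-1}$-type norm, i.e. $\|\Delta$ of $f$ along any single direction$\|_{U_{d-1}}=0$ on average, which by the identity $\int\|\Delta_t f\|_{U_{d-1}}^{2^{d-1}}\,dt=\|f\|_{U_d}^{2^d}$ (used already in the proof of the spectral lemma above) forces $\|f\|_{U_d}=0$... no — rather, forces $\mathbb{E}(f\mid\mathcal{F}_{d-1})$-type conclusions. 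Concretely: the hypothesis that $f$ is measurable with respect to the join of the $\mathcal{B}_S$ with $S\neq\emptyset$ means $f$ is orthogonal to everything orthogonal to that join; a function orthogonal to all $\mathcal{B}_S$, $S\neq\emptyset$, when its dependence is only on $x$, is a function with $\|f\|_{U_d}=0$ (this is exactly the Gowers inner-product / box-norm duality, since the join of the $\mathcal{B}_S$ is the ``box $\sigma$-algebra'' whose orthocomplement among $x$-functions is the $U_d=0$ space). Hence by Lemma \ref{normchar}, $f$ is measurable in $\mathcal{F}_{d-1}$, i.e. $f\circ\psi_\emptyset\in\psi_\emptyset^{-1}(\mathcal{F}_{d-1})$.

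\medskip

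\noindent\emph{Where the difficulty concentrates.} The delicate point is making precise the claim that among functions of $x$ alone, being measurable in $\bigvee_{S\neq\emptyset}\mathcal{B}_S$ is equivalent to having vanishing $U_d$ semi-norm of the orthogonal complement — equivalently, that the projection $\mathbb{E}(\,\cdot\,\mid\bigvee_{S\neq\emptyset}\mathcal{B}_S)$ restricted to $L_2(\mathcal{B}_\emptyset)$ equals $\mathbb{E}(\,\cdot\,\mid\psi_\emptyset^{-1}\mathcal{F}_{d-1})$. This requires the Gowers inner product formula $\langle f_S:S\subseteq[d]\rangle=\int\prod_S \mathcal{C}^{|S|}f_S(x+\sum_{i\in S}t_i)$ and the fact (from \cite{Sz1}, via Lemma \ref{normchar}) that $\|f\|_{U_d}=0\iff f\perp L_2(\mathcal{F}_{d-1})$, together with a measure-theoretic approximation argument to pass from ``orthogonal to all product functions $\prod_{S}g_S\circ\psi_S$'' to ``orthogonal to $L_2$ of the join.'' I expect this duality/approximation step — not the change of variables or the modularity manipulation — to be the real content, and I would present it carefully, very likely just invoking the corresponding result from \cite{Sz1} as the paper already does for Lemma \ref{cubint} itself.
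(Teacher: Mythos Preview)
The paper does not prove Lemma \ref{cubint} at all; it simply records it as ``proved in \cite{Sz1}''. So there is no argument here to compare against, and your task was to supply one. On that score the proposal does not succeed.

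Your Step~1 (the reduction to $H=\emptyset$ via the cube symmetry $S\mapsto S\triangle H$) is correct and is exactly how one normalizes the problem.

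Step~2 is not a proof. You call $\supseteq$ the ``easy'' direction, sketch a vague telescoping idea, and then say you would ``keep it brief, citing \cite{Sz1}''. But the entire lemma is already being cited from \cite{Sz1}; deferring one direction back to that reference is circular. In fact this direction --- that every $\mathcal{F}_{d-1}$-measurable function of $x$ is measurable in $\bigvee_{S\neq\emptyset}\mathcal{B}_S$ --- is not at all routine: it is the structural half of the statement, asserting that low-order functions genuinely decompose along the nontrivial faces of the cube. One honest route is by induction on $d$ using the fundamental theorem (Theorem~\ref{fundth}) to write $\mathcal{F}_{d-1}$-measurable functions as $L_2$-limits of sums $\sum\phi_i g_i$ with $\phi_i$ a $(d{-}1)$-st order character and $g_i\in L_\infty(\mathcal{F}_{d-2})$, and then to exploit the identity expressing $\phi_i(x)$ through $\Delta_{t_1,\dots,t_{d-1}}\phi_i$ and the values $\phi_i(x+\sum_{i\in S}t_i)$ for $S\neq\emptyset$; but this has to be carried out, not asserted.

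Step~3 contains a correct kernel buried in a muddled presentation. The aborted induction using Lemma~\ref{modul} is unnecessary. The clean argument you eventually reach is: if $h\in L_\infty(\bA)$ has $\|h\|_{U_d}=0$, then by the Gowers--Cauchy--Schwarz inequality $\int (h\circ\psi_\emptyset)\cdot\prod_{S\neq\emptyset}\overline{g_S\circ\psi_S}=0$ for all bounded $g_S$, hence $h\circ\psi_\emptyset\perp L_2\bigl(\bigvee_{S\neq\emptyset}\mathcal{B}_S\bigr)$ (products of one-variable functions are $L_2$-dense in the join). Thus any $f\in\mathcal{B}_\emptyset\wedge\bigvee_{S\neq\emptyset}\mathcal{B}_S$ is orthogonal to every such $h$, and Lemma~\ref{normchar} gives $f\in\mathcal{F}_{d-1}$. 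This is fine \emph{provided} you know that the proof of Lemma~\ref{normchar} in \cite{Sz1} does not itself rely on Lemma~\ref{cubint}; you should check that, since both are imported from the same source and circularity is a real danger here. Your closing sentence, that you would ``very likely just invok[e] the corresponding result from \cite{Sz1} as the paper already does for Lemma~\ref{cubint} itself'', gives the game away: that is not a proof, it is a citation of the very result under discussion.
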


The reader should also notice that the $\sigma$-algebras $\mathcal{B}_S$ behave in a very symmetric way. There is a group of measurable automorphisms that permute these $\sigma$-algebras transitively. In fact this group is the same as the automorphism group of the $d$-dimensional cube.
The details are given in \cite{Sz1}.
Note that the $\sigma$-algebras $\mathcal{B}_S$ are so called coset $\sigma$-algebras (see \cite{Sz1}) and so they are perpendicular to any shift invariant $\sigma$-algebra.
An important corollary of Lemma \ref{cubint} is the following.

\begin{corollary}\label{zeroint} Let $\{f_S\}_{S\subseteq [d]}$ be a collection of $L_\infty(\bA,\mathcal{A})$ functions. Assume furthermore that there exists a subset $H\subseteq [d]$ such that $\mathbb{E}(f_H|\mathcal{F}_{d-1})$ is the zero function. Then

\begin{equation}\label{zeroineq}
\int_{x,t_1,t_2,\dots,t_d}~~\prod_{S\subseteq [d]}f_S\bigl(x+\sum_{i\in S}t_i\bigl)=\int_{x,t_1,t_2,\dots,t_d}~~\prod_{S\subseteq [d]}f_S\circ\psi_S=0.
\end{equation}

\end{corollary}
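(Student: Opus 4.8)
The plan is to reduce the Gowers-inner-product integral over the cube to a single orthogonality statement using Lemma \ref{cubint}. First I would observe that the integral in (\ref{zeroineq}) can be rewritten as a scalar product on the space $L_2(\bA^{d+1})$. Namely, write $g_H = f_H\circ\psi_H$ and let $g = \prod_{S\neq H} f_S\circ\psi_S$; then the left-hand side of (\ref{zeroineq}) is $\int_{\bA^{d+1}} g_H\,\overline{\overline{g}}$, i.e. the pairing $(g_H,\overline{g})$ in $L_2(\bA^{d+1},\mathcal{A}^{d+1})$. The factor $g_H$ is measurable in $\mathcal{B}_H = \psi_H^{-1}(\mathcal{A})$, while $\overline{g}$ is a product of functions each measurable in some $\mathcal{B}_S$ with $S\neq H$, hence $\overline{g}$ is measurable in $\bigvee_{S\neq H\subseteq[d]}\mathcal{B}_S$.

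Next I would use the standard fact that $(u,v) = (u, \mathbb{E}(v\mid \mathcal{B}_H))$ whenever $u$ is $\mathcal{B}_H$-measurable, together with the tower property, to replace $\overline{g}$ by its conditional expectation onto $\mathcal{B}_H$. Since $\overline{g}$ is measurable in $\bigvee_{S\neq H}\mathcal{B}_S$, its conditional expectation onto $\mathcal{B}_H$ is in fact measurable in the intersection $\mathcal{B}_H\wedge\bigl(\bigvee_{S\neq H}\mathcal{B}_S\bigr)$, which by Lemma \ref{cubint} equals $\psi_H^{-1}(\mathcal{F}_{d-1})$. Thus both $g_H$ and $\mathbb{E}(\overline{g}\mid\mathcal{B}_H)$ are pulled back along $\psi_H$, and because $\psi_H$ is a measure-preserving homomorphism the pairing descends to a pairing on $\bA$ itself: the integral in (\ref{zeroineq}) equals $\bigl(f_H,\ \overline{\mathbb{E}(\widetilde g\mid\mathcal{F}_{d-1})}\bigr)$ for the appropriate function $\widetilde g\in L_\infty(\bA)$ obtained by pushing $\overline g$ down. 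Here one uses that $\psi_H$ has fibers of constant measure so that conditional expectation onto $\psi_H^{-1}(\mathcal{A})$ and integration commute with $\psi_H$.

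Finally, the assumption $\mathbb{E}(f_H\mid\mathcal{F}_{d-1}) = 0$ forces $f_H$ to be orthogonal to every $\mathcal{F}_{d-1}$-measurable function (this is just $(f_H,h) = (\mathbb{E}(f_H\mid\mathcal{F}_{d-1}),h) = 0$ for $h\in L_2(\mathcal{F}_{d-1})$). Since $\overline{\mathbb{E}(\widetilde g\mid\mathcal{F}_{d-1})}$ is such a function, the pairing vanishes, which is exactly (\ref{zeroineq}).

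The main obstacle I anticipate is the bookkeeping in the second step: verifying carefully that conditional expectation onto the coset $\sigma$-algebra $\mathcal{B}_H = \psi_H^{-1}(\mathcal{A})$ really is computed fiberwise and commutes with the pullback $\psi_H^*$, so that the $(d{+}1)$-variable pairing genuinely collapses to an honest scalar product of two functions on $\bA$ in which $f_H$ appears linearly. Once that identification is in place, Lemma \ref{cubint} does all the work and the orthogonality conclusion is immediate. A minor additional point is ensuring the functions are in $L_\infty$ so that all the products and conditional expectations are well defined and the integrals converge, but this is guaranteed by hypothesis.
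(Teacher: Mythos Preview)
Your approach is correct and uses the same two ingredients as the paper (Lemma~\ref{cubint} and the perpendicularity of the coset $\sigma$-algebra $\mathcal{B}_H$ to the shift-invariant $\sigma$-algebra $\mathcal{B}=\bigvee_{S\neq H}\mathcal{B}_S$), but you project in the opposite direction. The paper conditions $f_H\circ\psi_H$ on $\mathcal{B}$: perpendicularity gives $\mathbb{E}(f_H\circ\psi_H\mid\mathcal{B})=\mathbb{E}(f_H\circ\psi_H\mid\mathcal{B}\cap\mathcal{B}_H)$, Lemma~\ref{cubint} identifies this with $\mathbb{E}(f_H\circ\psi_H\mid\psi_H^{-1}(\mathcal{F}_{d-1}))=\mathbb{E}(f_H\mid\mathcal{F}_{d-1})\circ\psi_H$, and the hypothesis kills this immediately. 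You instead condition the product $\overline g$ on $\mathcal{B}_H$ and then descend to $\bA$. Both routes work; the paper's is shorter because the projection is directly zero and no descent or identification of $\widetilde g$ is needed.

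One genuine gap in your write-up: the sentence ``since $\overline g$ is measurable in $\bigvee_{S\neq H}\mathcal{B}_S$, its conditional expectation onto $\mathcal{B}_H$ is in fact measurable in the intersection'' is not a general fact about $\sigma$-algebras; it is exactly where perpendicularity is required (in its symmetric form, which does hold). You flag the fiberwise computation as the main obstacle, but that step is routine once you know the function is a pullback through $\psi_H$; the step you should justify is the one before it.
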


\begin{proof} Let $\mathcal{B}$ denote the shift invariant $\sigma$-algebra
$$\bigvee_{S\neq H\subseteq [d]}\mathcal{B}_S.$$
We have that the integral (\ref{zeroineq}) can be rewritten in the form

\begin{equation}\label{zeroineq2}
\int_{x,t_1,t_2,\dots,t_d}~~\mathbb{E}(f_H\circ\psi_H|\mathcal{B}\bigr)\prod_{H\neq S\subseteq [d]}f_S\circ\psi_S.
\end{equation}

because the product $\prod_{H\neq S\subseteq [d]}f_S\circ\psi_S$ is measurable in $\mathcal{B}$.

Using lemma \ref{cubint} and that $\psi_H^{-1}(\mathcal{A})$ is perpendicular to $\mathcal{B}$ we obtain that

$$\mathbb{E}(f_H\circ\psi_H|\mathcal{B})=\mathbb{E}(f_H\circ\psi_H|\mathcal{B}\cap\mathcal{B}_H)=$$
$$=\mathbb{E}(f_H\circ\psi_H|\psi_H^{-1}(\mathcal{F}_{d-1}))=
\mathbb{E}(f_H|\mathcal{F}_{d-1})\circ\psi_H=0.$$

This implies that (\ref{zeroineq2}) is zero.

\end{proof}

We now prove a useful lemma.

\begin{lemma}[Mixed term lemma]\label{mixedterm} Let $\{f_S\}_{S\subseteq [d+1]}$ be a system of $L_\infty$ functions on $\bA$. Suppose that there is a collection of shift invariant subsets $\mathcal{H}_1,\mathcal{H}_2,\dots,\mathcal{H}_n$ in $L_\infty$ of $\bA$ such that for every pair $1\leq i<j\leq n$ and functions $g_1\in\mathcal{H}_i,~g_2\in\mathcal{H}_j$ we have $\mathbb{E}(g_1\overline{g_2}|\mathcal{F}_{d-1})=0$. Assume furthermore that $f_S$ is contained in one of these sets for every $S$. Then
$$\mathbb{E}_{x,t_1,t_2,\dots,t_{d+1}}\prod_{S\subseteq [d+1]}(f_S\circ\psi_S)^{c(|S|)}=0$$ whenever the functions $f_S$ are not all contained in the same set. (The operation $c(|S|)$ is the complex conjugation applied $|S|$-times.)
\end{lemma}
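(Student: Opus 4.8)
The plan is to reduce the Mixed term lemma to Corollary \ref{zeroint} by producing, from the hypothesis that the $f_S$ are not all in the same set $\mathcal{H}_i$, a single coordinate subset $H\subseteq[d+1]$ for which $\mathbb{E}(f_H\mid\mathcal{F}_{d-1})=0$. The obstruction is that Corollary \ref{zeroint} is stated for a product of functions on $\bA^{d+1}$ with exponents equal to $1$ (no conjugations), whereas here some factors carry conjugations $c(|S|)$; and moreover the vanishing hypothesis we are handed is a \emph{pairwise} statement about functions in different $\mathcal{H}_i$, not an unconditional statement about one fixed $f_H$. So the first task is bookkeeping: absorb the conjugations, and the second is a pigeonhole-type argument to locate the ``bad'' coordinate set $H$.

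First I would observe that complex conjugation is harmless: replacing $f_S$ by $\overline{f_S}$ when $|S|$ is odd changes neither membership in a shift invariant set $\mathcal{H}_i$ (since $\mathcal{H}_i$ being shift invariant we may and do assume it is closed under conjugation, or else replace each $\mathcal{H}_i$ by $\mathcal{H}_i\cup\overline{\mathcal{H}_i}$ and check the pairwise orthogonality hypothesis is preserved because $\mathbb{E}(g_1\overline{g_2}\mid\mathcal{F}_{d-1})=0$ for all choices of bars), nor the conditional-expectation hypothesis $\mathbb{E}(g_1\overline{g_2}\mid\mathcal{F}_{d-1})=0$. Hence it suffices to prove
$$\mathbb{E}_{x,t_1,\dots,t_{d+1}}\prod_{S\subseteq[d+1]}g_S\circ\psi_S=0,$$
where $g_S=f_S$ or $\overline{f_S}$, under the same hypotheses. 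Now I want to apply Corollary \ref{zeroint} with $d$ replaced by $d+1$, so I need some $H\subseteq[d+1]$ with $\mathbb{E}(g_H\mid\mathcal{F}_{d})=0$ --- wait, Corollary \ref{zeroint} as stated needs $\mathbb{E}(f_H\mid\mathcal{F}_{d-1})=0$ with the cube of dimension $d$; so with a $(d+1)$-dimensional cube it needs $\mathbb{E}(g_H\mid\mathcal{F}_{d})=0$. But our hypothesis only gives information modulo $\mathcal{F}_{d-1}$. This is the real subtlety, and it is resolved by the pairwise structure: pick any two indices $S_1,S_2$ with $g_{S_1}\in\mathcal{H}_i$, $g_{S_2}\in\mathcal{H}_j$, $i\neq j$ (such a pair exists precisely because the $f_S$ are not all in one set). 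Then $\mathbb{E}(g_{S_1}\overline{g_{S_2}}\mid\mathcal{F}_{d-1})=0$, equivalently by Lemma \ref{gort} that $\|g_{S_1}\overline{g_{S_2}}\|_{U_d}=0$, equivalently $g_{S_1}\overline{g_{S_2}}\perp L_2(\mathcal{F}_{d-1})$.

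The key step, then, is to exhibit the desired vanishing as a consequence of this single pairwise relation by a symmetrization/averaging over the $t_i$. Concretely I would group the $2^{d+1}$ subsets of $[d+1]$ into the $2^d$ pairs $\{S,\,S\triangle\{d+1\}\}$ and rewrite the integral over $\bA^{d+2}$ (variables $x,t_1,\dots,t_{d+1}$) by first integrating out $t_{d+1}$. For fixed $x,t_1,\dots,t_d$ this inner integral is
$$\int_{t_{d+1}}\prod_{S\subseteq[d]}\bigl(g_S\circ\psi_S\bigr)(x,t_1,\dots,t_d)\,\bigl(g_{S\cup\{d+1\}}\circ\psi_{S\cup\{d+1\}}\bigr)(x,t_1,\dots,t_d,t_{d+1}),$$
which is exactly a $d$-dimensional Gowers-type inner product of the two ``half-cube'' products; invoking the Cauchy--Schwarz / Gowers inner-product inequality (or directly Corollary \ref{zeroint} applied twice, once to each half-cube after the pairing, using that one of the two half-cubes contains a factor orthogonal to $\mathcal{F}_{d-1}$) forces the whole integral to $0$ as soon as \emph{some} pair contains a ``mixed'' product $g_{S}\overline{g_{S'}}$ with $\mathbb{E}(\cdot\mid\mathcal{F}_{d-1})=0$. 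Since we have arranged that at least one such pair exists, we are done. The hard part will be checking that the chosen pair can always be routed into the Corollary --- i.e. that after relabeling the cube coordinates by the automorphism group mentioned after Lemma \ref{cubint}, the two indices $S_1,S_2$ witnessing non-collapse can be taken to differ exactly in the last coordinate, which follows from transitivity of the cube automorphism group on directed edges together with the fact that the set $\{S : g_S\in\mathcal{H}_i\}$ being a proper nonempty subset of the Boolean lattice, some edge of the cube straddles it.
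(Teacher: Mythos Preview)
Your overall strategy is exactly the paper's: use the connectedness of the Boolean cube to find an edge $\{H,H\cup\{r\}\}$ with $f_H\in\mathcal{H}_i$, $f_{H\cup\{r\}}\in\mathcal{H}_j$, $i\ne j$, relabel so $r=d+1$, pair the factors along that direction, and reduce to Corollary~\ref{zeroint} in dimension $d$. The last paragraph of your proposal states this cleanly.

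Where you go astray is the middle. First, integrating out $t_{d+1}$ with $x,t_1,\dots,t_d$ fixed does \emph{not} produce a $d$-dimensional Gowers inner product, and there is no role for Cauchy--Schwarz or for applying Corollary~\ref{zeroint} ``twice to each half-cube''. The paper does the opposite and much simpler thing: \emph{fix} $t_{d+1}$ and for each $S\subseteq[d]$ set
\[
f'_{S,t_{d+1}}(y)=f_S(y)\,\overline{f_{S\cup\{d+1\}}(y+t_{d+1})}.
\]
Shift invariance of $\mathcal{H}_j$ puts $y\mapsto f_{H\cup\{d+1\}}(y+t_{d+1})$ back in $\mathcal{H}_j$, so the hypothesis gives $\mathbb{E}(f'_{H,t_{d+1}}\mid\mathcal{F}_{d-1})=0$ directly, and a single application of Corollary~\ref{zeroint} (dimension $d$, after absorbing the leftover $c(|S|)$ into each $f'_S$) kills the integral for every $t_{d+1}$. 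Second, your conjugation manoeuvre---enlarging each $\mathcal{H}_i$ to $\mathcal{H}_i\cup\overline{\mathcal{H}_i}$---is unnecessary and does not obviously preserve the hypothesis (you would need $\mathbb{E}(h_1h_2\mid\mathcal{F}_{d-1})=0$, not just $\mathbb{E}(h_1\overline{h_2}\mid\mathcal{F}_{d-1})=0$). The point is that the two members of an edge $\{S,S\cup\{d+1\}\}$ have opposite parities of $|S|$, so the conjugation pattern automatically produces $f_S\overline{f_{S\cup\{d+1\}}}$ (or its conjugate), which is exactly what the hypothesis controls; and conditional expectation commutes with complex conjugation, so the residual $c(|S|)$ is harmless.
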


\begin{proof} If the functions $f_S$ are not all contained in the same set then there is an element $r\in [d+1]$ and $H\subseteq [d+1]$ with $r\notin H$ such that $f_H$ and $f_{H\cup\{r\}}$ are not in the same set. Without loss of generality we assume that $r=d+1$.
Now for $S\subseteq [d]$ and fixed value of $t_{d+1}$ let $x\rightarrow f'_{S,t_{d+1}}(x)$ denote the function $f_S(x)\overline{f_{S\cup\{d+1\}}(x+t_{d+1})}$. Using the shift invariance of the sets we have that $\mathbb{E}(f'_{H,t_{d+1}}|\mathcal{F}_{d-1})=0$. Using lemma \ref{zeroint} for this system of functions we obtain that
$$\mathbb{E}_{x,t_1,t_2,\dots,t_d}\prod_{S\subseteq [d]}(f'_{S,t_{d+1}}\circ\psi_S)^{c(|S|)}=0$$
for every fixed $t_{d+1}$. This completes the proof.
\end{proof}

\begin{lemma}\label{twofunct} Let $f,g$ be $L_\infty(\mathcal{A})$ functions such that $f=f_1+f_2$ where $f_1=\mathbb{E}(f|\mathcal{F}_{d-1})$ and $g=g_1+g_2$ where $g_1=\mathbb{E}(g|\mathcal{F}_{d-1})$. Then
\begin{equation}\label{twofun} \int_t\|f(x)\overline{g(x+t)}\|^{2^d}_{U_d}=\int_t\|f_1(x)\overline{g_1(x+t)}\|^{2^d}_{U_d}+\|f_2(x)\overline{g_2(x+t)}\|^{2^d}_{U_d}.
\end{equation}
\end{lemma}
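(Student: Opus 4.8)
The plan is to expand the left-hand side using the decompositions $f=f_1+f_2$, $g=g_1+g_2$ and to show that every ``mixed'' term in the resulting sum vanishes, leaving only the two ``pure'' terms on the right. First I would expand the product
$$f(x)\overline{g(x+t)} = f_1(x)\overline{g_1(x+t)} + f_1(x)\overline{g_2(x+t)} + f_2(x)\overline{g_1(x+t)} + f_2(x)\overline{g_2(x+t)}.$$
Write $u_t(x)$ for this sum and let $u^{(1)}_t,u^{(2)}_t,u^{(3)}_t,u^{(4)}_t$ be its four summands. Now $\|u_t\|_{U_d}^{2^d}$ is, by definition, the integral over $x$ and $s_1,\dots,s_d$ of $\prod_{S\subseteq[d]}(u_t\circ\psi_S)^{c(|S|)}$ (where $\psi_S$ acts on the variables $x,s_1,\dots,s_d$). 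Expanding each factor $u_t\circ\psi_S$ into its four pieces, $\|u_t\|_{U_d}^{2^d}$ becomes a sum of $4^{2^d}$ integrals, each of which is a Gowers-type integral of a system $\{h_S\circ\psi_S\}_{S\subseteq[d]}$ where each $h_S$ is one of $u^{(1)}_t,\dots,u^{(4)}_t$.

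The key step is to kill all mixed terms by applying the Mixed term lemma (Lemma \ref{mixedterm}), or rather its one-dimensional-lower analogue built from Corollary \ref{zeroint}. The relevant shift-invariant classes are obtained from a partition of the four functions $u^{(i)}_t$ according to which rank-one-module ``type'' the factors $f_a$ and $g_b$ come from; concretely, one wants two shift-invariant subsets $\mathcal{H},\mathcal{H}'$ of $L_\infty(\bA)$ such that $f_1\overline{g_1}$-type products land in a class that is $\mathcal{F}_{d-1}$-orthogonal to the classes containing $f_1\overline{g_2}$, $f_2\overline{g_1}$, $f_2\overline{g_2}$. The cleanest route: note $f_2=\mathbb{E}(f|\mathcal{F}_{d-1})^\perp$ has $\|f_2\|_{U_d}$-relevant content orthogonal to $L_2(\mathcal{F}_{d-1})$, so by Lemma \ref{gort} any product of an $\mathcal{F}_{d-1}$-measurable function with an $\mathcal{F}_{d-1}$-orthogonal function has vanishing conditional expectation onto $\mathcal{F}_{d-1}$. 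For a mixed monomial, some factor $h_H\circ\psi_H$ has $\mathbb{E}(h_H|\mathcal{F}_{d-1})=0$: indeed if the monomial is not pure, then for the coordinate $x$ (the $S=\emptyset$ slot) or for some singleton slot, the corresponding factor involves $f_2$ paired against $g_1$, or $f_1$ against $g_2$, which after absorbing the $\mathcal{F}_{d-1}$-measurable factor is $\mathcal{F}_{d-1}$-orthogonal; then Corollary \ref{zeroint} (applied to the $d$-cube in variables $x,s_1,\dots,s_d$, with $H$ that distinguished slot) forces that integral to be $0$, for every fixed $t$.

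Once every mixed term integrates to zero for each fixed $t$, integrating over $t$ leaves only
$$\int_t\|u_t\|_{U_d}^{2^d}\,dt = \int_t\|u^{(1)}_t\|_{U_d}^{2^d}\,dt + \int_t\|u^{(4)}_t\|_{U_d}^{2^d}\,dt,$$
which is exactly the right-hand side of (\ref{twofun}); note the two surviving ``pure'' integrals are genuinely $\ge 0$ since they equal $\|f_1(x)\overline{g_1(x+t)}\|_{U_d}^{2^d}$ and $\|f_2(x)\overline{g_2(x+t)}\|_{U_d}^{2^d}$ respectively, each a $2^d$-th power of a seminorm. The main obstacle I anticipate is bookkeeping: one has to check that in \emph{every} non-pure monomial there is at least one slot $H\subseteq[d]$ (including possibly $H=\emptyset$) at which the chosen function, after pulling out its $\mathcal{F}_{d-1}$-measurable part, is $\mathcal{F}_{d-1}$-orthogonal, so that Corollary \ref{zeroint} genuinely applies; this requires being careful that the ``mixedness'' of the $t$-coupled pair $(f_a,g_b)$ propagates to at least one $\psi_S$-slot rather than cancelling out. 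Organizing the four functions $u^{(i)}_t$ into the two shift-invariant classes ``products whose left factor is $f_1$ and right is $g_1$'' versus ``the rest'' and invoking Lemma \ref{mixedterm} directly (after the substitution absorbing the $t_{d+1}$-style variable — here it is the parameter $t$) is the slick way to package this, and is the form I would write up.
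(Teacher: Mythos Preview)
Your overall plan---expand and kill mixed terms---is the same as the paper's, but the way you set up the cube and the two classes has a genuine gap.

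The problem is the monomials that mix only $u^{(1)}_t=f_1\overline{g_1(\cdot+t)}$ and $u^{(4)}_t=f_2\overline{g_2(\cdot+t)}$. For these, neither factor at any slot of the $d$-cube has vanishing $\mathcal{F}_{d-1}$-conditional expectation: $u^{(1)}_t$ is $\mathcal{F}_{d-1}$-measurable, and $u^{(4)}_t$, being a product of two $\mathcal{F}_{d-1}$-orthogonal functions, typically has a nontrivial $\mathcal{F}_{d-1}$-component (think of $\Delta_t f_2$ when $f=g$). So Corollary~\ref{zeroint} on the $d$-cube, for fixed $t$, does not apply. Your fallback partition ``$u^{(1)}$ versus the rest'' does not rescue this: the hypothesis of Lemma~\ref{mixedterm} requires $\mathbb{E}(h_1\overline{h_2}\mid\mathcal{F}_{d-1})=0$ for $h_1$ a shift of $u^{(1)}_t$ and $h_2$ a shift of $u^{(4)}_{t}$, and this fails in general; moreover, even if the lemma applied, the surviving ``all rest'' term would be $\int_t\|u^{(2)}_t+u^{(3)}_t+u^{(4)}_t\|_{U_d}^{2^d}$, not $\int_t\|u^{(4)}_t\|_{U_d}^{2^d}$.

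The paper avoids this by passing to the $(d+1)$-cube \emph{before} pairing anything: set $t=t_{d+1}$ and write
\[
\int_t\|f(x)\overline{g(x+t)}\|_{U_d}^{2^d}
=\mathbb{E}_{x,t_1,\dots,t_{d+1}}\prod_{S\subseteq[d+1]}(f_S\circ\psi_S)^{c(|S|)},
\]
with $f_S=f$ when $d+1\notin S$ and $f_S=g$ when $d+1\in S$. Now each vertex carries a single function $f$ or $g$, and after substituting $f=f_1+f_2$, $g=g_1+g_2$ one gets $2^{2^{d+1}}$ terms indexed by a choice of subscript $1$ or $2$ at every vertex. Apply Lemma~\ref{mixedterm} with $\mathcal{H}_1=L_\infty(\mathcal{F}_{d-1})$ (containing $f_1,g_1$) and $\mathcal{H}_2$ its orthogonal complement (containing $f_2,g_2$); the orthogonality hypothesis $\mathbb{E}(h_1\overline{h_2}\mid\mathcal{F}_{d-1})=0$ is immediate since $h_1$ pulls out of the conditional expectation. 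Every term in which both subscripts occur dies, leaving exactly the all-$1$ term and the all-$2$ term, which are the two summands on the right of~(\ref{twofun}). The key point you are missing is that the correct two classes live at the level of the individual functions $f_a,g_b$ on the $(d+1)$-cube, not at the level of the paired products $u^{(i)}_t$ on the $d$-cube.
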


\begin{proof}
\begin{equation}\label{twofunct1}
\int_t\|f(x)\overline{g(x+t)}\|^{2^d}_{U_d}=\mathbb{E}_{x,t_1,t_2,\dots,t_{d+1}}\prod_{S\in [d+1]}(f_S\circ\psi_S)^{c(|S|)}
\end{equation}
Where $f_S=f$ whenever $d+1\notin S$, $f_S=g$ whenever $d+1\in S$ and $c(|S|)$ is the complex conjugation applied $|S|$-times.
By substituting $f=f_1+f_2$ and $g=g_1+g_2$ into (\ref{twofunct1}) we get $2^{2^{d+1}}$ terms of the form
\begin{equation}\label{twofunct2}
\mathbb{E}_{x,t_1,t_2,\dots,t_{d+1}}\prod_{S\in [d+1]}(f'_S\circ\psi_S)^{c(|S|)}
\end{equation}
where $f'_S$ is either $f_1$ or $f_2$ if $d+1\notin S$ and is ether $g_1$ or $g_2$ if $d+1\in S$.
We call such a term ``mixed'' if both numbers $1$ and $2$ occur in the indices.

Obviously it is enough to show that mixed terms are $0$.
Using the two shift invariant Hilbert spaces $L_2(\mathbb{F}_{d-1})$ and its orthogonal space the previous ``mixed term lemma'' finishes the proof.
\end{proof}

\begin{corollary}\label{nozer} Let $f,g$ be $L_\infty(\mathcal{A})$ functions such that non of $\mathbb{E}(f|\mathcal{F}_{d-1})$ and $\mathbb{E}(g|\mathcal{F}_{d-1})$ is the $0$ function. Then the two variable function $h(x,t)=\mathbb{E}_x(f(x)\overline{g(x+t)}|\mathcal{F}_{d-1})$ is not the $0$ function.
\end{corollary}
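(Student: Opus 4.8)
The plan is to argue by contradiction: assuming that $h$ is the zero function, I will deduce that one of $\mathbb{E}(f|\mathcal{F}_{d-1})$ and $\mathbb{E}(g|\mathcal{F}_{d-1})$ vanishes, contradicting the hypothesis. The first step is to recast the statement ``$h\equiv 0$'' in terms of Gowers norms. For a fixed $t$ the slice $x\mapsto h(x,t)$ is by definition $\mathbb{E}_x(f(x)\overline{g(x+t)}|\mathcal{F}_{d-1})$, and by Lemma \ref{normchar} (equivalently Lemma \ref{gort}) this conditional expectation is $0$ if and only if $\|f(x)\overline{g(x+t)}\|_{U_d}=0$. By Fubini on $\bA\times\bA$, $h$ vanishes almost everywhere precisely when this holds for almost every $t$, i.e. precisely when $\int_t\|f(x)\overline{g(x+t)}\|^{2^d}_{U_d}=0$.

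Now I invoke Lemma \ref{twofunct}. Write $f=f_1+f_2$ and $g=g_1+g_2$ with $f_1=\mathbb{E}(f|\mathcal{F}_{d-1})$ and $g_1=\mathbb{E}(g|\mathcal{F}_{d-1})$. The right-hand side of (\ref{twofun}) is a sum of two non-negative quantities, so the vanishing of the left-hand side forces each of them to be $0$; in particular $\int_t\|f_1(x)\overline{g_1(x+t)}\|^{2^d}_{U_d}=0$, hence $\|f_1(x)\overline{g_1(x+t)}\|_{U_d}=0$ for almost every $t$. Because $\mathcal{F}_{d-1}$ is shift invariant, the function $x\mapsto f_1(x)\overline{g_1(x+t)}$ lies in $L_\infty(\mathcal{F}_{d-1})$, on which $U_d$ is a genuine norm by Lemma \ref{normchar}; therefore $f_1(x)\overline{g_1(x+t)}=0$ for almost every pair $(x,t)$. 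Applying Fubini once more, for almost every $x$ with $f_1(x)\neq 0$ we would have $g_1(x+t)=0$ for almost every $t$, i.e. $g_1$ would be the zero function; thus either $f_1\equiv 0$ or $g_1\equiv 0$, which is the desired contradiction.

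I do not expect a real obstacle here beyond having Lemma \ref{twofunct} available; the only points requiring care are measure-theoretic bookkeeping — using Fubini on the product space $\bA\times\bA$, using shift invariance of $\mathcal{F}_{d-1}$ to place $f_1(x)\overline{g_1(x+t)}$ inside $L_\infty(\mathcal{F}_{d-1})$ so that a vanishing $U_d$-seminorm actually forces the function to be zero, and noting that the two terms on the right of (\ref{twofun}) are separately non-negative.
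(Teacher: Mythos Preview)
Your proof is correct and follows essentially the same route as the paper: both arguments hinge on Lemma~\ref{twofunct} and on the fact that $U_d$ is a genuine norm on $L_\infty(\mathcal{F}_{d-1})$. The paper argues directly (the supports of $f_1$ and a shift of $g_1$ overlap for a positive measure set of $t$, hence the right-hand side of (\ref{twofun}) is nonzero), whereas you run the contrapositive; the content is the same.
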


\begin{proof} The support of both $f_1=\mathbb{E}(f|\mathcal{F}_{d-1})$ and $g_1=\mathbb{E}(g|\mathcal{F}_{d-1})$ has positive measure. This means that for a positive measure set of $t$'s the supports of $f_1$ and $g_1(x+t)$ intersect each other in a positive measure set. Since $U_d$ is a norm on $L_\infty(\mathcal{F}_{d-1})$ we get that in (\ref{twofun}) the right hand side is not $0$. By lemma \ref{twofunct} it means that for a positive measure set of $t$'s $\|f(x)\overline{g(x+t)}\|_{U_d}\neq 0$. This means that $h(x,t)$ is not the $0$ function.
\end{proof}

\subsection{Gowers's norms in terms of character decomposition}

The main result of this chapter is an additivity theorem for the $2^k$-th power of the $U_k$ norm.

\begin{theorem}[Additivity] Let $\phi$ be an $L_\infty$ function with a decomposition
$\phi=\phi_1+\phi_2+\phi_3+\dots$ converging in $L_2$. Assume that $\mathbb{E}(\phi_i(x+t)\overline{\phi_j(x)}|\mathcal{F}_{k-2})=0$ for every pair $i\neq j$ and $t\in\bA$. Then
$$\|\phi\|_{U_k}^{2^k}=\sum_{i=1}^\infty \|\phi_i\|_{U_k}^{2^k}$$
or equivalently
$$\mathbb{E}(\Delta_{t_1,t_2,\dots,t_k}\phi(x))=\sum_{i=1}^\infty\mathbb{E}(\Delta_{t_1,t_2,\dots,t_k}\phi_i(x))$$
where $t_1,t_2,\dots,t_k$ and $x$ are chosen uniformly at random.
\end{theorem}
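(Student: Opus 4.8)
The plan is to write $\|\phi\|_{U_k}^{2^k}$ as a cube inner product over the Boolean lattice $2^{[k]}$, substitute $\phi=\sum_i\phi_i$, and annihilate every off-diagonal contribution with the Mixed Term Lemma (Lemma \ref{mixedterm}); the only point that is not purely formal, the passage from the finite to the infinite sum, is then settled by a one-line $L_1$-estimate.

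Writing $\psi_S(x,t_1,\dots,t_k)=x+\sum_{i\in S}t_i$ as in the section on cubic structure, recall that
$$\|\phi\|_{U_k}^{2^k}=\mathbb{E}_{x,t_1,\dots,t_k}\Delta_{t_1,\dots,t_k}\phi(x)=\mathbb{E}_{x,t_1,\dots,t_k}\prod_{S\subseteq[k]}(\phi\circ\psi_S)^{c(|S|)},$$
and that the right-hand side is defined, and multilinear (with conjugates), when an arbitrary bounded function is inserted in each slot $S$. The key observation is that the present hypothesis is exactly the hypothesis of Lemma \ref{mixedterm} with $d+1=k$, so that the $\sigma$-algebra occurring there is $\mathcal{F}_{d-1}=\mathcal{F}_{k-2}$. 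Concretely, for each index $i$ let $\mathcal{H}_i$ be the shift-invariant set $\{x\mapsto\phi_i(x+s):s\in\bA\}\cup\{0\}$. Using that $\mathcal{F}_{k-2}$ and the measure are shift invariant, the assumption $\mathbb{E}(\phi_i(x+t)\overline{\phi_j(x)}\mid\mathcal{F}_{k-2})=0$ for $i\neq j$ (which, via Lemma \ref{ort2}, just says that the $\phi_i$ sit in distinct rank one modules) becomes $\mathbb{E}(g_1\overline{g_2}\mid\mathcal{F}_{k-2})=0$ for all $g_1\in\mathcal{H}_i$, $g_2\in\mathcal{H}_j$ with $i\neq j$, which is precisely what Lemma \ref{mixedterm} needs. (Here I use that the $\phi_i$ are bounded; for the $(k-1)$-st order Fourier decomposition this is part of Theorem \ref{dec1}, and in general we may assume it.)

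Fix $N$ and apply this to the \emph{finite} decomposition $\phi=\phi_1+\dots+\phi_N+r_N$, where $r_N:=\phi-\sum_{i\le N}\phi_i\in L_\infty(\bA)$; use the classes $\mathcal{H}_1,\dots,\mathcal{H}_N$ together with one extra class $\mathcal{H}_{N+1}$ consisting of the shifts of $r_N$ (its cross-orthogonality against the $\mathcal{H}_i$, $i\le N$, follows from the hypothesis because $\mathbb{E}(\cdot\mid\mathcal{F}_{k-2})$ and multiplication by a bounded function are $L_1$-continuous and $r_N$ is an $L_2$-limit of partial sums of the $\phi_j$). Expanding each occurrence of $\phi$ in $\prod_S\phi\circ\psi_S$ and applying Lemma \ref{mixedterm}, every term in which two distinct classes occur vanishes, so only the monochromatic terms survive and
$$\|\phi\|_{U_k}^{2^k}=\sum_{i=1}^N\|\phi_i\|_{U_k}^{2^k}+\|r_N\|_{U_k}^{2^k}.$$
In particular $\sum_{i=1}^\infty\|\phi_i\|_{U_k}^{2^k}\le\|\phi\|_{U_k}^{2^k}<\infty$, and it remains only to show $\|r_N\|_{U_k}\to0$. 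For this, run the same expansion on the inner product that has $r_N$ in the slot $S=\emptyset$ and $\phi$ in each of the other $2^k-1$ slots: expanding those copies of $\phi$ as $\sum_{i\le N}\phi_i+r_N$ and using Lemma \ref{mixedterm} once more forces all slots to carry $r_N$, whence
$$\|r_N\|_{U_k}^{2^k}=\mathbb{E}_{x,t_1,\dots,t_k}\;r_N(x)\prod_{\emptyset\neq S\subseteq[k]}(\phi\circ\psi_S)^{c(|S|)}.$$
Each of the $2^k-1$ factors in the product has modulus at most $\|\phi\|_\infty$, so the right-hand side is bounded by $\|\phi\|_\infty^{2^k-1}\mathbb{E}_x|r_N(x)|=\|\phi\|_\infty^{2^k-1}\|r_N\|_1\le\|\phi\|_\infty^{2^k-1}\|r_N\|_2$, which tends to $0$ since $r_N\to0$ in $L_2$. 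Letting $N\to\infty$ in the displayed identity gives $\|\phi\|_{U_k}^{2^k}=\sum_{i=1}^\infty\|\phi_i\|_{U_k}^{2^k}$, and the ``equivalently'' form is just this written out through the definition of the Gowers norm.

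The step I expect to be the crux is the reduction to Lemma \ref{mixedterm}: getting the index bookkeeping right so that the $\sigma$-algebra in that lemma is exactly $\mathcal{F}_{k-2}$, and choosing the shift-invariant classes $\mathcal{H}_i$ (including the auxiliary $\mathcal{H}_{N+1}$ used for the tail) so that the present cross-orthogonality hypothesis lines up with the one in Lemma \ref{mixedterm}. The cube expansion, the multilinearity, and the $L_1$-bound forcing the tail to vanish should then be routine.
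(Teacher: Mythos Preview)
Your argument is correct and follows the paper's route exactly: expand the Gowers cube, set $\mathcal{H}_i=\{x\mapsto\phi_i(x+t):t\in\bA\}$, and kill every non-monochromatic term with the Mixed Term Lemma (applied with $d+1=k$, so the relevant $\sigma$-algebra is indeed $\mathcal{F}_{k-2}$). Your handling of the passage to the infinite sum---rewriting $\|r_N\|_{U_k}^{2^k}$ as the cube inner product with $\phi$ in every slot but one and then estimating by $\|\phi\|_\infty^{2^k-1}\|r_N\|_1$---is in fact more careful than the paper's one-line reduction to finite sums via ``$\|\cdot\|_{U_k}\le\|\cdot\|_2$''.
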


\begin{proof} First of all note that the series $\phi_1+\phi_2+\dots$ converges in $L_2$. Since the $L_2$ norm is an upper bound on the Gowers norm it is enough to verify the statement for the case when this sum is finite. Let us assume that $\phi=\phi_1+\phi_2+\dots+\phi_n$.
We have that
$$\Delta_{t_1,t_2,\dots,t_k}\phi(x)=\prod_{S\subseteq [k]}\phi\bigl(x+\sum_{i\in S}t_i\bigr)^{c(|S|)}=\sum_{f:2^{[k]}\rightarrow [n]}Q_f(x,t_1,t_2,\dots,t_k)$$
where
$$
Q_f(x,t_1,t_2,\dots,t_k)=\prod_{S\subseteq [k]}\phi_{f(S)}\bigl(x+\sum_{i\in S}t_i\bigr)^{c(|S|)}.
$$
where $c(r)$ is $r$-th power of the conjugation operation on $\mathbb{C}$.
If $f$ is not a constant function then we say that $Q_f$ is a mixed term. We denote by $\mathcal{H}_i$ the set of functions $\{x\rightarrow \phi_i(x+t)|t\in\bA\}$. The mixed term lemma \ref{mixedterm} applied to these sets shows that the integrals of mixed terms are all
zero. This finishes the proof.
\end{proof}

\begin{corollary} Let $f$ be an $L_\infty$ function with $k-1$-order Fourier decomposition
$f=g+f_1+f_2+f_3+\dots$ where $f'=f_1+f_2+\dots=\mathbb{E}(f|\mathcal{F}_{k-1})$. Then
$$\|f\|_{U_k}^{2^k}=\|f'\|_{U_k}^{2^k}=\sum_{i=1}^\infty \|f_i\|_{U_k}^{2^k}.$$
\end{corollary}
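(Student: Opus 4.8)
The plan is to deduce this corollary directly from the Additivity theorem just proved, so the only real work is to check that the $k-1$-order Fourier components $f_i$ of $f$ satisfy the orthogonality hypothesis of that theorem, namely that $\mathbb{E}(f_i(x+t)\overline{f_j(x)}\mid\mathcal{F}_{k-2})=0$ for $i\neq j$ and all $t\in\bA$. First I would recall that by Theorem~\ref{dec1} each $f_i$ is of the form $\phi_i g_i$ where $\phi_i$ is a $(k-1)$-th order character and $g_i=|f_i|\in\mathcal{F}_{k-2}$; moreover distinct $f_i,f_j$ lie in distinct rank one modules over $L_\infty(\mathcal{F}_{k-2})$. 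Applying Lemma~\ref{ort2} at level $k-1$ (with ``$k$'' there replaced by $k-1$), $f_i$ and $f_j$ are $\mathcal{F}_{k-2}$-orthogonal, i.e.\ $\mathbb{E}(f_i\overline{f_j}\mid\mathcal{F}_{k-2})=0$. To get the shifted version with an arbitrary $t$, I would argue that the shifted function $x\mapsto f_i(x+t)$ still lies in the same rank one module as $f_i$ (the modules are shift invariant, as stated in the Fundamental theorem), and likewise $x\mapsto f_j(x+t)$ lies in the module of $f_j$; hence $x\mapsto f_i(x+t)$ and $x\mapsto f_j(x+t)$ are again in distinct rank one modules, and applying Lemma~\ref{ort2} once more (after an obvious substitution $x\mapsto x-t$, which is measure preserving and sends $\mathcal{F}_{k-2}$ to itself) gives $\mathbb{E}(f_i(x+t)\overline{f_j(x)}\mid\mathcal{F}_{k-2})=0$ for every $t$.

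With the hypothesis verified, the Additivity theorem applied to the decomposition $f'=f_1+f_2+\dots$ (at level $k$) yields immediately $\|f'\|_{U_k}^{2^k}=\sum_i\|f_i\|_{U_k}^{2^k}$. It remains to identify $\|f\|_{U_k}$ with $\|f'\|_{U_k}$; this is exactly the Norm characterization Lemma~\ref{normchar} together with the fact that $f'=\mathbb{E}(f|\mathcal{F}_{k-1})$, which is part of Theorem~\ref{dec1}: one has $\|f\|_{U_k}=\|\mathbb{E}(f|\mathcal{F}_{k-1})\|_{U_k}=\|f'\|_{U_k}$. Chaining the two equalities gives the displayed formula.

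I expect no serious obstacle here; this is a short bookkeeping argument. The one point requiring a little care is the reindexing of ``level'': the Additivity theorem is stated with a parameter $k$ and needs the hypothesis at level $\mathcal{F}_{k-2}$, while the $(k-1)$-order decomposition of $f$ naturally produces characters and modules relative to $\mathcal{F}_{k-2}$ — so the levels match up exactly, but one should state this explicitly rather than leave the reader to chase indices. A secondary subtlety is the passage from the unshifted $\mathcal{F}_{k-2}$-orthogonality to the shifted version; invoking shift invariance of the rank one modules handles it cleanly, but it is worth spelling out since the Additivity hypothesis genuinely quantifies over all $t$.
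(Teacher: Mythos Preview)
Your proposal is correct and matches the paper's intent: the paper states this as an immediate corollary of the Additivity theorem with no separate proof, and the details you supply---verifying the $\mathcal{F}_{k-2}$-orthogonality hypothesis via Lemma~\ref{ort2} and the shift invariance of the rank one modules, then invoking Lemma~\ref{normchar} for the first equality---are exactly what is implicitly being used. One minor simplification: once you note that $x\mapsto f_i(x+t)$ lies in the same (shift invariant) module as $f_i$, Lemma~\ref{ort2} applied directly to $f_i(\cdot+t)$ and $f_j$ already gives $\mathbb{E}(f_i(x+t)\overline{f_j(x)}\mid\mathcal{F}_{k-2})=0$, so the extra substitution $x\mapsto x-t$ is unnecessary.
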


\section{Higher order dual groups}

Let $\hat{\bA}_k$ denote the set of rank $1$ modules over $L_\infty(\mathcal{F}_{k-1},\bm)$.
The next lemma says that rank one modules are forming an abelian group. This fact is crucial
for higher order Fourier analysis.

\begin{lemma} The set $\hat{\bA}_k$ is an Abelian group with respect to point wise multiplication on $\bA$.
\end{lemma}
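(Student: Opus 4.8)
The plan is to verify the group axioms for $\hat{\bA}_k$ under pointwise multiplication, where the product of two rank one modules is defined as the module generated by a pointwise product of generating characters. The main conceptual points are: (i) pointwise multiplication of $k$-th order characters again yields a $k$-th order character, so the operation makes sense; (ii) the resulting module does not depend on the choice of generators; (iii) the trivial module $L_\infty(\mathcal{F}_{k-1})$ (generated by the constant function $1$) is the identity; and (iv) inverses exist, given by complex conjugation.

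First I would recall from Theorem \ref{fundth} (and Theorem \ref{dec1}(3)) that every rank one module $N$ over $L_\infty(\mathcal{F}_{k-1})$ has a generating element $\phi:\bA\to\mathcal{C}$ with $\Delta_t\phi\in\mathcal{F}_{k-1}$ for all $t$, and that two such generators $\phi,\phi'$ of the same module differ by multiplication by an element of $L_\infty(\mathcal{F}_{k-1})$ of absolute value $1$. Given modules $N_1,N_2$ with characters $\phi_1,\phi_2$, I define $N_1 N_2$ to be the closed module generated by $\phi_1\phi_2$. The function $\phi_1\phi_2$ maps into $\mathcal{C}$, and $\Delta_t(\phi_1\phi_2)(x) = \Delta_t\phi_1(x)\,\Delta_t\phi_2(x)$, which is a product of two $\mathcal{F}_{k-1}$-measurable functions, hence $\mathcal{F}_{k-1}$-measurable; so $\phi_1\phi_2$ is a $k$-th order character and generates a genuine rank one module. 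Well-definedness: if $\phi_1' = u_1\phi_1$ and $\phi_2' = u_2\phi_2$ with $u_1,u_2\in L_\infty(\mathcal{F}_{k-1})$, $|u_1|=|u_2|=1$, then $\phi_1'\phi_2' = (u_1 u_2)(\phi_1\phi_2)$ with $u_1u_2\in L_\infty(\mathcal{F}_{k-1})$ of absolute value $1$, so $\phi_1'\phi_2'$ generates the same module as $\phi_1\phi_2$. Commutativity and associativity are inherited from pointwise multiplication of the generators. The identity is the module generated by the constant $1$, i.e.\ $L_\infty(\mathcal{F}_{k-1})$ itself, since $\phi\cdot 1 = \phi$; and the inverse of the module generated by $\phi$ is the module generated by $\overline{\phi}$, because $\phi\overline{\phi} = 1$ (as $|\phi|=1$ pointwise), so their product is the identity module.

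The step I expect to require the most care is making precise that the product module is again rank one and that the operation is well-defined at the level of the $L_2$-closures rather than just the generators — in particular checking that $\phi_1\phi_2$ is not accidentally measurable in $\mathcal{F}_{k-1}$ unless both $\phi_1$ and $\phi_2$ are (so that the "rank one" status is genuinely a property of the product), and that the module depends only on $N_1$ and $N_2$. This is essentially the content already isolated in Lemma \ref{ort1} and Lemma \ref{ort2}: a $k$-th order character is either in $\mathcal{F}_{k-1}$ or generates a nontrivial rank one module, and the relevant orthogonality properties pass to products. So I would invoke Lemma \ref{ort1} applied to $h = \phi_1\phi_2$ to conclude that the module it generates is well-defined and rank one, exactly as in the proof of Lemma \ref{ort2}. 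The remaining verifications are then the routine group-axiom bookkeeping sketched above.
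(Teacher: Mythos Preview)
Your proposal is correct and follows essentially the same route as the paper: show that the product of two $k$-th order characters is again a $k$-th order character via $\Delta_t(\phi_1\phi_2)=\Delta_t\phi_1\,\Delta_t\phi_2\in\mathcal{F}_{k-1}$, check well-definedness by noting that other generators differ by unimodular $\mathcal{F}_{k-1}$-functions, and take $\overline{\phi}$ for the inverse. One remark: your final concern about $\phi_1\phi_2$ being ``accidentally'' measurable in $\mathcal{F}_{k-1}$ is unnecessary --- if that happens, the product is simply the identity element (the trivial module $L_\infty(\mathcal{F}_{k-1})$ is itself a rank one module), which is exactly what should occur when $M_2=M_1^{-1}$; no appeal to Lemma~\ref{ort1} is needed here.
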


\begin{proof} Let $M_1$ and $M_2$ be two rank one modules over $L_\infty(\mathcal{F}_{k-1})$ and let $\phi_1\in M_1, \phi_2\in M_2$ be two $k$-th order characters. The product $\phi=\phi_1\phi_2$ satisfies $\Delta_t\phi=\Delta_t\phi_1\Delta_t\phi_2\in L_\infty(\mathcal{F}_{k-1})$ for every $t\in\bA$ and so it is contained in a rank one module $M_3$.
Any other functions $f_1\in M_1$ and $f_2\in M_2$ are of the form $f_1=\phi_1\lambda_1,~f_2=\phi_2\lambda_2$ for some functions $\lambda_1,\lambda_2\in L_\infty(\mathcal{F}_{k-1})$. This implies that $f_1f_2$ is in $M_3$.
Associativity and commutativity is clear. The inverse of $M_1$ is generated by $\overline{\phi_1}$
\end{proof}

\begin{definition} The abelian group structure on $\hat{\bA}_k$ is called the {\bf $k$-th order dual group} of $\bA$.
\end{definition}

The next theorem is the analogy of the fact from harmonic analysis that point wise multiplication of functions correspond to convolution in the dual language.

\begin{theorem}\label{highconv} Let $f,g$ be two functions in $L_\infty(\mathcal{F}_k)$. Assume for an arbitrary $a\in\hat{\bA}_k$ we denote the component of $f$ and $g$ in $a$ by $f_a$ and $g_a$. Then the component of $fg$ in $c\in\hat{\bA}_k$ is equal to
$$\sum_{a+b=c}f_ag_b$$
where the above sum has only countable many non zero term and the sum is convergent in $L_2$.
\end{theorem}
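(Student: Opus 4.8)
The plan is to read off the $c$-component of $fg$ directly from the Fundamental Theorem. Since $\mathcal{F}_k$ is a $\sigma$-algebra and $f,g\in L_\infty$, the product $fg$ belongs to $L_\infty(\mathcal{F}_k)$, so by Theorem \ref{dec1} it has a well-defined $k$-th order decomposition; let $P_c$ denote the orthogonal projection of $L_2(\mathcal{F}_k,\bm)$ onto the $L_2$-closure of the rank one module $c\in\hat{\bA}_k$, so that the component of $fg$ in $c$ is $P_c(fg)$. Fix once and for all a $k$-th order character $\phi_a$ generating each module $a$ that occurs, and write the components as $f_a=\phi_a\alpha_a$ and $g_b=\phi_b\beta_b$ with $\alpha_a,\beta_b\in L_\infty(\mathcal{F}_{k-1})$ (part 3 of Theorem \ref{dec1}). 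The only structural input is that $\phi_a\phi_b$ is again a $k$-th order character and generates the module $a+b$ (the Abelian group structure on $\hat{\bA}_k$); hence $f_ag_b=(\phi_a\phi_b)(\alpha_a\beta_b)$ lies in module $a+b$.

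First I would prove the single-index identity $P_c(f_a g)=f_a\,g_{c-a}$ for each fixed $a$. The function $f_a\,g_{c-a}$ lies in module $c$ by the previous paragraph, so it is enough to show that $f_a g-f_a\,g_{c-a}=f_a(g-g_{c-a})$ is orthogonal to module $c$. Now $g-g_{c-a}$ is the $L_2$-sum of the components of $g$ in the modules $b\ne c-a$, hence it is orthogonal to module $c-a$. It remains to observe that multiplication by $f_a=\phi_a\alpha_a$ sends any $h\in L_2(\mathcal{F}_k)$ orthogonal to a module $M$ to a function orthogonal to module $M+a$: for a generating character $\psi$ of module $M+a$ and any $\lambda\in L_\infty(\mathcal{F}_{k-1})$ we have $\langle f_a h,\psi\lambda\rangle=\langle h,\overline{f_a}\psi\lambda\rangle$, and $\overline{f_a}\psi\lambda=(\overline{\phi_a}\psi)(\overline{\alpha_a}\lambda)$ lies in module $M$, so the pairing vanishes; and functions $\psi\lambda$ are dense in module $M+a$. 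Applying this with $M=c-a$ to $h=g-g_{c-a}$ gives $f_a(g-g_{c-a})\perp$ module $c$, which proves the identity. The case $a=0$ is just the statement that multiplication by an $\mathcal{F}_{k-1}$-measurable function respects the module grading, which refines Lemma \ref{ort2}.

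Finally I would pass to the limit in $a$. Put $f_N=\sum_{a\le N}f_a$, which converges to $f$ in $L_2$ by the Fundamental Theorem; since $g\in L_\infty$, $\|f_N g-fg\|_2\le\|g\|_\infty\|f_N-f\|_2\to 0$, so $f_N g\to fg$ in $L_2$ and therefore $P_c(f_N g)\to P_c(fg)=(fg)_c$ in $L_2$. As $P_c$ is linear and $f_N$ is a finite sum, the single-index identity gives $P_c(f_N g)=\sum_{a\le N}P_c(f_a g)=\sum_{a\le N}f_a\,g_{c-a}$, so $(fg)_c=\lim_{N\to\infty}\sum_{a\le N}f_a\,g_{c-a}=\sum_{a+b=c}f_a g_b$ with convergence in $L_2$. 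Only countably many $f_a$ are nonzero, so only countably many summands are nonzero. The point that needs care is the convergence: it is essential to decompose only $f$ and leave $g$ intact, using boundedness of $g$ so that multiplication by $g$ is continuous on $L_2$ — decomposing both factors at once would only give an $L_1$ statement and not the asserted $L_2$-convergence. Everything else (that products of $k$-th order characters are $k$-th order characters, that $\overline{f_a}\psi\lambda$ genuinely sits in module $M$, and the density of the functions $\psi\lambda$) is routine from the definitions.
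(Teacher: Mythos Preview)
Your proof is correct and follows essentially the same route as the paper's: both arguments rest on the fact that $f_ag_b$ lies in module $a+b$ and then use the $L_\infty$-bound on one factor to pass from finite partial sums to the full decomposition. The only cosmetic difference is that the paper first takes $g$ in a single module and then approximates $g$ using $\|f\|_\infty$, whereas you fix a single term $f_a$, establish the projection identity $P_c(f_ag)=f_ag_{c-a}$, and then approximate $f$ using $\|g\|_\infty$; these are mirror images of one another.
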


\begin{proof} First of all we observe that if $g$ is contained in one single rank one module then
the $k$-th order decomposition of $fg$ is $\sum_a f_ag$ since it converges in $L_2$ and the terms $f_ag$ are from distinct rank one modules.
From this observation we also get the statement if $g$ has finitely many non zero components.

If $g$ has infinitely many components then for an arbitrary $\epsilon$ we can approximate $g$  $\epsilon$-close in $L_2$ by a sub sum of its components $g_\epsilon$. Then $fg=fg_\epsilon+f(g-g_\epsilon)$. Here the $\|f(g-g_\epsilon)\|_2\leq \|f\|_\infty\epsilon$. So as $\epsilon$ goes to $0$ the $L_2$ error we make also goes to $0$.
\end{proof}

\subsection{Pure and locally pure characters}

The simplest example for a $k$-th order character is a function $\phi:\bA\rightarrow\mathcal{C}$ such that
$$\Delta_{t_1,t_2,\dots,t_{k+1}}\phi(x)=1$$
for every $t_1,t_2,\dots,t_{k+1},x$ in $\bA$.
Such functions will be called {\bf pure} characters.
Unfortunately for $k>1$ there are groups $\bA$ on which not every rank one module over $L_\infty(\mathcal{F}_{k-1})$ can be represented by a pure character.
It will turn out however that every $k$-th order character is (approximately) pure on ``neighborhoods'' measurable in $\mathcal{F}_{k-1}$. In other words $k$-th order characters are (approximately) ``locally pure'' modulo some $k-1$ degree decomposition of $\bA$.
To make these statements precise we need new notation.

\begin{definition} Let $\mathcal{B}\subseteq\mathcal{A}(\bA)$ be any $\sigma$ algebra.
We say that a function $\phi:\bA\rightarrow\mathcal{C}$ is a $\mathcal{B}$-locally pure character of degree $k$ if the function
$\Delta_{t_1,t_2,\dots,t_{k+1}}\phi(x)$ on $\bA^{k+2}$ is measurable in the $\sigma$ algebra generated by the $\sigma$-algebras $\psi_S^{-1}(\mathcal{B})$ where $S$ runs trough the subsets of $\{1,2,\dots,k+1\}$. We denote the set of $\mathcal{B}$ locally pure characters of degree $k$ by $[\mathcal{B},k]^*$.
\end{definition}

In case $\mathcal{B}$ is the trivial $\sigma$-algebra then $[\mathcal{B},k]^*$ is just the set $k$-th order pure characters.

\begin{lemma}\label{pureprop} Let $\mathcal{B}\subseteq\mathcal{A}(\bA)$ be a $\sigma$-algebra. Then: \begin{enumerate}
\item $[\mathcal{B},k]^*$ is an Abelian group with respect to point wise multiplication.
\item $[\mathcal{B},0]^*$ is the set of $\mathcal{B}$ measurable functions $f:\bA\rightarrow \mathcal{C}$
\item $[\mathcal{B},k]^*\subseteq [\mathcal{B},k+1]^*$
\item If $\mathcal{B}$ is invariant and $\phi\in[\mathcal{B},k]^*$ then $\Delta_t\phi\in[\mathcal{B},k-1]^*$ for every $t\in\bA$
\end{enumerate}
\end{lemma}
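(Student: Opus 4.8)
The plan is to verify the four items essentially by manipulating the defining condition, namely that $\Delta_{t_1,\dots,t_{k+1}}\phi(x)$ is measurable in $\mathcal{C}_{k+1}:=\bigvee_{S\subseteq[k+1]}\psi_S^{-1}(\mathcal{B})$, together with the analogous statements in lower degree. For item (1), I would first record the multiplicativity of the iterated difference operator: $\Delta_{t_1,\dots,t_{k+1}}(\phi\psi)=\Delta_{t_1,\dots,t_{k+1}}\phi\cdot\Delta_{t_1,\dots,t_{k+1}}\psi$ (this is immediate from $\Delta_t(fg)=\Delta_t f\,\Delta_t g$ and induction). Hence if both factors lie in $[\mathcal{B},k]^*$ their pointwise product does too, since the product of two $\mathcal{C}_{k+1}$-measurable functions is $\mathcal{C}_{k+1}$-measurable; closure under inverses follows because $\overline{\phi}$ has $\Delta_{t_1,\dots,t_{k+1}}\overline{\phi}=\overline{\Delta_{t_1,\dots,t_{k+1}}\phi}$, again $\mathcal{C}_{k+1}$-measurable, and the constant $1$ function is trivially in the set; associativity and commutativity are inherited from $\mathbb{C}$.

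For item (2), I would simply unwind the definition at $k=0$: the relevant $\sigma$-algebra is $\bigvee_{S\subseteq\{1\}}\psi_S^{-1}(\mathcal{B})$, i.e.\ generated by $\psi_\emptyset^{-1}(\mathcal{B})$ and $\psi_{\{1\}}^{-1}(\mathcal{B})$, which are the pullbacks of $\mathcal{B}$ under $(x,t_1)\mapsto x$ and $(x,t_1)\mapsto x+t_1$. The condition is that $\Delta_{t_1}\phi(x)=\phi(x+t_1)\overline{\phi(x)}$ be measurable in this $\sigma$-algebra; taking $t_1=0$ forces nothing new, but the key point is that $\phi(x+t_1)\overline{\phi(x)}$ is a product of a function of $x+t_1$ and a function of $x$, so it is automatically measurable in that join precisely when $\phi$ itself is $\mathcal{B}$-measurable — and conversely any $\mathcal{B}$-measurable $\phi:\bA\to\mathcal{C}$ works. (One should be slightly careful phrasing the ``only if'' direction; specializing $t_1$ to a fixed generic value and using that $\overline{\phi(x)}$ has modulus $1$ recovers $\phi(x+t_1)$ as $\mathcal{B}$-measurable, hence $\phi$ is $\mathcal{B}$-measurable, in the spirit of the argument in Lemma \ref{sepex}.)

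For item (3), I would note that $\Delta_{t_1,\dots,t_{k+2}}\phi=\Delta_{t_{k+2}}(\Delta_{t_1,\dots,t_{k+1}}\phi)$, so if $\Delta_{t_1,\dots,t_{k+1}}\phi$ is measurable in $\bigvee_{S\subseteq[k+1]}\psi_S^{-1}(\mathcal{B})$ then applying one more difference operator keeps us measurable in $\bigvee_{S\subseteq[k+2]}\psi_S^{-1}(\mathcal{B})$, since the latter $\sigma$-algebra contains the pullback of the former along the coordinate-forgetting map together with its $t_{k+2}$-shift, and the subsets of $[k+1]$ embed into the subsets of $[k+2]$. For item (4), with $\mathcal{B}$ shift invariant and $\phi\in[\mathcal{B},k]^*$, I would write $\Delta_{t_1,\dots,t_k}(\Delta_t\phi)=\Delta_{t,t_1,\dots,t_k}\phi$, which by hypothesis is measurable in $\bigvee_{S\subseteq\{0,1,\dots,k\}}\psi_S^{-1}(\mathcal{B})$ (indexing the extra shift by $t$); the task is to push this forward to show $\Delta_t\phi$ is $[\mathcal{B},k-1]^*$, i.e.\ that $\Delta_{t_1,\dots,t_k}(\Delta_t\phi)$ is measurable in $\bigvee_{S\subseteq[k]}\psi_S^{-1}(\mathcal{B})$ on $\bA^{k+1}$ for each fixed $t$. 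This is the step I expect to be the main obstacle: one must check that fixing the coordinate $t$ and using shift invariance of $\mathcal{B}$ collapses the $\sigma$-algebra $\bigvee_{S\subseteq\{0,1,\dots,k\}}\psi_S^{-1}(\mathcal{B})$ down to $\bigvee_{S\subseteq[k]}\psi_S^{-1}(\mathcal{B})$ after the appropriate reparametrization $x\mapsto x$ (resp.\ $x\mapsto x+t$), since $\psi_{S\cup\{0\}}$ and $\psi_S$ differ exactly by the constant shift $t$, and a shift of a $\mathcal{B}$-set is again a $\mathcal{B}$-set. Once that bookkeeping is done the conclusion is immediate.
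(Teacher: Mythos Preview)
The paper states this lemma without proof, treating all four items as routine verifications from the definition; your plan supplies exactly those verifications and is essentially correct.

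One small slip in item (2): for the ``only if'' direction you propose to specialize $t_1$ and then divide by $\overline{\phi(x)}$, but fixing $t_1=t$ only tells you that $x\mapsto\phi(x+t)\overline{\phi(x)}$ is measurable in $\mathcal{B}\vee(\mathcal{B}-t)$, not in $\mathcal{B}$ alone, so you cannot isolate $\phi$ that way. The clean fix is to change variables to $(x,y)$ with $y=x+t_1$, so that the hypothesis becomes that $(x,y)\mapsto\phi(y)\overline{\phi(x)}$ is measurable in $\pi_1^{-1}(\mathcal{B})\vee\pi_2^{-1}(\mathcal{B})$; then fix a generic $x=x_0$ (Fubini) to conclude that $y\mapsto\phi(y)\overline{\phi(x_0)}$ is $\mathcal{B}$-measurable, and divide by the nonzero constant $\overline{\phi(x_0)}$. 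With that adjustment, and with the observation in item (4) that restriction to the slice $t_{k+1}=t$ carries each $\psi_{S\cup\{k+1\}}^{-1}(\mathcal{B})$ to $\psi_S^{-1}(\mathcal{B})$ by shift invariance of $\mathcal{B}$ (exactly as you outline), the argument goes through.
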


\begin{definition} We introduce the $\mathcal{B}$-local $k$-th order dual group $[\mathcal{B},k]^0$ as the factor group $$[\mathcal{B},k]^*/[\mathcal{B},k-1]^*.$$
\end{definition}

One motivation for this notion is that it will turn out that $\hat{\bA}_k$ is naturally isomorphic to $[\mathcal{F}_{k-1},k]^0$.

\begin{lemma}\label{charmes} Every element in $[\mathcal{B},k]^*$ is measurable in the $\sigma$-algebra  $\mathcal{F}_k\vee\mathcal{B}$.
\end{lemma}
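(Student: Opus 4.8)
The plan is to prove the statement by induction on $k$. The base case $k=0$ is immediate: by Lemma~\ref{pureprop}(2) the set $[\mathcal{B},0]^*$ is exactly the set of $\mathcal{B}$-measurable functions $\bA\to\mathcal{C}$, while $\mathcal{F}_0$ is trivial, so $\mathcal{F}_0\vee\mathcal{B}=\mathcal{B}$ and there is nothing to prove.

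For the inductive step I would assume the lemma for $k-1$ and take $\phi\in[\mathcal{B},k]^*$. The first task is to control $\Delta_t\phi$ for each fixed $t$. When $\mathcal{B}$ is shift invariant — the situation in all the intended applications, e.g. $\mathcal{B}\subseteq\mathcal{F}_{k-1}$ — Lemma~\ref{pureprop}(4) gives $\Delta_t\phi\in[\mathcal{B},k-1]^*$ for every $t$, and the inductive hypothesis then yields $\Delta_t\phi\in\mathcal{F}_{k-1}\vee\mathcal{B}$ for every $t$. (For a general $\mathcal{B}$ one gets the same conclusion with $\mathcal{B}$ replaced by the $\sigma$-algebra generated by $\mathcal{B}$ and its translate by $t$, by restricting the defining cube condition on $\Delta_{t_1,\dots,t_{k+1}}\phi$ to the face $t_{k+1}=t$, since on that face $\psi_S$ with $k+1\in S$ becomes a translate of $\psi_{S\setminus\{k+1\}}$.)

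Granting $\Delta_t\phi\in\mathcal{C}_0:=\mathcal{F}_{k-1}\vee\mathcal{B}$ for all $t$ — a shift-invariant $\sigma$-algebra — Lemma~\ref{sepex} produces a shift-invariant separable extension $\mathcal{C}=\mathcal{C}_0\vee\sigma(\phi)$ of $\mathcal{C}_0$ containing $\phi$. It remains to show $\mathcal{C}\subseteq\mathcal{F}_k\vee\mathcal{B}$. I would do this by establishing the two facts $\mathcal{C}=(\mathcal{C}\wedge\mathcal{F}_k)\vee\mathcal{B}$ and $\mathcal{C}\wedge\mathcal{F}_k\subseteq\mathcal{F}_k$; the latter is automatic, and the former is precisely the kind of identity supplied by Lemma~\ref{modul}, applied with $\mathcal{B}$ and the shift-invariant $\mathcal{F}_k$ in the role of the two perpendicular algebras (perpendicular since $\mathcal{F}_k$ is shift invariant). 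In the principal case $\mathcal{B}\subseteq\mathcal{F}_{k-1}$ this last step is unnecessary: there $\Delta_t\phi\in\mathcal{F}_{k-1}$ for every $t$, so $\phi$ is either $\mathcal{F}_{k-1}$-measurable or a genuine $k$-th order character, and the Fundamental Theorem~\ref{fundth} puts it in $\mathcal{F}_k=\mathcal{F}_k\vee\mathcal{B}$.

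The hard part is exactly this last step: showing that the separable shift-invariant extension of $\mathcal{F}_{k-1}\vee\mathcal{B}$ housing $\phi$ actually lies inside $\mathcal{F}_k\vee\mathcal{B}$, and not merely inside the (a priori larger) $\sigma$-algebra of sets relatively separable over $\mathcal{F}_{k-1}\vee\mathcal{B}$. The subtlety is that translating $\phi$ feeds $\mathcal{B}$-information back in through $\Delta_t\phi$, so one cannot detach a shift-invariant ``$\mathcal{F}_k$-part'' naively; the clean route is the $\sigma$-algebra arithmetic of Lemma~\ref{modul}, which needs the perpendicularity of $\mathcal{B}$ and $\mathcal{F}_k$, and this is the point at which the structure of $\mathcal{B}$ (its being contained in some $\mathcal{F}_j$, or a coset $\sigma$-algebra) genuinely enters.
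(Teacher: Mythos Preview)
Your inductive strategy has a genuine gap at exactly the point you flag as ``the hard part,'' and the proposed fix via Lemma~\ref{modul} does not work. Two issues:

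\medskip

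\textbf{(i) Perpendicularity is unjustified.} You assert that $\mathcal{B}$ and $\mathcal{F}_k$ are perpendicular ``since $\mathcal{F}_k$ is shift invariant.'' But the paper's perpendicularity facts run the other way: \emph{coset} $\sigma$-algebras are perpendicular to shift-invariant ones. A general $\mathcal{B}$ --- or even a shift-invariant $\mathcal{B}$ --- has no reason to be perpendicular to $\mathcal{F}_k$. Moreover, even granting perpendicularity, the identity $\mathcal{C}=(\mathcal{C}\wedge\mathcal{F}_k)\vee\mathcal{B}$ does not fit the shape of Lemma~\ref{modul}: that lemma simplifies $(\mathcal{C}'\vee\mathcal{B}_1)\wedge\mathcal{B}'$ with $\mathcal{B}_1\subseteq\mathcal{B}'$, and your $\mathcal{C}=\mathcal{F}_{k-1}\vee\mathcal{B}\vee\sigma(\phi)$ contains $\sigma(\phi)$, which is not known to lie in either perpendicular algebra --- that is precisely what you are trying to prove.

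\medskip

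\textbf{(ii) The ``principal case'' does not cover the intended application.} Your shortcut for $\mathcal{B}\subseteq\mathcal{F}_{k-1}$ is fine, but in Corollary~\ref{secder} the lemma is invoked at degree $k-2$ with a $\sigma$-algebra $\mathcal{B}$ satisfying $\mathcal{F}_{k-2}\subseteq\mathcal{B}\subseteq\mathcal{F}_{k-1}$; the principal-case hypothesis at degree $k-2$ would be $\mathcal{B}\subseteq\mathcal{F}_{k-3}$, which fails. So the general case is needed, and there your induction stalls: knowing $\Delta_t\phi\in\mathcal{F}_{k-1}\vee\mathcal{B}$ only places $\phi$ in a separable extension of $\mathcal{F}_{k-1}\vee\mathcal{B}$, which can be strictly larger than $\mathcal{F}_k\vee\mathcal{B}$.

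\medskip

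The paper's proof avoids induction entirely and works for arbitrary $\mathcal{B}$. The trick is to go up to $\bA^{k+2}$ and write
\[
f(x)=g_1\cdot g_2,\qquad g_1=\Delta_{t_1,\dots,t_{k+1}}f(x),\qquad g_2=f(x)/g_1=\prod_{\emptyset\ne S\subseteq[k+1]} f\circ\psi_S^{\,\pm 1},
\]
so that $g_1$ is measurable in $\bigl(\bigvee_{S\ne\emptyset}\psi_S^{-1}(\mathcal{A})\bigr)\vee\psi_\emptyset^{-1}(\mathcal{B})$ by the defining condition of $[\mathcal{B},k]^*$, while $g_2$ is measurable in the shift-invariant algebra $\mathcal{C}:=\bigvee_{S\ne\emptyset}\psi_S^{-1}(\mathcal{A})$. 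Now the perpendicularity that is actually used is between the \emph{coset} $\sigma$-algebra $\psi_\emptyset^{-1}(\mathcal{A})$ and the shift-invariant $\mathcal{C}$ --- this one is genuine --- and Lemma~\ref{modul} together with Lemma~\ref{cubint} gives
\[
\bigl(\mathcal{C}\vee\psi_\emptyset^{-1}(\mathcal{B})\bigr)\wedge\psi_\emptyset^{-1}(\mathcal{A})
=\bigl(\mathcal{C}\wedge\psi_\emptyset^{-1}(\mathcal{A})\bigr)\vee\psi_\emptyset^{-1}(\mathcal{B})
=\psi_\emptyset^{-1}(\mathcal{F}_k\vee\mathcal{B}).
\]
The key point you were missing is that the perpendicularity must be arranged on the cube, between a coset $\sigma$-algebra and a shift-invariant one, not between $\mathcal{B}$ and $\mathcal{F}_k$ on $\bA$ itself.
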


\begin{proof} Assume that $f\in[\mathcal{B},k]^*$. Let us consider the probability space $\bA^{k+2}$. Let $\mathcal{C}$ be the $\sigma$-algebra generated by $\psi_S^{-1}(\mathcal{A})$ where $S$ runs through the non empty subsets of $\{1,2,\dots,k+1\}$.
Let $g_1=\Delta_{t_1,t_2,\dots,t_{k+1}}f(x)$ and $g_2=f(x)/g_1(x,t_1,t_2,\dots,t_{k+1})$.
We have that $g_1$ is measurable in $\mathcal{C}\vee\psi_\emptyset^{-1}(\mathcal{B})$ and $g_2$ is measurable in $\mathcal{C}$. This means that $f=g_1g_2$ is measurable in $\mathcal{C}\vee\psi_\emptyset^{-1}(\mathcal{B})$. On the other hand $f$ is measurable in $\psi_\emptyset^{-1}(\mathcal{A})$ and thus it is measurable in
$$(\mathcal{C}\vee\psi_\emptyset^{-1}(\mathcal{B}))\wedge\psi_\emptyset^{-1}(\mathcal{A}).$$
By definition $\psi_\emptyset^{-1}(\mathcal{A})$ is a coset $\sigma$-algebra and so it is perpendicular to any shift invariant $\sigma$-algebras. Now we can use lemma \ref{modul} and lemma \ref{cubint}
and obtain that $f$ is measurable in
$$(\mathcal{C}\wedge\psi_\emptyset^{-1}(\mathcal{A}))\vee\psi_\emptyset^{-1}(\mathcal{B})=\psi_\emptyset^{-1}(\mathcal{F}_k)\vee\psi_\emptyset^{-1}(\mathcal{B})=\psi_\emptyset^{-1}(\mathcal{F}_k\vee\mathcal{B}).$$

\end{proof}

\begin{lemma}\label{charsep} If $\phi$ is a $k$-th order character then $\phi\in[\mathcal{F}_{k-1},k]^*$.
\end{lemma}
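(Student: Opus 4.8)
The plan is to unwind the definition of $[\mathcal{F}_{k-1},k]^{*}$ and then to establish the required measurability by collapsing the cube $\bA^{k+2}$ in each coordinate direction and intersecting the resulting information. Concretely, set $F:=\Delta_{t_1,\dots,t_{k+1}}\phi$, viewed as a function on $\bA^{k+2}$; by definition I must show $F$ is measurable in $\mathcal{D}:=\bigvee_{S\subseteq[k+1]}\psi_{S}^{-1}(\mathcal{F}_{k-1})$. Since $\phi$ is a $k$-th order character it lies in $L_\infty(\mathcal{F}_k)$, so a first, too-coarse observation is that $F=\prod_{S\subseteq[k+1]}(\phi\circ\psi_S)^{c(|S|)}$ is measurable in $\bigvee_{S}\psi_S^{-1}(\mathcal{F}_k)$; the whole point is to push this down one level, which must exploit the cancellation forced in the alternating product by the fact that $\phi$ is a character and not just $\mathcal{F}_k$-measurable.

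First I would collapse the cube in an arbitrary direction $j\in[k+1]$: since difference operators commute, $F=\Delta_{(t_i)_{i\neq j}}(\Delta_{t_j}\phi)(x)$, and $\Delta_{t_j}\phi\in L_\infty(\mathcal{F}_{k-1})$ for every value of $t_j$ because $\phi$ is a $k$-th order character. Hence, for each fixed $t_j$, the restriction of $F$ is a product of factors $(\Delta_{t_j}\phi)\circ\psi_S$ with $S\subseteq[k+1]\setminus\{j\}$, each measurable in $\psi_S^{-1}(\mathcal{F}_{k-1})$. By the standard fact that a jointly measurable function all of whose coordinate slices lie in a fixed sub-$\sigma$-algebra is measurable in the corresponding product $\sigma$-algebra, $F$ is measurable in $\bigl(\bigvee_{S\subseteq[k+1]\setminus\{j\}}\psi_S^{-1}(\mathcal{F}_{k-1})\bigr)\vee\mathcal{T}_j$, where $\mathcal{T}_j$ is the coset $\sigma$-algebra generated by the coordinate $t_j$. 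Using shift-invariance of $\mathcal{F}_{k-1}$ one checks that $\psi_S^{-1}(\mathcal{F}_{k-1})\subseteq\psi_{S\setminus\{j\}}^{-1}(\mathcal{F}_{k-1})\vee\mathcal{T}_j$ whenever $j\in S$, so this simply says $F$ is measurable in $\mathcal{D}\vee\mathcal{T}_j$. Carrying this out for every $j$ shows $F$ is measurable in $\bigwedge_{j=1}^{k+1}(\mathcal{D}\vee\mathcal{T}_j)$.

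It then remains to prove the $\sigma$-algebra identity $\bigwedge_{j=1}^{k+1}(\mathcal{D}\vee\mathcal{T}_j)=\mathcal{D}$ (the inclusion $\supseteq$ being trivial). Here I would peel off the coordinate $\sigma$-algebras $\mathcal{T}_j$ one at a time: $\mathcal{D}$ is shift-invariant and every join of it with finitely many $\mathcal{T}_i$ is again shift-invariant, while each $\mathcal{T}_j$ is a coset $\sigma$-algebra, hence perpendicular to such joins; so Lemma \ref{modul} applies and reduces each step to identifying a common part of the form $\mathcal{T}_j\wedge(\mathcal{D}\vee\mathcal{T}_{i_1}\vee\cdots)$ with $\mathcal{T}_j\wedge\mathcal{D}$, which one settles with Lemma \ref{cubint} applied to suitable subcubes. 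I expect this last identity to be the main obstacle, precisely because $\mathcal{D}$ and the $\mathcal{T}_j$ are perpendicular but genuinely not independent — for $k\ge 2$ the intersection $\mathcal{D}\wedge\mathcal{T}_j$ already contains $\chi(t_j)$ for every linear character $\chi$ — so there is no tensor-product shortcut and the cube-intersection machinery has to be used in earnest. (It should in fact be enough to intersect the two conditions coming from $j=1$ and $j=2$.)
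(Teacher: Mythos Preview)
Your plan leaves the crucial step --- the $\sigma$-algebra identity $\bigwedge_{j}(\mathcal{D}\vee\mathcal{T}_j)=\mathcal{D}$ --- as a sketch, and the route you outline for it (peeling off the $\mathcal{T}_j$'s via Lemma~\ref{modul} and then invoking Lemma~\ref{cubint} ``on suitable subcubes'') is not obviously workable: Lemma~\ref{cubint} computes $\mathcal{B}_H\wedge\bigvee_{S\neq H}\mathcal{B}_S$ for the \emph{full} family $\{\psi_S^{-1}(\mathcal{A})\}$, not for intersections of the form $\mathcal{T}_j\wedge(\mathcal{D}\vee\mathcal{T}_{i_1}\vee\cdots)$ with $\mathcal{D}$ already built out of $\mathcal{F}_{k-1}$-pullbacks. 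There is also a hidden issue earlier: the ``standard fact'' that slice-wise measurability in $\mathcal{C}$ upgrades to joint measurability in (lift of $\mathcal{C}$)$\vee\mathcal{T}_j$ is exactly the statement that the strong product $\mathcal{C}\star\mathcal{A}(\bA)$ coincides with that join, which in this non-separable ultraproduct setting is not automatic and is essentially what Lemma~\ref{starcyl} provides only for cylindric $\sigma$-algebras.

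The paper bypasses all of this with a three-line argument that you are missing. Write $f=\Delta_{t_1,\dots,t_{k+1}}\phi$ on $\bA^{k+2}$ and set $\mathcal{B}=\bigvee_{S\subseteq[k+1]}\psi_S^{-1}(\mathcal{F}_{k-1})$. For any $t\in\bA^{k+2}$ one has
\[
\Delta_t f=\prod_{S\subseteq[k+1]}\bigl((\Delta_{\psi_S(t)}\phi)\circ\psi_S\bigr)^{c(|S|)},
\]
and each factor lies in $\psi_S^{-1}(\mathcal{F}_{k-1})$ because $\phi$ is a $k$-th order character; hence $\Delta_t f\in L_\infty(\mathcal{B})$ for every $t$. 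Since $\mathcal{B}$ is shift-invariant, Lemma~\ref{ort1} forces the dichotomy: either $f$ is $\mathcal{B}$-measurable or $\mathbb{E}(f\mid\mathcal{B})=0$. The latter is impossible because $\int f=\|\phi\|_{U_{k+1}}^{2^{k+1}}>0$ (as $\phi\in\mathcal{F}_k$). So $f\in\mathcal{B}$, which is exactly $\phi\in[\mathcal{F}_{k-1},k]^*$. The key idea you did not find is to differentiate $f$ once more (in $\bA^{k+2}$) rather than trying to intersect $\sigma$-algebras directly; Lemma~\ref{ort1} then does all the work.
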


\begin{proof} Let $f$ denote the function $\Delta_{t_1,t_2,\dots,t_{k+1}}\phi(x)$ on $\bA^{k+2}$. It is clear that for every $t\in\bA^{k+2}$ we have that $\Delta_t f$ is measurable in the $\sigma$-algebra $\mathcal{B}$ generated by $\psi_S^{-1}(\mathcal{F}_{k-1})$ where $S$ runs through the subsets of $\{1,2,\dots,k+1\}$. Using that $\mathcal{B}$ is shift invariant and lemma $\ref{ort1}$ we obtain that either $f$ is measurable in $\mathcal{B}$ or $E(f|\mathcal{B})=0$.
However the second possibility is impossible since the integral of $f$ is equal to the $k+1$-th Gowers norm of $\phi$ which is positive because $\phi\in\mathcal{F}_k$.
\end{proof}

\begin{corollary}\label{secder} If $\phi$ is a $k$-th order character then there is an invariant  $\sigma$-algebra $\mathcal{F}_{k-2}\subseteq\mathcal{B}\subset\mathcal{F}_{k-1}$ which is relative separable over $\mathcal{F}_{k-2}$ such that $\Delta_{t_1,t_2}\phi$ is measurable in $\mathcal{B}$ for every pair $t_1,t_2\in\bA$.
\end{corollary}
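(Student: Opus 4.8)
The plan is to exhibit $\mathcal{B}$ explicitly and then check the three required properties. Take $\mathcal{B}$ to be the $\sigma$-algebra generated by $\mathcal{F}_{k-2}$ together with the family of all second difference functions $\{\Delta_{t_1,t_2}\phi : t_1,t_2\in\bA\}$. The inclusion $\mathcal{F}_{k-2}\subseteq\mathcal{B}$ is built in, and $\mathcal{B}\subseteq\mathcal{F}_{k-1}$ holds because, $\phi$ being a $k$-th order character, $\Delta_{t_1}\phi\in L_\infty(\mathcal{F}_{k-1})$, and then by shift invariance of $\mathcal{F}_{k-1}$ also $\Delta_{t_1,t_2}\phi=\Delta_{t_2}(\Delta_{t_1}\phi)\in L_\infty(\mathcal{F}_{k-1})$. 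So it remains to show that $\mathcal{B}$ is shift invariant and relative separable over $\mathcal{F}_{k-2}$.

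For shift invariance I would use the identity $\Delta_{t_1,t_2}\phi(x+s)=\Delta_{t_1,t_2}\phi(x)\cdot\Delta_{s,t_1,t_2}\phi(x)$ together with the purely formal identity
$$\Delta_{s,t_1,t_2}\phi=(\Delta_{s,t_1}\phi)\,(\Delta_{s,t_2}\phi)\,\overline{\Delta_{s,t_1+t_2}\phi},$$
which is immediate from the definition of $\Delta$. Together they express every shift of a generator of $\mathcal{B}$ as a product of generators of $\mathcal{B}$; since $\mathcal{F}_{k-2}$ is itself shift invariant, this yields shift invariance of $\mathcal{B}$.

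The substantive part is relative separability over $\mathcal{F}_{k-2}$, i.e. that countably many of the functions $\Delta_{t_1,t_2}\phi$ already generate $\mathcal{B}$ over $\mathcal{F}_{k-2}$. Here I would invoke Lemma \ref{charsep}: $\phi\in[\mathcal{F}_{k-1},k]^*$, hence by part (4) of Lemma \ref{pureprop} we get $\Delta_{t_1,t_2}\phi\in[\mathcal{F}_{k-1},k-2]^*$ for every $t_1,t_2$; equivalently, for each fixed $t_1,t_2$ the function $(x,s_1,\dots,s_{k-1})\mapsto\Delta_{s_1,\dots,s_{k-1},t_1,t_2}\phi(x)$ on $\bA^{k}$ is measurable in the cubic $\sigma$-algebra generated by the pullbacks $\psi_S^{-1}(\mathcal{F}_{k-1})$, $S\subseteq\{1,\dots,k-1\}$. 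Feeding this into the argument of Lemma \ref{charsep} (i.e. Lemma \ref{ort1}) on $\bA^{k}$, together with Lemma \ref{cubint} and Lemma \ref{modul} — which on $\bA^{k}$, so with $d=k-1$, collapse the intersection of the $x$-coordinate algebra with the join of the remaining cube faces down to $\psi_\emptyset^{-1}(\mathcal{F}_{k-2})$ — one obtains that $\Delta_{t_1,t_2}\phi$ is measurable in $\mathcal{F}_{k-2}$ joined with a separable family of sets; and, because this family is carved out of the structure of the single function $\phi$ and is, modulo $\mathcal{F}_{k-2}$, stable under translating $t_1$ and $t_2$ (again by the $\Delta$-identities above), it can be taken to be one fixed countable family independent of $t_1,t_2$. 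That countable family, joined with $\mathcal{F}_{k-2}$, then contains $\mathcal{B}$, so $\mathcal{B}$ is relative separable over $\mathcal{F}_{k-2}$.

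The main obstacle is exactly this last step: transferring the tame behaviour of the $k$-variable function $\Delta_{s_1,\dots,s_{k-1},t_1,t_2}\phi$ into a single-variable separability statement for the functions $\Delta_{t_1,t_2}\phi$, while (i) pushing the pullbacks of $\mathcal{F}_{k-1}$ down to $\mathcal{F}_{k-2}$ through the cubic relations and (ii) checking that the resulting countable generating family does not depend on $t_1,t_2$. The passages across the measure-zero fibres $\{t_1=a,\,t_2=b\}$ are legitimate only because every function and $\sigma$-algebra in sight is an ultraproduct of honestly defined finite objects, so one may argue fibrewise; that is where the real care is needed. The rest — the formal $\Delta$-identities and the inclusions $\mathcal{F}_{k-2}\subseteq\mathcal{B}\subseteq\mathcal{F}_{k-1}$ — is routine.
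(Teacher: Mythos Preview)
Your construction of $\mathcal{B}$ as the $\sigma$-algebra generated by $\mathcal{F}_{k-2}$ and all second differences is legitimate, and your shift-invariance argument via the $\Delta$-identities is fine. The gap is in the separability step, and it is a real gap, not just a missing detail.

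You invoke Lemma~\ref{charsep} only in the form $\phi\in[\mathcal{F}_{k-1},k]^*$, and then apply Lemma~\ref{pureprop}(4) to get $\Delta_{t_1,t_2}\phi\in[\mathcal{F}_{k-1},k-2]^*$. But when you then run the cubic argument of Lemma~\ref{charmes} (via Lemmas~\ref{cubint} and~\ref{modul}) on $\bA^{k}$, what it outputs is that $\Delta_{t_1,t_2}\phi$ is measurable in $\mathcal{F}_{k-2}\vee\mathcal{F}_{k-1}=\mathcal{F}_{k-1}$ --- which you already knew. No separable family drops out of that computation; Lemma~\ref{charmes} gives back $\mathcal{F}_{k-2}\vee\mathcal{B}$ where $\mathcal{B}$ is whatever you fed in, and you fed in the non-separable $\mathcal{F}_{k-1}$. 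Your subsequent claim that the family ``can be taken to be one fixed countable family independent of $t_1,t_2$'' is therefore unsupported: you never produced a countable family at all.

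The paper runs the argument in the opposite order, and this is what makes it work. From Lemma~\ref{charsep} one knows that the \emph{single} function $(x,t_1,\dots,t_{k+1})\mapsto\Delta_{t_1,\dots,t_{k+1}}\phi(x)$ on $\bA^{k+2}$ is measurable in $\bigvee_S\psi_S^{-1}(\mathcal{F}_{k-1})$; being one measurable function, it already lies in $\bigvee_S\psi_S^{-1}(\mathcal{B}_1)$ for some \emph{separable} $\mathcal{B}_1\subset\mathcal{F}_{k-1}$, i.e.\ $\phi\in[\mathcal{B}_1,k]^*$. Now define $\mathcal{B}$ as the smallest invariant $\sigma$-algebra containing $\mathcal{F}_{k-2}$ and $\mathcal{B}_1$: relative separability over $\mathcal{F}_{k-2}$ is then automatic, and one \emph{afterwards} checks that every $\Delta_{t_1,t_2}\phi$ lands in $\mathcal{B}$ by applying Lemma~\ref{pureprop}(4) (now with the separable $\mathcal{B}$ in place of $\mathcal{F}_{k-1}$) and Lemma~\ref{charmes}, which yields measurability in $\mathcal{F}_{k-2}\vee\mathcal{B}=\mathcal{B}$. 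The point you missed is that the passage to a separable $\mathcal{B}_1$ has to happen \emph{before} you differentiate and before you apply Lemma~\ref{charmes}, at the level of the single $(k+2)$-variable function; this is exactly what guarantees one countable family works uniformly in $t_1,t_2$.
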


\begin{proof} Lemma \ref{charsep} implies that there is a separable $\sigma$-algebra $\mathcal{B}_1\subset\mathcal{F}_{k-1}$ such that $\phi\in [\mathcal{B}_1,k]^*$. Let $\mathcal{B}$ denote the smallest invariant $\sigma$ algebra containing $\mathcal{F}_{k-2}$ and $\mathcal{B}_1$.
Then $\phi\in[\mathcal{B},k]^*$. It is clear that $\mathcal{B}$ is relative separable over $\mathcal{F}_{k-2}$ since it is generated by the shifts of countable many relative separable elements over $\mathcal{F}_{k-2}$. Lemma \ref{pureprop} implies that $\Delta_{t_1,t_2}\phi\in[\mathcal{B},k-2]^*$ for every $t_1,t_2\in\bA$.
Using lemma \ref{charmes} we obtain that $\Delta_{t_1,t_2}\phi$ is measurable in $\mathcal{B}\vee\mathcal{F}_{k-2}=\mathcal{B}$.
\end{proof}

\subsection{$k$-types}

A {\bf $k$-type} $T$ is defined as a countable subgroup of $\hat{\bA}_k$.
It can be seen that $k$-types are in a one to one correspondence with the shift invariant $\sigma$-algebras $\mathcal{B}$ that can be generated by countable many sets and $\mathcal{F}_{k-1}$. Here $\mathcal{B}$ is the smallest $\sigma$-algebra in which all the elements from $T$ are measurable. The group $T$ is obtained from $\mathcal{B}$ as the rank $1$ decomposition of $L_2(\mathcal{B},\bm)$

\begin{definition} Let $f$ be a measurable function in $L_2(\mathcal{F}_k)$. We say that the $k$-th dual-support $S(f)\subseteq\hat{\bA}_k$ of $f$ is the set of rank one modules that are not orthogonal to $f$.
It is clear that $S(f)$ is a countable set. The {\bf $k$-type} $T(f)$ of $f$ is the group generated by the elements $\{g_1g_2^{-1}~|~g_1,g_2\in S(f)\}$. If the value of $k$ is not clear from the context then we will use $T_k(f)$ and $S_k(f)$ instead of $T(f)$ and $S(f)$.
\end{definition}

The next lemma follows from theorem \ref{highconv}

\begin{lemma}\label{prodsup} If $f,g\in L_\infty(\mathcal{F}_k)$ then $S(fg)\subseteq S(f)S(g)$.
\end{lemma}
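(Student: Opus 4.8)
The plan is to reduce the statement about dual supports to the convolution formula of Theorem \ref{highconv}. First I would recall that for $f,g\in L_\infty(\mathcal{F}_k)$ the product $fg$ again lies in $L_\infty(\mathcal{F}_k)$, since $\mathcal{F}_k$ is a $\sigma$-algebra; hence $fg$ has a well-defined $k$-th order Fourier decomposition and a dual support $S(fg)\subseteq\hat{\bA}_k$. By definition, $c\in S(fg)$ precisely when the component $(fg)_c$ of $fg$ in the rank one module labelled by $c$ is nonzero.

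Next I would invoke Theorem \ref{highconv}, which states that $(fg)_c=\sum_{a+b=c}f_ag_b$, the sum being over pairs $(a,b)\in\hat{\bA}_k\times\hat{\bA}_k$ with $a+b=c$, with only countably many nonzero terms and convergence in $L_2$. If $c\notin S(f)S(g)$, then there is no way to write $c=a+b$ with $a\in S(f)$ and $b\in S(g)$: for every decomposition $c=a+b$ at least one of $a\notin S(f)$ or $b\notin S(g)$ holds, so $f_a=0$ or $g_b=0$, and therefore $f_ag_b=0$. Consequently every term in the sum defining $(fg)_c$ vanishes, so $(fg)_c=0$ and $c\notin S(fg)$. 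Taking the contrapositive gives $S(fg)\subseteq S(f)S(g)$, which is exactly the claim.

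I do not anticipate a serious obstacle here; the lemma is essentially a bookkeeping consequence of the convolution identity once one notes that a product of two $\mathcal{F}_k$-measurable bounded functions is again $\mathcal{F}_k$-measurable and bounded. The only point requiring a line of care is that the convolution sum $\sum_{a+b=c}f_ag_b$ really does compute the $c$-component of $fg$ termwise — but this is precisely the content of Theorem \ref{highconv}, including the assertion that only countably many terms are nonzero and that the sum converges in $L_2$, so no additional argument about interchanging limits is needed. Thus the proof is short: cite Theorem \ref{highconv} and argue by contraposition as above.
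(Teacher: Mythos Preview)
Your proposal is correct and matches the paper's approach exactly: the paper simply states that the lemma follows from Theorem~\ref{highconv}, and your contrapositive argument is precisely how one unwinds that citation. Nothing further is needed.
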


\begin{lemma}\label{type1} Let $T$ be a fixed $k$-type and let $\mathcal{H}$ denote the set of functions in $L_\infty(\mathcal{F}_k,\bm)$ whose types are contained in $T$. Then the set $\mathcal{H}$ is closed under point-wise multiplication and under the shift operations. In particular $\Delta_t\mathcal{H}\subseteq \mathcal{H}$ for every $t\in\bA$.
\end{lemma}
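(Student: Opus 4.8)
The plan is to verify the two closure properties separately, using the dual-support/type machinery already set up. First I would handle multiplicativity. Take $f, g \in \mathcal{H}$, so $T(f) \subseteq T$ and $T(g) \subseteq T$. By Lemma \ref{prodsup} we have $S(fg) \subseteq S(f)S(g)$, so any generator $h_1 h_2^{-1}$ of $T(fg)$ with $h_1, h_2 \in S(fg)$ can be written as $h_1 h_2^{-1} = (a_1 b_1)(a_2 b_2)^{-1}$ with $a_i \in S(f)$, $b_i \in S(g)$. Rearranging, this equals $(a_1 a_2^{-1})(b_1 b_2^{-1})$, which is a product of a generator of $T(f)$ and a generator of $T(g)$, hence lies in $T(f) + T(g) \subseteq T$ (writing the group $\hat{\bA}_k$ additively as in the statement, or multiplicatively — either way the point is that $T$ is a subgroup containing both). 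Therefore $T(fg) \subseteq T$ and $fg \in \mathcal{H}$.

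Next I would handle shift-invariance. Fix $t \in \bA$ and consider the shift $f_t(x) := f(x+t)$. The key point is that shifting by $t$ corresponds, on each rank one module, to multiplication by $\Delta_t\phi \in L_\infty(\mathcal{F}_{k-1})$ applied to the generating character $\phi$; in particular the shift operator permutes the rank one modules, and in fact it fixes each module setwise, since if $\phi$ generates a module $M$ then $\phi(x+t) = \phi(x)\Delta_t\phi(x)$ with $\Delta_t\phi \in L_\infty(\mathcal{F}_{k-1})$, so $\phi_t$ generates the same module $M$. Consequently the component of $f_t$ in a module $a$ is just the shift of the component of $f$ in $a$ (possibly multiplied by an $\mathcal{F}_{k-1}$ factor of modulus one), so $S(f_t) = S(f)$ and hence $T(f_t) = T(f) \subseteq T$. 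Thus $f_t \in \mathcal{H}$. The final claim $\Delta_t \mathcal{H} \subseteq \mathcal{H}$ follows: if $f \in \mathcal{H}$ then $\Delta_t f(x) = f(x+t)\overline{f(x)}$ is a product of $f_t \in \mathcal{H}$ and $\overline{f} \in \mathcal{H}$ (the latter because $S(\overline{f}) = S(f)^{-1}$, so $T(\overline{f}) = T(f) \subseteq T$), and multiplicativity closes the argument.

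The step I expect to be the main obstacle is the precise bookkeeping for the shift operator: one must be careful that shifting a function measurable in $\mathcal{F}_k$ genuinely preserves the decomposition into rank one modules over $L_\infty(\mathcal{F}_{k-1})$, rather than merely permuting the modules among themselves. This uses that $\mathcal{F}_{k-1}$ is shift invariant and that $\Delta_t\phi \in L_\infty(\mathcal{F}_{k-1})$ for a $k$-th order character $\phi$ (Theorem \ref{fundth}), so that $x \mapsto \phi(x+t)$ lies in the same module as $\phi$; then the uniqueness of the decomposition in Theorem \ref{dec1} forces the components to transform as claimed. Everything else is a routine manipulation of subgroups of $\hat{\bA}_k$ using Lemma \ref{prodsup}.
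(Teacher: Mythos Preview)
Your proof is correct and follows the same approach as the paper: the multiplicativity step is exactly Lemma \ref{prodsup} (you just spell out the subgroup computation that the paper leaves implicit), and your shift-invariance argument unpacks what the paper dismisses as ``trivial'' --- namely that the rank one modules in Theorem \ref{fundth} are shift invariant, so $S(f_t)=S(f)$. Your handling of $\Delta_t$ via $\overline{f}\in\mathcal{H}$ is also fine and again implicit in the paper.
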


\begin{proof} Closeness under shift operations is trivial. Lemma \ref{prodsup} implies the statement on the the point wise multiplication.
\end{proof}

\begin{proposition}\label{fixtype} Let $\phi$ be a $k$-th order character. Then there is a fixed $k-1$ type $T$ such that $T_{k-1}(\Delta_t\phi)\subseteq T$ for every fixed $t\in \bA$.
\end{proposition}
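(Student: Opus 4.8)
The plan is to build the single $(k-1)$-type $T$ from the local structure of $\phi$ established in Corollary \ref{secder}, and then show that this $T$ contains $T_{k-1}(\Delta_t\phi)$ for every $t$. First I would invoke Corollary \ref{secder} to obtain an invariant $\sigma$-algebra $\mathcal{B}$ with $\mathcal{F}_{k-2}\subseteq\mathcal{B}\subset\mathcal{F}_{k-1}$, relative separable over $\mathcal{F}_{k-2}$, such that $\Delta_{t_1,t_2}\phi\in\mathcal{B}$ for all $t_1,t_2\in\bA$. Fixing $t$ and setting $\psi=\Delta_t\phi$, this says $\Delta_s\psi$ is $\mathcal{B}$-measurable for every $s$, i.e.\ $\psi$ is a $(k-1)$-th order character relative to $\mathcal{B}$ in the appropriate sense; in particular each Fourier component of $\psi$ generates a rank one module over $L_\infty(\mathcal{F}_{k-2})$ whose generating character has all its $\Delta_s$ measurable in $\mathcal{B}$.

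The key point I would then extract is that the $(k-1)$-dual support $S_{k-1}(\psi)$ — and hence the type $T_{k-1}(\psi)$ — depends only on $\mathcal{B}$, not on $t$. Concretely, I would argue that for any $t$ the rank one modules appearing in $S_{k-1}(\Delta_t\phi)$ are exactly among the rank one modules over $L_\infty(\mathcal{F}_{k-2})$ that are contained in $L_2(\mathcal{B})$: since $\mathcal{B}$ is invariant and relative separable over $\mathcal{F}_{k-2}$, the Fundamental Theorem (Theorem \ref{fundth}) applied inside $L_2(\mathcal{B})$ decomposes it into countably many rank one modules over $L_\infty(\mathcal{F}_{k-2})$, and $\Delta_t\phi\in L_2(\mathcal{B})$ forces all its components to lie among these. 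Letting $T_0$ be the $(k-1)$-type associated to $\mathcal{B}$ (the group generated by $\{g_1g_2^{-1}\mid g_1,g_2\text{ rank one modules in }L_2(\mathcal{B})\}$, which is countable because $\mathcal{B}$ is relative separable over $\mathcal{F}_{k-2}$), we get $S_{k-1}(\Delta_t\phi)\subseteq$ (that countable family) and therefore $T_{k-1}(\Delta_t\phi)\subseteq T_0$ for every $t$. Set $T=T_0$.

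The main obstacle I anticipate is the bookkeeping needed to make ``the rank one modules inside $L_2(\mathcal{B})$ form a countable family, and $T_0$ is a genuine $(k-1)$-type'' fully rigorous: one must check that $\mathcal{B}$ being relative separable over $\mathcal{F}_{k-2}$ really does force only countably many distinct rank one modules over $L_\infty(\mathcal{F}_{k-2})$ to occur in $L_2(\mathcal{B})$, using the correspondence between $k$-types and countably-generated invariant $\sigma$-algebras described just before the statement. A secondary technical point is to confirm that $\Delta_t\phi$ being $\mathcal{B}$-measurable (rather than merely having all its own $\Delta_s$ in $\mathcal{F}_{k-2}$) is what places its components inside $L_2(\mathcal{B})$; this is where the invariance of $\mathcal{B}$ and Lemma \ref{ort1} (the dichotomy $\mathbb{E}(\cdot|\mathcal{B})=0$ or measurability) get used, exactly as in the proof of Lemma \ref{charsep}. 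Once these are in place the conclusion is immediate, with $T$ manifestly independent of $t$.
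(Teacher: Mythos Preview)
Your overall strategy---building $T$ from the $\sigma$-algebra $\mathcal{B}$ furnished by Corollary \ref{secder}---is exactly the paper's starting point, but there is a genuine gap at the step where you assert ``$\Delta_t\phi\in L_2(\mathcal{B})$ forces all its components to lie among these.'' Corollary \ref{secder} only gives $\Delta_{t_1,t_2}\phi\in\mathcal{B}$; it does \emph{not} give $\Delta_t\phi\in\mathcal{B}$, and in fact this stronger claim is false in general. For a pure quadratic character ($k=2$) every second difference is a constant, so one may take $\mathcal{B}=\mathcal{F}_0$, yet $\Delta_t\phi$ is a nontrivial linear character and hence not $\mathcal{F}_0$-measurable. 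The dichotomy of Lemma \ref{ort1} does not rescue you here: applied to $\psi=\Delta_t\phi$ it says either $\psi\in\mathcal{B}$ or $\mathbb{E}(\psi\mid\mathcal{B})=0$, and the example just given lands in the second branch. Consequently your assertion that $S_{k-1}(\Delta_t\phi)$ sits inside the modules of $L_2(\mathcal{B})$ is too strong; only the \emph{differences} $\lambda_i\lambda_j^{-1}$ of elements of the dual support are forced into $T$, which is precisely the content of the Proposition and precisely the subtlety your argument skips.

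The paper bridges this gap by working with the second differences, which \emph{are} in $\mathcal{B}$, and pulling information back to the first difference via Corollary \ref{nozer}. With $T$ the type corresponding to $\mathcal{B}$ one has $S_{k-1}(\Delta_{t_1,t_2}\phi)\subseteq T$ for all $t_1,t_2$. Writing $\Delta_{t_1}\phi=\sum_i f_i$ with $f_i$ in module $\lambda_i$, choosing characters $\phi_i\in\lambda_i$, and setting $g_i=(\Delta_{t_1}\phi)\overline{\phi_i}$, one has $\mathbb{E}(g_i\mid\mathcal{F}_{k-2})\neq 0$. Corollary \ref{nozer} then produces $t_2$ with $\mathbb{E}\bigl(g_i(x)\overline{g_j(x+t_2)}\,\big|\,\mathcal{F}_{k-2}\bigr)\neq 0$; expanding this product as $\Delta_{t_1,t_2}\phi(x)\cdot\overline{\phi_i(x)}\phi_j(x+t_2)$ and noting that the second factor lies in the module $\lambda_i^{-1}\lambda_j$, one concludes that the $\lambda_i\lambda_j^{-1}$-component of $\Delta_{t_1,t_2}\phi$ is nonzero, whence $\lambda_i\lambda_j^{-1}\in T$. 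This analytic step through Corollary \ref{nozer} is the missing idea in your proposal.
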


\begin{proof} Let $\mathcal{B}$ be the $\sigma$-algebra guaranteed by corollary \ref{secder} and let $T$ be the subgroup of $\hat{\bA}_k$ corresponding to $\mathcal{B}$. We have that the dual support of $\Delta_{t_1,t_2}\phi$ is contained in $T$ for every $t_1,t_2$. Let $t_1\in\bA$ be an arbitrary fixed element and let $\Delta_{t_1}\phi=f_1+f_2+\dots$ be the unique $k-1$-th order Fourier decomposition of $\Delta_{t_1}\phi$ into non zero functions. Assume that $\lambda_i\in\hat{\bA}_{k-1}$ is the module containing $f_i$ for every $i$. We have to show that $\lambda_i\lambda_j^{-1}\in T$ for every pair of indices $i,j$.
Let us choose a $k-1$-th order character $\phi_i$ from every module $\lambda_i$ and let $g_i$ denote $(\Delta_{t_1}\phi)\overline{\phi_i}$.
We have that $\mathbb{E}(g_i|\mathcal{F}_{k-2})$ is not $0$.
This means by lemma \ref{nozer} that for a positive measure $t_2$'s $\mathbb{E}(g_i(x)\overline{g_j(x+t_2)}|\mathcal{F}_{k-2})$ is not the $0$ function. On the other hand $g_i(x)\overline{g_j(x+t_2)}=(\Delta_{t_1,t_2}\phi(x))\overline{\phi_i(x)}\phi_j(x+t_2)$.
Here $\overline{\phi_i(x)}\phi_j(x+t_2)$ is an element from the module $\lambda_j\lambda_i^{-1}$.
If $\mathbb{E}(g_i(x)\overline{g_j(x+t_2)}|\mathcal{F}_{k-2})$ is not $0$ for some $t_2$ then the $\lambda_i\lambda_j^{-1}$ component of $\Delta_{t_1,t_2}\phi$ is not zero. It shows that $\lambda_i\lambda_j^{-1}\in T$.
\end{proof}

\begin{theorem}\label{charhom} For every $k$-th order character $\phi$ there is a countable subgroup $T\subset\hat{\bA}_{k-1}$ and a homomorphism $h:\bA\rightarrow\hat{\bA}_{k-1}/T$ such that the $k$-th dual support of $\Delta_t\phi$ is contained in the coset $h(t)$.
\end{theorem}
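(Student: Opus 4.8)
The plan is to read $T$ and $h$ directly off the dual supports of the functions $\Delta_t\phi$: Proposition \ref{fixtype} supplies a common $(k-1)$-type $T$, and the cocycle identity for $\Delta$ turns the coset assignment $t\mapsto h(t)$ into a homomorphism.

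First I would let $T$ be the fixed $(k-1)$-type given by Proposition \ref{fixtype}, so that $T\subset\hat{\bA}_{k-1}$ is countable and $T_{k-1}(\Delta_t\phi)\subseteq T$ for every $t\in\bA$. Since $\phi$ is a $k$-th order character, each $\Delta_t\phi$ is measurable in $\mathcal{F}_{k-1}$ and satisfies $|\Delta_t\phi|=1$ pointwise, hence is a nonzero element of $L_2(\mathcal{F}_{k-1})$; by Theorem \ref{fundth} its dual support $S_{k-1}(\Delta_t\phi)\subseteq\hat{\bA}_{k-1}$ is therefore nonempty. Moreover $T_{k-1}(\Delta_t\phi)$ is generated by the quotients $g_1g_2^{-1}$ with $g_1,g_2\in S_{k-1}(\Delta_t\phi)$ and is contained in $T$, so any two elements of $S_{k-1}(\Delta_t\phi)$ differ by an element of $T$. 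Thus $S_{k-1}(\Delta_t\phi)$ lies in a unique coset of $T$, and I set $h(t)\in\hat{\bA}_{k-1}/T$ equal to that coset; then $S_{k-1}(\Delta_t\phi)\subseteq h(t)$ by construction, which is the asserted containment.

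To see that $h$ is a homomorphism I would start from
$$\Delta_{t_1+t_2}\phi(x)=\bigl(\Delta_{t_2}\phi\bigr)(x+t_1)\cdot\bigl(\Delta_{t_1}\phi\bigr)(x),$$
obtained by inserting $\phi(x+t_1)\overline{\phi(x+t_1)}$. The rank one modules in the decomposition of $L_2(\mathcal{F}_{k-1})$ are shift invariant (Theorem \ref{fundth}), so a shift does not alter a function's dual support; in particular $S_{k-1}\bigl((\Delta_{t_2}\phi)(\cdot+t_1)\bigr)=S_{k-1}(\Delta_{t_2}\phi)$. Applying Lemma \ref{prodsup} to the displayed product and using commutativity of $\hat{\bA}_{k-1}$ gives
$$S_{k-1}(\Delta_{t_1+t_2}\phi)\subseteq S_{k-1}(\Delta_{t_1}\phi)\cdot S_{k-1}(\Delta_{t_2}\phi)\subseteq h(t_1)h(t_2),$$
while the left hand side is a nonempty subset of $h(t_1+t_2)$. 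Two cosets of $T$ with a common element are equal, so $h(t_1+t_2)=h(t_1)h(t_2)$.

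I do not expect a real obstacle here; the substantive work is already in Proposition \ref{fixtype}, and the rest is bookkeeping with dual supports. The points needing care are the well-definedness of $h$, namely that $S_{k-1}(\Delta_t\phi)$ is nonempty (using $|\Delta_t\phi|=1$ and Theorem \ref{fundth}) and that it lies in a single coset (using that the type $T_{k-1}(\cdot)$ is generated by quotients of dual-support elements), together with the two elementary facts that the dual support is multiplicative up to inclusion (Lemma \ref{prodsup}) and invariant under shifts. If regularity of $h$ in $t$ were also wanted, one could note that $t\mapsto\Delta_t\phi$ is measurable and that each coset $h(t)$ is pinned down by countably many inner products, but the statement as given asks only for an abstract homomorphism.
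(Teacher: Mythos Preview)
Your proposal is correct and follows essentially the same approach as the paper's proof: both invoke Proposition \ref{fixtype} to obtain $T$, define $h(t)$ as the coset containing $S_{k-1}(\Delta_t\phi)$, and then use the identity $\Delta_{t_1+t_2}\phi(x)=\Delta_{t_2}\phi(x+t_1)\Delta_{t_1}\phi(x)$ together with shift invariance of the dual support and multiplicativity (the paper cites Theorem \ref{highconv}, you cite its corollary Lemma \ref{prodsup}) to conclude that $h$ is a homomorphism. Your version is in fact slightly more explicit about why $S_{k-1}(\Delta_t\phi)$ is nonempty and why $T_{k-1}(\Delta_t\phi)\subseteq T$ forces it into a single coset, points the paper leaves implicit.
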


\begin{proof} Proposition \ref{fixtype} implies that there is a countable subgroup $T\subset\hat{\bA}_{k-1}$ such that the dual support of $\Delta_t\phi$ is contained in a coset of $T$ for every element $t\in\bA$. We denote this coset by $h(t)$. We have to show that $h$ is a homomorphism. This follows from the identity
$$\Delta_{t_1+t_2}\phi(x)=\Delta_{t_2}\phi(x+t_1)\Delta_{t_1}\phi(x)$$
using that the dual support of a function is shift invariant and theorem \ref{highconv} the proof is complete.
\end{proof}

\begin{lemma}\label{countriv} Let $k\geq 2$ and $\phi$ be a $k$-th order character such that there is a countable subgroup $T\subseteq \hat{\bA}_{k-1}$ with the property that the $k-1$-th dual support of $\Delta_t\phi$ is contained in $T$ for every $t\in\bA$. Then $\phi$ is measurable in $\mathcal{F}_{k-1}$ (or in other words $\phi$ represents the trivial module.)
\end{lemma}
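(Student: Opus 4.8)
The plan is to convert the dual--support hypothesis into an ordinary measurability statement about the functions $\Delta_t\phi$, and then let Lemma~\ref{sepex} finish the job.

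First I would introduce the shift invariant $\sigma$-algebra $\mathcal{B}$ attached to the $(k-1)$-type $T$: by the correspondence between types and $\sigma$-algebras, $\mathcal{B}$ satisfies $\mathcal{F}_{k-2}\subseteq\mathcal{B}\subseteq\mathcal{F}_{k-1}$, it is generated by $\mathcal{F}_{k-2}$ together with countably many sets, and $L_2(\mathcal{B},\bm)$ is the orthogonal sum of the rank one modules belonging to $T$. The central observation is that the hypothesis ``$S_{k-1}(\Delta_t\phi)\subseteq T$ for every $t$'' is exactly the statement that $\Delta_t\phi\in L_2(\mathcal{B},\bm)$ for every $t$. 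Indeed, since $\phi$ is a $k$-th order character we have $\Delta_t\phi\in L_\infty(\mathcal{F}_{k-1})$, so the $(k-1)$-th order Fourier decomposition of $\Delta_t\phi$ has all of its components in modules lying in $S_{k-1}(\Delta_t\phi)\subseteq T$; hence $\Delta_t\phi\in L_2(\mathcal{B},\bm)$, and as $|\Delta_t\phi|\equiv 1$ we get $\Delta_t\phi\in L_\infty(\mathcal{B},\bm)$ for every $t\in\bA$.

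Now I would apply Lemma~\ref{sepex} to the pair $(\mathcal{B},\phi)$: it produces a shift invariant separable extension $\mathcal{B}'$ of $\mathcal{B}$ with $\phi\in\mathcal{B}'$. Since $\mathcal{B}$ is generated by $\mathcal{F}_{k-2}$ and countably many sets, and $\mathcal{B}'$ is generated by $\mathcal{B}$ together with only countably many further sets (the level sets of $\phi$), the $\sigma$-algebra $\mathcal{B}'$ is shift invariant and generated by $\mathcal{F}_{k-2}$ and countably many sets. By the recursive construction of the Fourier $\sigma$-algebras this forces $\mathcal{B}'\subseteq\mathcal{F}_{k-1}$, so $\phi\in\mathcal{F}_{k-1}$. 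As $\phi$ is a $k$-th order character, belonging to $\mathcal{F}_{k-1}$ means it generates the trivial rank one module over $L_\infty(\mathcal{F}_{k-1},\bm)$, which is the assertion.

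The only point that needs a little care is the last step, namely that the separable extension furnished by Lemma~\ref{sepex} still sits inside $\mathcal{F}_{k-1}$; but this is the same principle -- a shift invariant $\sigma$-algebra generated by $\mathcal{F}_{k-2}$ and countably many sets is relative separable over $\mathcal{F}_{k-2}$ -- that is already used in the proof of Corollary~\ref{secder}, so I expect no real difficulty. It is worth noting why one should not try to argue via Lemma~\ref{ort1} instead: that lemma only yields the dichotomy ``$\mathbb{E}(\phi\mid\mathcal{B})=0$ or $\phi\in\mathcal{B}$'', and the first alternative genuinely cannot be ruled out (it occurs precisely when $\phi$ has no nonzero component in any module of $T$). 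The advantage of the route through Lemma~\ref{sepex} is that it places $\phi$ inside $\mathcal{F}_{k-1}$ directly, bypassing the need to exclude that case.
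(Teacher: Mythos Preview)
Your proposal is correct and follows essentially the same route as the paper's own proof: introduce the $\sigma$-algebra $\mathcal{B}$ corresponding to the countable type $T$, observe that $\Delta_t\phi$ is measurable in $\mathcal{B}$ for every $t$, apply Lemma~\ref{sepex}, and conclude via relative separability over $\mathcal{F}_{k-2}$ that the resulting extension lies in $\mathcal{F}_{k-1}$. Your write-up is in fact more detailed than the paper's (which is terse and contains a small typo), and your closing remark about why Lemma~\ref{ort1} alone would not suffice is a nice addition.
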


\begin{proof} Let $\mathcal{B}$ be the $\sigma$-algebra generated by the modules in $T$. Since $T$ is countable we have that $\mathcal{B}$ is a separable extension of $\mathcal{F}_{k-2}$. Now Lemma \ref{sepex} implies that $\phi$ is contained in a shift invariant separable extension of $\mathcal{B}$. which is also a shift invariant separable extension of $\mathcal{F}_{k-1}$.
\end{proof}

\subsection{Various Hom-sets}

We will need some notation. For two Abelian groups $A_1$ and $A_2$ we denote by $\hom(A_1,A_2)$ the set of all homomorphism from $A_1$ to $A_2$. The set $\hom(A_1,A_2)$ is an Abelian group with respect to the point wise multiplication. Let $\aleph_0(A_2)$ denote the set of countable subgroups in $A_2$. The groups $\hom(A_1,A_2/T)$ where $T\in\aleph_0(A_2)$ are forming a direct system with the natural homomorphisms $\hom(A_1,A_2/T_1)\rightarrow\hom(A_1,A_2/T_2)$ defined when $T_1\subseteq T_2$.

\begin{definition} $\hom^*(A_1,A_2)$ is the direct limit of the direct system $$\{\hom(A_1,A_2/T)\}_{T\in\aleph_0(A_2)}$$
with the homomorphisms induced by embeddings on $\aleph_0(A_2)$.
\end{definition}

We describe the elements of $\hom^*(A_1,A_2)$.
Let $H(A_1,A_2)$ be the disjoint union of all the sets $\hom(A_1,A_2/T)$ where $T$ is some countable subgroup of $A_2$. If $h_1\in\hom(A_1,A_2/T_1)$ and $h_2\in\hom(A_1,A_2/T_2)$ are two elements in $H(A_1,A_2)$ then we say that $h_1$ and $h_2$ are equivalent if there is a countable subgroup $T_3$ of $A_2$ containing both $T_1$ and $T_2$ such that $h_1$ composed with $A_2/T_1\rightarrow A_2/T_3$ is the same as $h_2$ composed with $A_2/T_2\rightarrow A_2/T_3$.
The equivalence classes in $H(A_1,A_2)$ are forming an Abelian group that we denote by $\hom^*(A_1,A_2)$.

For two abelian groups let $\hom^c(A_1,A_2)$ denote the set of homomorphisms whose image is countable. We denote by $\hom^0(A_1,A_2)$ the factor $\hom(A_1,A_2)/\hom^c(A_1,A_2)$. If $T$ is a countable subgroup of $A_2$ then there is a natural embedding of $\hom^0(A_1,A_2/T)$ into $\hom^*(A_1,A_2)$ in the following way.
The set $\hom(A_1,A_2/T)$ is a subset of $H(A_1,A_2)$. It is easy to see that $\phi_1,\phi_2\in\hom(A_1,A_2/T)$ are equivalent if and only if they are contained in the same coset of $\hom^c(A_1,A_2)$. From the definitions it follows that

\begin{lemma}\label{homstarcup} $$\hom^*(A_1,A_2)=\bigcup_{T\in\aleph_0(A_2)} \hom^0(A_1,A_2/T).$$
\end{lemma}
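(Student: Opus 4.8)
The plan is to read off both inclusions directly from the description of $\hom^*(A_1,A_2)$ as the set of equivalence classes in $H(A_1,A_2)$, together with the embeddings $\hom^0(A_1,A_2/T)\hookrightarrow\hom^*(A_1,A_2)$ recorded just before the statement. First I would pin down the single fact those embeddings rest on: for a fixed countable subgroup $T\le A_2$, two homomorphisms $\phi_1,\phi_2\in\hom(A_1,A_2/T)$ represent the same element of $\hom^*(A_1,A_2)$ if and only if $\phi_1\phi_2^{-1}$ has countable image in $A_2/T$, i.e. $\phi_1\phi_2^{-1}\in\hom^c(A_1,A_2/T)$. Indeed, if $\phi_1$ and $\phi_2$ are equivalent there is a countable $T_3\supseteq T$ such that $\phi_1$ and $\phi_2$ agree after projection to $A_2/T_3$, which forces $\phi_1\phi_2^{-1}$ to take values in the countable group $T_3/T$; conversely, if $\phi_1\phi_2^{-1}$ has countable image, the preimage $T_3\le A_2$ of that image is countable (it has a countable subgroup $T$ with countable quotient) and witnesses the equivalence. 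Consequently the assignment sending the $\hom^c(A_1,A_2/T)$-coset of $h$ to the $H(A_1,A_2)$-class of $h$ is a well-defined injective homomorphism $\hom^0(A_1,A_2/T)\to\hom^*(A_1,A_2)$, and I will identify $\hom^0(A_1,A_2/T)$ with its image.

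With this in hand the inclusion $\supseteq$ is automatic: each $\hom^0(A_1,A_2/T)$ has just been identified with a subgroup of $\hom^*(A_1,A_2)$, so the union over all $T\in\aleph_0(A_2)$ is contained in $\hom^*(A_1,A_2)$. For $\subseteq$, take an arbitrary element of $\hom^*(A_1,A_2)$; by the definition of the direct limit it is the $H(A_1,A_2)$-class of some $h\in\hom(A_1,A_2/T)$ with $T$ a countable subgroup of $A_2$, and by the previous paragraph that class is the image under the embedding of the $\hom^c(A_1,A_2/T)$-coset of $h$, an element of $\hom^0(A_1,A_2/T)$. Hence every element of $\hom^*(A_1,A_2)$ lies in $\bigcup_{T\in\aleph_0(A_2)}\hom^0(A_1,A_2/T)$, which completes the argument.

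The only genuine content — hence the ``hard part'', modest as it is — is the characterization of $H(A_1,A_2)$-equivalence inside a single group $\hom(A_1,A_2/T)$, which in turn reduces to the bookkeeping fact that an extension of a countable group by a countable group is countable. Everything else is a formal unwinding of the definition of the direct limit, so I expect no diagram chase beyond that.
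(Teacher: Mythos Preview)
Your proposal is correct and follows precisely the approach the paper sketches: the paper gives no separate proof of this lemma, stating only that it ``follows from the definitions'' after recording that $\phi_1,\phi_2\in\hom(A_1,A_2/T)$ are equivalent in $H(A_1,A_2)$ if and only if they lie in the same coset of $\hom^c(A_1,A_2/T)$. You have simply supplied the details of that claim (including the observation that a countable-by-countable extension is countable) and then read off both inclusions, which is exactly what the paper intends.
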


\begin{corollary}\label{exp} If $A_1$ is of exponent $n$ then so is $\hom^*(A_1,A_2)$.
\end{corollary}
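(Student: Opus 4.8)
The plan is to reduce the statement to the completely elementary fact that $\hom(A_1,B)$ has exponent dividing $n$ for \emph{every} abelian group $B$, and then carry this through the quotient and the direct limit appearing in the definition of $\hom^*$.

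First I would invoke Lemma \ref{homstarcup}, which exhibits $\hom^*(A_1,A_2)$ as the union $\bigcup_{T\in\aleph_0(A_2)}\hom^0(A_1,A_2/T)$. Hence any element $x\in\hom^*(A_1,A_2)$ lies in $\hom^0(A_1,A_2/T)$ for some countable subgroup $T\subseteq A_2$, and to see $nx=0$ it suffices to know that $\hom^0(A_1,A_2/T)$ has exponent dividing $n$. But by definition $\hom^0(A_1,A_2/T)$ is the quotient $\hom(A_1,A_2/T)/\hom^c(A_1,A_2/T)$, so it is enough to prove the claim for the group $\hom(A_1,B)$ with $B=A_2/T$.

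The only computation needed is then the following: given $\phi\in\hom(A_1,B)$, its $n$-th power in the group $\hom(A_1,B)$ (whose operation is pointwise multiplication in $B$) is the map $a\mapsto\phi(a)^n=\phi(a^n)$; since $A_1$ has exponent $n$ we have $a^n=1$ for all $a\in A_1$, so $\phi(a^n)=1$ and $\phi^n$ is the trivial homomorphism, i.e.\ the identity of $\hom(A_1,B)$. Therefore $\hom(A_1,B)$ has exponent dividing $n$, and the same follows for its quotient $\hom^0(A_1,A_2/T)$ and for $\hom^*(A_1,A_2)$.

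There is no genuine obstacle here; the one point worth making explicit is that the structure maps $\hom(A_1,A_2/T_1)\to\hom(A_1,A_2/T_2)$ and the passage to the factor by $\hom^c$ are group homomorphisms for the pointwise operation, so neither step can enlarge the exponent. (The corollary is naturally read as ``exponent dividing $n$''; if one wanted exponent exactly $n$ one would additionally have to produce an element of order $n$, but that is not what is used later.)
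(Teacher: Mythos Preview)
Your argument is correct and is exactly the intended one: the paper states this as an immediate corollary of Lemma~\ref{homstarcup} without writing out a proof, and what you have supplied is precisely the routine verification (reduce to $\hom(A_1,A_2/T)$ via the lemma and the quotient defining $\hom^0$, then use $\phi(a)^n=\phi(a^n)=\phi(1)$). Your parenthetical remark about ``exponent dividing $n$'' matches the paper's own convention, stated just after the corollary, that exponent $n$ means every $n$-th power is the identity.
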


Note that an abelian group is said to be of exponent $n$ if the $n$-th power of every element is the identity.
There is a simplification of the situation when $A_2$ is of exponent $p$ for some prime number $p$.

\begin{lemma} If $p$ is a prime number and $A_2$ is of exponent $p$ then $\hom^*(A_1,A_2)=\hom^0(A_1,A_2)$.
\end{lemma}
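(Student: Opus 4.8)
The plan is to take an arbitrary element of $\hom^*(A_1,A_2)$, realize it via Lemma \ref{homstarcup} inside $\hom^0(A_1,A_2/T)$ for some countable subgroup $T\subseteq A_2$, and then push it back into $\hom^0(A_1,A_2)$ by splitting the quotient map $A_2\to A_2/T$. The existence of such a splitting is precisely where the hypothesis is used: since $pA_2=0$, the group $A_2$ is an $\mathbb{F}_p$-vector space, every subgroup is a subspace, and hence $T$ has a complement $S$ with $A_2=T\oplus S$ (using a Hamel basis / Zorn's lemma in the possibly infinite-dimensional case).

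First I would fix a countable subgroup $T\subseteq A_2$ together with a complement $S$ as above. The restriction to $S$ of the quotient map $q:A_2\to A_2/T$ is an isomorphism onto $A_2/T$; let $\iota:A_2/T\to A_2$ be the composition of $(q|_S)^{-1}$ with the inclusion $S\hookrightarrow A_2$, so that $q\circ\iota=\mathrm{id}_{A_2/T}$. Post-composition with $\iota$ yields a homomorphism $\hom(A_1,A_2/T)\to\hom(A_1,A_2)$, $h\mapsto\iota\circ h$. Since $\iota$ is injective, $\iota\circ h$ has countable image exactly when $h$ does, so this descends to a map $\hom^0(A_1,A_2/T)\to\hom^0(A_1,A_2)$.

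Next I would verify compatibility with the embeddings into $\hom^*(A_1,A_2)$. For $h\in\hom(A_1,A_2/T)$ the identity $q\circ(\iota\circ h)=h$ exhibits $\iota\circ h\in\hom(A_1,A_2)=\hom(A_1,A_2/\{0\})$ and $h\in\hom(A_1,A_2/T)$ as equivalent elements of $H(A_1,A_2)$ (take the common countable overgroup $T_3=T$: composing $\iota\circ h$ with $A_2\to A_2/T$ gives $h$, and composing $h$ with the identity of $A_2/T$ gives $h$). Hence the class of $h$ in $\hom^*(A_1,A_2)$ already lies in the canonical copy of $\hom^0(A_1,A_2)$. As $T$ was arbitrary, Lemma \ref{homstarcup} gives $\hom^*(A_1,A_2)\subseteq\hom^0(A_1,A_2)$; the reverse inclusion is the $T=\{0\}$ term of Lemma \ref{homstarcup}, so equality holds.

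I do not expect a genuine obstacle here. The only points requiring care are the existence of the complement $S$ for an arbitrary (possibly infinite-dimensional) $\mathbb{F}_p$-vector space, and the bookkeeping that the splitting $\iota$ interacts correctly both with the direct-limit identifications defining $\hom^*$ and with the "countable image" quotient defining $\hom^0$; both are routine once the splitting is in hand.
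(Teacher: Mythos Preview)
The paper states this lemma without proof, so there is no argument to compare against directly. Your proposal is correct and is the natural way to fill the gap: since $A_2$ has exponent $p$ it is an $\mathbb{F}_p$-vector space, every countable subgroup $T$ admits a linear complement, the quotient $A_2\to A_2/T$ splits, and the splitting lets you lift any $h\in\hom(A_1,A_2/T)$ to $\iota\circ h\in\hom(A_1,A_2)$ representing the same class in $\hom^*(A_1,A_2)$. Combined with Lemma~\ref{homstarcup} this gives the claimed equality. The intermediate remark that post-composition with $\iota$ ``descends to a map $\hom^0(A_1,A_2/T)\to\hom^0(A_1,A_2)$'' is correct but not actually needed for the conclusion; all you use is that one chosen representative $h$ lifts to an equivalent element of $\hom(A_1,A_2/\{0\})$.
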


\begin{definition} We say that an Abelian group is essentially torsion free if there are at most countably many finite order elements in it.
\end{definition}

\begin{lemma}\label{estors} If $A$ is essentially torsion free then $A/T$ is essentially torsion free whenever $T\in\aleph_0(A)$.
\end{lemma}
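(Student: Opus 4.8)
The plan is to describe the torsion subgroup of $A/T$ explicitly and then bound its cardinality. Write $\mathrm{Tor}(A)$ for the set of finite-order elements of $A$; this is a subgroup, and by hypothesis it is countable. For each integer $n \geq 1$ I would set $S_n = \{a \in A : na \in T\}$, which is a subgroup of $A$ containing $T$ since $T$ is a subgroup. An element $a+T$ of $A/T$ has finite order precisely when $na \in T$ for some $n \geq 1$, so the torsion subgroup of $A/T$ is exactly $\bigcup_{n\geq 1} S_n/T$. Hence it suffices to show each $S_n/T$ is countable, and for that it is enough to show each $S_n$ is countable.

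To see that $S_n$ is countable, consider the homomorphism $\varphi_n : S_n \to T$ given by $\varphi_n(a) = na$, which is well defined by the very definition of $S_n$. Its kernel is $\{a \in A : na = 0\}$, which is contained in $\mathrm{Tor}(A)$ and hence countable, while its image is a subgroup of the countable group $T$ and hence countable. Therefore $S_n$ is an extension of a countable group by a countable group, so $S_n$ is countable, and a fortiori so is $S_n/T$.

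Finally, the torsion subgroup of $A/T$ equals the countable union $\bigcup_{n\geq 1} S_n/T$ of countable sets and is therefore countable, which is precisely the assertion that $A/T$ is essentially torsion free. There is no real obstacle in this argument; the only points that deserve a line of justification are the identification of the torsion of $A/T$ with $\bigcup_n S_n/T$ and the observation that $\ker \varphi_n \subseteq \mathrm{Tor}(A)$, both of which are immediate from the definitions.
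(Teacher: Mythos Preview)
Your proof is correct. It is essentially the same argument as the paper's, just organized as a direct count rather than a contradiction: the paper assumes $A/T$ has uncountably many torsion elements, pigeonholes to find a single pair $(n,t)$ with $\{x : x^n = t\}$ uncountable, and then translates by a fixed element of this set to produce uncountably many torsion elements of $A$. Your version packages the same idea via the homomorphism $\varphi_n : S_n \to T$, whose fibers are exactly the paper's sets $\{x : x^n = t\}$; the observation that each fiber is a coset of $\ker\varphi_n \subseteq \mathrm{Tor}(A)$ is precisely the paper's translation step. Both routes rest on the same two facts---$T$ is countable and the $n$-torsion of $A$ is countable---and neither gains any real advantage over the other, though your direct formulation is arguably tidier.
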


\begin{proof} Assume by contradiction that there are uncountably many finite order elements in $A/T$. Then there is a natural number $n$ and element $t\in T$ such that the set
$S=\{x~|~x\in A,~x^n=t\}$ is uncountable. Then for a fixed element $y\in S$ the set $Sy^{-1}$ is an uncountable set of finite order elements in $A$ which is a contradiction.
\end{proof}

\begin{lemma}\label{nullfree} If $A_2$ is essentially torsion free then $\hom^0(A_1,A_2)$ is torsion free.
\end{lemma}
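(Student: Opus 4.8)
The plan is to peel off the definition of $\hom^0$ and turn the claim into an elementary cardinality estimate on the image of a homomorphism. Recall that $\hom^0(A_1,A_2)=\hom(A_1,A_2)/\hom^c(A_1,A_2)$, where $\hom^c(A_1,A_2)$ consists of the homomorphisms with countable image. Thus a torsion element of $\hom^0(A_1,A_2)$ is represented by some $\phi\in\hom(A_1,A_2)$ with $\phi^n\in\hom^c(A_1,A_2)$ for some $n\geq 1$, and it suffices to show that, under the hypothesis on $A_2$, any such $\phi$ already lies in $\hom^c(A_1,A_2)$, i.e. that $B:=\phi(A_1)$ is countable.

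First I would note that $\phi^n(A_1)=\{\,b^n\mid b\in B\,\}=:B^n$, so $B^n$ is countable by assumption. Next consider the $n$-th power endomorphism $\pi\colon B\to B$, $\pi(b)=b^n$ (a homomorphism since $A_2$ is abelian). Its image is $B^n$, which is countable, and its kernel $\ker\pi=\{\,b\in B\mid b^n=e\,\}$ consists of finite-order elements of $A_2$; since $A_2$ is essentially torsion free there are only countably many such elements, so $\ker\pi$ is countable.

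Finally, since $B/\ker\pi\cong B^n$, we get $|B|=|\ker\pi|\cdot|B/\ker\pi|\leq\aleph_0\cdot\aleph_0=\aleph_0$, so $B$ is countable and $\phi\in\hom^c(A_1,A_2)$. Hence the only torsion element of $\hom^0(A_1,A_2)$ is the identity, which is the assertion. I do not expect a genuine obstacle here; the only point requiring a little care is the reduction to the image $B$, together with the observation that $\ker\pi$ embeds into the (by hypothesis countable) torsion part of $A_2$.
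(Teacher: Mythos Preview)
Your proof is correct and follows essentially the same idea as the paper's: both exploit that the $n$-th power map on the image $B=\phi(A_1)$ has countable image (by hypothesis) and countable kernel (since the kernel consists of torsion elements of $A_2$). The paper phrases this by contradiction via a pigeonhole argument on fibers (some $t\in B^n$ must have uncountable preimage, yielding uncountably many torsion elements), whereas you package it more cleanly through the first isomorphism theorem and the cardinality identity $|B|=|\ker\pi|\cdot|B/\ker\pi|$; the substance is the same.
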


\begin{proof} Assume by contradiction that there is an element $\tau\in\hom(A_1,A_2)$ and $n\in\mathbb{N}$ such that $\tau(A_1)$ is uncountable but $\tau^n(A_1)$ is countable.
Similarly to the proof of lemma \ref{estors} this means that there is a fixed element $t\in\tau^n(A_1)$ whose pre image under the map $x\rightarrow x^n$ is uncountable which is a contradiction.
\end{proof}

\begin{lemma}\label{estfree} If $A_2$ is essentially torsion free then $\hom^*(A_1,A_2)$ is torsion free.
\end{lemma}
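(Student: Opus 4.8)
The plan is to obtain this as an immediate consequence of the three preceding lemmas. I would start by taking an arbitrary torsion element $\chi\in\hom^*(A_1,A_2)$, say $\chi$ has finite order $n\geq 1$, and aim to show $\chi$ is trivial. By Lemma \ref{homstarcup} we have $\hom^*(A_1,A_2)=\bigcup_{T\in\aleph_0(A_2)}\hom^0(A_1,A_2/T)$, so there is a countable subgroup $T\subseteq A_2$ with $\chi\in\hom^0(A_1,A_2/T)$. Since the natural map $\hom^0(A_1,A_2/T)\to\hom^*(A_1,A_2)$ is an injective group homomorphism (this is exactly the embedding described just before Lemma \ref{homstarcup}), the element $\chi$ still has finite order, dividing $n$, when viewed inside $\hom^0(A_1,A_2/T)$.

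Now I would feed the hypothesis into the chain of lemmas. Since $A_2$ is essentially torsion free, Lemma \ref{estors} gives that $A_2/T$ is essentially torsion free for our countable $T$. Then Lemma \ref{nullfree}, applied with $A_2/T$ in the role of $A_2$, tells us that $\hom^0(A_1,A_2/T)$ is torsion free. Hence the finite-order element $\chi$ of $\hom^0(A_1,A_2/T)$ must be trivial, and by injectivity of the embedding it is therefore trivial in $\hom^*(A_1,A_2)$ as well. As $\chi$ was an arbitrary torsion element, $\hom^*(A_1,A_2)$ is torsion free.

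The only step requiring any care — and the closest thing to an obstacle here — is the remark that descending from $\hom^*(A_1,A_2)$ to one of the covering subgroups $\hom^0(A_1,A_2/T)$ preserves the finite-order property; this is precisely the injectivity of $\hom^0(A_1,A_2/T)\hookrightarrow\hom^*(A_1,A_2)$ recorded in the construction of $\hom^*$. Everything else is a direct invocation of Lemmas \ref{homstarcup}, \ref{estors}, and \ref{nullfree} in sequence, so no computation is involved.
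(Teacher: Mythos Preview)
Your proof is correct and follows exactly the paper's route: use Lemma \ref{homstarcup} to reduce to showing each $\hom^0(A_1,A_2/T)$ is torsion free, then apply Lemma \ref{estors} followed by Lemma \ref{nullfree}. The paper simply phrases the first step as ``it is enough to prove that for every $T\in\aleph_0(A_2)$ the group $\hom^0(A_1,A_2/T)$ is torsion free,'' which is precisely your element-chasing argument via the injective embedding, compressed into one line.
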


\begin{proof} By lemma \ref{homstarcup} it is enough to prove that for every $T\in\aleph_0(A_2)$ the group $\hom^0(A_1,A_2/T)$ is torsion free. Lemma \ref{estors} implies that $A_2/T$ is essentially torsion free. Lemma \ref{nullfree} finishes the proof.
\end{proof}

\subsection{On the structure of the higher order dual groups}

In this section we study the structure of higher order dual groups of $\bA$.
Let us start with $\hat{\bA}_1$.
We know that $\hat{\bA}_1$ is the group of measurable homomorphisms $\bA\rightarrow\mathcal{C}$.

\begin{lemma}\label{firstdual} The group $\hat{\bA}_1$ is isomorphic to $\bA$.
\end{lemma}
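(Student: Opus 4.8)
The plan is to show that the assignment sending a measurable homomorphism $\chi\colon\bA\to\cC$ to the rank one module $\bC\cdot\chi$ over $L_\infty(\mathcal{F}_0)=\bC$ is a group isomorphism $\hat{\bA}_1\cong\bA$, once we know the group of measurable homomorphisms is itself $\bA$. So the first move is to recall that $\hat{\bA}_1$, as defined, is the set of rank one modules over $L_\infty(\mathcal{F}_0,\bm)=\bC$; a $1$-st order character is a $\phi\colon\bA\to\cC$ with $\Delta_t\phi\in L_\infty(\mathcal{F}_0)=\bC$ for all $t$, i.e. $\phi(x+t)\overline{\phi(x)}$ is a constant (in $x$) for every $t$. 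Writing $c(t)$ for that constant and using $|\phi|=1$, one checks $c$ is itself a homomorphism $\bA\to\cC$ and $\phi(x+t)=c(t)\phi(x)$; normalizing $\phi(0)=1$ forces $\phi=c$. Hence the rank one modules over $\bC$ are exactly the one-dimensional spaces spanned by measurable homomorphisms $\bA\to\cC$, and $\hat{\bA}_1$ is (isomorphic to) the group $\hom_{\mathrm{meas}}(\bA,\cC)$ of measurable characters of $\bA$ under pointwise multiplication.

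The remaining and main content is therefore: the group of \emph{measurable} homomorphisms $\bA\to\cC$ is isomorphic to $\bA$ itself. The plan is to go through the ultra product structure. Each $A_i$ is a finite abelian group, so $\hat{A_i}=\hom(A_i,\cC)$ is (non-canonically) isomorphic to $A_i$, with $|\hat{A_i}|=|A_i|$. I would like to say that a measurable homomorphism $\chi\colon\bA\to\cC$ is, up to the ultra product identification, the ultra limit of a sequence of honest characters $\chi_i\in\hat{A_i}$; then the map $\chi\mapsto(\chi_i)$ modulo the ultrafilter sets up a bijection between $\hom_{\mathrm{meas}}(\bA,\cC)$ and $\prod_{i\to\omega}\hat{A_i}=\prod_{i\to\omega}A_i=\bA$ that is clearly multiplicative, giving the isomorphism. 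The nontrivial point here is the measurability hypothesis: an arbitrary (ultraproduct-internal) homomorphism $\bA\to\cC$ need not be measurable, but the ones appearing in rank one modules are, and one must use measurability (continuity/boundedness of the associated functions, or the structure of the ultra product $\sigma$-algebra from \cite{Sz1}) to force it to be an ultra limit of genuine finite characters rather than some pathological internal map.

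Concretely the key steps, in order, are: (1) identify $\hat{\bA}_1$ with $\hom_{\mathrm{meas}}(\bA,\cC)$ via the normalization argument above, using Lemma~\ref{ort1}/Lemma~\ref{sepex} if needed to see these $\phi$ generate the rank one modules; (2) show every $\chi\in\hom_{\mathrm{meas}}(\bA,\cC)$ arises as an ultra limit of characters $\chi_i\in\hat{A_i}$ — this is the heart of the matter and should follow from the description of measurable functions on the ultra product as ultra limits of functions on the $A_i$ together with the fact that a function which is ``almost'' a homomorphism on a finite abelian group is exactly a homomorphism (a rigidity/stability statement, or simply that the internal set of $x$ with $\chi_i(x+y)=\chi_i(x)\chi_i(y)$ for all $y$ has full measure in the ultralimit); (3) observe that distinct elements of $\prod_{i\to\omega}\hat A_i$ give distinct (non-$\mathbf{m}$-a.e.-equal) characters of $\bA$, and conversely, so $\chi\mapsto[(\chi_i)]_\omega$ is a bijection; (4) note it is a group homomorphism since pointwise multiplication of characters corresponds coordinatewise to the group law of $\hat A_i\cong A_i$. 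I expect step (2), turning the measurability hypothesis into a genuine ultralimit-of-finite-characters representation, to be the main obstacle; steps (1), (3), (4) are essentially bookkeeping given the ultra product formalism of \cite{Sz1}.
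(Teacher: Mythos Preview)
Your proposal is correct and follows essentially the same route as the paper: identify $\hat{\bA}_1$ with the measurable homomorphisms $\bA\to\cC$, let $H$ be the subgroup of characters arising as ultra limits of honest finite characters, observe $H\cong\prod_{i\to\omega}\hat A_i\cong\bA$ via orthogonality of distinct finite characters, and then argue $H=\hat{\bA}_1$. The paper dispatches your step (2) in one line by invoking the ``well known fact that approximate characters on a finite Abelian group can be approximated with proper characters'' (the stability statement you anticipated); your phrasing that an approximate homomorphism is ``exactly'' a homomorphism is a slight overstatement, but the stability version you also mention is precisely what is used.
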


\begin{proof}
We have that $\bA$ is the ultra limit of a sequence $\{A_i\}_{i=1}^\infty$ of finite abelian groups.
Let $H\subseteq\hat{\bA}_1$ denote the set of those characters that are ultra limit of characters on $\{A_i\}_{i=1}^\infty$.
Let $\lambda_i:A_i\rightarrow\mathcal{C}$ and $\mu_i:A_i\rightarrow\mathcal{C}$ be two sequences of linear characters. Let furthermore $\lambda$ be the ultra limit of $\{\lambda_i\}_{i=1}^\infty$ and $\mu$ be the ultra limit of $\{\mu_i\}_{i=1}^\infty$.
If $\lambda_i$ differs from $\mu_i$ on an index set which is in the ultra filter then they are orthogonal at this index set and so they are orthogonal in the limit. This implies that $\lambda=\mu$ if and only if the sequences $\{\lambda_i\}_{i=1}^\infty$ and $\{\mu_i\}_{i=1}^\infty$ agree on a set from the ultra filter. In other words $H$ is isomorphic to the ultra product of the dual groups of $\{A_i\}_{i=1}^\infty$. Since the dual group of a finite abelian group $A$ is isomorphic to $A$ we obtain that $H$ is isomorphic to $\bA$.

To complete the proof we need to see that $H=\hat{\bA}_1$. This follows from the well known fact that approximate characters on a finite Abelian group can be approximated with proper characters.
\end{proof}

\medskip

By theorem \ref{charhom} every $k$-th order character $\phi$ induces a homomorphism from $\bA$ to $\hat{\bA}_{k-1}/T$ for some countable subgroup. This homomorphism represents an element in $\hom^*(\bA,\hat{\bA}_{k-1})$. We denote this element by $q(\phi)$. If $q(\phi_1)=q(\phi_2)$ then Lemma \ref{countriv} shows that if $k\geq 2$ then $\phi_1$ and $\phi_2$ belong to the same rank one module.
This implies the following lemma.

\begin{lemma}\label{dualemb} Let $k\geq 2$ and $\phi_1$ and $\phi_2$ be two $k$-th order characters. Them $q(\phi_1)=q(\phi_2)$ if and only if $\phi_1$ and $\phi_2$ generate the same rank one module over $L_\infty(\mathcal{F}_{k-1})$. It follows that $q$ is an injective homomorphism of $\hat{\bA}_k$ into $\hom^*(\bA,\hat{\bA}_{k-1})$
\end{lemma}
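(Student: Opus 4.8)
The plan is to deduce Lemma \ref{dualemb} as a fairly direct corollary of the three ingredients already established: Theorem \ref{charhom} (every $k$-th order character $\phi$ gives a homomorphism $\bA\to\hat{\bA}_{k-1}/T$, hence an element $q(\phi)\in\hom^*(\bA,\hat{\bA}_{k-1})$), Lemma \ref{countriv} (a $k$-th order character all of whose $\Delta_t\phi$ have dual support in one fixed countable $T$ is measurable in $\mathcal{F}_{k-1}$, i.e. represents the trivial module), and the fact that $\hat{\bA}_k$ is a group under pointwise multiplication with the inverse of a module generated by $\phi$ being generated by $\overline{\phi}$. First I would note that $q$ is well-defined on rank one modules: if $\phi$ and $\phi'=\phi\lambda$ with $\lambda\in L_\infty(\mathcal{F}_{k-1})$, $|\lambda|=1$, generate the same module, then $\Delta_t\phi'=\Delta_t\phi\cdot\Delta_t\lambda$ and $\Delta_t\lambda\in\mathcal{F}_{k-1}$, so $\Delta_t\phi$ and $\Delta_t\phi'$ have the same $(k-1)$-th dual support (the $\mathcal{F}_{k-2}$-trivial module components are unchanged by multiplying with an $\mathcal{F}_{k-1}$-function — more precisely their dual supports in $\hat{\bA}_{k-1}$ agree), so $q(\phi)=q(\phi')$ as elements of $\hom^*$.

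Next I would check that $q$ is a homomorphism of groups $\hat{\bA}_k\to\hom^*(\bA,\hat{\bA}_{k-1})$. Given two $k$-th order characters $\phi_1,\phi_2$ with associated countable subgroups $T_1,T_2$, set $T_3=\langle T_1,T_2\rangle$, still countable; then the product $\phi_1\phi_2$ satisfies $\Delta_t(\phi_1\phi_2)=\Delta_t\phi_1\cdot\Delta_t\phi_2$, and by Lemma \ref{prodsup} its $(k-1)$-th dual support sits in $S(\Delta_t\phi_1)S(\Delta_t\phi_2)\subseteq h_1(t)h_2(t)$, a coset of $T_3$. Hence $q(\phi_1\phi_2)$ is represented by the pointwise product $h_1h_2$ viewed modulo $T_3$, which is exactly the sum of the classes of $q(\phi_1)$ and $q(\phi_2)$ in the direct limit $\hom^*(\bA,\hat{\bA}_{k-1})$. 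Injectivity is then the content of Lemma \ref{countriv}: if $q(\phi_1)=q(\phi_2)$ then, after passing to a common countable $T$, the homomorphisms $h_1$ and $h_2$ agree modulo $T$, so the $(k-1)$-th dual support of $\Delta_t(\phi_1\overline{\phi_2})\subseteq h_1(t)h_2(t)^{-1}$ lies in the single coset $T$ for every $t$; actually one gets it lies in $T$ itself since $h_1(t)h_2(t)^{-1}$ is the identity coset. Since $\phi_1\overline{\phi_2}$ is again a $k$-th order character, Lemma \ref{countriv} forces it to be measurable in $\mathcal{F}_{k-1}$, i.e. it represents the trivial module, which says precisely that $\phi_1$ and $\phi_2$ generate the same rank one module over $L_\infty(\mathcal{F}_{k-1})$.

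The main obstacle, and the one point worth spelling out carefully, is the bookkeeping about dual supports under multiplication by $\mathcal{F}_{k-1}$-functions and under products of characters — i.e. making precise the claim that $q$ descends to modules and is multiplicative. This is where Theorem \ref{highconv} and Lemma \ref{prodsup} do the real work, together with the observation that the direct-limit structure of $\hom^*$ is exactly designed so that ``the dual support of $\Delta_t\phi$ lies in a coset of a varying countable subgroup'' becomes a well-defined single element. Once one is comfortable that these dual-support inclusions hold coset-by-coset and that enlarging $T$ to a common countable overgroup does not change the class in $\hom^*$, the homomorphism and injectivity assertions are immediate. I would also remark explicitly that the hypothesis $k\geq 2$ is used only through Lemma \ref{countriv}, which needs $\mathcal{F}_{k-2}$ to make sense as a nontrivial $\sigma$-algebra (for $k=1$ the statement degenerates since $\hat{\bA}_1\cong\bA$ is already understood by Lemma \ref{firstdual}).
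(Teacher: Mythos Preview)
Your overall strategy is exactly the paper's: define $q$ via Theorem \ref{charhom}, and get injectivity from Lemma \ref{countriv}. Your treatment is in fact more complete than the paper's, which says nothing explicit about well-definedness or the homomorphism property and simply asserts the lemma after the sentence ``If $q(\phi_1)=q(\phi_2)$ then Lemma \ref{countriv} shows\dots''. Your arguments for the homomorphism property (via Lemma \ref{prodsup} and a common countable $T_3$) and for injectivity (reducing to $\phi_1\overline{\phi_2}$ and applying Lemma \ref{countriv}) are correct.

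There is, however, one genuine slip in your well-definedness step. You write that if $\phi'=\phi\lambda$ with $\lambda\in L_\infty(\mathcal{F}_{k-1})$, $|\lambda|=1$, then ``$\Delta_t\phi$ and $\Delta_t\phi'$ have the same $(k-1)$-th dual support''. This is false in general: $\Delta_t\lambda$ is merely an $\mathcal{F}_{k-1}$-measurable function, and such a function can have many nonzero components in $\hat{\bA}_{k-1}$, so multiplication by it can genuinely move the dual support. What is true, and what suffices, is that the dual supports agree \emph{modulo a fixed countable subgroup}. Indeed, since $\lambda\in L_\infty(\mathcal{F}_{k-1})$ one has $S_{k-1}(\lambda)$ countable, and by Lemma \ref{prodsup} together with shift-invariance of dual supports,
\[
S_{k-1}(\Delta_t\lambda)\ \subseteq\ S_{k-1}(\lambda)\cdot S_{k-1}(\lambda)^{-1}
\]
for every $t$. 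Setting $T_\lambda=\langle S_{k-1}(\lambda)\rangle$ and enlarging to $T_3=\langle T_\phi,T_{\phi'},T_\lambda\rangle$, the cosets $h_\phi(t)$ and $h_{\phi'}(t)$ coincide modulo $T_3$, so $q(\phi)=q(\phi')$ in $\hom^*(\bA,\hat{\bA}_{k-1})$. This is the same bookkeeping you already carried out correctly for the homomorphism property; once you replace ``same dual support'' by ``same coset of a countable subgroup'', the argument goes through.
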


Of course if $k=1$ then $q$ is not injective however the situation is even better. In this case $\hat{\bA}_1$ is embedded into $\hom(\bA,\mathcal{C})=\hom(\bA,\hat{\bA}_0)$.
Lemma \ref{dualemb} has a few interesting consequences.

\begin{lemma}\label{duexp} Let $p$ be a prime number and assume that $\bA$ has exponent $p$. Then for every $k\geq 1$ the group $\hat{\bA}_k$ has exponent $p$.
\end{lemma}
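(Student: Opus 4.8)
The plan is to argue by induction on $k$, using Lemma \ref{dualemb} to reduce the statement about $\hat{\bA}_k$ to a statement about a $\hom^*$ group, and then to invoke Corollary \ref{exp} which tells us that $\hom^*(A_1,A_2)$ has exponent $n$ whenever $A_1$ does. The base case $k=1$ is Lemma \ref{firstdual}: since $\hat{\bA}_1\cong\bA$ and $\bA$ has exponent $p$, so does $\hat{\bA}_1$. (Alternatively one notes directly that $\hat{\bA}_1$ embeds into $\hom(\bA,\cC)$ and $\bA$ has exponent $p$.)

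For the inductive step, suppose $k\geq 2$ and that $\hat{\bA}_{k-1}$ has exponent $p$. By Lemma \ref{dualemb}, the map $q$ is an injective homomorphism of $\hat{\bA}_k$ into $\hom^*(\bA,\hat{\bA}_{k-1})$. Since $\bA$ has exponent $p$, Corollary \ref{exp} (applied with $A_1=\bA$, $A_2=\hat{\bA}_{k-1}$, $n=p$) shows that $\hom^*(\bA,\hat{\bA}_{k-1})$ has exponent $p$. An injective homomorphism into a group of exponent $p$ forces the source to have exponent dividing $p$; since $p$ is prime and $\hat{\bA}_k$ is nontrivial in the relevant cases (or trivially if it is), $\hat{\bA}_k$ has exponent $p$ as well. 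This completes the induction.

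I should double-check one subtlety: Corollary \ref{exp} is stated for $A_1$ of exponent $n$ and concludes $\hom^*(A_1,A_2)$ has exponent $n$, with no hypothesis on $A_2$, so the induction hypothesis on $\hat{\bA}_{k-1}$ is in fact not even needed for the exponent conclusion — only the hypothesis that $\bA$ has exponent $p$ matters. Thus the argument does not really require induction: for every $k\geq 2$, $\hat{\bA}_k$ embeds via $q$ into $\hom^*(\bA,\hat{\bA}_{k-1})$, which has exponent $p$ by Corollary \ref{exp} because $\bA$ does, and for $k=1$ we use $\hat{\bA}_1\cong\bA$. The only mild obstacle is the bookkeeping at $k=1$, where $q$ is not injective; but there the direct isomorphism $\hat{\bA}_1\cong\bA$ of Lemma \ref{firstdual} handles it cleanly, so no real difficulty remains.
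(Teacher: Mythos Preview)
Your proof is correct and follows essentially the same route as the paper: base case via $\hat{\bA}_1\cong\bA$ (Lemma~\ref{firstdual}), and for $k\geq 2$ the embedding of Lemma~\ref{dualemb} together with Corollary~\ref{exp}. Your additional observation that the induction hypothesis on $\hat{\bA}_{k-1}$ is not actually needed---since Corollary~\ref{exp} only requires the \emph{first} argument $\bA$ to have exponent $p$---is a valid sharpening of the paper's argument.
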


\begin{proof} We go by induction. We know that $\hat{\bA}_1$ is isomorphic with $\bA$ so it has exponent $p$. Assume that it is true for $k$ then Corollary \ref{exp} and Lemma \ref{dualemb} ensure that it remains true for $k+1$.
\end{proof}

The next lemma follows immediately from Lemma \ref{dualemb}

\begin{lemma} $\hat{\bA}_k$ is embedded into both
$$\hom^*(\bA,\hom^*(\bA,\dots,\hom^*(\bA,\hat{\bA}_1))\dots)$$
and
$$\hom^*(\bA,\hom^*(\bA,\dots,\hom^*(\bA,\hom(\bA,\mathcal{C})))\dots )$$
where the number of $\hom^*$-s is $k-1$,
\end{lemma}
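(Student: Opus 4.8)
The plan is to iterate Lemma \ref{dualemb} and pull the $\hom^*$-functor through itself. The only nontrivial content beyond that lemma is the simple observation that $\hom^*(A_1,-)$ is a (covariant) functor on abelian groups, i.e. an injective homomorphism $B_1\hookrightarrow B_2$ induces an injective homomorphism $\hom^*(A_1,B_1)\hookrightarrow\hom^*(A_1,B_2)$; once this is in hand the statement is a one-line induction.

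First I would record the functoriality: given an injective homomorphism $\iota:B_1\to B_2$ and a countable subgroup $T\subseteq B_1$, the image $\iota(T)$ is a countable subgroup of $B_2$ and $\iota$ descends to an injection $B_1/T\to B_2/\iota(T)$ (injectivity because $\iota^{-1}(\iota(T))=T$). Post-composition therefore sends $\hom(A_1,B_1/T)$ injectively into $\hom(A_1,B_2/\iota(T))$, and these maps are compatible with the direct-system maps indexed by $\aleph_0(B_1)$, so they assemble into a homomorphism $\iota_*:\hom^*(A_1,B_1)\to\hom^*(A_1,B_2)$. For injectivity of $\iota_*$: if $h\in\hom(A_1,B_1/T)$ becomes trivial in $\hom^*(A_1,B_2)$, then after enlarging $\iota(T)$ to some countable $T'\subseteq B_2$ the composite $A_1\to B_2/T'$ is zero, i.e. the image of $h$ lands in $T'/\iota(T)$; pulling back through the injection $B_1/T\hookrightarrow B_2/\iota(T)$ shows the image of $h$ lands in the countable subgroup $(B_1/T)\cap(T'/\iota(T))$, hence $h$ is already trivial in $\hom^*(A_1,B_1)$. (Here I use Lemma \ref{homstarcup} and the description of $\hom^*$ via $H(A_1,A_2)$ from the text.)

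Then I would run the induction. By Lemma \ref{dualemb}, for every $j\geq 2$ we have an injection $\hat{\bA}_j\hookrightarrow\hom^*(\bA,\hat{\bA}_{j-1})$, and for $j=1$ the final remark before the lemma gives $\hat{\bA}_1\hookrightarrow\hom(\bA,\mathcal{C})=\hom(\bA,\hat{\bA}_0)$. Starting from $\hat{\bA}_k\hookrightarrow\hom^*(\bA,\hat{\bA}_{k-1})$ and applying $\hom^*(\bA,-)_*$ to the successive embeddings $\hat{\bA}_{k-1}\hookrightarrow\hom^*(\bA,\hat{\bA}_{k-2})$, etc., yields, after $k-1$ steps, an injection
$$\hat{\bA}_k\hookrightarrow\hom^*(\bA,\hom^*(\bA,\dots,\hom^*(\bA,\hat{\bA}_1))\dots)$$
with $k-1$ copies of $\hom^*$. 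For the second display one does the same but uses the $\hat{\bA}_1\hookrightarrow\hom(\bA,\mathcal{C})$ embedding at the innermost stage (applying $\hom^*(\bA,-)_*$ to it as well), giving the nesting with $\hom(\bA,\mathcal{C})$ at the center.

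I do not expect a genuine obstacle here; the statement is essentially a formal consequence of Lemma \ref{dualemb}. The one place needing a little care — and the only thing worth spelling out — is the injectivity half of the functoriality claim, since $\hom^*$ is a quotient-of-a-direct-limit construction and one must check that triviality "upstairs" in $B_2$ forces triviality already in $B_1$; the argument above via Lemma \ref{homstarcup} handles it, using essentially the same "uncountable preimage" bookkeeping that appears in the proofs of Lemmas \ref{estors} and \ref{nullfree}.
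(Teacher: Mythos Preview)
Your proposal is correct and follows exactly the paper's approach: the paper's proof is the one-line observation that the result ``follows directly from Lemma \ref{dualemb} and the fact that $\hom^*(A_1,A_2)\subseteq\hom^*(A_1,A_3)$ whenever $A_2\subseteq A_3$.'' You have simply spelled out in detail the functoriality/injectivity fact that the paper takes for granted, and then run the same induction.
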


\begin{proof} The proof follows directly from Lemma \ref{dualemb} and the fact that $\hom^*(A_1,A_2)\subseteq \hom^*(A_1,A_3)$ whenever $A_2\subseteq A_3$.
\end{proof}

It interesting to note that
$$\hom(\bA,\hom(\bA,\dots,\hom(\bA,\mathcal{C}))\dots)\simeq \hom\Bigl(\bigotimes_{i=1}^k\bA,\mathcal{C}\Bigr)$$
which shows some connection of higher order Fourier analysis with higher order polynomials.
However the situation is more complex due to the presence of the $*$'s at the $\hom$ functions.

\subsection{Maps induced by measurable homomorphisms of $\bA$}

Let $\sigma_i:A_i\rightarrow A_i$ be a sequence of automorphisms. We denote by $\sigma$ the ultra product of $\{\sigma_i\}_{i=1}^\infty$ which is an automorphism of $\bA$. Automorphisms that arise this way will be called {\bf measurable automorphisms}. Similarly we can define measurable endomorphisms.

In the following lemmas $\sigma$ will always denote a measurable automorphism on $\bA$.

\begin{lemma} Let $f$ be an $L_\infty$ function measurable in $\mathcal{F}_k$. Then $f^{\sigma}(x)=f(\sigma(x))$ is also measurable in $\mathcal{F}_k$.
\end{lemma}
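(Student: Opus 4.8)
\emph{The plan.} I would characterize membership in $\mathcal{F}_k$ by a property that is manifestly invariant under $\sigma$, and then simply transport it. By Theorem \ref{dec1} (together with Lemma \ref{normchar}), an $L_2$ function $g$ is measurable in $\mathcal{F}_k$ if and only if it is orthogonal to every $h$ with $\|h\|_{U_{k+1}}=0$. So it suffices to know that precomposition with $\sigma$ preserves Gowers seminorms and preserves $L_2$ inner products. The fact making both work is that $\sigma$, being the ultra product of the group automorphisms $\sigma_i$ (each preserving the uniform probability measure on the finite group $A_i$), is a measure preserving group automorphism of $\bA$; its inverse $\sigma^{-1}$ is the ultra product of the $\sigma_i^{-1}$, hence again a measurable automorphism, and $f\mapsto f^\sigma$ is an isometry of every $L_p(\bA)$.

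\emph{Key steps.} First I would note that a measurable automorphism is a surjective measurable homomorphism $\bA\to\bA$, so Lemma \ref{normhom} (with $\bB=\bA$) gives $\|f^\sigma\|_{U_m}=\|f\|_{U_m}$ for every $m$; equivalently, since $\sigma(x+t)=\sigma(x)+\sigma(t)$ one has $\Delta_{t_1,\dots,t_m}f^\sigma(x)=\Delta_{\sigma(t_1),\dots,\sigma(t_m)}f(\sigma(x))$, and the substitution $x\mapsto\sigma(x),\ t_i\mapsto\sigma(t_i)$ leaves the defining integral unchanged. Next, given $f\in L_\infty(\mathcal{F}_k)$ and an arbitrary $h$ with $\|h\|_{U_{k+1}}=0$, I would apply this norm invariance to $\sigma^{-1}$ to get $\|h^{\sigma^{-1}}\|_{U_{k+1}}=0$, hence $(f,h^{\sigma^{-1}})=0$ by the characterization of $\mathcal{F}_k$; since $\sigma$ is measure preserving, $(f^\sigma,h)=(f,h^{\sigma^{-1}})=0$. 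As $h$ was arbitrary, $f^\sigma$ is orthogonal to every function of vanishing $U_{k+1}$ seminorm, so $f^\sigma\in\mathcal{F}_k$ by Theorem \ref{dec1}, and $f^\sigma$ is clearly still $L_\infty$.

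\emph{Main obstacle.} There is essentially no obstacle here: the statement is a soft consequence of $\sigma$ respecting the group law, the measure, and therefore the whole tower $\mathcal{F}_0\subset\mathcal{F}_1\subset\cdots$. The only points meriting a word of care are that $\sigma^{-1}$ is again a \emph{measurable} automorphism (true, being the ultra product of the $\sigma_i^{-1}$) and that $\sigma$ is measure preserving. An alternative, more hands-on route is induction on $k$ straight from the recursive definition: $\sigma$ carries a shift-invariant separable extension of $\mathcal{F}_{k-1}$ to a shift-invariant separable extension of $\sigma(\mathcal{F}_{k-1})=\mathcal{F}_{k-1}$, the only subtlety being that shift-invariance is preserved because $\sigma$ conjugates translation by $t$ to translation by $\sigma^{-1}(t)$. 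The Gowers-norm argument above is cleaner and reuses machinery already established in the paper.
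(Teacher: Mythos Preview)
Your proposal is correct and follows essentially the same approach as the paper's own proof: the paper also invokes Lemma \ref{normhom} to get that $U_{k+1}$ is preserved by $\sigma$, notes that scalar products are preserved, and appeals to the norm characterization (Lemma \ref{normchar}) to conclude. Your write-up is simply a more explicit unpacking of the same three ingredients.
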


\begin{proof} Lemma \ref{normhom} implies that the Gowers norm $U_{k+1}$ is preserved under $\sigma$. Since the scalar product is also preserved under $\sigma$, by Lemma \ref{normchar} the proof is complete.
\end{proof}

\begin{lemma} Let $k\geq 1$-be a natural number. Then $\sigma$ permutes the rank one modules over $L_\infty(\mathcal{F}_{k-1}$. This action induces an automorphism of $\hat{\bA}_k$. By abusing the notation this action will also be denoted by $\sigma$.
\end{lemma}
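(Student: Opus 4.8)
The plan is to reduce everything to the already-established fact that $\sigma$ preserves each Fourier $\sigma$-algebra, and then to track how $\sigma$ acts on $k$-th order characters. First I would record that, by the previous lemma applied with $k-1$ in place of $k$, the map $g\mapsto g^\sigma$ sends $L_\infty(\mathcal{F}_{k-1})$ into itself; since $\sigma^{-1}$ is again a measurable automorphism (it is the ultra product of $\{\sigma_i^{-1}\}_{i=1}^\infty$), the same is true for $\sigma^{-1}$, so $g\mapsto g^\sigma$ is an automorphism of the algebra $L_\infty(\mathcal{F}_{k-1})$. When $k=1$ this step is vacuous, $\mathcal{F}_0$ being trivial.

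Next I would check that $\sigma$ carries $k$-th order characters to $k$-th order characters. If $\phi:\bA\to\mathcal{C}$ is a $k$-th order character, then for every $t\in\bA$, using that $\sigma$ is a group homomorphism,
$$\Delta_t(\phi^\sigma)(x)=\phi(\sigma(x)+\sigma(t))\,\overline{\phi(\sigma(x))}=(\Delta_{\sigma(t)}\phi)(\sigma(x))=(\Delta_{\sigma(t)}\phi)^\sigma(x),$$
which is measurable in $\mathcal{F}_{k-1}$ because $\Delta_{\sigma(t)}\phi\in L_\infty(\mathcal{F}_{k-1})$ and $\sigma$ preserves $\mathcal{F}_{k-1}$. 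Hence $\phi^\sigma$ is a $k$-th order character. If $M$ is the rank one module generated by $\phi$, a short computation then identifies $M^\sigma:=\{f^\sigma:f\in M\}$ with the rank one module generated by $\phi^\sigma$: every $f\in M$ is of the form $\phi\lambda$ with $\lambda\in L_\infty(\mathcal{F}_{k-1})$, so $f^\sigma=\phi^\sigma\lambda^\sigma$ with $\lambda^\sigma\in L_\infty(\mathcal{F}_{k-1})$, and conversely $\phi^\sigma\mu=(\phi\,\mu^{\sigma^{-1}})^\sigma\in M^\sigma$ for any $\mu\in L_\infty(\mathcal{F}_{k-1})$. Thus $M\mapsto M^\sigma$ is a well-defined self-map of $\hat{\bA}_k$, and applying the same construction to $\sigma^{-1}$ produces a two-sided inverse, so $\sigma$ permutes the rank one modules.

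Finally, to see that the induced permutation is a group automorphism of $\hat{\bA}_k$, I would use that the product of the modules generated by $\phi_1$ and $\phi_2$ is the module generated by $\phi_1\phi_2$, together with $(\phi_1\phi_2)^\sigma=\phi_1^\sigma\phi_2^\sigma$; this gives $(M_1M_2)^\sigma=M_1^\sigma M_2^\sigma$, so $M\mapsto M^\sigma$ is a bijective homomorphism, hence an automorphism. I do not anticipate a genuine obstacle: the only point needing a moment's care is the invariance of $\mathcal{F}_{k-1}$ under both $\sigma$ and $\sigma^{-1}$, and that is handed to us by the previous lemma once we observe that $\sigma^{-1}$ is itself a measurable automorphism.
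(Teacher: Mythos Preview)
The paper states this lemma without proof, evidently regarding it as an immediate consequence of the preceding lemma (that $\sigma$ preserves each $\mathcal{F}_k$). Your argument correctly supplies the routine verification: invariance of $\mathcal{F}_{k-1}$ under $\sigma$ and $\sigma^{-1}$, the computation $\Delta_t(\phi^\sigma)=(\Delta_{\sigma(t)}\phi)^\sigma$ showing $k$-th order characters go to $k$-th order characters, and the check that $M\mapsto M^\sigma$ is a bijective group homomorphism on $\hat{\bA}_k$. There is nothing to compare against, and your proof is sound.
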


Note that if $\tau$ is an arbitrary automorphism of an abelian group $A_2$ then it induces an action on $\hom^*(A_1,A_2)$.

\begin{lemma}\label{commute} The embedding $q:\hat{\bA}_k\rightarrow \hom^*(\bA,\hat{\bA}_{k-1})$ commutes with the action $\sigma$
\end{lemma}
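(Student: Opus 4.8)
The embedding $q:\hat{\bA}_k\rightarrow \hom^*(\bA,\hat{\bA}_{k-1})$ commutes with the action $\sigma$.

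**Proof proposal.** The plan is to unwind the definition of $q$ and of the induced $\sigma$-actions on both groups and check that the two constructions are compatible with precomposition by $\sigma$. Recall that for a $k$-th order character $\phi$, the element $q(\phi)\in\hom^*(\bA,\hat{\bA}_{k-1})$ is represented by the homomorphism $h$ of Theorem \ref{charhom}, where $h(t)$ is the coset (modulo the countable subgroup $T$ of Proposition \ref{fixtype}) of $\hat{\bA}_{k-1}$ containing the $(k-1)$-th dual support $S_{k-1}(\Delta_t\phi)$. On the source side, $\sigma$ acts on $\hat{\bA}_k$ by sending the module of $\phi$ to the module of $\phi^\sigma:=\phi\circ\sigma$. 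On the target side, $\sigma$ acts on $\hat{\bA}_{k-1}$ (permuting rank one modules over $L_\infty(\mathcal{F}_{k-2})$), hence on $\hom^*(\bA,\hat{\bA}_{k-1})$ by the rule $(\sigma\cdot h)(t)=\sigma\bigl(h(\sigma^{-1}(t))\bigr)$ (the standard way an automorphism of both the domain and codomain acts on a hom-set, where on the domain $\bA\cong\hat{\bA}_1$ the action is the given one). So the goal reduces to the identity $q(\phi^\sigma)=\sigma\cdot q(\phi)$ in $\hom^*(\bA,\hat{\bA}_{k-1})$.

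First I would compute $\Delta_t(\phi^\sigma)$. Since $\sigma$ is a measurable automorphism, $\Delta_t(\phi\circ\sigma)(x)=\phi(\sigma(x)+\sigma(t))\overline{\phi(\sigma(x))}=(\Delta_{\sigma(t)}\phi)(\sigma(x))=(\Delta_{\sigma(t)}\phi)^\sigma(x)$. Next I would identify the $(k-1)$-th dual support of $(\Delta_{\sigma(t)}\phi)^\sigma$. By the preceding lemmas, $\sigma$ permutes the rank one modules over $L_\infty(\mathcal{F}_{k-2})$ and preserves orthogonality and the $\mathcal{F}_{k-2}$-projection (Gowers norms and scalar products are $\sigma$-invariant by Lemma \ref{normhom} and Lemma \ref{normchar}); hence a module $\lambda\in\hat{\bA}_{k-1}$ is not orthogonal to a function $g$ iff $\sigma(\lambda)$ is not orthogonal to $g^\sigma$. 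Therefore $S_{k-1}(g^\sigma)=\sigma\bigl(S_{k-1}(g)\bigr)$, and applying this with $g=\Delta_{\sigma(t)}\phi$ gives $S_{k-1}(\Delta_t\phi^\sigma)=\sigma\bigl(S_{k-1}(\Delta_{\sigma(t)}\phi)\bigr)$. Consequently, if $T$ is the countable subgroup attached to $\phi$ via Proposition \ref{fixtype}, then $\sigma(T)$ works for $\phi^\sigma$, and the homomorphism $h^\sigma:\bA\to\hat{\bA}_{k-1}/\sigma(T)$ attached to $\phi^\sigma$ satisfies $h^\sigma(t)=\sigma\bigl(h(\sigma(t))\bigr)$. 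Finally I would check that passing to the common refinement in the direct limit, this is exactly the element $\sigma\cdot q(\phi)$; here one should be slightly careful that the $\sigma$-action on $\bA$ used in the hom-set is the one transported through $\hat{\bA}_1\cong\bA$ (Lemma \ref{firstdual}), but since that isomorphism is itself $\sigma$-equivariant, the two possible conventions agree, and the verification is purely formal.

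The main obstacle I anticipate is bookkeeping rather than mathematical depth: one must be careful that the $\sigma$-action on $\hom^*(\bA,\hat{\bA}_{k-1})$ is defined using $\sigma$ on the domain $\bA$ as well as $\sigma$ on the codomain $\hat{\bA}_{k-1}$ — if one only twists the codomain, the statement as phrased would be false, and Proposition \ref{fixtype}/Theorem \ref{charhom} only pin down $q(\phi)$ up to the equivalence in the direct limit, so the identity $q(\phi^\sigma)=\sigma\cdot q(\phi)$ should be checked at the level of equivalence classes, choosing representatives over $\sigma(T)$ (or any countable group containing both $T$ and $\sigma(T)$). Once the conventions are fixed, the computation of $\Delta_t(\phi^\sigma)$ and the $\sigma$-equivariance of dual supports do all the work. \qed
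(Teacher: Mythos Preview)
The paper states Lemma~\ref{commute} without proof; the statement is left as essentially self-evident once the actions have been set up. Your unwinding of the definitions---the identity $\Delta_t(\phi^\sigma)=(\Delta_{\sigma(t)}\phi)^\sigma$ and the $\sigma$-equivariance of dual supports---is exactly the computation one needs, and is the natural (indeed, only) way to check this.

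Two remarks are worth recording. First, there is a small internal inconsistency in your write-up: you define the action by $(\sigma\cdot h)(t)=\sigma\bigl(h(\sigma^{-1}(t))\bigr)$ but then compute $h^\sigma(t)=\sigma\bigl(h(\sigma(t))\bigr)$; these differ by replacing $\sigma$ with $\sigma^{-1}$ in the domain. The computation is correct; it is your stated convention for the action on $\hom^*$ that needs adjusting. Since the paper's action on $\hat{\bA}_{k-1}$ is $M\mapsto M^\sigma=\{f\circ\sigma:f\in M\}$ (a right action), the compatible action on $\hom^*(\bA,\hat{\bA}_{k-1})$ is $(h\cdot\sigma)(t)=\bigl(h(\sigma(t))\bigr)^\sigma$, and with that convention $q(\phi^\sigma)=q(\phi)\cdot\sigma$ on the nose.

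Second, your caveat is well placed and in fact sharper than the paper: the sentence preceding the lemma (``if $\tau$ is an arbitrary automorphism of an abelian group $A_2$ then it induces an action on $\hom^*(A_1,A_2)$'') suggests acting only through the codomain, whereas your computation shows the domain twist is essential. So you have identified a genuine imprecision in the paper's formulation; once the action on $\hom^*$ is taken on both factors in the manner above, the lemma and its application to $x\mapsto x^n$ go through.
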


\begin{lemma}Let $n$ be an integer co-prime to all the numbers $|A_i|,i=1,2,\dots$. Then $\sigma:x\rightarrow x^n$ is a measurable automorphism on $\bA$ and the induced action on $\hat{\bA}_k$ is again given by $x\rightarrow x^n$ for every $k\geq 1$.
\end{lemma}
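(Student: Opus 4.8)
The statement has two parts: first, that $\sigma\colon x\mapsto x^n$ is a measurable automorphism of $\bA$ when $n$ is coprime to every $|A_i|$; second, that the induced action on $\hat{\bA}_k$ is again $x\mapsto x^n$ for every $k\ge 1$. For the first part I would argue index-wise: since $\gcd(n,|A_i|)=1$, the map $\sigma_i\colon A_i\to A_i$, $x\mapsto x^n$ (written multiplicatively; $nx$ additively) is an automorphism of the finite abelian group $A_i$ — its inverse is $x\mapsto x^{m_i}$ where $m_i n\equiv 1\pmod{|A_i|}$. Taking the ultra product of $\{\sigma_i\}_{i=1}^\infty$ gives, by the definition of measurable automorphism in the excerpt, a measurable automorphism $\sigma$ of $\bA$; and since on each $A_i$ the map is literally $x\mapsto x^n$, the ultra product is the map $x\mapsto x^n$ on $\bA$ (exponentiation commutes with taking ultra limits of sequences of elements).

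For the second part I would proceed by induction on $k$, using Lemma~\ref{commute} and the embedding $q\colon\hat{\bA}_k\to\hom^*(\bA,\hat{\bA}_{k-1})$. Base case $k=1$: by Lemma~\ref{firstdual}, $\hat{\bA}_1$ is isomorphic to $\bA$, and under this identification the action of a measurable automorphism $\sigma$ on $\hat{\bA}_1$ is the dual (contragredient) action; for the specific $\sigma\colon x\mapsto x^n$ on a finite abelian group, the dual action on the character group $\widehat{A_i}$ is again raising to the $n$-th power (precompose a character $\chi$ with $x\mapsto x^n$ to get $\chi^n$). Passing to ultra limits, the induced action on $\hat{\bA}_1$ is $x\mapsto x^n$. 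Inductive step: assume the induced action on $\hat{\bA}_{k-1}$ is $y\mapsto y^n$. By Lemma~\ref{commute}, $q$ intertwines the $\sigma$-action on $\hat{\bA}_k$ with the $\sigma$-action on $\hom^*(\bA,\hat{\bA}_{k-1})$, which (as noted before Lemma~\ref{commute}) is induced by the $\sigma$-actions on $\bA$ and on $\hat{\bA}_{k-1}$. Concretely, if $a\in\hat{\bA}_k$ corresponds to a homomorphism $h\colon\bA\to\hat{\bA}_{k-1}/T$, then $\sigma\cdot a$ corresponds to the homomorphism $x\mapsto (h(\sigma^{-1}x))^{\sigma}$. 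Using $\sigma(x)=x^n$ on $\bA$ and, by the inductive hypothesis, $y^\sigma=y^n$ on $\hat{\bA}_{k-1}$ (hence on the quotient $\hat{\bA}_{k-1}/T$, noting $T$ is $\sigma$-invariant because $\sigma$ permutes rank one modules and raising to a coprime power preserves countability of subgroups), one computes that this composite homomorphism is $h^n$, i.e. $\sigma\cdot a = a^n$ in $\hat{\bA}_k$. Since $q$ is injective (Lemma~\ref{dualemb} for $k\ge 2$), this determines the action on $\hat{\bA}_k$.

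The main technical point to handle carefully is the bookkeeping in the inductive step: one must verify that the standard contravariant action of an automorphism on a $\hom^*$-set really does behave as $x\mapsto x^n$ when both the source action (on $\bA$) and target action (on $\hat{\bA}_{k-1}$) are $x\mapsto x^n$, and in particular that the apparent "inverse" $\sigma^{-1}$ appearing on the source cancels against the power on the target. This works precisely because $\hom(\bA,B)$ is abelian and for $h$ a homomorphism, $h(\sigma^{-1}x)^\sigma = h(x^{m})^{n}$ where $m$ is the (ultra-limit of the) inverse exponents; on each finite level $h(m_i x) = m_i h(x)$ and $(m_i h(x))^{n} = m_i n\, h(x) = h(x)$ in the appropriate sense, so in fact the naive computation collapses correctly to $h^n$ once one is careful that $h$ takes values in a quotient where $n$-th powers and $m$-th powers are mutually inverse up to the countable subgroup. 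I would make this precise by first checking it for genuine (index-wise) automorphisms and characters as in the proof of Lemma~\ref{firstdual}, then propagating through $q$. No step requires more than routine diagram-chasing once the base case and Lemma~\ref{commute} are in hand; the only place one could slip is forgetting the $\sigma$-invariance of the subgroup $T$, which I would dispatch by the remark that $\sigma$ permutes rank one modules and hence sends countable subgroups to countable subgroups.
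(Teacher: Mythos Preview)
Your plan is essentially the paper's proof. The paper writes only: ``The statement is clear for $k=1$. Then Lemma~\ref{commute} and induction on $k$ finishes the proof.'' You have spelled out the same skeleton --- $\sigma$ is a measurable automorphism because it is the ultra product of the index-wise automorphisms $x\mapsto x^n$, the base case $k=1$ via the contragredient action on characters, and the inductive step via the injectivity of $q$ and Lemma~\ref{commute} --- with considerably more detail than the paper provides.

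One small point: the paper's action of $\sigma$ on $\hom^*(\bA,\hat{\bA}_{k-1})$ is introduced (just before Lemma~\ref{commute}) as coming from the automorphism on the \emph{target} $\hat{\bA}_{k-1}$ alone, not as the two-sided contragredient action $h\mapsto(h\circ\sigma^{-1})^{\sigma}$ that you set up. With the paper's convention the inductive step is a one-liner: if $\sigma$ acts as $n$-th power on $\hat{\bA}_{k-1}$ then post-composition by $\sigma$ is $h\mapsto h^n$, and injectivity of $q$ finishes. So the cancellation you worry about in your last paragraph does not arise in the paper's framework; your extra bookkeeping is self-imposed rather than a gap in the argument.
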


\begin{proof} The statement is clear for $k=1$. Then Lemma \ref{commute} and induction on $k$ finishes the proof.
\end{proof}

We believe that the next corollary should have a simple proof but with the current path of arguments we use the full theory developed here.

\begin{corollary}[reflected character] Assume that $\phi$ is a $k$-th order character then $x\rightarrow\phi(x)\phi(-x)$ is measurable in $\mathcal{F}_{k-1}$.
\end{corollary}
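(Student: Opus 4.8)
The plan is to recognize the function $x\mapsto\phi(x)\phi(-x)$ as the quotient of two $k$-th order characters that generate the \emph{same} rank one module over $L_\infty(\mathcal{F}_{k-1})$, so that the quotient is automatically $\mathcal{F}_{k-1}$-measurable.

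First I would apply the lemma just proved with $n=-1$, which is coprime to every $|A_i|$: the inversion map $\sigma\colon x\mapsto -x$ is a measurable automorphism of $\bA$, and the automorphism it induces on $\hat{\bA}_k$ is again $a\mapsto -a$, i.e. it sends each rank one module $M$ to its inverse $M^{-1}$ in the group $\hat{\bA}_k$. Next I would note that $\phi^{\sigma}(x):=\phi(-x)$ is again a $k$-th order character — this is implicit in the statement that $\sigma$ permutes the rank one modules, and one checks it directly from $\Delta_t\phi^{\sigma}(x)=(\Delta_{\sigma(t)}\phi)(\sigma(x))$ together with the fact that $\sigma$ preserves $\mathcal{F}_{k-1}$. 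Hence, if $M$ denotes the module generated by $\phi$, then $\phi^{\sigma}$ generates $\sigma(M)=M^{-1}$. On the other hand $\overline{\phi}$ also generates $M^{-1}$, since the inverse of a rank one module is by definition generated by the pointwise conjugate of any of its generators.

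Now, since two $k$-th order characters that generate the same rank one module over $L_\infty(\mathcal{F}_{k-1})$ can differ only by a factor in $L_\infty(\mathcal{F}_{k-1})$ of absolute value $1$ (the remark following Theorem \ref{fundth}), there is an $h\in L_\infty(\mathcal{F}_{k-1})$ with $|h|=1$ such that $\phi(-x)=\overline{\phi(x)}\,h(x)$ for almost every $x$. Multiplying through by $\phi(x)$ and using $|\phi|=1$ gives $\phi(x)\phi(-x)=h(x)$, which is measurable in $\mathcal{F}_{k-1}$, as claimed. The same computation trivially covers $k=1$, where $\phi(-x)=\overline{\phi(x)}$ and the product is the constant $1$.

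The only point requiring any care is the identification of the action of $\sigma$ on $\hat{\bA}_k$ with $a\mapsto -a$, but this is exactly the content of the preceding lemma (proved via Lemma \ref{commute} and induction on $k$), and everything else is formal. I therefore do not expect a genuine obstacle here; in the authors' words, the statement is a soft consequence of the structure theory already developed, and the proof above is essentially bookkeeping around that theory.
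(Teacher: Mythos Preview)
Your proposal is correct and is exactly the argument the paper has in mind: the corollary is stated immediately after the lemma on $x\mapsto x^n$ with no separate proof, and your write-up simply fills in the routine details of applying that lemma with $n=-1$ (always coprime to $|A_i|$), identifying $\phi(-x)$ and $\overline{\phi}$ as generators of the same module $M^{-1}$, and reading off the $\mathcal{F}_{k-1}$-measurable quotient.
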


\subsection{Examples}

\noindent{\it Prime characteristic:}

We have seen that if $\bA$ is of exponent $p$ (which is equivalent with saying that it is an ultra product of elementary abelian groups of exponent $p$) then all the higher order dual groups $\hat{\bA}_k$ have exponent $p$. In other words, up to abstract isomorphism, we have a full understanding of their structure.

Is this section we look at other interesting examples.
\medskip

%\noindent{\it Finite exponent:}

%Assume now that the exponent of each $A_i$ is $n$ where $n$ is not necessarily a prime number.
%In this case we know that this property is inherited to the higher order dual groups but it is not %clear what the exact group theoretic structure of $\hat{\bA}_k$ is if $k\geq 2$.

\medskip

\noindent {\it The finite rank case:}

Assume that in the sequence $\{A_i\}_{i=1}^\infty$ each term is of rank at most $r$.
This means that they can be generated by at most $r$ elements.
Since in this case $A_i$ contains at most $n^r$ elements of order $n$ for every fixed $n$ it follows that $\bA$ has the same property and so $\hat{\bA}_1\cong\bA$ is essentially torsion free.
Now lemma \ref{estfree} and lemma \ref{dualemb} imply that $\hat{\bA}_k$ has to be torsion free for every $k\geq 2$.

\medskip

\noindent{\it High characteristic case:}

The situation of the finite rank case can be generalized. Assume that for every $n$ the number of elements of order $n$ is at most $f(n)$. Then again $\bA$ is essentially torsion free and $\hat{bA}_k$ is torsion free whenever $k\geq 2$.

An interesting phenomenon is that in the high characteristic case the divisible group $\mathbb{Q}$ can be embedded into $\bA$ and so into $\hat{\bA}_1$ showing that $\hat{\bA}_1$ is not isomorphic to the free abelian group with uncountable many generators.

\medskip

%\noindent{\it Some direct products:}

%\medskip
%Assume that $A_i=P_i\times F_i$ where $P_i$ is a growing sequence of exponent $p$ and $F_i$ is a %bounded rank sequence.
%(TO BE COMPLETED)

\section{The multi linear $\sigma$-algebra}

In this section we introduce the ``multi linear'' $\sigma$-algebra on the $k$-fold direct product $\bA^k$. We prove a correspondence between the $k$-th order characters in this $\sigma$-algebra and the $k$-th order characters of $\bA$.
The main motivation is the following simple observation.
Let $\phi$ be a $k$-th order pure character on $\bA$. We have that $\Delta_{t_1,t_2,\dots,t_{k+1}}\phi$ is the constant $1$ function.
This implies $\Delta_{t_1,t_2,\dots,t_k}\phi$ is a constant function the value of which is the exponential of a $k$-linear form on $\bA$. Furthermore it is easy to see that this $k$-linear form uniquely characterizes the rank one module containing $\psi$.
An advantage can be seen immediately. Let us assume that $\bA$ is of characteristic $p$. Then there are pure characters that take values which are not necessarily $p$-th root of unities. However the corresponding bilinear forms take only $p$-th roots of unities.

It will turn out in this section that this phenomenon generalizes to non pure characters.
In the general case $\Delta_{t_1,t_2,\dots,t_k}\phi(x)$ wont always be a constant function but its integral according to $x$ is not $0$ and is measurable in a $\sigma$-algebra which can be called ``multi linear'' $\sigma$-algebra. Characters of this $\sigma$-algebras are similar to multi linear functions.

\subsection{definitions and basics}

\begin{definition} Let $\bA$ and $\bB$ be two ultra product groups. Let $\mathcal{B}_1\subseteq\mathcal{A}(\bA)$ and $\mathcal{B}_2\subseteq\mathcal{A}(\bB)$ be two $\sigma$-algebras. Then their {\bf strong product} $\mathcal{B}_1\star\mathcal{B}_2$ consists of all measurable sets $S$ in $\mathcal{A}(\bA\times\bB)$ such that $(\{x\}\times \bB)\cap S$ is in $\mathcal{B}_2$ for every $x\in\bA$ and $(\bA\times\{y\})\cap S$ is in $\mathcal{B}_1$ for every fixed $y$ in $\bB$.
\end{definition}

The operation $\star$ is clearly associative.
Let $\bX_1,\bX_2,\dots,\bX_n$ be a collection of ultra product sets and let $\bX$ denote their direct product. For every subset $S\subseteq [n]$ we define the cylindric sigma algebra $\sigma_S(\bX)$ as the pre image of the ultra product $\sigma$-algebra of $\prod_{i\in S}\bX_i$ under the projection $\bX\rightarrow \prod_{i\in S}\bX_i$.
Cylindric $\sigma$-algebras were first introduced in \cite{ESz} to study hypergraph regularity and hypargraph limits. The arithmetic of these $\sigma$-algebras is completely described in \cite{ESz}.
It is easy to see that cylindric $\sigma$ algebras behave nicely under strong product in the following sense.

\begin{lemma}\label{starcyl} Let $m<n$ be a natural number, Let $\bY_1=\prod_{i=1}^m\bX_i$ and $\bY_2=\prod_{i=m+1}^n\bX_i$. Let $G_1$ be a collection of subsets of $\{1,2,\dots,m\}$ and $G_2$ be a collection of subsets of $\{m+1,m+2,\dots,n\}$. Then
$$\Bigl(\bigvee_{S\in G_1}\sigma_S(\bY_1)\Bigr)\star\Bigl(\bigvee_{S\in G_2}\sigma_S(\bY_2)\Bigr)=
\bigvee_{S_1\in G_1,S_2\in G_2}\sigma_{S_1\cup S_2}(\bX).$$
\end{lemma}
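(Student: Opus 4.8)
The plan is to establish both inclusions of the stated identity, working coordinatewise with the defining property of the strong product and then invoking the known arithmetic of cylindric $\sigma$-algebras on $\bX_1\times\cdots\times\bX_n$ from \cite{ESz}. Throughout, write $\cL = \bigl(\bigvee_{S\in G_1}\sigma_S(\bY_1)\bigr)\star\bigl(\bigvee_{S\in G_2}\sigma_S(\bY_2)\bigr)$ for the left-hand side and $\cR = \bigvee_{S_1\in G_1,\,S_2\in G_2}\sigma_{S_1\cup S_2}(\bX)$ for the right-hand side.

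First I would prove $\cR\subseteq\cL$. It suffices to show that for each pair $S_1\in G_1$, $S_2\in G_2$ the generator $\sigma_{S_1\cup S_2}(\bX)$ lies in $\cL$; since $\cL$ is a $\sigma$-algebra this gives the containment of the join. A set $B\in\sigma_{S_1\cup S_2}(\bX)$ is, up to the ultra product $\sigma$-algebra, a union of product sets $B_1\times B_2$ with $B_1\in\sigma_{S_1}(\bY_1)$ and $B_2\in\sigma_{S_2}(\bY_2)$, so it is enough to check that such a product set $B_1\times B_2$ satisfies the defining condition for membership in the strong product: for fixed $x\in\bY_1$ the slice $(\{x\}\times\bY_2)\cap(B_1\times B_2)$ is either empty or equal to $\{x\}\times B_2$, hence measurable in $\bigvee_{S\in G_2}\sigma_S(\bY_2)$ since $B_2$ is; symmetrically for fixed $y\in\bY_2$. (One should be slightly careful that "union of measurable rectangles" is only an $L_2$-approximation statement, so I would phrase the argument via approximating characteristic functions and passing the slice operation through the limit, exactly as in the proof of Lemma \ref{modul}.)

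The harder inclusion is $\cL\subseteq\cR$, and this is where I expect the main obstacle to be. Here I would argue that $\cL$ is itself a cylindric-type $\sigma$-algebra on $\bX$: concretely, $\bigvee_{S\in G_1}\sigma_S(\bY_1)$ viewed inside $\cA(\bX)$ equals $\bigvee_{S\in G_1}\sigma_S(\bX)$ (now regarding $S\subseteq[m]\subseteq[n]$), since pulling back along $\bX\to\bY_1$ and then along $\bY_1\to\prod_{i\in S}\bX_i$ is the same as pulling back directly along $\bX\to\prod_{i\in S}\bX_i$; similarly for the $G_2$ factor. So the left-hand side is the strong product of two such joins of cylindric $\sigma$-algebras, and the claim becomes the purely combinatorial identity, proved in \cite{ESz}, describing how joins of cylindric $\sigma$-algebras on a product of ultra product spaces multiply under strong product — namely that the strong product of $\bigvee_{S\in G_1}\sigma_S$ (supported on the first $m$ coordinates) with $\bigvee_{S\in G_2}\sigma_S$ (supported on the last $n-m$ coordinates) is generated exactly by the $\sigma_{S_1\cup S_2}$. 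The subtle point to get right is that strong product is genuinely larger than the ordinary product $\sigma$-algebra, so one cannot simply cite the product-of-$\sigma$-algebras formula; one must use the full arithmetic from \cite{ESz}, together with the observation that because $G_1$ involves only coordinates in $[m]$ and $G_2$ only coordinates in $\{m+1,\dots,n\}$, no "mixed" cylindric $\sigma$-algebra $\sigma_S$ with $S$ straddling the two blocks can appear beyond those already generated by the $S_1\cup S_2$. I would therefore structure the proof as: (i) reduce each factor to a join of cylindric $\sigma$-algebras on $\bX$; (ii) apply the description of $\star$ on cylindric $\sigma$-algebras from \cite{ESz} to this block-separated situation; (iii) read off that the result is $\cR$. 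The routine verifications in steps (i) and (iii) I would leave to the reader, citing \cite{ESz} for the one genuinely nontrivial input in step (ii).
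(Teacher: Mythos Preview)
The paper does not actually prove this lemma: it is stated immediately after the sentence ``It is easy to see that cylindric $\sigma$-algebras behave nicely under strong product in the following sense,'' with the arithmetic of cylindric $\sigma$-algebras from \cite{ESz} cited as background, and no argument is given. Your outline is therefore not competing with any proof in the paper; it is filling in what the paper leaves to the reader, and your overall shape---check the easy inclusion $\cR\subseteq\cL$ on generators via the slice definition of $\star$, then get $\cL\subseteq\cR$ from the lattice identities among the $\sigma_S$---is exactly the intended route.

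One point to sharpen: in your step (ii) you write ``apply the description of $\star$ on cylindric $\sigma$-algebras from \cite{ESz}.'' Be careful here, because the operation $\star$ is introduced in \emph{this} paper, not in \cite{ESz}; what \cite{ESz} supplies is the lattice arithmetic of the $\sigma_S$ (intersections and joins, independence/perpendicularity), not a statement about $\star$ per se. Concretely, the clean way to run the hard inclusion is to observe that $\cL=\bigl(\bigvee_{S_1\in G_1}\sigma_{S_1}(\bY_1)\star\cA(\bY_2)\bigr)\wedge\bigl(\cA(\bY_1)\star\bigvee_{S_2\in G_2}\sigma_{S_2}(\bY_2)\bigr)$, identify each factor with a join of cylindric $\sigma$-algebras on $\bX$ (this is the step genuinely using the structure results from \cite{ESz}, together with the block separation $S_1\subseteq[m]$, $S_2\subseteq\{m+1,\dots,n\}$), and then compute the intersection of those two joins using the modular/independence identities from \cite{ESz}. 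Also, for $\cR\subseteq\cL$ you do not need the $L_2$-approximation by rectangles at all: a set in $\sigma_{S_1\cup S_2}(\bX)$ is a pullback along $\bX\to\prod_{i\in S_1\cup S_2}\bX_i$, and its slice at any $x\in\bY_1$ is the pullback to $\bY_2$ of a section, hence lies in $\sigma_{S_2}(\bY_2)$ directly.
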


\medskip

The strong product of Fourier $\sigma$-algebras is of special interest.
For a vector $v=(v_1,v_2,\dots,v_k)$ of length $k$ with $v_i\in\mathbb{N}$ we can introduce the generalized Fourier $\sigma$-algebra $\mathcal{F}_v$ on $\bA^k$ by
$$\mathcal{F}_v=\mathcal{F}_{v_1}\star\mathcal{F}_{v_2}\star\dots\star\mathcal{F}_{v_k}.$$
If $v=(1,1,\dots,1)$ of length $k$ then we call $\mathcal{F}_v$ on $\bA^k$ the {\bf $k$-linear $\sigma$-algebra}. We will also denote the $k$-linear $\sigma$-algebra by $\star^k\mathcal{F}_1$.

\begin{lemma} The $k$-linear $\sigma$-algebra $\star^k\mathcal{F}_1$ is contained in $\mathcal{F}_k(\bA^k)$.
\end{lemma}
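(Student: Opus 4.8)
The plan is to derive the statement by induction on $k$ from the more symmetric assertion that for any two ultra product groups $\bX,\bY$ and any $a\ge 0$ one has
$$\mathcal F_a(\bX)\star\mathcal F_1(\bY)\subseteq\mathcal F_{a+1}(\bX\times\bY).$$
Since $\bA^k$ is itself the ultra product of the finite groups $A_i^k$, the whole apparatus of Gowers norms and the $\sigma$-algebras $\mathcal F_j$ is available on it, and since the strong product is monotone in each argument we get $\star^k\mathcal F_1=(\star^{k-1}\mathcal F_1)\star\mathcal F_1\subseteq\mathcal F_{k-1}(\bA^{k-1})\star\mathcal F_1(\bA)\subseteq\mathcal F_k(\bA^k)$, using the inductive hypothesis $\star^{k-1}\mathcal F_1\subseteq\mathcal F_{k-1}(\bA^{k-1})$ together with the displayed inequality for $a=k-1$. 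The base case $a=0$ is immediate: the second slice condition forces every set in $\mathcal F_0(\bX)\star\mathcal F_1(\bY)$ to be a cylinder $\bX\times B$, and then the first slice condition forces $B\in\mathcal F_1(\bY)$, so $\mathcal F_0(\bX)\star\mathcal F_1(\bY)=\pi_\bY^{-1}(\mathcal F_1(\bY))$, which sits inside $\mathcal F_1(\bX\times\bY)$ because the pull-back of $\mathcal F_j$ along a surjective measurable homomorphism lies in $\mathcal F_j$ (the computation of $\bh^{-1}(\mathcal F_k)$ in the subsection on measurable homomorphisms).

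For the inductive step I would take a bounded $f$ measurable in $\mathcal F_a(\bX)\star\mathcal F_1(\bY)$ and, using Lemma \ref{normchar}, show that it is orthogonal to every bounded $h$ on $\bX\times\bY$ with $\|h\|_{U_{a+2}(\bX\times\bY)}=0$, which yields $f\in\mathcal F_{a+1}(\bX\times\bY)$. The tool is a fibrewise application of the Fundamental Theorem (Theorems \ref{fundth} and \ref{dec1}): for each fixed $y$ the slice $x\mapsto f(x,y)$ lies in $L_2(\mathcal F_a(\bX))$ and hence decomposes as $\sum_i\phi_i^y(x)c_i^y(x)$ with $\phi_i^y$ a degree-$a$ character of $\bX$ and $c_i^y\in L_\infty(\mathcal F_{a-1}(\bX))$. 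Assembled over $y$, the $c_i$-factor turns out to be measurable in $\mathcal F_{a-1}(\bX)\star\mathcal F_1(\bY)$, hence in $\mathcal F_a(\bX\times\bY)$ by the inductive hypothesis; the $\phi_i$-factor, regarded as a function on $\bX\times\bY$ that is constant in $y$, satisfies $\Delta_{(s,u)}\phi_i=\Delta_s\phi_i\in\pi_\bX^{-1}(\mathcal F_{a-1}(\bX))\subseteq\mathcal F_{a-1}(\bX\times\bY)$ and is therefore a degree-$a$ character on $\bX\times\bY$; and the distinct $i$-components are pairwise orthogonal in all the relevant cube averages by the Mixed term lemma \ref{mixedterm}. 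Putting these together, a degree-$(a+1)$ character of $\bX\times\bY$ times an $\mathcal F_a(\bX\times\bY)$-function is $\mathcal F_{a+1}$-measurable, so each piece of $f$ pairs to zero against $h$, and hence so does $f$.

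The delicate point — and the place where real work is needed — is exactly the passage from the pointwise-in-$y$ decomposition to a genuine decomposition of $f$ on $\bX\times\bY$: the rank-one modules $\lambda_i^y\in\hat\bX_a$ occurring in the slice at $y$ genuinely depend on $y$. Already $a=1$ with $\bX=\bY=\bA$ shows this is unavoidable: if $B\colon\bA\times\bA\to\mathcal C$ is a non-degenerate bilinear form, then the slice of $(x,y)\mapsto B(x,y)$ at $y$ is the single linear character $B(\cdot,y)$, and the set of these characters is typically uncountable — which is precisely why the total degree becomes $a+1$ and not $a$. Controlling this requires the language of $k$-types and the embedding $q\colon\hat\bX_{a+1}\hookrightarrow\hom^*(\bX,\hat\bX_a)$ of the previous section: one shows that $y\mapsto\lambda_i^y$ is an $\mathcal F_1(\bY)$-measurable family and organizes the $\phi_i^y$ into an honest degree-$(a+1)$ character of $\bX\times\bY$ whose associated homomorphism $\bX\to\hat\bX_a/T$ records that family, so that Theorem \ref{charhom} applies. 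I expect making this fibrewise selection measurable and pinning down its type to be the main obstacle; the cross-term cancellation and the degree bookkeeping are then routine.
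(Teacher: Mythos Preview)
Your approach is genuinely different from the paper's, and the obstacle you flag at the end is real --- real enough that what you have is a plan rather than a proof.

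The paper's argument uses neither induction nor the Fundamental Theorem. It works directly with the cylindric characterization of $\mathcal{F}_k$ coming from Lemma~\ref{cubint}: a function $g$ on a group $\bG$ lies in $\mathcal{F}_k(\bG)$ if and only if $g(x_1+\cdots+x_{k+1})$, viewed on $\bG^{k+1}$, is measurable in $\bigvee_{S\subset[k+1],\,|S|=k}\sigma_S$. Taking $\bG=\bA^k$, the ambient space becomes $\bA^{k(k+1)}$ with coordinates indexed by $[k]\times[k+1]$. From $f\in\star^k\mathcal{F}_1$ and Lemma~\ref{starcyl} one reads off that for each row $i\in[k]$ the pullback of $f$ is measurable in $\mathcal{B}_i:=\bigvee_{j\in[k+1]}\sigma_{Q_{i,j}}$, where $Q_{i,j}$ consists of all entries outside row $i$ together with the single entry $(i,j)$. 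The arithmetic of cylindric $\sigma$-algebras from \cite{ESz} then gives $\bigwedge_{i}\mathcal{B}_i=\bigvee_{g:[k]\to[k+1]}\sigma_{S_g}$, where $S_g$ is the graph of $g$. Since no function $[k]\to[k+1]$ is surjective, every $S_g$ misses an entire column $j^*$, so $\sigma_{S_g}\subseteq\sigma_{[k]\times([k+1]\setminus\{j^*\})}$, which is one of the required cylinders $\sigma_S$ with $|S|=k$. That is the whole proof.

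As for your route: the inductive claim $\mathcal{F}_a(\bX)\star\mathcal{F}_1(\bY)\subseteq\mathcal{F}_{a+1}(\bX\times\bY)$ is correct, but your argument for it has the gap you yourself name, and it is not a minor one. Theorem~\ref{charhom} attaches to a \emph{single} character on $\bX$ a homomorphism into $\hat{\bX}_{a-1}/T$; it says nothing about organizing a measurable family $y\mapsto\lambda^y\in\hat{\bX}_a$ into a character on $\bX\times\bY$. Separately, your assertion that the coefficient $(x,y)\mapsto c_i^y(x)$ lands in $\mathcal{F}_{a-1}(\bX)\star\mathcal{F}_1(\bY)$ is unjustified: projecting the $x$-slice onto a rank-one module that itself varies with $y$ has no a priori reason to preserve $\mathcal{F}_1(\bY)$-measurability of the $y$-slices. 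The paper's combinatorial argument sidesteps all of this at much lower cost.
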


\begin{proof} Let us introduce $k(k+1)$ copies of $\bA$. In order to distinguish them we denote them by $\bA_{i,j}$ with $1\leq i\leq k$ and $1\leq j\leq k+1$.
Let $\bB_j=\prod_{i=1}^k\bA_{i,j}$. To prove the lemma we need to show that if a function $f$ is measurable in the $k$-linear $\sigma$ algebra then $f(x_1+x_2+\dots+x_{k+1})$ with $x_i\in\bB_i$ is measurable in
$$\bigvee_{S\subset [k+1],|S|=k}\sigma_S(\bB_1\times\bB_2\times\dots\times\bB_{k+1}).$$
For every $1\leq t\leq k$ and $1\leq s\leq k+1$ let $Q_{t,s}\subseteq [k]\times [k+1]$ denote the set $$Q_{t,s}:=\{(i,j)|i\neq t~{\rm or}~ (i=t~~{\rm and}~~ j=s)\}.$$
Lemma \ref{starcyl} implies that $f(x_1+x_2+\dots+x_{k+1})$ is measurable in each of
$$\mathcal{B}_i:=\bigvee_{j\in [k+1]}\sigma_{Q_{i,j}}(\bA^{k(k+1)}).$$
The intersection of the $\sigma$-algebras $\mathcal{B}_i$ is equal to
$$\bigvee_{f:[k]\rightarrow[k+1]}\sigma_{S_f}(\bA^{k(k+1)})$$
where $S_f$ denotes the graph of the function $f$.
This completes the proof.
\end{proof}

\subsection{The multi-linear representation map}

Motivated by the definition of the Gowers uniformity norm we introduce the functional
$\tilde{V}_k:L_\infty(\bA)\rightarrow L_\infty(\bA^k)$
defined by
$$\tilde{V}_k(f)(t_1,t_2,\dots,t_k)=\int_x\Delta_{t_1,t_2,\dots,t_k}f(x).$$
We call $\tilde{V}_k(f)$ the {\bf $k$-linear representation} of $f$.
The $k$-linear representation has two surprising properties summarized in lemma \ref{multmeas} and lemma \ref{repnonzer}.

\begin{lemma}\label{multmeas} If $f$ is any $L_\infty$ function on $\bA$ then $\tilde{V}_k(f)$ is measurable in the $k$-linear $\sigma$ algebra $\star^k\mathcal{F}_1(\bA)$.
\end{lemma}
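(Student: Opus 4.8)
The plan is to show that $\tilde{V}_k(f)$, as a function of $(t_1,\dots,t_k)\in\bA^k$, is measurable in $\star^k\mathcal{F}_1 = \mathcal{F}_1\star\mathcal{F}_1\star\dots\star\mathcal{F}_1$. By the definition of the strong product, it suffices to show that for each coordinate $j$, after fixing all the other coordinates $t_i$ ($i\neq j$) to arbitrary values, the resulting function $t_j\mapsto \tilde{V}_k(f)(t_1,\dots,t_k)$ is measurable in $\mathcal{F}_1(\bA)$. By symmetry of the construction we may take $j=k$.

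So fix $t_1,t_2,\dots,t_{k-1}\in\bA$ and set $g(x) := \Delta_{t_1,\dots,t_{k-1}}f(x)$, a fixed $L_\infty$ function on $\bA$. Then $\Delta_{t_1,\dots,t_k}f(x) = \Delta_{t_k}g(x) = g(x+t_k)\overline{g(x)}$, so
$$\tilde{V}_k(f)(t_1,\dots,t_k)=\int_x g(x+t_k)\overline{g(x)}.$$
The first step is to recognize this as (the value at $t_k$ of) an autocorrelation function, i.e. $u(t) := \int_x g(x+t)\overline{g(x)}$. The claim reduces to: for any $L_\infty$ function $g$ on $\bA$, the autocorrelation $u(t)=\int_x g(x+t)\overline{g(x)}$ is measurable in $\mathcal{F}_1(\bA)$.

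To prove this, I would use the linear Fourier decomposition of $g$ on $\bA$, which is the rank-one decomposition of $L_2(\mathcal{F}_1)$ over $L_\infty(\mathcal{F}_0)=\mathbb{C}$ (Theorem \ref{fundth} with $k=1$): writing $g = g_0 + \sum_i c_i\chi_i$ where $\chi_i$ are the linear characters (elements of $\hat{\bA}_1$), $c_i\in\mathbb{C}$, and $g_0$ is orthogonal to $\mathcal{F}_1$ (equivalently $\|g_0\|_{U_2}=0$), one computes $\int_x g(x+t)\overline{g(x)} = \mathbb{E}_x(\Delta_{-t}\bar g \cdot \dots)$ — more precisely $\int_x g(x+t)\overline{g(x)} = \sum_i |c_i|^2\chi_i(t)$ by orthogonality of distinct characters (the cross terms and all terms involving $g_0$ integrate to zero, using Lemma \ref{ort2}/Lemma \ref{gort} applied with $k=1$ to kill the $g_0$ contribution, since $\|g_0\overline{\chi_i(\cdot+t)}\|_{U_1}$-type expressions vanish). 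Since $\sum_i|c_i|^2 = \|g\|_2^2<\infty$ and each $\chi_i$ is a linear character hence $\mathcal{F}_1$-measurable, the sum $\sum_i|c_i|^2\chi_i(t)$ converges in $L_2$ to an $\mathcal{F}_1$-measurable function. Hence $u$ is $\mathcal{F}_1$-measurable, which is exactly what the coordinate-wise criterion for $\star^k\mathcal{F}_1$ requires.

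The main obstacle I anticipate is the careful bookkeeping of which cross terms vanish when expanding $\int_x g(x+t)\overline{g(x)}$ — in particular justifying that the "noise" part $g_0$ contributes nothing to the autocorrelation for (almost) every $t$, and handling the convergence issues when $g$ has infinitely many Fourier components (truncating $g$ to finitely many components, controlling the $L_2$-error in $t$, and passing to the limit). This is routine but needs the $U_2$-orthogonality statements (Lemma \ref{ort2}, Lemma \ref{gort}) rather than just naive orthogonality, since we need $\int_x g_0(x+t)\overline{\chi_i(x)}=0$ to hold for the relevant $t$, which follows from $\mathbb{E}(g_0|\mathcal{F}_1)=0$. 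A secondary point is simply confirming that "measurable in $\mathcal{F}_1$ in each coordinate separately, with the others fixed arbitrarily" is literally the definition of membership in the iterated strong product $\star^k\mathcal{F}_1$; this follows by associativity of $\star$ together with unwinding the definition of strong product one coordinate at a time.
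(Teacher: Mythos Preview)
Your proof is correct and shares the same high-level reduction as the paper: verify membership in $\star^k\mathcal{F}_1$ coordinate by coordinate, which (after freezing $t_1,\dots,t_{k-1}$ and setting $g=\Delta_{t_1,\dots,t_{k-1}}f$) reduces to showing that the one-variable correlation $u(t)=\int_x g(x+t)\overline{g(x)}$ lies in $\mathcal{F}_1$.

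Where you diverge is in how you prove this one-variable fact. The paper's argument is structural: it observes that $m(x,y)=f(x)g(y)$ is measurable in the shift-invariant $\sigma$-algebra $\sigma_{\{1\}}\vee\sigma_{\{2\}}$ on $\bA^2$, projects $m$ to the coset $\sigma$-algebra of the anti-diagonal subgroup to obtain a function of $x-y$ alone, and then uses that this projection remains in $\sigma_{\{1\}}\vee\sigma_{\{2\}}$ to conclude (via the separability definition of $\mathcal{F}_1$) that the resulting one-variable function lies in $\mathcal{F}_1$. This yields the slightly more general statement that $t\mapsto\int_x f(x)g(x+t)$ is $\mathcal{F}_1$-measurable for \emph{any} bounded $f,g$, without invoking the fundamental theorem at all. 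Your route instead expands $g$ in linear characters and computes $u(t)=\sum_i|c_i|^2\chi_i(t)$ explicitly; the cross terms with the noise part $g_0$ vanish (the $g_0\times\chi_i$ terms by ordinary orthogonality and shift-invariance of $\mathcal{F}_1$, the $g_0\times g_0$ term because $\|g_0\|_{U_2}=0$ forces $\int_x g_0(x+t)\overline{g_0(x)}=0$ for a.e.\ $t$). Both arguments are short; the paper's buys generality and avoids appealing to the $k=1$ decomposition, while yours gives an explicit formula for the slice.
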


This lemma follows immediately from the next lemma.

\begin{lemma} Let $f$ and $g$ be two $L_\infty$ functions on $\bA$. then
$h(t)=\int_x f(x)g(x+t)$ is measurable in $\mathcal{F}_1$.
\end{lemma}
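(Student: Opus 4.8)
The plan is to throw away the parts of $f$ and $g$ orthogonal to $\mathcal{F}_1$ and then expand what is left in linear characters. Write $f=f_1+f_0$ and $g=g_1+g_0$ with $f_1=\mathbb{E}(f\,|\,\mathcal{F}_1)$, $g_1=\mathbb{E}(g\,|\,\mathcal{F}_1)$; by Lemma \ref{normchar} the complementary terms satisfy $\|f_0\|_{U_2}=\|g_0\|_{U_2}=0$, i.e.\ $f_0,g_0$ are orthogonal to $L_2(\mathcal{F}_1)$. Substituting into the definition splits $h(t)$ into four terms. The two mixed terms are, for each fixed $t$, the Hermitian pairings $(f_1,\overline{g_0(\cdot+t)})$ and $(f_0,\overline{g_1(\cdot+t)})$; since $L_2(\mathcal{F}_1)$ and its orthogonal complement are each invariant under shifts and under complex conjugation, in each of these pairings one argument lies in $L_2(\mathcal{F}_1)$ and the other in its orthogonal complement, so both mixed terms vanish identically in $t$.

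For the ``main'' term $\int_x f_1(x)g_1(x+t)$ I would use that $L_2(\mathcal{F}_1)$ is by definition the closed span of the linear characters, so $f_1$ and $g_1$ are $L_2$-limits of finite linear combinations of characters. For characters $\chi,\chi'$ one has $\int_x\chi(x)\chi'(x+t)=\chi'(t)\int_x(\chi\chi')(x)$, which equals $\chi'(t)$ when $\chi\chi'$ is trivial and $0$ otherwise; hence the integral of a product of two finite character sums is again a finite linear combination of characters in the variable $t$. The estimate $|\int_x(f_1-f_1')(x)g_1(x+t)|\le\|f_1-f_1'\|_2\|g_1\|_2$ (and its analogue in the $g$-slot) makes this approximation uniform in $t$, so $\int_x f_1(x)g_1(x+t)$ is a uniform, hence $L_2$, limit of $\mathcal{F}_1$-measurable functions of $t$, and is therefore measurable in $\mathcal{F}_1$.

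The remaining term $F(t):=\int_x f_0(x)g_0(x+t)$ is where the actual work lies, and I expect it to vanish for almost every $t$. The argument is to evaluate $\int_t|F(t)|^2$ directly: writing it as a fourfold average, substituting $x+t\mapsto u$ and $y=x+s$, the integrand turns into $\overline{\Delta_s f_0(x)}\;\overline{\Delta_s g_0(x+t)}$, and carrying out the $x$- and $t$-averages separately gives
$$\int_t|F(t)|^2=\int_s\overline{\Bigl(\int_x\Delta_s f_0(x)\Bigr)\Bigl(\int_u\Delta_s g_0(u)\Bigr)}\,ds .$$
By Cauchy--Schwarz in $s$ together with the standard identity $\|\phi\|_{U_2}^{4}=\int_s\bigl|\int_x\Delta_s\phi(x)\bigr|^{2}\,ds$ (obtained by expanding $\Delta_{s_1,s_2}\phi$), this is at most $\|f_0\|_{U_2}^2\|g_0\|_{U_2}^2=0$, so $F\equiv0$ almost everywhere. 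Assembling the four terms, $h$ agrees almost everywhere with the $\mathcal{F}_1$-measurable function produced in the second paragraph, which proves the lemma. The only delicate point is keeping track of the complex conjugations in $\Delta_s$ and in the $U_2$-identity above; everything else is routine Fubini.
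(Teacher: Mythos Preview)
Your proof is correct and self-contained, but the paper proceeds quite differently. Instead of splitting $f$ and $g$ along $\mathcal{F}_1$ and computing each cross-term, the paper works on $\bA\times\bA$: it sets $m(x,y)=f(x)g(y)$, which is measurable in the shift-invariant $\sigma$-algebra $\sigma_{\{1\}}\vee\sigma_{\{2\}}$, and projects $m$ onto the coset $\sigma$-algebra of the anti-diagonal subgroup $\{(a,b):a+b=0\}$. Because coset $\sigma$-algebras are perpendicular to shift-invariant ones, this projection $m'$ remains measurable in $\sigma_{\{1\}}\vee\sigma_{\{2\}}$; on the other hand $m'$ is (a reparametrisation of) $h$, and a one-variable function whose lift lies in $\sigma_{\{1\}}\vee\sigma_{\{2\}}$ is forced into $\mathcal{F}_1$ by the separability definition. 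The paper's route is shorter and scales cleanly to $\tilde{V}_k$ via iteration, which is its purpose in that section. Your route, by contrast, is more explicit and avoids the perpendicularity machinery and the implicit final step, at the cost of doing three separate computations; it also makes visible exactly where the $U_2$-norm enters, which is pedagogically nice. Both are valid; yours would be harder to iterate to the multilinear statement that follows.
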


\begin{proof} Let $m(x,y)=f(x)g(y)$. Let $m'$ be the projection of $m$ to the coset $\sigma$-algebra on $\bA\times\bA$ corresponding to the subgroup $\{(a,b)|a+b=0\}$.
Since $m$ is measurable in the shift invariant $\sigma$-algebra $\sigma_{\{1\}}\vee\sigma_{\{2\}}(\bA^2)$ then so is $m'$.
On the other hand $m'(x,y)=h(x-y)$. This means that $h$ must be in $\mathcal{F}_1$.
\end{proof}

\begin{corollary}[Convolution is linear] The convolution of two $L_\infty$ functions on $\bA$ is measurable in the linear $\sigma$-algebra $\mathcal{F}_1$.
\end{corollary}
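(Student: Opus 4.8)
The plan is to reduce the corollary directly to the lemma just proved. Recall that on the probability space $\bA$ the convolution of $f,g\in L_\infty(\bA)$ is, up to a harmless choice of sign convention, $(f*g)(t)=\int_x f(x)g(t-x)$. Setting $g^-(y):=g(-y)$, which again lies in $L_\infty(\bA)$ since the reflection $x\mapsto -x$ is measure preserving, we have $g(t-x)=g^-(x-t)=g^-(x+(-t))$, and hence $(f*g)(t)=h(-t)$, where $h(s):=\int_x f(x)\,g^-(x+s)$. (With the other sign convention for the convolution one lands on $h$ itself; either way the argument below applies.)

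First I would apply the preceding lemma to the pair $f,g^-$: it says precisely that $h\in L_\infty(\mathcal{F}_1)$, i.e.\ $h$ is measurable in the linear $\sigma$-algebra $\mathcal{F}_1$. Next I would invoke invariance of $\mathcal{F}_1$ under measurable automorphisms. The map $x\mapsto -x$ is a measurable automorphism of $\bA$ (it is the ultra product of the inversion maps on the finite groups $A_i$), and by the lemma in the section on maps induced by measurable homomorphisms, precomposition with a measurable automorphism carries $L_\infty(\mathcal{F}_k)$ into itself for every $k$. In particular $t\mapsto h(-t)$ is $\mathcal{F}_1$-measurable, and since this function is exactly $f*g$, the corollary follows.

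The argument is essentially bookkeeping: the only point needing a moment of care is the sign flip relating the ``correlation'' $h$ appearing in the lemma to the genuine convolution, together with the observation that $\mathcal{F}_1$ is closed under the reflection automorphism — and both are immediate from results already in hand, so I do not expect any real obstacle. An alternative that sidesteps the reflection entirely is to repeat the proof of the lemma verbatim with $m(x,y)=f(x)g(y)$ projected onto the coset $\sigma$-algebra of the subgroup $\{(a,b)\mid a+b=0\}$ of $\bA\times\bA$, whose cosets are the level sets of $(x,y)\mapsto x+y$; since $m$ is measurable in the shift-invariant $\sigma$-algebra $\sigma_{\{1\}}\vee\sigma_{\{2\}}(\bA^2)$ and coset $\sigma$-algebras are perpendicular to shift-invariant ones, the projection $m'(x,y)=(f*g)(x+y)$ stays in $\sigma_{\{1\}}\vee\sigma_{\{2\}}(\bA^2)$, and the $d=2$ case of Lemma~\ref{cubint} then forces $f*g\in\mathcal{F}_1$.
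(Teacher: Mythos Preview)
Your proposal is correct and follows the paper's intended approach: the corollary is stated without proof because it is meant to be an immediate consequence of the preceding lemma, and your reduction via $g^-(y)=g(-y)$ together with the $\mathcal{F}_k$-invariance under measurable automorphisms does exactly that. Your alternative argument---projecting $f(x)g(y)$ onto the coset $\sigma$-algebra of $\{(a,b)\mid a+b=0\}$ to obtain $(f*g)(x+y)$ directly---is in fact a cleaner variant of the lemma's own proof and avoids the reflection step altogether.
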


\begin{lemma}Let $f\in L_\infty(\bA)$ be a function with $\|f\|_{U_{k+1}}>0$. Then for a positive measure of $t$'s $\|\Delta_tf\|_{U_k}>0$.
\end{lemma}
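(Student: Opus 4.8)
The plan is to reduce the statement to the standard recursive identity for the Gowers norms, namely
$$\|f\|_{U_{k+1}}^{2^{k+1}}=\int_t\|\Delta_t f\|_{U_k}^{2^k}\,dt,$$
which has already appeared (and been used) in the proof of the lemma preceding Corollary \ref{specor}. First I would record this identity: it follows directly from the definition by writing $\Delta_{t_1,t_2,\dots,t_{k+1}}f(x)=\Delta_{t_2,\dots,t_{k+1}}(\Delta_{t_1}f)(x)$ and integrating over $x,t_2,\dots,t_{k+1}$ first (for fixed $t_1$) and then over $t_1$. The inner integral is by definition $\|\Delta_{t_1}f\|_{U_k}^{2^k}$.

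The second ingredient is that the integrand $t\mapsto\|\Delta_t f\|_{U_k}^{2^k}$ is pointwise a nonnegative real number; this is just the standard fact that $\mathbb{E}_{x,t_1,\dots,t_k}\Delta_{t_1,\dots,t_k}g(x)\ge 0$ for every $L_\infty$ function $g$ (an iterated Cauchy--Schwarz / positivity argument for Gowers norms), applied with $g=\Delta_t f$. Granting these two facts, the conclusion is immediate: if $\|\Delta_t f\|_{U_k}=0$ held for almost every $t$, then the right-hand side of the displayed identity would be $0$, forcing $\|f\|_{U_{k+1}}=0$, contrary to hypothesis. Hence the set of $t$ with $\|\Delta_t f\|_{U_k}>0$ has positive measure.

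I do not anticipate a real obstacle here; the only point requiring a word of justification is the pointwise nonnegativity of $\|\Delta_t f\|_{U_k}^{2^k}$, i.e.\ that $U_k$ really is a (semi-)norm whose $2^k$-th power is given by the nonnegative Gowers inner product, which is part of the basic setup carried over from \cite{Sz1} and Lemma \ref{normchar}. Everything else is the bookkeeping of the recursive identity, which is already implicitly in the paper.
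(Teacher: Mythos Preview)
Your argument is correct and considerably more elementary than the route taken in the paper. You reduce the lemma to the recursive identity $\|f\|_{U_{k+1}}^{2^{k+1}}=\int_t\|\Delta_t f\|_{U_k}^{2^k}\,dt$ (already used verbatim earlier in the paper, as you note) together with pointwise nonnegativity of the Gowers inner product; both ingredients are part of the basic setup and available at this point, so there is no gap.

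The paper instead argues structurally via the fundamental theorem: it picks a $k$-th order character $\phi$ occurring in the decomposition of $\mathbb{E}(f|\mathcal{F}_k)$, so that $\mathbb{E}(f\overline{\phi}\,|\,\mathcal{F}_{k-1})\neq 0$, then invokes Corollary~\ref{nozer} (applied to $f\overline{\phi}$) to deduce that $\mathbb{E}_x(\Delta_t(f\overline{\phi})\,|\,\mathcal{F}_{k-1})\neq 0$ on a positive-measure set of $t$; since $\Delta_t\overline{\phi}\in L_\infty(\mathcal{F}_{k-1})$ it factors out of the conditional expectation, giving $\mathbb{E}(\Delta_t f\,|\,\mathcal{F}_{k-1})\neq 0$ and hence $\|\Delta_t f\|_{U_k}>0$. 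This route threads through Corollary~\ref{nozer}, Lemma~\ref{twofunct} and the mixed-term lemma, i.e.\ through the whole machinery of rank-one modules. Your argument bypasses all of that, which is a clear gain in economy; the paper's proof, by contrast, illustrates how the character decomposition interacts with the $\Delta_t$ operation and situates the lemma within the higher-order structural narrative, but it does not actually extract any extra information here.
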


\begin{proof} The condition of the lemma shows that the $k$-th order Fourier expansion of $\mathbb{E}(f|\mathcal{F}_k)$ contains a nonzero term corresponding to a module $\lambda\in\hat{\bA}_k$. Let $\phi$ be a $k$-th order character from the module $\lambda$.
Then $\mathbb{E}(f\overline{\phi}|\mathcal{F}_{k-1})$ in not $0$. This means by lemma \ref{nozer} that for a positive measure of $t$'s $\mathbb{E}(\Delta_t(f\overline{\phi})|\mathcal{F}_{k-1})$ is not $0$. On the other hand
$$\mathbb{E}(\Delta_t(f\overline{\phi})|\mathcal{F}_{k-1})=\mathbb{E}(\Delta_tf|\mathcal{F}_{k-1})\Delta_t\overline{\phi}$$
This means that $\mathbb{E}(\Delta_tf|\mathcal{F}_{k-1})$ is not $0$ for a positive measure of $t$'s which completes the proof.
\end{proof}

\begin{lemma}\label{repnonzer} Let $f\in L_\infty(\bA)$ be a function with $\|f\|_{U_{k+1}}>0$. Then $\tilde{V}_k(f)$ is not the zero function.
\end{lemma}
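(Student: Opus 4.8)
The plan is to derive the clean identity
$$\|f\|_{U_{k+1}}^{2^{k+1}}=\bigl\|\tilde{V}_k(f)\bigr\|_{L_2(\bA^k)}^2,$$
from which the lemma is immediate: if the left-hand side is positive then $\tilde{V}_k(f)$ has strictly positive $L_2$-norm, hence it cannot be the zero function.

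To establish the identity I would peel off the last difference variable. Writing $g=g_{t_1,\dots,t_k}:=\Delta_{t_1,t_2,\dots,t_k}f$ and using the definitional identity $\Delta_{t_1,t_2,\dots,t_{k+1}}f=\Delta_{t_{k+1}}\bigl(\Delta_{t_1,t_2,\dots,t_k}f\bigr)$, one has
$$\mathbb{E}_{x,t_{k+1}}\,\Delta_{t_1,t_2,\dots,t_{k+1}}f(x)=\mathbb{E}_{x,t_{k+1}}\,g(x+t_{k+1})\overline{g(x)}.$$
Since $t_{k+1}$ is uniform and independent of $x$, the inner integral $\mathbb{E}_{t_{k+1}}g(x+t_{k+1})$ equals the constant $\mathbb{E}_s g(s)$, so the displayed quantity equals $\bigl|\mathbb{E}_x g(x)\bigr|^2=\bigl|\tilde{V}_k(f)(t_1,\dots,t_k)\bigr|^2$. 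In particular it is non-negative, and integrating over $t_1,\dots,t_k$ gives
$$\|f\|_{U_{k+1}}^{2^{k+1}}=\mathbb{E}_{x,t_1,\dots,t_{k+1}}\,\Delta_{t_1,t_2,\dots,t_{k+1}}f(x)=\mathbb{E}_{t_1,\dots,t_k}\bigl|\tilde{V}_k(f)(t_1,\dots,t_k)\bigr|^2=\bigl\|\tilde{V}_k(f)\bigr\|_2^2,$$
which is the claimed identity.

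There is no serious obstacle here; the only point needing (routine) care is the Fubini-type interchange of the expectations, which is legitimate on the ultra product probability space because $f\in L_\infty$. An alternative, slightly weaker route avoids the explicit identity and argues by induction on $k$: the recursion $\tilde{V}_k(f)(t_1,\dots,t_k)=\tilde{V}_{k-1}(\Delta_{t_1}f)(t_2,\dots,t_k)$ combined with the preceding lemma (which supplies a positive-measure set of $t_1$ with $\|\Delta_{t_1}f\|_{U_k}>0$) and the inductive hypothesis shows $\tilde{V}_{k-1}(\Delta_{t_1}f)$ is nonzero for a positive measure of $t_1$, hence $\tilde{V}_k(f)\neq 0$. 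I would nevertheless present the direct computation above, since it is shorter and yields the exact value of $\|f\|_{U_{k+1}}$ rather than just non-vanishing.
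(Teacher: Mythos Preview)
Your proposal is correct. The paper, however, takes precisely your \emph{alternative} route: it iterates the preceding lemma $k$ times to conclude that $\|\Delta_{t_1,\dots,t_k}f\|_{U_1}>0$ on a positive-measure set of $(t_1,\dots,t_k)$, and then invokes $\|g\|_{U_1}=|\mathbb{E}_x g(x)|$ to deduce that $\tilde{V}_k(f)$ is nonzero there.

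Your primary argument via the identity $\|f\|_{U_{k+1}}^{2^{k+1}}=\|\tilde{V}_k(f)\|_{L_2}^2$ is genuinely different and strictly more economical. It bypasses the preceding lemma entirely---and with it the structural machinery (the $k$-th order Fourier expansion and Corollary~\ref{nozer}) invoked in that lemma's proof---replacing all of it by one line of computation. It also yields a sharper conclusion: the exact $L_2$-norm of $\tilde{V}_k(f)$ rather than mere non-vanishing. The Fubini step you flag is unproblematic within the paper's framework; the same kind of iterated-integral identity (namely $\int_t\|\Delta_t g\|_{U_k}^{2^k}\,dt=\|g\|_{U_{k+1}}^{2^{k+1}}$) is already used without comment in the proof of Lemma~\ref{specor}'s supporting lemma, so your computation carries no additional foundational burden.
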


\begin{proof} By iterating the previous lemma we obtain that for a positive measure of $k$-tuples $t_1,t_2,\dots,t_k$ the value of $\|\Delta_{t_1,t_2,\dots,t_k}f\|_{U_1}$ is bigger then $0$.
This means that for these $k$-tuples $\tilde{V}(f)$ can't be $0$.
\end{proof}

\medskip

Let $G$ denote the $k$-th dual group of $\bA^k$. Without proof we mention that elements in $G$ that can be represented by functions measurable in the multi linear $\sigma$-algebra are forming a subgroup of $G_2$ of $G$. The $k$-liner representation map induces an embedding of $\hat{\bA}_k$ into $G_2$.

\section{Finite statements}

In this part of the paper we demonstrate how to translate statements of higher order Fourier theory into finite ones. Unfortunately we loose the precise algebraic nature of many statements and instead we get approximative ones.
In general, there is an almost ``algorithmic way'' of translating infinite results from ultra product spaces to finite statements. However in some special situations one has to be careful.
An important goal of this part is to prove regularity type results for bounded functions on abelian groups. For every natural number $k$ we provide a $k$-th order regularization which is a finite version of the $k$-th order Fourier decomposition.

Regularity type results in combinatorics usually come in various strength. We have to note that both weak and strong versions have their own advantages.
Many regularity lemmas fall into a category that we call ``plain regularity''.
The general scheme for (strong versions of) plain regularity lemmas is the following.

\medskip

\noindent{\bf Rough scheme for plain regularity:}~{\it Let us fix an arbitrary function $F:\mathbb{N}\rightarrow\mathbb{R}^+$. Let $f$ be a bounded (say $\|f\|_\infty\leq 1$) function on some finite structure. Let us choose a number $\epsilon>0$. Then, if the structure is ``big enough'' then $f$ can be decomposed into three parts $f=f_s+f_e+f_r$. The function $f_s$ is the structured part and has ``complexity'' at most $n$ where $n$ is bounded in terms of $\epsilon$ and $F$. It is usually assumed that $f_s$ is bounded. The function $f_e$ is an error term with $\|f_e\|_2\leq\epsilon$. The function $f_r$ is ``quasi random'' with precision $F(n)$.}

\medskip

An example for plain regularity is the graph regularity lemma by Szemer\'edi. An example of a regularity lemma which doesn't fit into this framework is the so called hypergraph regularity lemma. Our regularity lemmas for functions on Abelian groups will also be more complicated because $k$-th order fourier analysis is closely related to the regularization of $k$-uniform hypergraphs. In this part of the paper we prove various regularity lemmas for functions. As a demonstration we highlight one of them.

\begin{definition} Let $P=\{P_1,P_2,\dots,P_n\}$ be a partition of an abelian group $A$. We say that a function $f:A\rightarrow\mathbb{C}$ is a $k$-th degree $(P,\epsilon)$-character if $$\Delta_{t_1,t_2,\dots,t_{k+1}} f(x)=g(t_1,t_2,\dots,t_{k+1},x)+r(t_1,t_2,\dots,t_{k+1},x)$$
    where $\|r\|_2<\epsilon$ and the value of $g$ depends only on the partition sets containing the sums $\sum_{i\in S}x+t_i$ where $S\subseteq [k+1]$.
\end{definition}

\begin{definition}[Complexity-I.] Let $A$ be an abelian group. We define the complexity notion $c_1(k,\underline{n},\underline{\epsilon})$ with parameters $\underline{n}=(n_1,n_2,\dots,n_{2k})\in\mathbb{N}^{2k}$ and $\underline{\epsilon}=(\epsilon_1,\epsilon_2,\dots,\epsilon_{2k})\in{\mathbb{R}^+}^{2k}$.

We say that a function $f:A\rightarrow\mathbb{C}~,~\|f\|_\infty\leq 1$ has complexity $c_1(k,\underline{n},\underline{\epsilon})$ if it has a decomposition $f=h+f_1+f_2+\dots+f_m$ with $m\leq n_{2k}~,~\|f_i\|_\infty\leq 1~,~ \|h\|_2<\epsilon_{2k}$ and $A$-has a partition $P=\{P_1,P_2,\dots,P_{2k-1}\}$ such that
\begin{enumerate}
\item each function $f_i$ is a $k$-th order $(P,\epsilon_{2k-1})$-character
\item for each partition set $P_i$ the function $1_{P_i}$ has complexity $$c_1(k-1,(n_{2k-2},n_{2k-3},\dots,n_1),(\epsilon_{2k-2},\epsilon_{2k-3},\dots,\epsilon_1)).$$
\end{enumerate}
In the case $k=0$ we say that only constant functions are of complexity $(0,n,\epsilon)$ no matter what $n$ and $\epsilon$ is.
\end{definition}

\begin{theorem}[$k$-th order Fourier-regularity-I.]\label{furreg} For every system of functions $F_i:\mathbb{N}^i\rightarrow\mathbb{R}^+~(i=0,1,2,\dots,2k-1)$ and $G:\mathbb{N}^{2k}\rightarrow\mathbb{R}^+$ there is a natural number $s$ such that every function $f:A\rightarrow\mathbb{C}~,~\|f\|_\infty\leq 1$ on an abelian group $A$ of size at least $s$ there are natural numbers $n_1,n_2,\dots,n_{2k}$ between $1$ and $s$ and a decomposition $f=f'+g$ where $\|g\|_{U_{k+1}}\leq G(n_1,n_2,\dots,n_{2k})$ and $f'$ is of complexity
$$c_1\Bigl(k,(n_1,n_2,\dots,n_{2k}),(F_{2k-1}(n_2,n_3,\dots,n_{2k}),F_{2k-2}(n_3,
n_4,\dots,n_{2k}),\dots,F_0)\Bigr).$$
\end{theorem}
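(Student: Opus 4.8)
The plan is to obtain the finite statement by the standard ultra-product compactness argument, using the infinite-dimensional $k$-th order Fourier decomposition from Theorem \ref{dec1} together with the local structure results of the previous section. Suppose for contradiction that the theorem fails. Then for every $s$ there is an abelian group $A_s$ with $|A_s|\ge s$ and a function $f_s:A_s\to\mathbb{C}$, $\|f_s\|_\infty\le 1$, such that no choice of $n_1,\dots,n_{2k}$ between $1$ and $s$ yields a decomposition of the required form. Take $\bA$ to be the ultra product of the $A_s$ along a non-principal ultra filter, and let $f$ be the ultra limit of the $f_s$; then $f\in L_\infty(\bA,\mathcal{A})$ with $\|f\|_\infty\le 1$. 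We will produce on $\bA$ a decomposition witnessing finite complexity, and then push it back down to infinitely many $A_s$, contradicting the choice of the $f_s$.

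On $\bA$ we apply Theorem \ref{dec1} to get $f=f'+g$ with $f'=\mathbb{E}(f|\mathcal{F}_k)$, $\|g\|_{U_{k+1}}=0$, and $f'=\sum_i f_i$ the $k$-th order Fourier decomposition into $k$-th order characters times $\mathcal{F}_{k-1}$-measurable multipliers. Since $\sum_i\|f_i\|_2^2=\|f'\|_2^2\le 1$, for any $\delta>0$ only finitely many $f_i$, say $f_1,\dots,f_m$, have $\|f_i\|_2\ge\delta$, and $\|f'-\sum_{i\le m}f_i\|_2<\delta$; absorb the tail into the error. For each $f_i$ ($i\le m$) Lemma \ref{charsep} and Corollary \ref{secder} give an invariant $\sigma$-algebra $\mathcal{F}_{k-2}\subseteq\mathcal{B}_i\subset\mathcal{F}_{k-1}$, relative separable over $\mathcal{F}_{k-2}$, on which $f_i$ is $\mathcal{B}_i$-locally pure of degree $k$; let $\mathcal{B}$ be the (still relative separable) join of the $\mathcal{B}_i$. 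By separability we may choose a single finite partition $P$ of $\bA$ into $\mathcal{B}$-measurable pieces that is fine enough that, on each block, $\Delta_{t_1,\dots,t_{k+1}}f_i(x)$ depends on the blocks of the partial sums up to an $L_2$-error below the target threshold — this is exactly the statement that each $f_i$ is a $k$-th order $(P,\varepsilon)$-character. To control the partition blocks themselves we recurse: each $1_{P_j}$ is $\mathcal{F}_{k-1}$-measurable (in fact built from $\mathcal{F}_{k-2}$ and countably many sets), so by the inductive hypothesis on $k$ it has bounded complexity $c_1(k-1,\cdot,\cdot)$. This is an induction on $k$; the base case $k=0$ is the convention that only constant functions have complexity $(0,n,\epsilon)$, which matches $\mathcal{F}_0$ being trivial.

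The final step is to transfer this back to the finite groups. All the data — the number $m$ of components, the partition $P$ with $|P|=2k-1$ blocks, the recursively attached partitions of the blocks, the bounds $n_1,\dots,n_{2k}$, and all the $L_2$-error and $U_{k+1}$-error estimates — are first-order measurable statements, each asserting that a certain integral inequality holds. By Łoś's theorem each such statement holds on an index set in the ultra filter, and a finite intersection of such sets is again in the ultra filter, hence nonempty and in fact infinite; pick any $s$ in it larger than all the $n_i$. Then $f_s$ on $A_s$ admits a decomposition of complexity $c_1\bigl(k,(n_1,\dots,n_{2k}),(F_{2k-1}(\dots),\dots,F_0)\bigr)$ with the prescribed $U_{k+1}$-error $G(n_1,\dots,n_{2k})$, contradicting the assumption. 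I expect the main obstacle to be the bookkeeping in the recursive step: one must choose the partition $P$ so that simultaneously (a) every $f_i$ becomes a genuine $(P,\epsilon_{2k-1})$-character with the error controlled by the right $F$-value evaluated at the already-chosen larger indices, and (b) the partition blocks are themselves regularizable with errors matching the nested complexity parameters. Getting the order of quantifiers right — choosing the coarse data first and only then demanding precision $F(n)$ depending on it, exactly as in the "rough scheme for plain regularity" but iterated $k$ times — is the delicate point, and it is the reason the complexity notion $c_1$ is defined recursively with the indices read off in the order $n_{2k},n_{2k-1},\dots$.
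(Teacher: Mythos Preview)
Your proposal is correct and follows essentially the same route as the paper: a contradiction argument via an ultra product of counterexamples, projection to $\mathcal{F}_k$ to split $f=f'+g$ with $\|g\|_{U_{k+1}}=0$, truncation of the $k$-th order Fourier decomposition of $f'$, Lemma \ref{charsep} to get a separable $\sigma$-algebra in $\mathcal{F}_{k-1}$ and hence a finite partition making the $f_i$ approximate $(P,\epsilon)$-characters, induction on $k$ for the partition pieces, and transfer back to a finite stage. The only cosmetic difference is that the paper factors the argument into two pieces --- a ``general form of $k$-th order Fourier-regularity'' valid for any parameterized property characterizing something containing $\mathcal{F}'_k$, plus a lemma that $c_1$ is such a property --- whereas you run the whole thing in one pass; the content is the same.
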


\subsection{Function properties}

A property in general is just a subset of objects under investigation.
In this chapter we will talk about function properties on abelian groups.
A finite function property is a property defined on the set of all complex valued functions on finite abelian groups. An infinite function property is defined on the set of measurable functions on groups that are ultra products of finite abelian groups.

\begin{definition} Let $\mathcal{P}$ be a finite function property.
We define the infinite function property
$$cl(\mathcal{P})=\{f|f=\limo f_i~,~f_i:A_i\mapsto\mathbb{C}~,~f_i\in\mathcal{P}~,~\sup_i\|f_i\|_\infty<\infty\}$$
where $\omega$ is an arbitrary ultra filter and $\{A_i\}_{i=1}^\infty$ is an arbitrary sequence of finite abelian groups with an infinite ultra product.

\end{definition}

\begin{definition} Let $\mathcal{P}(\underline{n},\underline{\epsilon})$ be a parameterized family of function properties with parameters $\underline{n}=(n_1,n_2,\dots,n_r)\in\mathbb{N}^r$ and $\underline{\epsilon}=(\epsilon_1,\epsilon_2,\dots,\epsilon_r)\in{\mathbb{R}^+}^r$. We say that $\mathcal{P}(\underline{n},\underline{\epsilon})$ characterizes a certain infinite function property $\mathcal{P}'$ if
$$\mathcal{P}'=\bigcap_{\epsilon_r}\bigcup_{n_r}\bigcap_{\epsilon_{r-1}}\bigcup_{n_{r-1}}\dots\bigcap_{\epsilon_1}\bigcup_{n_1}cl(\mathcal{P}(\underline{n},\underline{\epsilon}))$$
\end{definition}

\subsection{Fourier regularity}

\begin{definition} Let $\mathcal{F}'_k$ denote the infinite function property that $f$ is measurable in $\mathcal{F}_k$ and $\|f\|_\infty\leq 1$.
\end{definition}

\begin{theorem}[general form of $k$-th order Fourier-regularity] Let $\mathcal{P}((n_1,n_2,\dots,n_r),(\epsilon_1,\epsilon_2,\dots,\epsilon_r))$ be a $2r$ parameter family of function properties which characterizes a property containing $\mathcal{F}'_k$. Then for every system of functions $F_i:\mathbb{N}^i\rightarrow\mathbb{R}^+~(i=0,1,2,\dots,r-1)$ and $G:\mathbb{N}^r\rightarrow\mathbb{R}^+$ there is a natural number $s$ such that for every function $f:A\rightarrow\mathbb{C}~,~\|f\|_\infty\leq 1$ on an abelian group $A$ of size at least $s$ there there are numbers $n_1,n_2,\dots,n_r$ between $1$ and $s$ and a decomposition $f=f'+g$ where $\|g\|_{U_{k+1}}\leq G(n_1,n_2,\dots,n_r)$ and $f'$ has property
\begin{equation}\label{prop}
\mathcal{P}\Bigl((n_1,n_2,\dots,n_r),(F_{r-1}(n_2,n_3,\dots,n_r),F_{r-2}(n_3,
n_4,\dots,n_r),\dots,F_0)\Bigr)
\end{equation}
\end{theorem}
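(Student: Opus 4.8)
The plan is to argue by contradiction, passing from the finite statement to the infinite decomposition theorem \ref{dec1} by means of an ultra product. Suppose the conclusion fails for some system $F_i:\mathbb{N}^i\to\mathbb{R}^+$ $(i=0,1,\dots,r-1)$ and some $G:\mathbb{N}^r\to\mathbb{R}^+$. Then for every $s\in\mathbb{N}$ there is an abelian group $A_s$ with $|A_s|\geq s$ and a function $f_s:A_s\to\mathbb{C}$, $\|f_s\|_\infty\leq 1$, such that for all $n_1,\dots,n_r\in\{1,\dots,s\}$ and every decomposition $f_s=f'+g$ in which $f'$ has property $\mathcal{P}\bigl((n_1,\dots,n_r),(F_{r-1}(n_2,\dots,n_r),\dots,F_0)\bigr)$ one has $\|g\|_{U_{k+1}}>G(n_1,\dots,n_r)$. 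Fix a non-principal ultra filter $\omega$, let $\bA$ be the ultra product of $\{A_s\}_{s=1}^\infty$ (which is infinite since $|A_s|\to\infty$), and put $f=\limo f_s$, so $\|f\|_\infty\leq 1$. By theorem \ref{dec1} we may write $f=f'+g$ with $f'=\mathbb{E}(f|\mathcal{F}_k)$ and $\|g\|_{U_{k+1}}=0$; since conditional expectation is a contraction on $L_\infty$ we get $\|f'\|_\infty\leq 1$, so $f'$ lies in the infinite function property $\mathcal{F}'_k$.

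Next I would unwind the hypothesis that $\mathcal{P}((n_1,\dots,n_r),(\epsilon_1,\dots,\epsilon_r))$ characterizes a property $\mathcal{P}'$ containing $\mathcal{F}'_k$; by definition of ``characterizes'',
$$\mathcal{P}'=\bigcap_{\epsilon_r}\bigcup_{n_r}\bigcap_{\epsilon_{r-1}}\bigcup_{n_{r-1}}\cdots\bigcap_{\epsilon_1}\bigcup_{n_1}cl\bigl(\mathcal{P}((n_1,\dots,n_r),(\epsilon_1,\dots,\epsilon_r))\bigr).$$
Reading this as the quantifier alternation $\forall\epsilon_r\,\exists n_r\,\forall\epsilon_{r-1}\,\exists n_{r-1}\,\cdots\,\forall\epsilon_1\,\exists n_1$ applied to $f'\in\mathcal{P}'$, I would supply the parameters recursively in this order: take $\epsilon_r:=F_0$ and obtain $n_r$; take $\epsilon_{r-1}:=F_1(n_r)$ and obtain $n_{r-1}$; and in general, once $n_r,n_{r-1},\dots,n_{j+1}$ have been produced, take $\epsilon_j:=F_{r-j}(n_{j+1},\dots,n_r)$ and obtain $n_j$. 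This is legitimate precisely because $\epsilon_j$ is allowed to depend on exactly those $n$'s that occur before it in the quantifier list. The outcome is fixed natural numbers $n_1,\dots,n_r$ together with $f'\in cl\bigl(\mathcal{P}((n_1,\dots,n_r),(F_{r-1}(n_2,\dots,n_r),\dots,F_0))\bigr)$, and the $\epsilon$-vector produced is exactly the one appearing in the statement.

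Finally I would transfer this back to the finite groups $A_s$. By the definition of $cl$ there are functions $f'_s:A_s\to\mathbb{C}$ with $\sup_s\|f'_s\|_\infty<\infty$, each $f'_s$ of property $\mathcal{P}\bigl((n_1,\dots,n_r),(F_{r-1}(n_2,\dots,n_r),\dots,F_0)\bigr)$, and $\limo f'_s=f'$. Then $\limo(f_s-f'_s)=f-f'=g$, and since the $2^{k+1}$-th power of $\|\cdot\|_{U_{k+1}}$ is an expectation of a product of values of the function, one has $\limo\|h_s\|_{U_{k+1}}=\|\limo h_s\|_{U_{k+1}}$ for every uniformly bounded sequence $h_s$; with $h_s=f_s-f'_s$ this gives $\limo\|f_s-f'_s\|_{U_{k+1}}=\|g\|_{U_{k+1}}=0$. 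On the other hand, for all $s\geq\max(n_1,\dots,n_r)$ the tuple $(n_1,\dots,n_r)$ lies in $\{1,\dots,s\}$ and $f_s=f'_s+(f_s-f'_s)$ is a decomposition with $f'_s$ of the required property, so the choice of $f_s$ forces $\|f_s-f'_s\|_{U_{k+1}}>G(n_1,\dots,n_r)$ for $\omega$-almost every $s$. As $G(n_1,\dots,n_r)$ is a fixed positive constant, this contradicts $\limo\|f_s-f'_s\|_{U_{k+1}}=0$, proving the theorem.

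I expect the only real obstacle to be the bookkeeping in the middle step: one has to verify that the index shifts in the $\epsilon$-vector $(F_{r-1}(n_2,\dots,n_r),\dots,F_0)$ are arranged so that the recursive substitution is compatible with the order of the quantifiers $\forall\epsilon\,\exists n$, and one must be slightly careful that the approximating sequence $\{f'_s\}$ may be taken on the original groups $A_s$, so that $f_s-f'_s$ is genuinely an ultra limit over the same index set. Everything else — the decomposition $f=\mathbb{E}(f|\mathcal{F}_k)+g$ with vanishing $U_{k+1}$ semi-norm, and the continuity of the Gowers norm along ultra limits of bounded functions — is supplied by theorem \ref{dec1} and by the basic properties of the ultra product measure used throughout the paper.
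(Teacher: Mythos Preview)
Your argument is correct and follows essentially the same route as the paper's proof: contradiction via an ultra product, projection onto $\mathcal{F}_k$ using theorem \ref{dec1}, unwinding the $\forall\epsilon\,\exists n$ quantifier string to land $f'$ in the relevant $cl(\mathcal{P}(\cdots))$, and then pulling back to a finite index to contradict the assumed failure. Your treatment is in fact more explicit than the paper's on two points the paper leaves tacit --- the recursive choice $\epsilon_j:=F_{r-j}(n_{j+1},\dots,n_r)$ matching the displayed $\epsilon$-vector, and the continuity of $\|\cdot\|_{U_{k+1}}$ along ultra limits of uniformly bounded sequences --- so nothing is missing.
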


\begin{proof} We go by contradiction. Assume that for a fixed set of functions there is a sequence of counterexamples $f_i:A_i\rightarrow\mathbb{C}$ for the statement with $s=i$. In particular this means that $|A_i|\geq i$ and so their ultra product $\bA$ is infinite. Let $f:\bA\rightarrow\mathbb{C}$ denote the ultra limit of $\{f_i\}_{i=1}^\infty$.
We have that $f=f'+g$ where $f'=\mathbb{E}(f|\mathcal{F}_k)$ and $\|g\|_{U_{k+1}}=0$.
The fact that $\mathcal{P}$ characterizes a property containing $\mathcal{F}_k$ implies that there is a sequence $(n_1,n_2,\dots,n_k)$ such that $f'$ is contained in
$$cl\Bigl(\mathcal{P}\Bigl((n_1,n_2,\dots,n_r),(F_{r-1}(n_2,n_3,\dots,n_r),F_{r-2}(n_3,
n_4,\dots,n_r),\dots,F_0)\Bigr)\Bigr).$$
and so there is a sequence $\{f'_i\}_{i=1}^\infty$ with ultra limit $f'$ such that
$f'$ has property (\ref{prop}).
Let $g_i=f_i-f'_i$. The ultra limit of $\{g_i\}_{i=1}^\infty$ is $g$ and so the ultra limit of $\|g_i\|_{U_{k+1}}$ is $0$. This means that there is a natural number $s\geq\max(n_1,n_2,\dots,n_r)$ with $\|g_s\|_{U_{k+1}}\leq G(n_1,n_2,\dots,n_r)$. This is a contradiction since the decomposition $f_s=f_s'+g_s$ satisfies the statement.
\end{proof}

\begin{lemma} The property characterized by the complexity notion $c_1(k,\underline{n},\underline{\epsilon})$ contains $\mathcal{F}'_k$.
\end{lemma}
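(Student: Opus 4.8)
The natural plan is an induction on $k$. For $k=0$ a function measurable in $\mathcal{F}_0$ is constant, the complexity $c_1(0,\cdot,\cdot)$ admits by definition only constant functions, and a constant function on $\bA=\prod_\omega A_j$ is the ultra limit of the corresponding constants; hence $\mathcal{F}'_0\subseteq cl(c_1(0,\cdot,\cdot))$, which is the whole characterized property in this degenerate case.

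For the inductive step fix $f$ measurable in $\mathcal{F}_k$ with $\|f\|_\infty\le 1$ on $\bA=\prod_\omega A_j$, and play the quantifier game $\forall\epsilon_{2k}\,\exists n_{2k}\,\forall\epsilon_{2k-1}\,\exists n_{2k-1}\,\cdots\,\forall\epsilon_1\,\exists n_1$. Given $\epsilon_{2k}$, apply Theorem~\ref{dec1} to write $f=f_1+f_2+\cdots$ with $f_i=\phi_ig_i$, where $\phi_i$ is a $k$-th order character, $g_i=|f_i|\in\mathcal{F}_{k-1}$ and $\|f_i\|_\infty\le 1$; choose $m$ with $\|f-\sum_{i\le m}f_i\|_2<\epsilon_{2k}$ and set $n_{2k}:=m$, $h:=f-\sum_{i\le m}f_i$. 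Given $\epsilon_{2k-1}$: by the construction in the proof of Corollary~\ref{secder} (via Lemma~\ref{charsep}) each $\phi_i$, $i\le m$, lies in $[\mathcal{B}_i,k]^*$ for a separable $\mathcal{B}_i\subseteq\mathcal{F}_{k-1}$; let $\mathcal{B}\subseteq\mathcal{F}_{k-1}$ be a separable $\sigma$-algebra containing all the $\mathcal{B}_i$ and the $\sigma$-algebras generated by the $g_i$. Then $\Delta_{t_1,t_2,\dots,t_{k+1}}f_i=\Delta_{t_1,t_2,\dots,t_{k+1}}\phi_i\cdot\Delta_{t_1,t_2,\dots,t_{k+1}}g_i$ is measurable in $\bigvee_{S\subseteq[k+1]}\psi_S^{-1}(\mathcal{B})$ for every $i\le m$. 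Let $\mathcal{B}_0\subseteq\mathcal{B}$ be the finite $\sigma$-algebra generated by finitely many generators of $\mathcal{B}$, with atoms forming a partition $P=\{P_1,\dots,P_L\}$; since $\bigvee_S\psi_S^{-1}(\mathcal{B}_0)$ increases to $\bigvee_S\psi_S^{-1}(\mathcal{B})$ as $\mathcal{B}_0$ grows, martingale convergence lets us take $\mathcal{B}_0$ fine enough that $\|\Delta_{t_1,t_2,\dots,t_{k+1}}f_i-\mathbb{E}(\Delta_{t_1,t_2,\dots,t_{k+1}}f_i\mid\bigvee_S\psi_S^{-1}(\mathcal{B}_0))\|_2<\epsilon_{2k-1}$ for all $i\le m$. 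This conditional expectation is exactly a function of which cells $P_\ell$ contain the partial sums $x+\sum_{l\in S}t_l$, so each $f_i$ is a $k$-th degree $(P,\epsilon_{2k-1})$-character; set $n_{2k-1}:=L$. Since $P_\ell\in\mathcal{F}_{k-1}$ we have $1_{P_\ell}\in\mathcal{F}'_{k-1}$. Feeding the remaining parameters $\epsilon_{2k-2},\dots,\epsilon_1$ into the induction hypothesis applied to each of the finitely many functions $1_{P_\ell}$, and taking componentwise maxima of the resulting integers (complexity is monotone in $\underline{n}$, a trivial side induction), we obtain $n_{2k-2},\dots,n_1$ with $1_{P_\ell}\in cl(c_1(k-1,(n_{2k-2},\dots,n_1),(\epsilon_{2k-2},\dots,\epsilon_1)))$ for every $\ell$.

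It remains to realize this configuration over $\omega$-almost all finite factors. Representing $f$, the $\phi_i,g_i,h$ and the partition $P$ by finite data on the $A_j$ and putting $f^{(j)}:=h^{(j)}+\sum_{i\le m}\phi_i^{(j)}g_i^{(j)}$, so that $\lim_\omega f^{(j)}=f$, the ``soft'' requirements $\|h^{(j)}\|_2<\epsilon_{2k}$, $\|\phi_i^{(j)}g_i^{(j)}\|_\infty\le 1$ and ``$\phi_i^{(j)}g_i^{(j)}$ is a $k$-th degree $(P^{(j)},\epsilon_{2k-1})$-character'' hold for $\omega$-a.a.\ $j$, because each is the ultra limit of its finite analogue (the $L_2$-distance of $\Delta_{t_1,t_2,\dots,t_{k+1}}\phi_i^{(j)}g_i^{(j)}$ to the nearest cell-data function is the ultra limit of the corresponding finite distances). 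The delicate point is the recursive clause: the finite partition $P^{(j)}=\{P^{(j)}_1,\dots,P^{(j)}_L\}$ must have cell indicators of genuine finite complexity $c_1(k-1,(n_{2k-2},\dots,n_1),(\epsilon_{2k-2},\dots,\epsilon_1))$, not merely the naive ultra-product realizations of the $P_\ell$. Here the induction hypothesis is used a second time: it already produces finite functions $v^{(j)}_\ell\to 1_{P_\ell}$ of the desired complexity, and rounding each $v^{(j)}_\ell$ to a $\{0,1\}$-valued function and then disjointifying the rounded sets into a genuine $L$-cell partition changes $v^{(j)}_\ell$ by an $L_2$-amount tending $\omega$-to $0$; such a change is absorbed into the error term of the complexity decomposition, so the cells retain complexity $c_1(k-1,\dots)$ with the same $\underline{n}$ and only slightly relaxed $\underline{\epsilon}$ (one creates the slack by running the inner induction with $\underline{\epsilon}$ halved). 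Finally one checks, exactly as above, that the $\phi_i^{(j)}g_i^{(j)}$ remain $(P^{(j)},\epsilon_{2k-1})$-characters after this $o(1)$ change of partition. This stability-under-absorption bookkeeping, together with the monotonicity of $c_1$ in $\underline{n}$, is the main obstacle; everything else is the routine transfer between $\bA$ and the $A_j$. Collecting the pieces yields $f\in cl(c_1(k,(n_1,\dots,n_{2k}),(\epsilon_1,\dots,\epsilon_{2k})))$ for the integers just constructed, i.e.\ membership of $f$ in the property characterized by $c_1(k,\cdot,\cdot)$, which completes the induction.
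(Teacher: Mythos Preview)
Your proof is correct and follows essentially the same approach as the paper's: induction on $k$, truncation of the $k$-th order Fourier decomposition to $m$ terms, Lemma~\ref{charsep} to obtain a separable $\mathcal{B}\subset\mathcal{F}_{k-1}$, passage to a finite sub-$\sigma$-algebra whose atoms give the partition $P$, and then the induction hypothesis applied to the finitely many cell indicators. You are in fact more careful than the paper in two places: you explicitly write $f_i=\phi_ig_i$ and include the $g_i$ in $\mathcal{B}$ (the paper writes ``$f_i\in[\mathcal{B},k]^*$'', which strictly speaking is only defined for $\mathcal{C}$-valued functions), and you spell out the disjointification and error-absorption bookkeeping needed to pass to the finite factors, which the paper compresses into a single sentence.
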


\begin{proof} We go by induction on $k$. First we show that if $f\in\mathcal{F}'$ then $\forall\epsilon_{2k}~\exists n_{2k}~\forall\epsilon_{2k-1}\exists n_{2k-1}$ such that $f$ has a decomposition $f=h+f_1+f_2+\dots+f_m$ with $m\leq n_{2k}~,~\|f_i\|_\infty\leq 1~,~ \|h\|_2<\epsilon_{2k}$ and $A$-has a partition $P=\{P_1,P_2,\dots,P_{2k-1}\}$ such that
\begin{enumerate}
\item each function $f_i$ is a $k$-th order $(P,\epsilon_{2k-1})$-character
\item each partition set $P_i$ is measurable in $\mathcal{F}_{k-1}$
\end{enumerate}

To see this we consider the $k$-th order Fourier decomposition $f=f_1+f_2+\dots$. Since $f_1+f_2+\dots$ converges in $L_2$ there is a natural number $n_{2k}$ such that $\|f-f_1-f_2\dots-f_{n_{2k}}\|_2<\epsilon_{2k}$.
Lemma \ref{charsep} implies that there is a separable $\sigma$-algebra $\mathcal{B}\subset\mathcal{F}_{k-1}$ such that $f_i\in [\mathcal{B},k]^*$ for every $i$.
This means that there is a finite sub $\sigma$-algebra $\mathcal{B}_2\subset\mathcal{B}$ such that the projections $r_i$ of the functions
$h_i=\Delta_{t_1,t_2,\dots,t_{k+1}}f_i(x)$ on $\bA^{k+2}$ to the $\sigma$ algebra generated by the $\sigma$-algebras $\psi_S^{-1}(\mathcal{B}_2)$ where $S$ runs trough the subsets of $\{1,2,\dots,k+1\}$ satisfy $\|r_i-h_i\|<\epsilon_{2k-1}$ whenever $i=1,2,\dots,n_{2k}$.
It follows that the atoms of $\mathcal{B}_1$ are forming a partition $P$ such that $f_i$ is an $(P,\epsilon_{2k-1})$-character whenever $i<n_{2k}$.

Now by our induction step and by using that there are only finitely many partition sets in $P$ we obtain the formula that
$\forall\epsilon_{2k}~\exists n_{2k}~\forall\epsilon_{2k-1}\exists n_{2k-1}\dots\forall\epsilon_1~\exists n_1$ such that every partition set in $P$ is in the closure of $c_1(k-1,(n_{2k-2},n_{2k-3},\dots,n_1),(\epsilon_{2k-2},\epsilon_{2k-3},\dots,\epsilon_1))$.

By considering a finite approximation of the system $\{f_i\}_{i=1}^m$ and finite approximations of the partition sets with property $c_1(k-1,(n_{2k-2},n_{2k-3},\dots,n_1),(\epsilon_{2k-2},\epsilon_{2k-3},\dots,\epsilon_1))$
we obtain that the property characterized by $c_1$ contains $\mathcal{F}_k'$.

%Now we have to prove the other direction, namely that if $f$ is in the property characterized by $c_2$ then $f\in\mathcal{F}_k'$. Again by induction we get that $f$ satisfies the formula proved at the beginning of the proof. It is easy to see that it implies that $f_i\in[\mathcal{B},k]^*$ for some separable $\sigma$-algebra contained in $\mathcal{F}_{k-1}$ thus

\end{proof}

\subsection{Finite algorithm for the $k$-th order Fourier decomposition (sketch)}

\medskip

We start with some motivation that comes from ordinary Fourier analysis.
Let $f:A\mapsto\mathbb{C}$ be a function on a finite abelian group and let

\begin{equation}\label{fegyenlo}
f=\sum_{i=1}^{|A|}\chi_i\lambda_i
\end{equation}

be its ordinary Fourier decomposition where the terms $\chi_i$ are linear characters and their coefficients are ordered in a way that $|\lambda_1|\geq|\lambda_2|\geq|\lambda_3|\geq\dots$.
Let $M_f$ be the $A\times A$ rank one matrix $ff^*$.
For all matrix operations we use the normalization introduced in chapter \ref{limop}.

The group $A$ is acting on the entries of $M_f$ in the natural way: $(x,y)^a=(x+a,y+a)$. Let $M_f'$ be the matrix obtained from $M_f$ by averaging out the action of $A$. This is: $M_f'=1/|A|\sum_{a\in A}M_f^a$. It is easy to see that
$$M_f'=\sum_{i=1}^{|A|}\chi_i\chi_i^*|\lambda_i|^2.$$
In other words the eigenvectors of $M_f$ (if they have multiplicity one !) are constant multiples of linear characters on $A$. The importance of this observation is that to define $M_f'$ we don't need the concepts of Fourier decomposition and characters but its eigenvectors (corresponding to multiplicity one eigenvalues) are linear characters.

It is natural to ask if we can obtain quadratic decomposition of $f$ in a similar way? The averaging used to obtain $M_f'$ corresponds to a projection to the trivial $\sigma$-algebra $\mathcal{F}_0$. It turns out from theorem \ref{spec} that to get a quadratic decomposition we need to use projections to $\mathcal{F}_1$. The problem is that such a projection does not have a precise finite analogy but fortunately we are able to "approximate it".

For every $\epsilon>0$ we define the (non linear) operator $P_\epsilon:A^*\rightarrow A^*$ such that
$$P_\epsilon(f)=\sum_{|\lambda_i|\geq\epsilon}\chi_i\lambda_i$$
provided that $f$ has decomposition (\ref{fegyenlo}). These operators obviously commute with the shift operators defined by $S_a(f)(x)=f(x+a)$ where $a\in A$.
Now we define $Q_{\epsilon,f}(x,y)$ to be $$P_\epsilon(a\rightarrow M_f(x+a,y+a))(0).$$
We consider $Q_{\epsilon,f}$ as a finite analogy of the operator $\mathcal{K}_2(f)$.

Very roughly speaking, if we fix a natural number $m$ and a small constant $\epsilon>0$ then if $A$ is big enough then (appropriate constant multiples of the) eigenvectors corresponding to the $m$ largest eigenvalues of $Q_{\epsilon,f}$ can be regarded as ``finite approximations'' of the quadratic decomposition of $f$.

There are however problems with this statement that can be fixed with careful work.
The biggest problem already appears in the infinite language. In corollary \ref{specor2}
we see that if $\mathcal{K}_k(f)$ has multiple eigenvalues that it require further work to read off the $k$-th order decomposition of $f$.
Unfortunately, in the finite analogy multiple eigenvalues correspond to close eigenvalues, so the above algorithm only works if the $m$ largest eigenvalues are ``well separated''. If not, then we need a refinement of the algorithm.

We also need to understand in what sense $Q_{\epsilon,f}$ is a finite version of $\mathcal{K}_2(f)$. Let $f_i:A_i\rightarrow\mathbb{C}$ be a sequence of bounded functions. We denote by $M_\epsilon$ the ultra limit of the matrices $Q_{\epsilon,f}$. It is easy to see that as $\epsilon$ goes to $0$ the matrix $M_\epsilon$ converges to $\mathcal{K}_2(f)$ in $L_2$. This will imply that the $m$ largest eigenvalues also converge in $L_2$. On the other hand theorem \ref{limop} tells us how the eigenvectors behave when the ultra limits of operators is taken.

The full algorithm will be discussed in a separate paper.

\vskip 0.2in

\noindent
Bal\'azs Szegedy
\noindent
University of Toronto, Department of Mathematics,
\noindent
St George St. 40, Toronto, ON, M5R 2E4, Canada

\end{document}